\let\emptyset\varnothing
\DeclareMathOperator{\Aut}{Aut}
\DeclareMathOperator{\CF}{CF}
\DeclareMathOperator{\CM}{CM}
\DeclareMathOperator{\Crit}{Crit}
\DeclareMathOperator{\CZ}{CZ}
\DeclareMathOperator{\Diff}{Diff}
\DeclareMathOperator{\ev}{ev}
\DeclareMathOperator{\Fix}{Fix}
\DeclareMathOperator{\grad}{grad}
\DeclareMathOperator{\Ham}{Ham}
\DeclareMathOperator{\Hess}{Hess}
\DeclareMathOperator{\HF}{HF}
\DeclareMathOperator{\HM}{HM}
\DeclareMathOperator{\id}{id}
\DeclareMathOperator{\im}{im}
\DeclareMathOperator{\ind}{ind}
\DeclareMathOperator{\Int}{Int}
\DeclareMathOperator{\loc}{loc}
\DeclareMathOperator{\Log}{Log}
\DeclareMathOperator{\M}{M}
\DeclareMathOperator{\ord}{ord}
\DeclareMathOperator{\pr}{pr}
\DeclareMathOperator{\RFC}{RFC}
\DeclareMathOperator{\RFH}{RFH}
\DeclareMathOperator{\SH}{SH}
\DeclareMathOperator{\sgn}{sgn}
\DeclareMathOperator{\Sp}{Sp}
\DeclareMathOperator{\Spec}{Spec}
\DeclareMathOperator{\supp}{supp}
\DeclareMathOperator{\Symp}{Symp}
\newcommand{\dR}{\mathrm{dR}}
\newcommand{\bld}[1]{\boldmath\textit{\textbf{#1}}\unboldmath}
\newtheoremstyle{main} 		             	 		
  	{}	                                     		
  	{}	                                    		
  	{\itshape}			                     		
  	{}        	                             		
  	{\boldmath\bfseries}   	                         		
  	{.}            	                        		
  	{ }           	                         		
  	{\thmname{#1}\thmnumber{ #2}\thmnote{ (#3)}}	
\theoremstyle{main}
\newtheorem{definition}{Definition}[section]
\newtheorem{proposition}{Proposition}[section]
\newtheorem{corollary}{Corollary}[section]
\newtheorem{theorem}{Theorem}[section]
\newtheorem{lemma}{Lemma}[section]
\newtheorem{conjecture}{Conjecture}[section]
\newtheoremstyle{nonit} 		             	 		
  	{}	                                     		
  	{}	                                    		
  	{}			                     		
  	{}        	                             		
	{\boldmath\bfseries}   	                         		
  	{.}            	                        		
  	{ }           	                         		
  	{\thmname{#1}\thmnumber{ #2}\thmnote{ (#3)}}	
\theoremstyle{nonit}
\newtheorem{remark}{Remark}[section]
\newtheorem{example}{Example}[section]
\newtheoremstyle{ex} 		             	 		
  	{}	                                     		
  	{}	                                    		
  	{\small}			                     		
  	{}        	                             		
  	{\bfseries\boldmath}   	                         		
  	{.}            	                        		
  	{ }           	                         		
  	{\thmname{#1}\thmnumber{ #2}\thmnote{ (#3)}}	
\theoremstyle{ex}
\begin{document}

\title{Lectures on Twisted Rabinowitz--Floer Homology
}


\author{Yannis B\"ahni
}

\maketitle

\begin{abstract}
	Rabinowitz--Floer homology is the Morse--Bott homology in the sense of Floer associated with the Rabinowitz action functional introduced by Kai Cieliebak and Urs Frauenfelder in 2009. In this manuscript, we consider a generalisation of this theory to a Rabinowitz--Floer homology of a Liouville automorphism. As an application, we show the existence of noncontractible periodic Reeb orbits on quotients of symmetric star-shaped hypersurfaces. In particular, this theory applies to lens spaces. Moreover, we prove a forcing theorem, which guarantees the existence of a contractible twisted closed characteristic on a displaceable twisted stable hypersurface in a symplectically aspherical geometrically bounded symplectic manifold if there exists a contractible twisted closed characteristic belonging to a Morse-Bott component, with energy difference smaller or equal to the displacement energy of the displaceable hypersurface. 	
\end{abstract}

\tableofcontents

\newpage
\section{Introduction}
\label{sec:introduction}

The existence of closed Reeb orbits on lens spaces is important in the study of celestial mechanics. Indeed, by \cite[Corollary~5.7.5]{frauenfelderkoert:3bp:2018}, the Moser regularised energy hypersurface near the earth or the moon of the planar circular restricted three-body problem for energy values below the first critical value is diffeomorphic to the real projective space $\mathbb{RP}^3$. See also \cite[Introduction]{hryniewicz:cm:2016} for more details. Our main result will be the following.

\begin{theorem}[{\cite[Theorem~1.2]{baehni:rfh:2021}}]
	Let $\Sigma \subseteq \mathbb{C}^n$, $n \geq 2$, be a compact and connected star-shaped hypersurface invariant under the rotation	
	\begin{equation*}
		\varphi \colon \mathbb{C}^n \to \mathbb{C}^n, \quad \varphi(z_1,\dots,z_n) := \del[1]{e^{2\pi i k_1/m}z_1,\dots,e^{2\pi i k_n/m}z_n}
	\end{equation*}
	\noindent for some even $m \geq 2$ and $k_1,\dots,k_n \in \mathbb{Z}$ coprime to $m$. Then $\Sigma/\mathbb{Z}_m$ admits a noncontractible periodic Reeb orbit.
	\label{thm:my_result}
\end{theorem}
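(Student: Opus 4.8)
The plan is to deduce the theorem from the nonvanishing of a twisted Rabinowitz--Floer homology. First, observe that $\varphi$ is unitary, hence preserves the standard Liouville form $\lambda = \tfrac12\sum_j(x_j\,dy_j-y_j\,dx_j)$ and commutes with the radial Liouville vector field, so it is a Liouville automorphism of $(\mathbb{C}^n,\lambda)$ preserving $\Sigma$ and the induced contact form $\alpha := \lambda|_\Sigma$. Because each $k_j$ is coprime to the even number $m$ it is odd, so $\varphi^{m/2} = -\id =: \tau$; thus the $\mathbb{Z}_m$--action on $\Sigma$ contains the antipodal involution $\tau$. Since $\gcd(k_j,m)=1$ for all $j$, the action is free on $\mathbb{C}^n\setminus\{0\}$, hence on $\Sigma$, so $\Sigma/\mathbb{Z}_m$ is a closed contact manifold; and as $\Sigma$ is diffeomorphic to $S^{2n-1}$ with $n\ge2$ it is simply connected, whence $\pi_1(\Sigma/\mathbb{Z}_m)\cong\mathbb{Z}_m$ via the deck group. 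Consequently a Reeb trajectory $v$ on $\Sigma$ with $v(1)=\tau(v(0))=-v(0)$ projects to a closed Reeb orbit on $\Sigma/\mathbb{Z}_m$ representing the class $[\varphi^{m/2}]$, which is nontrivial because $0<m/2<m$. So it suffices to produce one such \emph{$\tau$--twisted Reeb chord} on $\Sigma$.

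Next I would set up the Rabinowitz--Floer homology of the Liouville automorphism $\tau=-\id$ of $\mathbb{C}^n$ relative to $\Sigma$. Fix a defining Hamiltonian $H\in C^\infty(\mathbb{C}^n)$ with $\Sigma=H^{-1}(0)$ a regular level and $H$ admissible at infinity, chosen even ($H\circ\tau=H$), which is possible since $\Sigma$, $\omega$ and $\lambda$ are $\tau$--invariant. On the space of $\tau$--twisted loops $\{v\in C^\infty(\mathbb{R},\mathbb{C}^n):v(t+1)=-v(t)\}$ together with a multiplier $\eta\in\mathbb{R}$, consider
\begin{equation*}
	\mathcal{A}^\tau_H(v,\eta):=\int_0^1 v^*\lambda-\eta\int_0^1 H(v(t))\,dt.
\end{equation*}
Its critical points are exactly the pairs $(v,\eta)$ with $v(\mathbb{R})\subseteq\Sigma$ and $\dot v=\eta\,X_H\circ v$; since $\tau$ has no fixed point on $\Sigma$, the case $\eta=0$ is excluded, so every critical point is a genuine $\tau$--twisted Reeb chord and, by the first paragraph, yields a noncontractible closed Reeb orbit on $\Sigma/\mathbb{Z}_m$. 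Hence the theorem reduces to showing that $\mathcal{A}^\tau_H$ has a critical point.

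This I would obtain from $\RFH^\tau(\Sigma,\mathbb{C}^n)\neq0$. For well-definedness I would run the Morse--Bott Floer package of Cieliebak--Frauenfelder in the $\tau$--twisted setting: $\mathbb{C}^n$ is symplectically aspherical and geometrically bounded, so holomorphic planes and discs do not bubble; their $L^\infty$--bounds on the Lagrange multiplier and on $\norm{v}$ along finite-energy gradient lines carry over verbatim to $\tau$--twisted loops; and a $\tau$--invariant perturbation makes $\mathcal{A}^\tau_H$ Morse--Bott with coherent orientations, giving a choice-independent invariant. For the nonvanishing, note that the $\tau$--twisted loop space of $\mathbb{C}^n$ deformation retracts, via $v\mapsto(1-s)v$ (which preserves the condition $v(t+1)=-v(t)$), onto the constant loop $0$ --- the unique fixed point of $\tau$ on $\mathbb{C}^n$ --- so the computation of $\RFH^\tau$ for the star-shaped domain bounded by $\Sigma$ collapses, exactly as in the Cieliebak--Frauenfelder computation over a point using asphericity, to the (degree-shifted) homology of this contractible space, which is nonzero. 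Equivalently, the vanishing mechanism valid for displaceable hypersurfaces cannot apply: it would need a $\tau$--invariant, i.e.\ even, Hamiltonian displacing $\Sigma$, but an even Hamiltonian has a critical point at $0$, so its flow fixes $0$ and commutes with $-\id$, and a degree argument then shows that the displaced copy of $\Sigma$ still encircles $0$ and still meets $\Sigma$.

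The main obstacle is precisely this nonvanishing, for that is where the twist must genuinely be used: every compact hypersurface of $\mathbb{C}^n$ is Hamiltonianly displaceable (e.g.\ by a linear translation), so the \emph{untwisted} $\RFH(\Sigma,\mathbb{C}^n)$ vanishes and gives nothing, and one must really compute the $\tau$--twisted invariant of the ball, or establish $\tau$--equivariant non-displaceability, as sketched. Alongside this, the bulk of the technical labour lies in checking that the Floer apparatus is unaffected by the extra twisted boundary condition: transversality for $\tau$--invariant perturbations, Gromov-type compactness, and coherent signs and the $\mathbb{Z}$--grading computed from the Conley--Zehnder index of paths in $\Sp(2n)$ terminating at $-\id=\varphi^{m/2}$. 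Granting these, $\RFH^\tau(\Sigma,\mathbb{C}^n)\neq0$ forces a critical point of $\mathcal{A}^\tau_H$, producing the $\tau$--twisted Reeb chord and hence the noncontractible periodic Reeb orbit on $\Sigma/\mathbb{Z}_m$, as claimed.
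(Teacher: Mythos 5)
Your reduction is fine up to the last step: $\varphi^{m/2}=-\id=:\tau$ since all $k_j$ are odd, a $\tau$-twisted Reeb chord projects to a noncontractible closed Reeb orbit on $\Sigma/\mathbb{Z}_m$, and such chords are exactly the critical points of $\mathscr{A}^{H}_\tau$ with the constant solutions excluded because $\Fix(\tau|_\Sigma)=\emptyset$. The gap is the nonvanishing claim $\RFH^\tau(\Sigma,\mathbb{C}^n)\neq 0$, which is false. Part (c) of Theorem \ref{thm:twisted_rfh} (proved as Theorem \ref{thm:displaceable}) applies here: $\Sigma$ is compact, hence Hamiltonianly displaceable in $\mathbb{C}^n$, and the vanishing mechanism does \emph{not} require the displacing Hamiltonian to be $\tau$-invariant. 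Given an arbitrary $F\in C^\infty_c(M\times I)$ with $\varphi_F(\Sigma)\cap\Sigma=\emptyset$, the reparametrisation trick of Lemma \ref{lem:twisted_Moser_pair} produces a twisted Moser pair whose perturbed functional has critical points corresponding to \emph{twisted leaf-wise intersection points} $\varphi_F(x)\in L_{\tau(x)}\subseteq\Sigma$ (Lemma \ref{lem:variational_characterisation_twisted_leaf-wise_intersections}); displaceability kills these regardless of any symmetry of $F$, so the perturbed complex is empty and $\RFH^\tau(\Sigma,\mathbb{C}^n)\cong 0$. Your heuristic that the twisted loop space of $\mathbb{C}^n$ retracts onto the fixed point $0$ does not compute the Floer homology (compare part (b) of Theorem \ref{thm:twisted_rfh}: with $\Fix(\tau|_\Sigma)=\emptyset$ that formula would give $0$ anyway), and your ``even Hamiltonians cannot displace $\Sigma$'' argument, while plausible as a statement about equivariant displaceability, is irrelevant to the vanishing theorem as actually proved.

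What rescues the argument — and is the route taken in the paper — is to pass to the $\mathbb{Z}_m$-\emph{equivariant} twisted Rabinowitz--Floer homology: the rotation acts freely on $\Crit\mathscr{A}^H_\varphi$ and on the moduli spaces, so one may quotient the chain complex by $\mathbb{Z}_m$ and take homology. For the round sphere the critical manifold is $\mathbb{S}^{2n-1}\times\mathbb{Z}$ (periods $\tau_k=\tfrac{\pi}{m}(mk-1)$), and — this is the amusing twist — the \emph{vanishing} of the non-equivariant $\RFH^\varphi(\mathbb{S}^{2n-1},\mathbb{C}^n)$ forced by displaceability is used as an input to pin down the differential (the ``string of pearls'' count must be odd); quotienting then yields a complex with zero differential when $m$ is even, so $\overline{\RFH}^\varphi_k(\mathbb{S}^{2n-1}/\mathbb{Z}_m)\cong\mathbb{Z}_2$ for all $k$ (Theorem \ref{thm:equivariant_twisted_rfh}), generated by noncontractible orbits via Lemma \ref{lem:noncontractible}. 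Finally, general star-shaped $\Sigma$ is reduced to the round sphere by a twisted homotopy of Liouville domains along the Liouville flow and the invariance Theorem \ref{thm:invariance}, a step your proposal also needs but does not supply. So the missing idea is precisely the equivariant construction: the twist alone does not beat displaceability; the free $\mathbb{Z}_m$-symmetry of the chain complex does.
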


Theorem \ref{thm:my_result} has similarities with the following two recent results. 

\begin{theorem}[{\cite[Corollary~1.6~(iv)]{sandon:reeb:2020}}]
	\label{thm:lens_space}
	Any contact form on $\mathbb{S}^{2n - 1}/\mathbb{Z}_m$ defining the standard contact structure admits a closed Reeb orbit.	
\end{theorem}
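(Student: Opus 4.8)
The plan is to reduce the statement to the (known) Weinstein conjecture for the standard contact sphere $(\mathbb{S}^{2n-1},\xi_{\mathrm{std}})$ by passing to the universal cover, rather than working directly on $\mathbb{S}^{2n-1}/\mathbb{Z}_m$, whose only obvious candidate filling $\mathbb{B}^{2n}/\mathbb{Z}_m$ is an orbifold. For $n=1$ there is nothing to prove: $\mathbb{S}^1/\mathbb{Z}_m\cong\mathbb{S}^1$, and the Reeb field of any contact form is a nowhere-vanishing vector field on a circle, hence periodic. So assume $n\geq 2$, in which case $\pi\colon\mathbb{S}^{2n-1}\to\mathbb{S}^{2n-1}/\mathbb{Z}_m$ is the universal covering. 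First I would lift: given a contact form $\alpha$ with $\ker\alpha$ the standard descended contact structure $\xi$, the pullback $\widetilde\alpha:=\pi^*\alpha$ is a contact form on $\mathbb{S}^{2n-1}$ with $\ker\widetilde\alpha=\pi^*\xi=\xi_{\mathrm{std}}$, because $\pi$ is a local diffeomorphism; it is automatically $\mathbb{Z}_m$-invariant since $\pi\circ g=\pi$ for every $g\in\mathbb{Z}_m$, so its Reeb vector field $R_{\widetilde\alpha}$ is $\mathbb{Z}_m$-invariant as well and $\pi$-related to $R_\alpha$.

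Next I would produce a closed Reeb orbit of $\widetilde\alpha$ by the (untwisted) theory. Writing $\widetilde\alpha=f\,\alpha_{\mathrm{std}}$ with $f>0$ (after possibly replacing $\alpha$ by $-\alpha$, which alters no orbit viewed as an unparametrised curve) and setting $\Sigma:=\{\,\sqrt{f(x)}\,x:x\in\mathbb{S}^{2n-1}\,\}\subseteq\mathbb{C}^n$, the radial diffeomorphism $x\mapsto\sqrt{f(x)}\,x$ is a strict contactomorphism from $(\mathbb{S}^{2n-1},\widetilde\alpha)$ onto $(\Sigma,\lambda_{\mathrm{std}}|_\Sigma)$, where $\Sigma$ is a compact connected star-shaped hypersurface. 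Since $\Sigma$ bounds a compact domain it is displaceable in $\mathbb{C}^n$ (slide the domain away), so $\RFH(\Sigma)=0$; were $\Sigma$ to carry no closed characteristic, the Rabinowitz action functional would be Morse--Bott with critical set the constant loops along $\Sigma$, forcing $\RFH(\Sigma)\cong H_*(\Sigma)\neq 0$, a contradiction. (Classically, this existence is Rabinowitz's theorem on star-shaped hypersurfaces.) Pulling the resulting closed characteristic back through the contactomorphism gives a closed $R_{\widetilde\alpha}$-orbit $\widetilde\gamma$ on $\mathbb{S}^{2n-1}$.

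Finally I would descend: as $R_{\widetilde\alpha}$ is $\pi$-related to $R_\alpha$, the curve $\gamma:=\pi\circ\widetilde\gamma$ is a periodic orbit of $R_\alpha$, nonconstant since $R_{\widetilde\alpha}$ never vanishes, hence the desired closed Reeb orbit of $\alpha$. I do not expect a genuine obstacle: the only content is the Weinstein conjecture for the standard sphere, which is classical, and inside the present framework it is exactly the ($\mathbb{Z}_m$-equivariantly trivial) case $m=1$ of the Rabinowitz--Floer machinery built for Theorem \ref{thm:my_result}. Note also that $\gamma$ is \emph{contractible} in $\mathbb{S}^{2n-1}/\mathbb{Z}_m$, its lift $\widetilde\gamma$ being a loop; this is why no parity assumption on $m$ enters here, in contrast with Theorem \ref{thm:my_result}, where the stronger conclusion of a \emph{noncontractible} orbit requires both an equivariant (twisted) refinement of the homology and $m$ even.
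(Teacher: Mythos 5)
Your argument is correct, but it is genuinely different from the proof behind the statement: the paper does not prove Theorem \ref{thm:lens_space} at all — it is quoted from Sandon, whose proof proceeds via a generalisation of Givental's nonlinear Maslov index to lens spaces, working intrinsically on $\mathbb{S}^{2n-1}/\mathbb{Z}_m$. Your route instead lifts to the universal cover, uses the bijection between contact forms defining $\xi_{\mathrm{std}}$ and star-shaped hypersurfaces in $\mathbb{C}^n$ (the same bijection the paper invokes immediately after stating the theorem), and then quotes the classical existence of a closed characteristic on a compact star-shaped hypersurface — Rabinowitz's theorem, or equivalently the untwisted $\varphi = \id$ case of Theorem \ref{thm:twisted_rfh}(b),(c): displaceability of $\Sigma$ in $\mathbb{C}^n$ forces $\RFH(\Sigma,\mathbb{C}^n) \cong 0$, while absence of closed orbits would give $\RFH(\Sigma,\mathbb{C}^n) \cong \operatorname{H}_*(\Sigma;\mathbb{Z}_2) \neq 0$ (simple connectivity of $\mathbb{S}^{2n-1}$ for $n \geq 2$ covers the hypothesis of part (b)); the orbit then descends since $\pi$ is a strict local contactomorphism. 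Each step checks out, including the radial-graph identity $\Phi^*(\lambda\vert_\Sigma) = f\,\lambda\vert_{\mathbb{S}^{2n-1}}$ and the sign normalisation. What the comparison buys: your covering-space argument is elementary given the classical sphere case, but, as you yourself note, it only ever produces a \emph{contractible} orbit (its lift is a loop), which matches the paper's remark that Sandon's theorem says nothing about the topological nature of the orbit, and it cannot be upgraded to the noncontractible conclusion of Theorem \ref{thm:my_result} — that is precisely what the twisted and $\mathbb{Z}_m$-equivariant Rabinowitz--Floer machinery is built for. Sandon's approach, conversely, avoids any filling or passage to the cover and obtains the corollary from a much stronger package (translated points and orderability phenomena for general contactomorphisms of lens spaces), which is why the heavier machinery appears in the cited source even though bare existence admits your short proof.
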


Using the fact that there is a natural bijection between contact forms on the odd-dimensional sphere equipped with the standard contact structure and star-shaped hypersurfaces, Theorem \ref{thm:lens_space} is actually stronger than Theorem \ref{thm:my_result} in that it does not restrict the parity of the lens space. However, Theorem \ref{thm:lens_space} does not say anything about the topological nature of the Reeb orbit. The proof of this theorem uses a generalisation of Givental's nonlinear Maslov index to lens spaces. 

\begin{theorem}[{\cite[Theorem~1.2]{liuzhang:noncontractible:2021}}]
	\label{thm:multiplicity}
	Every dynamically convex star-shaped $C^3$-hypersurface $\Sigma \subseteq \mathbb{C}^n$, $n \geq 2$, satisfying $\Sigma = -\Sigma$ admits at least two symmetric geometrically distinct closed characteristics.  
\end{theorem}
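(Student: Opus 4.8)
The plan is to recast the statement inside the twisted Rabinowitz--Floer homology developed in these lectures and then to run an index-iteration argument against the computed homology. Let $W \subseteq \mathbb{C}^n$ be the bounded star-shaped domain with $\partial W = \Sigma$, and set $\varphi \colon \mathbb{C}^n \to \mathbb{C}^n$, $\varphi(z) := -z$; this is the case $m = 2$, $k_1 = \dots = k_n = 1$ of the rotation in Theorem~\ref{thm:my_result}. Being linear and symplectic, $\varphi$ preserves the standard Liouville form; moreover $\varphi^2 = \id$ and $\varphi$ acts freely on $\Sigma$ since $0 \notin \Sigma$, so $\varphi$ is a Liouville automorphism of $W$ inducing a free $\mathbb{Z}_2$-action on $\Sigma$ with quotient the lens space $\Sigma/\mathbb{Z}_2$. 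The basic dictionary is: a symmetric closed characteristic on $\Sigma$ is exactly a $\varphi$-twisted closed characteristic, i.e.\ a Reeb chord $x \colon [0,\ell] \to \Sigma$ with $x(\ell) = \varphi(x(0))$, since concatenating $x$ with its $\varphi$-translate produces $\gamma \colon \mathbb{R}/2\ell\mathbb{Z} \to \Sigma$ with $\gamma(t+\ell) = -\gamma(t)$, and conversely. Under this dictionary the odd half-iterates $\gamma^{\pm(2j+1)/2}$, $j \geq 0$ (positively and negatively parametrised chords $\gamma|_{[0,(2j+1)\ell]}$, whose endpoint is again $-\gamma(0)$), are the twisted closed characteristics generated by a single symmetric closed characteristic $\gamma$, each sweeping out one Morse--Bott family; two symmetric closed characteristics are geometrically distinct precisely when the corresponding families are. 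Note that the bare existence of one symmetric closed characteristic is the content of Theorem~\ref{thm:my_result} with $m = 2$ --- which uses no convexity --- so the real assertion is the lower bound \emph{two}, and this is where dynamical convexity enters.

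Next I would invoke the computation of $\RFH(\Sigma,\varphi)$. By invariance of twisted Rabinowitz--Floer homology under deformations through $\varphi$-invariant star-shaped hypersurfaces (such a deformation from $\Sigma$ to the round sphere exists --- e.g.\ radial interpolation --- and remains $(-1)$-invariant), $\RFH(\Sigma,\varphi) \cong \RFH(\mathbb{S}^{2n-1},\varphi)$, the twisted homology of the round sphere --- equivalently the Rabinowitz--Floer homology of $\Sigma/\mathbb{Z}_2$ in the noncontractible ($\pi_1$-generating) sector. On the round sphere every Reeb orbit is a Hopf circle, the twisted closed characteristics are the half-fibers together with their odd iterates (and their backward parametrisations), and the Morse--Bott spectral sequence computes $\RFH(\mathbb{S}^{2n-1},\varphi)$ explicitly: it is nonvanishing in infinitely many positive and infinitely many negative degrees, which form two arithmetic progressions of common difference $2(n-1)$ (up to a finite shift).

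Now the crux. Assume for contradiction that $\Sigma$ carries exactly one geometrically distinct symmetric closed characteristic $\gamma$. Since $C^3$-regularity makes the symplectic path of linearised flows along $\gamma$ well-defined and continuous, Long's index iteration theory applies: the (twisted) Conley--Zehnder index satisfies
\begin{equation*}
	\CZ\bigl(\gamma^{\pm(2j+1)/2}\bigr) \;=\; \pm\tfrac{2j+1}{2}\,\hat\mu(\gamma) + O(1), \qquad j \to \infty,
\end{equation*}
with error bounded independently of $j$, and dynamical convexity --- applied to the full iterates $\gamma^k$, which are contractible in the simply connected $\Sigma$ --- forces the mean index $\hat\mu(\gamma)$ to be positive (in fact $\hat\mu(\gamma) \geq 2$ in the standard convention). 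If $\gamma$ is the only symmetric closed characteristic, every generator of the twisted Rabinowitz--Floer chain complex lies in the Morse--Bott family of some $\gamma^{\pm(2j+1)/2}$, and each such family contributes to $\RFH$ only in a window of degrees of bounded width around $\pm\tfrac{2j+1}{2}\hat\mu(\gamma)$; hence $\RFH(\Sigma,\varphi)$ is supported on a uniformly thickened copy of the two-sided sequence $\{\pm\tfrac{2j+1}{2}\hat\mu(\gamma)\}_{j \geq 0}$. The contradiction with the previous step is then extracted from the common index jump theorem of Long and Zhu, in a form adapted to half-iterates, the antipodal twist, and the noncontractible sector: it pins down the precise degrees in which these windows can land and shows they cannot cover the arithmetic progressions carrying $\RFH(\mathbb{S}^{2n-1},\varphi)$ for $n \geq 2$ --- the obstruction being, according to the case, a mismatch of residues, a rank discrepancy recorded by the mean Euler characteristic / a resonance identity, or (when $\hat\mu(\gamma)$ is large) a plain spectral gap. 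This produces a second symmetric closed characteristic $\gamma'$: it is symmetric because every generator of the twisted complex is, and geometrically distinct from $\gamma$ because it is a different Morse--Bott family.

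The step I expect to be the main obstacle is exactly this last one. Two features make it delicate. First, closed characteristics of a merely $C^3$ hypersurface need not be nondegenerate, so the whole argument must be carried out with the Morse--Bott version of the theory and with the iteration calculus for degenerate orbits --- splitting numbers, the $\pm(n-1)$-fluctuation of $\CZ$ about the mean index, the behaviour of the nullity under iteration. Secondly, a crude spacing argument does \emph{not} suffice, since $\hat\mu(\gamma)$ can be as small as $2$ while the contributing windows have comparable width; one genuinely needs the common index jump theorem, and in an equivariant form compatible with the twisted bookkeeping, to rule out a single orbit. Once this index-theoretic step is in place, the remaining ingredients --- the reduction to twisted Rabinowitz--Floer homology, its invariance, and the computation for the round sphere --- are the formal machinery of these lectures; the classical variational route, via Ekeland--Long index theory for the action functional on the symmetric loop space, would meet the same obstacle at the same place.
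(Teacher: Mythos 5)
There is no proof of this statement in the paper for you to match: Theorem \ref{thm:multiplicity} is quoted verbatim from \cite{liuzhang:noncontractible:2021}, and the text only remarks that its proof goes through index theory and Ekeland--Hofer theory. So your proposal has to stand on its own, and as it stands it is a program rather than a proof. The decisive step --- showing that a single symmetric closed characteristic $\gamma$, together with its odd half-iterates, cannot carry the homology, via a common-index-jump argument adapted to half-iterates, the antipodal twist, possibly degenerate (Morse--Bott) orbits and the noncontractible sector --- is only named, and you yourself flag it as the main obstacle. That step is essentially the entire content of the Liu--Zhang theorem; everything before it (the dictionary between symmetric closed characteristics and $\varphi$-twisted Reeb chords for $\varphi = -\id$, the invariance under the radial isotopy to the round sphere, $\widehat{\mu}(\gamma) \geq 2$ from dynamical convexity) is the easy part. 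A sketch that defers exactly the hard index-iteration estimate cannot be counted as a proof of a multiplicity result of this type.

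In addition, the homological input you feed into the contradiction is misstated relative to the theory developed here. The non-equivariant twisted Rabinowitz--Floer homology you invoke satisfies $\RFH^\varphi(\mathbb{S}^{2n-1},\mathbb{C}^n) \cong 0$, because $\mathbb{S}^{2n-1}$ is displaceable in $\mathbb{C}^n$ (Theorem \ref{thm:displaceable}); it is not ``nonvanishing in infinitely many positive and negative degrees.'' The nonvanishing invariant available in this framework is the $\mathbb{Z}_2$-equivariant twisted Rabinowitz--Floer homology of Theorem \ref{thm:equivariant_twisted_rfh}, which for even $m$ is $\mathbb{Z}_2$ in \emph{every} degree $k \in \mathbb{Z}$, not supported on two arithmetic progressions of gap $2(n-1)$; moreover the $\mathbb{Z}$-grading is only defined under the hypotheses of Remark \ref{rem:grading} and is a relative grading depending on a choice of reference orbit. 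So the support statement on which your window-covering argument rests is wrong as written and would have to be replaced by a precise statement about the equivariant theory (or about the Morse--Bott spectral sequence for $\Sigma/\mathbb{Z}_2$ in the $\pi_1$-generating class), after which the index-jump step would still have to be carried out in full --- including the degenerate case, where splitting numbers and the $\pm(n-1)$ fluctuation of $\mu_{\CZ}$ about $\widehat{\mu}$ must be controlled for half-iterates. Until both of these are done, the argument does not establish the existence of a second, geometrically distinct symmetric closed characteristic.
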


Theorem \ref{thm:multiplicity} has the advantage of being a \emph{multiplicity result}, but in disadvantage requires the assumption that the hypersurface is dynamically convex and does only work for $\mathbb{Z}_2$-symmetry. To the authors knowledge, the first named author is working on extending Theorem \ref{thm:multiplicity} to lens spaces. As many multiplicity results, the proof of this theorem makes use of index theory and in particular Ekeland--Hofer theory. The proof of Theorem \ref{thm:my_result} relies on a generalisation of \emph{Rabinowitz--Floer homology}. Rabinowitz--Floer homology is the Morse--Bott homology in the sense of Floer associated with the Rabinowitz action functional introduced by Kai Cieliebak and Urs Frauenfelder in 2009. See the excellent survey article \cite{albersfrauenfelder:rfh:2012} for a brief introduction to Rabinowitz--Floer homology and \cite{schlenk:floer:2019} for an overview of common Floer theories. One important feature of this homology in our work is that it provides an affirmative answer to the \emph{Weinstein conjecture} in some instances. Specifically, we introduce an analogue of the twisted Floer homology \cite{uljarevic:liouville:2017} in the Rabinowitz--Floer setting. Following  \cite{cieliebakfrauenfelder:rfh:2009} and \cite{albersfrauenfelder:rfh:2010}, we construct a Morse--Bott homology for a suitable twisted version of the standard Rabinowitz action functional, that is, the Lagrange multiplier functional of the symplectic area functional.

\begin{theorem}[{\cite[Theorem~1.1]{baehni:rfh:2021}}]
	\label{thm:twisted_rfh}
	Let $(M,\lambda)$ be the completion of a Liouville domain $(W,\lambda)$ and let $\varphi \in \Diff(W)$ be of finite order and such that $\varphi^* \lambda - \lambda = df_\varphi$ for some smooth compactly supported function $f_\varphi \in C^\infty_c(\Int W)$.

	\begin{enumerate}[label=\textup{(\alph*)}]
		\item The semi-infinite dimensional Morse--Bott homology $\RFH^\varphi(\partial W,M)$ in the sense of Floer of the twisted Rabinowitz action functional exists and is well-defined. Moreover, twisted Rabinowitz--Floer homology is invariant under twisted homotopies of Liouville domains.
		\item If $\partial W$ is simply connected and does not admit any nonconstant twisted Reeb orbit, then $\RFH^\varphi_*(\partial W,M) \cong \operatorname{H}_*(\Fix(\varphi\vert_{\partial W});\mathbb{Z}_2)$.
		\item If $\partial W$ is displaceable by a compactly supported Hamiltonian symplectomorphism in $(M,\lambda)$, then $\RFH^\varphi(\partial W,M) \cong 0$.
	\end{enumerate}	
\end{theorem}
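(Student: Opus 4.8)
The plan is to adapt the constructions of Cieliebak--Frauenfelder \cite{cieliebakfrauenfelder:rfh:2009} and Albers--Frauenfelder \cite{albersfrauenfelder:rfh:2010} to the $\varphi$-twisted setting, in the spirit of the twisted Floer homology of \cite{uljarevic:liouville:2017}. First I would extend $\varphi$ to a Liouville automorphism of all of $M$ that is strict on the positive symplectisation end (acting there only on the contact factor) and fix a $\varphi$-invariant defining Hamiltonian $H \in C^\infty(M)$ for $\partial W$ which equals the Liouville coordinate near infinity and satisfies $X_H\vert_{\partial W} = R$. The carrier of the theory is the space $\Lambda_\varphi M$ of $\varphi$-twisted loops $x \colon \mathbb{R} \to M$, $x(t+1) = \varphi(x(t))$, together with the twisted Rabinowitz action functional
\begin{equation*}
	\mathcal{A}^{H,\varphi}(x,\eta) := \int_0^1 x^*\lambda - f_\varphi(x(0)) - \eta\int_0^1 H(x(t))\,dt, \qquad (x,\eta) \in \Lambda_\varphi M \times \mathbb{R};
\end{equation*}
the correction term $-f_\varphi(x(0))$ is exactly what cancels the boundary contribution produced by $\varphi^*\lambda - \lambda = df_\varphi$ in the computation of $d\mathcal{A}^{H,\varphi}$, so that $(x,\eta) \in \Crit\mathcal{A}^{H,\varphi}$ iff $\dot x = \eta X_H(x)$ and $\int_0^1 H(x)\,dt = 0$. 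Because $\varphi^*\lambda = \lambda$ near $\partial W$ and hence $\varphi_*$ preserves the Reeb field there, $\Crit\mathcal{A}^{H,\varphi}$ is the disjoint union of the constant loops at points of $\Fix(\varphi\vert_{\partial W})$ (the locus $\eta = 0$, on which $\mathcal{A}^{H,\varphi} \equiv 0$ since $f_\varphi$ is supported in $\Int W$) and the nonconstant $\varphi$-twisted Reeb orbits carrying their real periods $\eta$. Since $\varphi$ has finite order, $\Fix(\varphi) \subseteq W$ is a closed submanifold and so is $\Fix(\varphi\vert_{\partial W}) = \Fix(\varphi) \cap \partial W$; granting the usual Morse--Bott nondegeneracy of the twisted Reeb flow (generic), $\Crit\mathcal{A}^{H,\varphi}$ is a disjoint union of closed Morse--Bott manifolds.

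The analytic core of part (a) is the compactness package for the negative $L^2$-gradient flow of $\mathcal{A}^{H,\varphi}$ with respect to a family $(J_t)$ of $d\lambda$-compatible almost complex structures of contact type at infinity and satisfying $J_{t+1} = (\varphi^{-1})^*J_t$. Two a priori estimates are required: an $L^\infty$-bound on the Lagrange multiplier $\eta$ along gradient trajectories, and a $C^0_{\loc}$-bound on the loop component confining the trajectory to a compact subset of $M$. The latter is the standard maximum-principle/convexity argument at infinity and is insensitive to the twist, as $\varphi$ is strict and $f_\varphi$ vanishes there; bubbling is excluded because $(M,d\lambda)$ is exact, hence symplectically aspherical. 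The $\eta$-bound is the delicate Cieliebak--Frauenfelder estimate, and I expect reproving it to be the main obstacle: one must verify that the fundamental-domain manipulations underlying it survive the twisted boundary condition $x(1) = \varphi(x(0))$ and that the extra term $f_\varphi(x(0))$ contributes only an error bounded by $\|f_\varphi\|_{C^0}$. Granting these, transversality holds for generic $(J_t)$ (the compatibility constraint only couples $J_0$ and $J_1$, leaving ample freedom), and Frauenfelder's cascade formalism over an auxiliary Morse function on $\Crit\mathcal{A}^{H,\varphi}$ produces the chain complex, graded by the transverse Conley--Zehnder index shifted by the Morse index (an absolute $\mathbb{Z}$-grading when $c_1(M)$ vanishes, a $\mathbb{Z}/2N$-grading otherwise). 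Invariance under twisted homotopies $(W_s,\lambda_s,\varphi_s)$ is then the usual continuation-map argument with the same compactness inputs applied to the $s$-dependent equation.

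For part (b), simple connectivity of $\partial W$ places all the relevant $\varphi$-twisted loops in a single component of $\Lambda_\varphi M$ and pins down the absolute grading. If moreover there is no nonconstant $\varphi$-twisted Reeb orbit, then $\Crit\mathcal{A}^{H,\varphi} = \Fix(\varphi\vert_{\partial W}) \times \{0\}$ and $\mathcal{A}^{H,\varphi}$ vanishes identically on it. Since the Floer differential strictly decreases $\mathcal{A}^{H,\varphi}$ across any nonconstant Floer cylinder, no cascade can contain such a cylinder, so the cascade complex collapses to the Morse complex of the auxiliary Morse function on $\Fix(\varphi\vert_{\partial W})$; hence $\RFH^\varphi_*(\partial W,M) \cong \operatorname{H}_*(\Fix(\varphi\vert_{\partial W});\mathbb{Z}_2)$.

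For part (c), let $\psi$ be a compactly supported Hamiltonian diffeomorphism of $(M,\lambda)$ with $\psi(\partial W) \cap \partial W = \emptyset$, generated by $F$. I would run the Cieliebak--Frauenfelder displacement mechanism inside the twisted category: insert the moving Hamiltonian $F$ into $\mathcal{A}^{H,\varphi}$ to obtain a one-parameter deformation of action functionals on $\Lambda_\varphi M \times \mathbb{R}$ whose critical points at the far end correspond to twisted Reeb chords from $\partial W$ to $\psi(\partial W)$ --- of which there are none --- while the action stays a priori bounded along the whole deformation in terms of the displacement energy. Combined with the $\eta$- and energy-estimates from part (a), this forces the continuation homomorphism on $\RFH^\varphi(\partial W,M)$ induced by the (trivial) homotopy through the displaced situation to be simultaneously the identity and zero, whence $\RFH^\varphi(\partial W,M) \cong 0$. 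No new twist-specific difficulty arises here, since the deformation keeps the $\varphi$-twisted boundary condition and the $f_\varphi$-term fixed and both are inert in the estimates involved.
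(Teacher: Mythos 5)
Your proposal follows essentially the same route as the paper: the same $f_\varphi$-corrected twisted Rabinowitz action functional and critical-point analysis, a Fundamental Lemma for the Lagrange-multiplier bound with an error term controlled by $\|f_\varphi\|_\infty$, maximum principle plus exactness (no bubbling) for compactness of the cascade moduli spaces, collapse of the cascade complex onto the Morse complex of $\Fix(\varphi\vert_{\partial W})$ using the strict action/multiplier decrease for part (b), and the perturbed-functional, twisted leaf-wise-intersection displacement argument with continuation maps for part (c). The only substantive divergence is transversality: where you assert it for generic $(J_t)$, the paper instead counts abstractly perturbed flow lines with cascades in the polyfold framework (the $\mathbb{S}^1$-reparametrisation symmetry and the autonomous, never-Morse functional make the generic-$J$ claim delicate), but this does not change the architecture of the proof.
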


Twisted Rabinowitz--Floer homology does indeed generalise standard Rabinowitz--Floer homology as
\begin{equation*}
	\RFH^{\id_W}(\partial W,M) \cong \RFH(\partial W,M).
\end{equation*}
The proof of Theorem \ref{thm:my_result} is straightforward, once we have computed the $\mathbb{Z}_m$-equivariant twisted Rabinowitz--Floer homology of the spheres $\mathbb{S}^{2n - 1} \subseteq \mathbb{C}^n$. Indeed, by invariance we may assume that $\Sigma = \mathbb{S}^{2n - 1}$, as $\Sigma$ is star-shaped. Then we use the following elementary topological fact (see Lemma \ref{lem:noncontractible} below). Let $\Sigma$ be a simply connected topological manifold and let $\varphi \colon \Sigma \to \Sigma$ be a homeomorphism of finite order $m$ that is not equal to the identity. If the induced discrete action
\begin{equation*}
	\mathbb{Z}_m \times \Sigma \to \Sigma, \qquad [k] \cdot x := \varphi^k(x)
\end{equation*}
\noindent is free, then $\pi \colon \Sigma \to \Sigma/\mathbb{Z}_m$ is a normal covering map \cite[Theorem~12.26]{lee:tm:2011}. For a point $x \in \Sigma$ define the \bld{based twisted loop space of $\Sigma$ and $\varphi$} by
\begin{equation*}
	\mathscr{L}_\varphi(\Sigma,x) := \cbr[0]{\gamma \in C(I,\Sigma) : \gamma(0) = x \text{ and } \gamma(1) = \varphi(x)},
\end{equation*}
\noindent where $I := \intcc[0]{0,1}$. Then we have the following result. See Figure \ref{fig:twisted_lift}.

\begin{lemma}
	If $\gamma \in \mathscr{L}_\varphi(\Sigma,x)$, then $\pi \circ \gamma \in \mathscr{L}(\Sigma/\mathbb{Z}_m,\pi(x))$ is not contractible. Conversely, if $\gamma \in \mathscr{L}(\Sigma/\mathbb{Z}_m,\pi(x))$ is not contractible, then there exists $1 \leq k < m$ such that $\widetilde{\gamma}_x \in \mathscr{L}_{\varphi^k}(\Sigma,x)$ for the unique lift $\widetilde{\gamma}_x$ of $\gamma$ with $\widetilde{\gamma}_x(0) = x$.
	\label{lem:noncontractible}
\end{lemma}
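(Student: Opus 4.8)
The plan is to derive both implications purely from the standard lifting theory of the normal covering $\pi \colon \Sigma \to \Sigma/\mathbb{Z}_m$, whose deck transformation group is $\mathbb{Z}_m$ acting via $\varphi$. I will invoke three facts: unique path lifting for covering maps; the monodromy criterion, which (since $\Sigma$ is simply connected, so $\pi_*\pi_1(\Sigma,x) = 1$) says that a loop in $\Sigma/\mathbb{Z}_m$ based at $\pi(x)$ is nullhomotopic if and only if its lift to $\Sigma$ starting at $x$ is again a loop; and the identification of the fibre $\pi^{-1}(\pi(x))$ with the orbit $\cbr[0]{x,\varphi(x),\dots,\varphi^{m-1}(x)}$, which holds because $\pi$ is the quotient by the $\mathbb{Z}_m$-action.

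For the first implication, let $\gamma \in \mathscr{L}_\varphi(\Sigma,x)$. Since $\pi \circ \varphi = \pi$, we get $\pi(\gamma(1)) = \pi(\varphi(x)) = \pi(x) = \pi(\gamma(0))$, so $\pi \circ \gamma \in \mathscr{L}(\Sigma/\mathbb{Z}_m,\pi(x))$. Now $\gamma$ is visibly a lift of $\pi \circ \gamma$ starting at $x$, hence by uniqueness of path lifting it is \emph{the} unique such lift, and its endpoint is $\varphi(x)$. Because the $\mathbb{Z}_m$-action is free and $\varphi \neq \id$, we have $\varphi(x) \neq x$, so this lift is not a loop; by the monodromy criterion $\pi \circ \gamma$ is not contractible.

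For the converse, let $\gamma \in \mathscr{L}(\Sigma/\mathbb{Z}_m,\pi(x))$ be non-contractible and let $\widetilde{\gamma}_x$ be its unique lift with $\widetilde{\gamma}_x(0) = x$. Then $\pi(\widetilde{\gamma}_x(1)) = \gamma(1) = \pi(x)$, so $\widetilde{\gamma}_x(1)$ lies in $\pi^{-1}(\pi(x)) = \cbr[0]{x,\varphi(x),\dots,\varphi^{m-1}(x)}$; write $\widetilde{\gamma}_x(1) = \varphi^k(x)$ with $0 \leq k < m$. If $k = 0$, then $\widetilde{\gamma}_x$ is a loop in the simply connected space $\Sigma$, hence nullhomotopic, so $\gamma = \pi \circ \widetilde{\gamma}_x$ would be nullhomotopic, contradicting the hypothesis. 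Therefore $1 \leq k < m$, and by definition $\widetilde{\gamma}_x \in \mathscr{L}_{\varphi^k}(\Sigma,x)$.

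The only point requiring any care — the "main obstacle", such as it is — is honest bookkeeping on the $\mathbb{Z}_m$-action: freeness is used twice, first to guarantee $\varphi(x) \neq x$ (so that the lift in the first part genuinely fails to close up) and then to ensure that the orbit of $x$ consists of exactly $m$ distinct points, so that the residue $k$ in the second part is unambiguous; and one must remember that $\Sigma$ being the universal cover is precisely what collapses the monodromy criterion to the clean equivalence "lift is a loop $\iff$ base loop is trivial". Everything else is a routine unwinding of definitions.
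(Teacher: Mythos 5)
Your proof is correct, and the underlying argument is the same as the paper's: unique path lifting, the monodromy criterion for the universal cover (a loop downstairs is nullhomotopic if and only if its lift closes up), and freeness of the $\mathbb{Z}_m$-action to rule out fixed points of $\varphi^k$ for $1 \leq k < m$. The only difference is packaging: the paper deduces the lemma from part (a) of the general twisted-loop-space theorem in Appendix A, identifying the deck group $\Aut_\pi(\Sigma)$ with $\mathbb{Z}_m = \{\id_\Sigma,\varphi,\dots,\varphi^{m-1}\}$ via the long exact sequence of the fibration and transitivity of the deck action, whereas you bypass that machinery by observing directly that $\pi^{-1}(\pi(x))$ is the $\mathbb{Z}_m$-orbit of $x$ because $\pi$ is the quotient map. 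Your shortcut is perfectly legitimate here and more elementary; the paper's detour buys the more refined structural statements (homeomorphisms $\Psi_e$, the covering of loop spaces) that it needs elsewhere, but none of that is required for the lemma itself.
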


For a more detailed study of twisted loop spaces of universal covering manifolds as well as a proof of Lemma \ref{lem:noncontractible} see Appendix \ref{sec:twisted_loops_on_universal_covering_manifolds}.

Another interesting application of Theorem \ref{thm:twisted_rfh} is the following \emph{forcing result}. Suppose that $\partial W$ is Hamiltonianly displaceable in the completion $(M,\lambda)$ and simply connected. If $\Fix(\varphi\vert_{\partial W}) \neq \emptyset$, then $\partial W$ does admit a twisted periodic Reeb orbit. Indeed, 
if there does not exist any twisted periodic Reeb orbit on $\partial W$, we compute
\begin{equation*}
	\RFH^\varphi(\partial W,M) \cong \operatorname{H}(\Fix(\varphi\vert_{\partial W});\mathbb{Z}_2) = \bigoplus_{k \geq 0} \operatorname{H}_k(\Fix(\varphi\vert_{\partial W});\mathbb{Z}_2) \neq 0
\end{equation*}
\noindent by part (b) of Theorem \ref{thm:twisted_rfh}, contradicting part (c) of Theorem \ref{thm:twisted_rfh}.

\begin{figure}[h!tb]
	\centering
	\includegraphics[width=.65\textwidth]{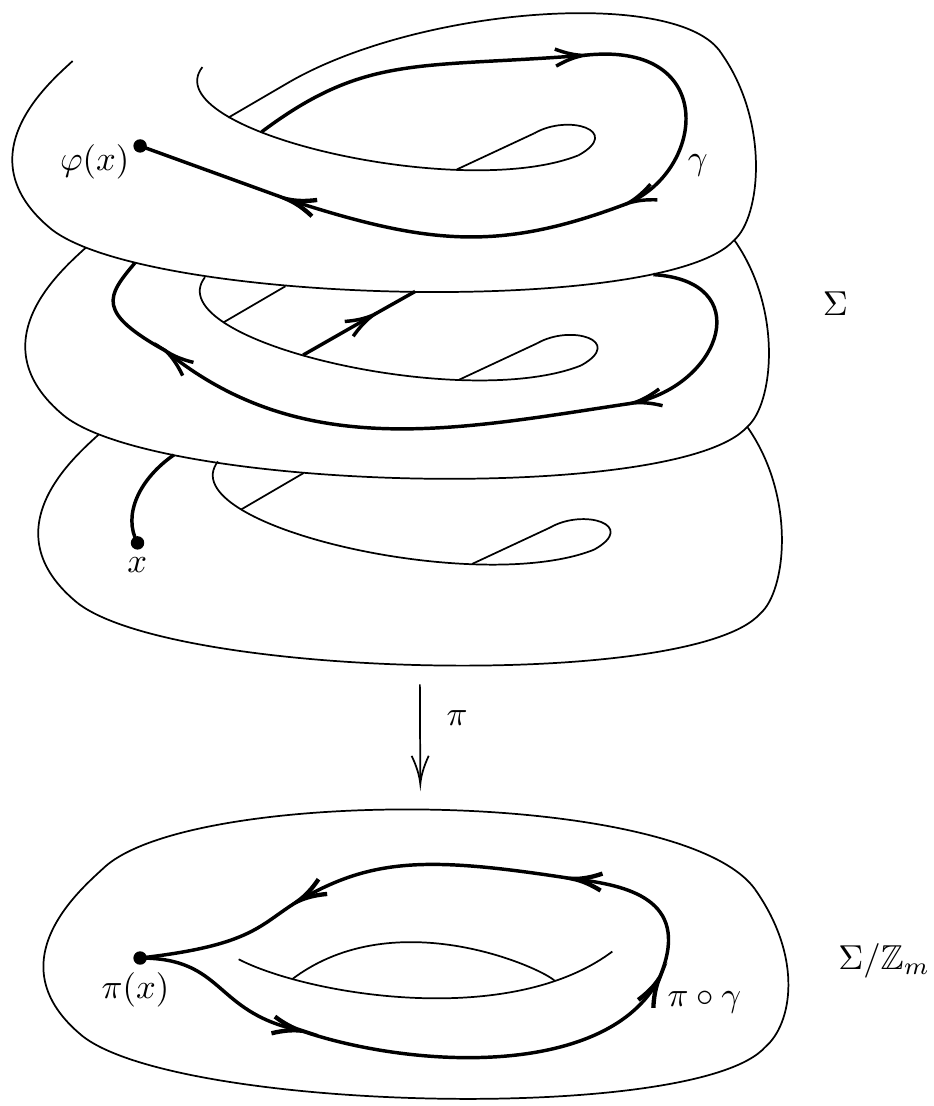}
	\caption{The projection $\pi \circ \gamma \in \mathscr{L}(\Sigma/\mathbb{Z}_m,\pi(x))$ of $\gamma \in \mathscr{L}_\varphi(\Sigma,x)$ is not contractible for the deck transformation $\varphi \neq \id_\Sigma$.}
	\label{fig:twisted_lift}
\end{figure}

\begin{theorem}[Forcing]
	\label{thm:forcing}
	Let $\Sigma$ be a twisted stable displaceable hypersurface in a symplectically aspherical, geometrically bounded, symplectic manifold $(M,\omega)$ for some $\varphi \in \Symp(M,\omega)$ and suppose that $v_0$ is a contractible twisted closed characteristic on $\Sigma$ belonging to a Morse--Bott component $C$. Then there exists a contractible twisted closed characteristic $v \notin C$ such that
	\begin{equation*}
		\int_{\mathbb{D}}\overline{v}^*\omega - \int_{\mathbb{D}} \overline{v}_0^*\omega \leq \ord(\varphi)e(\Sigma),
	\end{equation*}	
	\noindent where $e(\Sigma)$ denotes the displacement energy of $\Sigma$.
\end{theorem}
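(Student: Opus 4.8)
The plan is to reduce the statement to a short computation with the filtered twisted Rabinowitz--Floer homology and then to run a displacement argument; the only feature not already present in the untwisted case is an extra factor $\ord(\varphi)$ coming from the fact that a twisted orbit closes up to an honest loop only after iterating $\varphi$ to the identity. Recall that the contractible twisted closed characteristics on $\Sigma$ are precisely the critical points of the twisted Rabinowitz action functional $\mathcal{A}^\varphi$, the Lagrange multiplier playing the role of the period, and that the multiplier term of $\mathcal{A}^\varphi$ vanishes at a critical point because the corresponding orbit $v$ lies on $\Sigma$; hence its critical value is $\int_{\mathbb{D}}\overline{v}^*\omega$, a number independent of the capping by symplectic asphericity. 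Thus $\mathcal{A}^\varphi$ is constant on the connected Morse--Bott component $C$, with value $a:=\int_{\mathbb{D}}\overline{v}_0^*\omega$, and, writing $b:=a+\ord(\varphi)e(\Sigma)$, the assertion to prove is the existence of a critical point at action level in $(a,b]$, such a critical point automatically lying outside $C$. I argue by contradiction and assume there is no contractible twisted closed characteristic $v\notin C$ with $\int_{\mathbb{D}}\overline{v}^*\omega\le b$. Since $b\ge a$, every contractible twisted closed characteristic of filling area $\le b$ then belongs to $C$, so the action spectrum relevant to $\RFH^\varphi$ meets $(-\infty,b]$ only in $\{a\}$; as this spectrum is closed --- a consequence of the compactness underlying the construction of $\RFH^\varphi$ --- there is $\kappa>0$ with no critical value in $(a,b+\kappa]$.

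Next I would invoke the filtered twisted Rabinowitz--Floer homology $\RFH^{\varphi,(-\infty,c]}(\Sigma,M)$ constructed for the present setting, in which twisted stability supplies the a priori $L^\infty$-bound on the Lagrange multiplier needed for compactness of Floer trajectories, geometric boundedness controls their $C^0$-behaviour, and asphericity rules out sphere bubbling and makes the action filtration well defined. Under the standing assumption, for every $c\in[a,b+\kappa]$ the subcomplex of action $\le c$ is one and the same --- it is generated by the Morse critical points of an auxiliary Morse function on $C$ --- and its differential is just the Morse differential of $C$, since no generators of strictly smaller action are available; hence $\RFH^{\varphi,(-\infty,c]}(\Sigma,M)$ is, up to a degree shift, isomorphic to $\operatorname{H}_*(C;\mathbb{Z}_2)\neq 0$ for every such $c$, and the inclusion-induced maps between these groups for different $c\in[a,b+\kappa]$ are all the identity.

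Then I would bring in the displacement. Fix a compactly supported Hamiltonian $F$ whose time-one flow displaces $\Sigma$ and with Hofer norm $\|F\|$ so small that $\ord(\varphi)\|F\|\le b-a+\kappa$, which is possible because $e(\Sigma)=\inf\|F\|<\infty$. A twisted leaf-wise intersection argument shows that the perturbed functional $\mathcal{A}^\varphi_F$ has no critical points at all --- a critical point would produce a point of $\Sigma$ sent back onto $\Sigma$ along its closed-up, hence $\ord(\varphi)$-fold iterated, twisted leaf by the displacing map --- so every filtered Floer homology of $\mathcal{A}^\varphi_F$ vanishes. Sandwiching the continuation maps $\mathcal{A}^\varphi\to\mathcal{A}^\varphi_F\to\mathcal{A}^\varphi$, whose round trip is the sublevel inclusion and whose total action shift is $\ord(\varphi)\|F\|$ (again the factor $\ord(\varphi)$ appears because a twisted Floer cylinder becomes a genuine cylinder only after iterating $\varphi$ to the identity), I conclude that the inclusion-induced map
\begin{equation*}
	\RFH^{\varphi,(-\infty,a]}(\Sigma,M)\longrightarrow\RFH^{\varphi,(-\infty,\,a+\ord(\varphi)\|F\|]}(\Sigma,M)
\end{equation*}
factors through $0$ and therefore vanishes. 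By the previous paragraph and the choice of $F$ this map is the identity of $\operatorname{H}_*(C;\mathbb{Z}_2)$ (up to a shift), which is nonzero --- a contradiction. Hence a critical point at action level in $(a,b]$ exists, and it provides the desired $v\notin C$.

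The homological part just sketched is short; the real work --- and the main obstacle --- is analytic: establishing the a priori $L^\infty$-bound on the Lagrange multiplier from twisted stability so that the filtered twisted Rabinowitz--Floer homology and its continuation morphisms exist in the geometrically bounded, symplectically aspherical setting (rather than only over Liouville domains, as in Theorem~\ref{thm:twisted_rfh}), proving the twisted leaf-wise intersection lemma, and carrying out the bookkeeping that identifies the relevant action shift as exactly $\ord(\varphi)$ times the Hofer norm of the displacing Hamiltonian.
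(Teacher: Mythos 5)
Your outline is essentially sound, but it takes a genuinely different route from the paper, which never constructs a filtered twisted Rabinowitz--Floer homology in the stable, geometrically bounded setting. Following \cite[Theorem~4.9]{cieliebakfrauenfelderpaternain:mane:2010}, the paper's proof of Theorem \ref{thm:forcing_theorem} is a direct ``homotopy of homotopies'' argument: it analyses the single parametrized moduli space $\mathscr{M}$ of gradient flow lines of the $s$- and $r$-dependent functionals $\mathscr{A}_r(v,\tau,s)=\mathscr{A}^H_\varphi(v,\tau)-\beta_r(s)\int_0^1 F_t(v(t))dt$ with fixed negative asymptote $(v^-,\tau^-)$ and positive asymptote in $C$, proves that under the negation of the conclusion $\mathscr{M}$ would be compact (energy bound by $\norm{F}$, bound on $r$ from the displacement-induced lower bound on $\norm[0]{\grad\mathscr{A}_r}$, bound on $\tau$ via \cite[Proposition~4.1]{cieliebakfrauenfelderpaternain:mane:2010}, and the action-window estimate), and derives the contradiction that $\mathscr{M}$ would be a compact manifold whose boundary is the single point $(v^-,\tau^-,0)$. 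This buys two things your packaging does not: transversality is needed only for one Fredholm section with compact zero set, regular at $v^-$ because $C$ is Morse--Bott, and no global Morse--Bott property, spectral-gap discussion, or continuation/invariance machinery is required. Note moreover that in your sandwich the middle filtered complex is zero, so both continuation maps vanish on the nose and the entire content sits in the chain homotopy between their composition and the sublevel inclusion; constructing that chain homotopy with the stated action control is exactly the parametrized moduli space the paper studies directly, so your route repackages rather than avoids the same analysis, at the price of first setting up the filtered theory.

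One caveat on your bookkeeping: the factor $\ord(\varphi)$ is not produced by ``iterating a twisted Floer cylinder to a genuine cylinder''. In the paper the Floer-theoretic estimate carries no factor $m=\ord(\varphi)$ at all: it reads $\Omega(v)-\Omega(v^-)\leq\norm{F}$, and $m$ enters the statement of Theorem \ref{thm:forcing} only because the twisted functional \eqref{eq:Rabinowitz_functional} is normalized by $\frac{1}{m}\int_{\mathbb{D}}\overline{v}^*\omega$. If you insist on filtering by the unnormalized filling area, as you do by setting $a=\int_{\mathbb{D}}\overline{v}_0^*\omega$, then the deferred bookkeeping must simultaneously ensure that critical points of the perturbed functional detect leafwise intersections of the time-one map of $F$ itself (via a twisted Moser pair as in Lemmas \ref{lem:twisted_Moser_pair} and \ref{lem:variational_characterisation_twisted_leaf-wise_intersections}) and that the action shift of the sandwich is $m\norm{F}$; with a naive unnormalized perturbation term the critical-point equation couples to the Hamiltonian flow of $mF$ rather than $F$, and the constant degrades. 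So replace the iteration heuristic by the normalization argument; the constant you quote is correct, but your stated justification would not survive the bookkeeping you defer.
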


The proof of Theorem \ref{thm:forcing} is an adaptation of \cite[Theorem~4.9]{cieliebakfrauenfelderpaternain:mane:2010}. This result was initially shown by Felix Schlenk using quite different methods.

Finally, we put Theorem \ref{thm:my_result} into context. If $\Sigma^{2n - 1}/\mathbb{Z}_m$ satisfies the index condition
\begin{equation}
	\label{eq:index_condition}
	\mu_{\CZ}(\gamma) > 4 - n
\end{equation}
\noindent for all contractible Reeb orbits $\gamma$, the $\bigvee$-shaped symplectic homology $\check{\SH}(\Sigma)$ can be defined in the positive cylindrical end $\intco{0,+\infty} \times \Sigma$ by \cite[Corollary~3.7]{uebele:reeb:2019}. If $\Sigma/\mathbb{Z}_m$ admits a Liouville filling $W$, then we have 
\begin{equation*}
	\check{\SH}_*(\Sigma/\mathbb{Z}_m,M) \cong \RFH_*(\Sigma/\mathbb{Z}_m,M),
\end{equation*}
\noindent where $M$ denotes the completion of $W$. Note that even in the case of lens spaces this need not be the case, as for example $\mathbb{RP}^{2n - 1}$ is not Liouville fillable for any odd $n \geq 2$ by \cite[Theorem~1.1]{ghiggininiederkrueger:fillings:2020}. As the index condition \eqref{eq:index_condition} is only required for contractible Reeb orbits and they come from the universal covering manifold $\Sigma$, we can say something in the case where $\Sigma$ is strictly convex. Indeed, the Hofer--Wysocki--Zehnder Theorem \cite[Theorem~12.2.1]{frauenfelderkoert:3bp:2018} then implies that $\Sigma$ is dynamically convex, that is,
\begin{equation*}
	\mu_{\CZ}(\gamma) \geq n + 1
\end{equation*}
\noindent holds for all periodic Reeb orbits $\gamma$. Thus for $n \geq 2$, the index condition is satisfied and we can compute $\check{\SH}_*(\mathbb{S}^{2n - 1}/\mathbb{Z}_m)$ via the $\mathbb{Z}_m$-equivariant version of the symplectic homology $\check{\SH}_*(\mathbb{S}^{2n - 1})$.

In the case of hypertight contact manifolds, there is a similar construction without the index condition \eqref{eq:index_condition}. See for example \cite[Theorem~1.1]{meiwesnaef:hypertight:2018}. By \cite[Theorem~1.7]{meiwesnaef:hypertight:2018}, there do exist noncontractible periodic Reeb orbits on hypertight contact manifolds under suitable technical conditions. Moreover, one can show the existence of invariant Reeb orbits in this setting. See \cite[Corollary~1.6]{meiwesnaef:hypertight:2018} as well as \cite[Theorem~1.6]{merrynaef:invariant:2016} in the Liouville-fillable case.

The thesis is organised as follows. In Section \ref{sec:Floer_homology}, we review the basics of Hamiltonian Floer homology defined as the Morse--Bott homology associated with the symplectic action functional. We follow the cascade approach introduced by Urs Frauenfelder and define Hamiltonian Floer homology in the simplest case, that is, in the symplectically aspherical case. This is sufficient for our purposes. A detailed proof of the compactness of the relevant moduli spaces is given in Appendix \ref{ch:bubbling_analysis} and in order to deal with transversality, we use the polyfold approach which is sketched in Appendix \ref{ch:polyfolds}. We also review stable Hamiltonian manifolds, a generalisation of contact manifolds. Appendix \ref{ch:bubbling_analysis} is based on lecture notes written for a course on Hamiltonian Floer homology in the winter semester 2021/2022 taught by Urs Frauenfelder at the university of Augsburg.

In Section \ref{sec:twisted_rfh}, we introduce the main machinery for defining our new homology theory and prove Theorem \ref{thm:twisted_rfh}. This material is an extended version of \cite{baehni:rfh:2021}.

In Section \ref{sec:applications}, we prove Theorem \ref{thm:my_result} and the Forcing theorem \ref{thm:forcing}. Theorem \ref{thm:my_result} and its proof is also taken from \cite{baehni:rfh:2021}.

In the final Chapter \ref{sec:further_steps}, we indicate two further results that may be obtained using the theory developed in this thesis. 

\newpage
\section{Hamiltonian Floer Homology}
\label{sec:Floer_homology}

Floer homology was introduced by Andreas Floer around 1988 to tackle the \emph{homological Arnold conjecture}. Roughly speaking, the conjecture says in its simplest form that the number of nondegenerate solutions of a $1$-periodic Hamiltonian equation is bounded below by the dimension of the singular homology of the symplectic manifold with coefficients in $\mathbb{Z}_2$. That is, the number of such solutions is always bounded below by a topological invariant. This resembles the famous \emph{Morse inequalities}, and thus it is not suprising that the construction of Floer homology was largely influenced by Morse homology. See the excellent article \cite{schlenk:floer:2019}. However, a key technical ingredient for this semi-infinite dimensional version of Morse homology was Gromov's analysis of pseudoholomorphic curves introduced in 1985. Today there are many flavours of Floer theories, and we shall focus on \emph{Rabinowitz--Floer homology}. This homology was introduced in 2009 by Kai Cieliebak and Urs Frauenfelder. Crucial is the observation, that Floer homology can also be constructed in a more general way, namely in the \emph{Morse--Bott} case. In contrast to standard Hamiltonian Floer homology, Rabinowitz--Floer homology considers a fixed energy but arbitrary period problem. This leads to particular instances of the \emph{Weinstein conjecture} formulated in 1979, including the result of Rabinowitz in 1978. Weinstein conjectured that on every compact manifold admitting a contact form, there must exist a closed Reeb orbit. For an extensive historical treatment see \cite{mcduffsalamon:st:2017}. 

The aim of this introductory chapter is to explain the fundamental concepts required later on. In the first section we discuss the finite-dimensional version of Morse--Bott homology, a generalisation of Morse homology.

The second section discusses some basic facts coming from Hamiltonian dynamics, focusing on theory we need in subsequent chapters.

The third section introduces the archetypical version of Floer homology, \emph{Hamiltonian Floer homology}, on compact symplectic manifolds. We discuss the Morse--Bott approach, as this one will be useful in the discussion of Rabinowitz--Floer homology.

In the last section we discuss suitable structures on regular hypersurfaces in symplectic manifolds, including hypersurfaces of restricted contact type.

\subsection{Morse--Bott Homology}
\label{sec:mb}

Morse--Bott homology is a generalisation of Morse homology to functions with degenerate critical points. See \cite[Appendix~A]{cieliebakfrauenfelder:rfh:2009} for a short introduction via the cascade approach and \cite[Appendix~A]{frauenfelder:arnold-givental:2004} for a more extensive treatment. Morse--Bott functions often occur in the presence of symmetries. Indeed, let $G$ be a Lie group acting on a manifold $M$. If $f \in C^\infty(M)$ is $G$-invariant, that is, $f(gx) = f(x)$ holds for all $g \in G$ and $x \in M$, then $\Crit f$ is also $G$-invariant. In particular, $f$ is not a Morse function in general. However, the presence of symmetry usually simplifies the explicit computation of the Morse homology and symmetries occur throughout Physics.

\begin{definition}[{Morse--Bott Function, \cite[p.~232]{mcduffsalamon:st:2017}}]
	Let $M$ be a smooth manifold. A \bld{Morse--Bott function on $M$}\index{Morse--Bott!function} is defined to be a function $f \in C^\infty(M)$ such that
	\begin{enumerate}[label=\textup{(\roman*)}]
		\item $\Crit f \subseteq M$ is an embedded submanifold.
		\item $T_x \Crit f = \ker \Hess f\vert_x$ for all $x \in \Crit f$.
	\end{enumerate}
\end{definition}

\begin{remark}
	\label{rem:MB-lemma}
	Assumption (ii) is crucial for proving the Morse--Bott Lemma \cite[Lemma~3.51]{banyagahurtubise:morse:2004}, an analogue of the Morse Lemma. The Morse--Bott Lemma is a key technical ingredient for proving exponential decay of gradient flow lines.
\end{remark}

\begin{example}[$\mathbb{S}^{2n - 1}$]
	\label{ex:sphere}
	Let $f \colon \mathbb{S}^{2n - 1} \to \mathbb{R}$ on the odd-dimensional sphere
	\begin{equation*}
		\mathbb{S}^{2n - 1} := \cbr[4]{(z_1,\dots,z_n) \in \mathbb{C}^n : \sum_{j = 1}^n \abs[0]{z_j}^2 = 1}
	\end{equation*}
	\noindent be defined by
	\begin{equation*}
		f(z_1,\dots,z_n) := \sum_{j = 1}^n j\abs[0]{z_j}^2.
	\end{equation*}
	Then $f$ is a Morse--Bott function with critical manifold being a disjoint union of $n$ copies of $\mathbb{S}^1$.
\end{example}

Let $(M,g)$ be a compact Riemannian manifold and $f \in C^\infty(M)$ a Morse--Bott function. Choose an additional Morse function $h \in C^\infty(\Crit f)$ and a Riemannian metric $g_0$ on $\Crit f$ such that $(h,g_0)$ is a Morse--Smale pair, that is, the stable and unstable manifolds intersect transversally. Using Theorem \ref{thm:MB-polyfold}, one can define a nonnegative $\mathbb{Z}$-graded chain complex $(\CM_\ast(f),\partial_\ast)$ of $\mathbb{Z}_2$-vector spaces by
\begin{equation*}
	\CM_k(f) := \Crit_k h \otimes \mathbb{Z}_2, \quad \Crit_k h := \cbr[0]{x \in \Crit h : \ind_f(x) + \ind_h(x) = k},
\end{equation*}
\noindent for all $k \in \mathbb{Z}$, where $\ind$ denotes the ordinary Morse index, that is, the number of negative eigenvalues of the Hessian at that point, with boundary operator
\begin{equation*}
    \partial_k \colon \CM_k(f) \to \CM_{k - 1}(f), \qquad \partial_k x^- := \sum_{x^+ \in \Crit_{k - 1}h} n(x^-,x^+) x^+, 
\end{equation*}
\noindent where
\begin{equation*}
    n(x^-,x^+) := \#_2 \mathscr{M}(x^-,x^+) \in \mathbb{Z}_2
\end{equation*}
\noindent denotes the $\mathbb{Z}_2$-count of the abstractly perturbed unparametrised negative gradient flow lines with cascades from $x^-$ to $x^+$. Then $\partial_\ast$ is indeed a boundary operator by 
\begin{align*}
    (\partial_k \circ \partial_{k + 1})x^- &= \sum_{x^+ \in \Crit_{k - 1} h} \sum_{x \in \Crit_k h} \#_2\mathscr{M}(x^-,x) \#_2\mathscr{M}(x,x^+)x^+\\
    &= \sum_{x^+ \in \Crit_{k - 1} h} \#_2 \partial \mathscr{M}(x^-,x^+)x^+\\
    &= 0
\end{align*}
\noindent for all $x^- \in \Crit_{k + 1}h$ as 
\begin{equation*}
	\partial \mathscr{M}(x^-,x^+) \cong \coprod_{x \in \Crit h} \mathscr{M}(x^-,x) \times \mathscr{M}(x,x^+).
\end{equation*}
Thus we can define the \bld{Morse--Bott homology of $f$} by
\begin{equation*}
    \HM_k(f) := \frac{\ker \partial_k}{\im \partial_{k + 1}}, \qquad \forall k \in \mathbb{Z}.
\end{equation*}
As our notation suggests, $\HM(f)$ is independent of any auxiliary data up to natural isomorphisms. In particular, as every Morse function is a Morse--Bott function, Morse--Bott homology is canonically isomorphic to the ordinary Morse homology of $M$, and thus to the singular homology of $M$ with coefficients in $\mathbb{Z}_2$, that is, 
\begin{equation*}
	\HM_*(f) \cong \operatorname{H}_*(M;\mathbb{Z}_2).
\end{equation*}

\begin{example}[$\mathbb{S}^{2n - 1}$]
	Consider the odd-dimensional sphere $\mathbb{S}^{2n - 1}$ with Morse--Bott function $f \in C^\infty(\mathbb{S}^{2n - 1})$ defined in Example \ref{ex:sphere}. Choose the standard height function $h \in C^\infty(\Crit f)$ on each critical component $\mathbb{S}^1$. Then the associated chain complex is given by
	\begin{equation*}
		\CM_k(f) \cong \begin{cases}
			\mathbb{Z}_2 & 0 \leq k \leq 2n - 1,\\
			0 & \text{else}.
		\end{cases}
	\end{equation*}
	\noindent with boundary operator
	\begin{equation*}
		\partial_k = \begin{cases}
			1 & k \text{ even and } 1 \leq k \leq 2n - 1,\\
			0 & \text{else}.
		\end{cases}
	\end{equation*}
	See Figure \ref{fig:mb}. Thus the resulting homology is
	\begin{equation*}
		\HM_k(f) \cong \begin{cases}
			\mathbb{Z}_2 & k = 0,2n - 1,\\
			0 & \text{else}.
		\end{cases}
	\end{equation*}
\end{example}

\begin{figure}[h!tb]
	\centering
	\includegraphics[width=.75\textwidth]{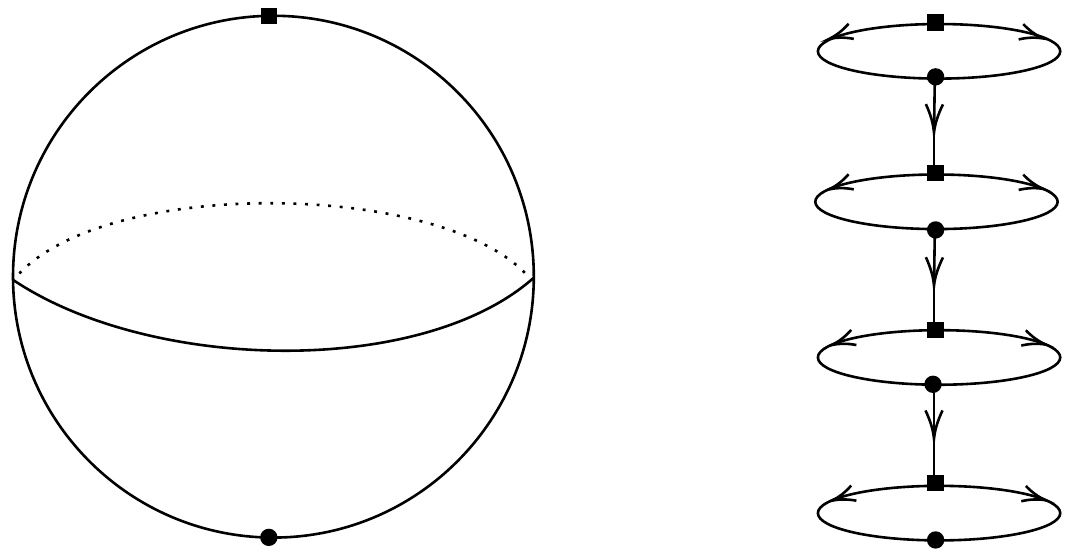}
	\caption{The sphere $\mathbb{S}^{2n - 1}$ with standard height function is depicted on the left and on the right we see the critical submanifold $\Crit f$ with standard height function on each critical component.}
	\label{fig:mb}
\end{figure}

\begin{example}[The Teapot]
    Consider the deformed sphere $\mathbb{S}^2$ as in Figure \ref{fig:teapot}. Then the standard height function is a Morse--Bott function on the teapot with critical manifold being the disjoint union of a circle $\mathbb{S}^1$ and four nondegenerate critical points. Choose also the standard height function on $\mathbb{S}^1$. Then the resulting chain complex is given by
    \begin{equation*}
		\begin{tikzcd}
			0 \arrow[r] & \mathbb{Z}_2 \oplus \mathbb{Z}_2 \arrow[r,"\begin{pmatrix}1 \>\> 1\\0 \>\> 0 \end{pmatrix}"] & \mathbb{Z}_2 \oplus \mathbb{Z}_2 \arrow[r,"\begin{pmatrix}0 \>\> 1\\0 \>\> 1 \end{pmatrix}"] & \mathbb{Z}_2 \oplus \mathbb{Z}_2 \arrow[r] & 0
		\end{tikzcd}
	\end{equation*}
	Thus the homology coincides again with $\operatorname{H}_*(\mathbb{S}^2;\mathbb{Z}_2)$.
\end{example}

\begin{figure}[h!tb]
	\centering
	\includegraphics[width=.75\textwidth]{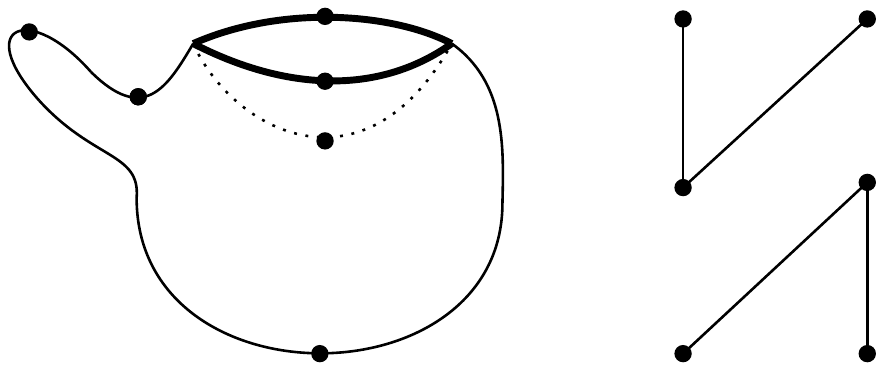}
	\caption{The deformed sphere $\mathbb{S}^2$ resembling a teapot with the standard height function.}
	\label{fig:teapot}
\end{figure}

\subsection{Hamiltonian Dynamics}
\label{sec:Hamiltonian_dynamics}

The modern language of classical mechanics is provided by symplectic geometry. For an introduction to symplectic geometry see \cite{silva:sg:2008} and for a more sophisticated treatment \cite{mcduffsalamon:st:2017}. For an introduction to Hamiltonian dynamics see \cite{abrahammarsden:cm:1978} as well as \cite{hoferzehnder:hd:1994} for a view towards symplectic invariants. Here we just briefly review the basics needed later on and to fix our conventions.

\begin{definition}[Hamiltonian System]
	A \bld{Hamiltonian system}\index{Hamiltonian!system} is a symplectic manifold $(M,\omega)$, called the \bld{phase space} together with a smooth function $H \in C^\infty(M)$, called a \bld{Hamiltonian function}. We write $(M,\omega,H)$ for a Hamiltonian system.
\end{definition}

\begin{definition}[Hamiltonian Vector Field]
	Let $(M,\omega,H)$ be a Hamiltonian system. The \bld{Hamiltonian vector field}\index{Hamiltonian!vector field} is defined to be the vector field $X_H \in \mathfrak{X}(M)$ given implicitly by
	\begin{equation*}
		i_{X_H}\omega = -dH.
	\end{equation*}	
\end{definition}

\begin{lemma}[{Jacobi, \cite[Theorem~3.3.19]{abrahammarsden:cm:1978}}]
	\label{lem:hamiltonian_vector_field_pullback}
	Let $(M,\omega,H)$ be a Hamiltonian system and let $\varphi \in \Symp(M,\omega)$ be a symplectomorphism. Then
	\begin{equation*}
		\varphi^*X_H = X_{\varphi^*H}.
	\end{equation*}
\end{lemma}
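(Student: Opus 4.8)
The plan is to verify the defining equation for a Hamiltonian vector field after pulling back along $\varphi$, and then invoke nondegeneracy of $\omega$ to conclude uniqueness. The only structural input needed is the naturality of interior multiplication under diffeomorphisms, together with the hypothesis $\varphi^*\omega = \omega$.

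First I would recall that for any diffeomorphism $\varphi$ of $M$, any vector field $X \in \mathfrak{X}(M)$ and any differential form $\alpha$, one has the naturality identity
\begin{equation*}
	\varphi^*\del[0]{i_X\alpha} = i_{\varphi^*X}\del[0]{\varphi^*\alpha},
\end{equation*}
where $\varphi^*X := (d\varphi)^{-1}\circ X\circ\varphi$ denotes the pullback vector field. This is a pointwise statement that follows directly from the chain rule and the definition of the pullback of a form, so I would state it without a detailed proof (or cite the relevant identity from the references on symplectic geometry already in the bibliography).

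Next I would apply $\varphi^*$ to the defining relation $i_{X_H}\omega = -dH$. On the left, naturality gives $\varphi^*\del[0]{i_{X_H}\omega} = i_{\varphi^*X_H}\del[0]{\varphi^*\omega} = i_{\varphi^*X_H}\omega$, where the second equality uses $\varphi \in \Symp(M,\omega)$. On the right, $\varphi^*(-dH) = -d(\varphi^*H)$ since pullback commutes with the exterior derivative. Hence $i_{\varphi^*X_H}\omega = -d(\varphi^*H)$, which is precisely the equation defining $X_{\varphi^*H}$. Since $\omega$ is nondegenerate, a vector field is determined by its contraction with $\omega$, so $\varphi^*X_H = X_{\varphi^*H}$.

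There is essentially no serious obstacle here: the proof is a one-line manipulation once the naturality identity is in hand, and the symplectomorphism hypothesis is used exactly once, to replace $\varphi^*\omega$ by $\omega$. The only point worth a sentence of care is being explicit about the sign and convention for $X_H$ (the paper uses $i_{X_H}\omega = -dH$) and about the definition of $\varphi^*X$ as a pullback rather than a pushforward, so that the identity $\varphi^*(i_X\alpha) = i_{\varphi^*X}(\varphi^*\alpha)$ is stated with the correct variance.
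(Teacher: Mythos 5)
Your proposal is correct and follows essentially the same route as the paper: the paper's proof is exactly the chain $i_{X_{\varphi^*H}}\omega = -d\varphi^*H = -\varphi^*dH = \varphi^*(i_{X_H}\omega) = i_{\varphi^*X_H}(\varphi^*\omega) = i_{\varphi^*X_H}\omega$, followed by uniqueness of the Hamiltonian vector field, which is precisely your argument read in the opposite direction. Your remarks on the sign convention and on the variance of $\varphi^*X$ are sensible but add nothing beyond what the paper's computation already uses.
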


\begin{proof}
	We compute
	\begin{align*}
		i_{X_{\varphi^*H}}\omega = -d\varphi^*H = -\varphi^*dH = \varphi^*(i_{X_H}\omega) = i_{\varphi^*X_H}(\varphi^*\omega) = i_{\varphi^*X_H}\omega.
	\end{align*}
	Thus we conclude by the uniqueness of the Hamiltonian vector field.	
\end{proof}

\begin{lemma}
	\label{lem:conjugated_flow}
	Let $(M,\omega,H)$ be a Hamiltonian system and let $\varphi \in \Symp(M,\omega)$ be a symplectomorphism. Then 
	\begin{equation*}
		\phi^{X_{\varphi^*H}}_t = \varphi^{-1} \circ \phi_t^{X_H} \circ \varphi, 
	\end{equation*}
	\noindent whenever either side is defined, where $\phi$ denotes the smooth flow of a vector field.
\end{lemma}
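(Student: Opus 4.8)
The plan is to reduce the claim to Lemma \ref{lem:hamiltonian_vector_field_pullback}, which already identifies $X_{\varphi^*H}$ with $\varphi^*X_H$, together with the standard naturality of flows under diffeomorphisms. Concretely, I would first recall that for any diffeomorphism $\varphi$ and any vector field $Y$, the pushforward vector field $\varphi_*Y := (\varphi^{-1})^*Y$ satisfies $\phi_t^{\varphi_* Y} = \varphi \circ \phi_t^Y \circ \varphi^{-1}$ on the appropriate domain, since conjugating a flow by $\varphi$ produces the flow of the conjugated vector field (one checks both sides solve the same ODE with the same initial condition at $t = 0$, then invokes uniqueness of integral curves). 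Equivalently, writing $Y = \varphi^* X_H$ and noting $\varphi_* Y = \varphi_*\varphi^* X_H = X_H$, this reads $\phi_t^{X_H} = \varphi \circ \phi_t^{\varphi^* X_H} \circ \varphi^{-1}$.

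Next I would substitute the conclusion of Lemma \ref{lem:hamiltonian_vector_field_pullback}, namely $\varphi^* X_H = X_{\varphi^* H}$, into that identity to obtain $\phi_t^{X_H} = \varphi \circ \phi_t^{X_{\varphi^* H}} \circ \varphi^{-1}$. Rearranging by composing with $\varphi^{-1}$ on the left and $\varphi$ on the right yields exactly
\begin{equation*}
	\phi_t^{X_{\varphi^*H}} = \varphi^{-1} \circ \phi_t^{X_H} \circ \varphi,
\end{equation*}
which is the assertion. The domain bookkeeping is automatic: the integral curve of $X_{\varphi^*H}$ through $p$ exists on the same interval as the integral curve of $X_H$ through $\varphi(p)$, because $\varphi$ is a diffeomorphism and carries one to the other, so "whenever either side is defined" is satisfied symmetrically.

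Alternatively, and perhaps cleanest for the write-up, I would argue directly: fix $p \in M$ and set $c(t) := \varphi^{-1}(\phi_t^{X_H}(\varphi(p)))$. Then $c(0) = p$, and differentiating, $\dot c(t) = (d\varphi^{-1})_{\phi_t^{X_H}(\varphi(p))}\big(X_H(\phi_t^{X_H}(\varphi(p)))\big) = (\varphi^* X_H)(c(t)) = X_{\varphi^* H}(c(t))$, using the chain rule, the definition of the pullback vector field, and Lemma \ref{lem:hamiltonian_vector_field_pullback} in the last step. Hence $c$ is the integral curve of $X_{\varphi^*H}$ starting at $p$, so $c(t) = \phi_t^{X_{\varphi^*H}}(p)$, and since $p$ was arbitrary the two flows agree. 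There is no real obstacle here; the only point requiring a word of care is the transport of maximal intervals of existence under $\varphi$, which is immediate since $\varphi$ is a diffeomorphism.
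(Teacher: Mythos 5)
Your proposal is correct and essentially coincides with the paper's argument: the "alternative" direct computation (differentiate $\varphi^{-1}\circ\phi_t^{X_H}\circ\varphi$, apply the chain rule, use Lemma \ref{lem:hamiltonian_vector_field_pullback} to identify $\varphi^*X_H = X_{\varphi^*H}$, and conclude by uniqueness of integral curves) is exactly the proof given in the paper. Your first formulation merely packages the same chain-rule computation as the standard naturality of flows under pushforward, so there is no substantive difference.
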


\begin{proof}
	Using Lemma \ref{lem:hamiltonian_vector_field_pullback} we compute
	\begin{align*}
		\frac{d}{dt}\varphi^{-1} \circ \phi_t^{X_H} \circ \varphi &= D\varphi^{-1} \circ \frac{d}{dt}\phi^{X_H}_t \circ \varphi\\
		&= D\varphi^{-1} \circ X_H \circ \phi_t^{X_H} \circ \varphi\\
		&= D\varphi^{-1} \circ X_H \circ \varphi \circ \varphi^{-1} \circ \phi_t^{X_H} \circ \varphi\\
		&= \varphi^*X_H \circ \varphi^{-1} \circ \phi_t^{X_H} \circ \varphi\\
		&= X_{\varphi^*H} \circ \varphi^{-1} \circ \phi_t^{X_H} \circ \varphi,
	\end{align*}
	\noindent and the result follows by the uniqueness of integral curves.
\end{proof}

\begin{definition}[{Algebra of Classical Observables, \cite[p.~46]{takhtajan:qm:2008}}]
	Let $(M,\omega)$ be a symplectic manifold. The commutative real algebra $C^\infty(M)$ of smooth functions on $M$ is called the \bld{algebra of classical observables}.
\end{definition}

\begin{definition}[{Poisson Bracket, \cite[p.~578]{lee:dt:2012}}]
	Let $(M,\omega)$ be a symplectic manifold. Define a mapping, called the \bld{Poisson bracket on the algebra of classical observables}\index{Poisson!bracket},
	\begin{equation*}
		\cbr{\cdot,\cdot} : C^\infty(M) \times C^\infty(M) \to C^\infty(M)
	\end{equation*}
	\noindent by
	\begin{equation*}
		\cbr{f,g} := \omega(X_f,X_g).
	\end{equation*}
\end{definition}

\begin{remark}[{\cite[Corollary~22.20]{lee:dt:2012}}]
	Let $(M,\omega)$ be a symplectic manifold. Then
	\begin{equation*}
		(C^\infty(M),\{\cdot,\cdot\}) \to (\mathfrak{X}(M),[\cdot,\cdot]), \qquad f \mapsto X_f,
	\end{equation*}
	\noindent is a Lie algebra homomorphism, where $[\cdot,\cdot]$ denotes the Lie bracket given by
	\begin{equation*}
		[X,Y] = L_X Y = \frac{d}{dt}\bigg\vert_{t = 0} \del[1]{\phi^X_t}^* Y
	\end{equation*}
	\noindent for all $X,Y \in \mathfrak{X}(M)$.
\end{remark}

\begin{lemma}[{Evolution Equation, \cite[Corollary~3.3.15]{abrahammarsden:cm:1978}}]
	\label{lem:evolution_equation}
	Let $(M,\omega,H)$ be a Hamiltonian system. Then
	\begin{equation*}
		\frac{d}{dt}f \circ \phi^{X_H}_t = \{H,f\} \circ \phi^{X_H}_t \qquad \forall f \in C^\infty(M),	
	\end{equation*}
	\noindent whenever either side is defined.
\end{lemma}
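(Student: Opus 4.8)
The plan is to prove the Evolution Equation directly from the definitions by computing the time derivative of $f \circ \phi^{X_H}_t$ using the chain rule and then identifying the result with $\{H,f\}$ evaluated along the flow. First I would fix $p \in M$ at which the flow $\phi^{X_H}_t$ is defined for the relevant range of $t$, and differentiate:
\begin{equation*}
	\frac{d}{dt}\del[1]{f \circ \phi^{X_H}_t}(p) = df\vert_{\phi^{X_H}_t(p)}\del[3]{\frac{d}{dt}\phi^{X_H}_t(p)} = df\vert_{\phi^{X_H}_t(p)}\del[1]{X_H(\phi^{X_H}_t(p))} = \del[1]{df(X_H)}\circ \phi^{X_H}_t(p),
\end{equation*}
where the middle equality is the defining property of the flow of $X_H$.

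Next I would rewrite $df(X_H)$ in terms of the Poisson bracket. By the defining relation $i_{X_H}\omega = -dH$, we have for the Hamiltonian vector field $X_f$ of $f$ that
\begin{equation*}
	df(X_H) = (i_{X_f}\omega)(X_H)\cdot(-1)\cdot(-1) = -\omega(X_H, X_f) = \omega(X_f, X_H) = \{f,H\} = -\{H,f\},
\end{equation*}
using $i_{X_f}\omega = -df$ hence $df(X_H) = -\omega(X_f,X_H)$, and the antisymmetry of $\omega$. Wait — I should be careful with the sign here, and this is the one point where an error is easy: substituting $df(X_H) = -(i_{X_f}\omega)(X_H) = -\omega(X_f,X_H) = \omega(X_H,X_f)$, and then $\{H,f\} = \omega(X_H,X_f)$ by definition of the Poisson bracket, so in fact $df(X_H) = \{H,f\}$ directly, with no extra sign. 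Combining with the chain-rule computation gives $\frac{d}{dt}f\circ\phi^{X_H}_t = \{H,f\}\circ\phi^{X_H}_t$, as claimed, and since $p$ was arbitrary in the common domain of definition, the identity holds wherever either side makes sense.

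The main obstacle is purely bookkeeping: getting the sign conventions consistent between $i_{X_H}\omega = -dH$, the Poisson bracket $\{f,g\} = \omega(X_f,X_g)$, and the antisymmetry of $\omega$, since a single misplaced minus sign turns $\{H,f\}$ into $\{f,H\}$. There is no analytic difficulty — everything is a pointwise identity along a smooth integral curve, valid on whatever maximal interval the flow exists — so the proof is genuinely short once the signs are pinned down.
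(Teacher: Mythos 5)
Your proof is correct, and the sign bookkeeping comes out right under the paper's conventions: with $i_{X_f}\omega = -df$ and $\{H,f\} = \omega(X_H,X_f)$ one indeed gets $df(X_H) = \{H,f\}$. The approach is essentially the paper's: the paper packages your chain-rule step as $\frac{d}{dt}\del[1]{\phi^{X_H}_t}^*f = \del[1]{\phi^{X_H}_t}^*L_{X_H}f$ (Fisherman's formula) and then uses $L_{X_H}f = df(X_H) = \{H,f\}$, which is exactly the pointwise computation you wrote out.
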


\begin{proof}
	Using Fisherman's formula \cite[Proposition~22.14]{lee:dt:2012} we compute
	\begin{align*}
		\frac{d}{dt}f \circ \phi^{X_H}_t &= \frac{d}{dt} \del[1]{\phi_t^{X_H}}^*f\\
		&= \del[1]{\phi_t^{X_H}}^* L_{X_H} f\\
		&= \del[1]{\phi_t^{X_H}}^*\{H,f\}\\
		&= \{H,f\} \circ \phi^{X_H}_t
	\end{align*}
	\noindent for all $f \in C^\infty(M)$.
\end{proof}

\begin{corollary}[{Preservation of Energy, \cite[Theorem~2.2.2]{frauenfelderkoert:3bp:2018}}]
	\label{cor:preservation_of_energy}
	Let $(M,\omega,H)$ be a Hamiltonian system. Then
	\begin{equation*}
		H\del[1]{\phi^{X_H}_t(x)} = H(x) \qquad \forall x \in M, 
	\end{equation*}
	\noindent whenever the left side is defined.
\end{corollary}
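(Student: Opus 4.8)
The plan is to apply the Evolution Equation (Lemma \ref{lem:evolution_equation}) with the observable $f = H$ itself. The key point is that the Poisson bracket is antisymmetric: since $\{f,g\} = \omega(X_f,X_g)$ and $\omega$ is a skew-symmetric bilinear form, we get $\{H,H\} = \omega(X_H,X_H) = 0$ identically on $M$.

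Then, for a fixed $x \in M$, consider the smooth function $t \mapsto H\del[1]{\phi^{X_H}_t(x)}$ on the maximal interval of existence of the integral curve through $x$. By Lemma \ref{lem:evolution_equation} applied to $f = H$,
\begin{equation*}
	\frac{d}{dt} H \circ \phi^{X_H}_t = \{H,H\} \circ \phi^{X_H}_t = 0,
\end{equation*}
so this function has vanishing derivative on a connected interval containing $0$, hence is constant. Evaluating at $t = 0$ and using $\phi^{X_H}_0 = \id_M$ yields $H\del[1]{\phi^{X_H}_t(x)} = H(x)$ for every $t$ in the domain.

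There is essentially no obstacle here; the only mild point is that $X_H$ need not be complete, so the constancy argument should be phrased on the maximal interval of existence of the integral curve, which is connected and contains $0$ — this is precisely what the qualifier ``whenever the left side is defined'' encodes. If one prefers to avoid invoking Lemma \ref{lem:evolution_equation}, the same computation can be done directly from the definition of $X_H$: $dH(X_H) = -i_{X_H}\omega(X_H) = -\omega(X_H,X_H) = 0$, which says that $H$ is constant along the flow lines of $X_H$, and integrating gives the claim.
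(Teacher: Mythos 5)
Your proposal is correct and follows exactly the paper's own argument: apply Lemma \ref{lem:evolution_equation} with $f = H$ and use antisymmetry of the Poisson bracket to get $\{H,H\} = 0$, hence $H \circ \phi^{X_H}_t$ is constant. The extra remarks on the maximal interval of existence and the direct computation $dH(X_H) = -\omega(X_H,X_H) = 0$ are fine but add nothing beyond what the paper's proof already implies.
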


\begin{proof}
	Using Lemma \ref{lem:evolution_equation} we compute
	\begin{equation*}
		\frac{d}{dt}H \circ \phi^{X_H}_t = \{H,H\} \circ \phi^{X_H}_t = 0
	\end{equation*}
	\noindent by antisymmetry of the Poisson bracket.
\end{proof}

We describe a particularly interesting class of Hamiltonian systems. Let $(M^n,g)$ be a compact Riemannian manifold and denote by $\pi \colon T^*M \to M$ its cotangent bundle. For a smooth potential function $V \in C^\infty(M)$ define $H \in C^\infty(T^*M)$ by 
\begin{equation}
	\label{eq:mechanical_Hamiltonian}
	H(q,p) := \frac{1}{2}\norm[0]{p}^2_{g^*} + V(q).
\end{equation}
For $\sigma \in \Omega^2(M)$ closed, the form $\omega_\sigma := dp \wedge dq + \pi^*\sigma$ is a symplectic form on $T^*M$ where $(q,p)$ denote the standard coordinates on the cotangent bundle. The symplectic manifold $(T^*M,\omega_\sigma)$ is called a \bld{magnetic cotangent bundle} and the Hamiltonian system $(T^*M,\omega_\sigma,H)$ is called a \bld{magnetic Hamiltonian system}. If $\sigma = 0$, the system is called a \bld{mechanical Hamiltonian system}. The dynamics of a magnetic Hamiltonian system are given by the flow of the associated Hamiltonian vector field
\begin{equation}
	\label{eq:magnetic_Hamiltonian_vf}
	X_H(q,p) = \sum_{i = 1}^n\frac{\partial H}{\partial p_i}\frac{\partial}{\partial q_i} + \sum_{i = 1}^n\del[4]{\sum_{j = 1}^n\sigma_{ij}(q)\frac{\partial H}{\partial p_j} - \frac{\partial H}{\partial q_i}}\frac{\partial}{\partial p_i},
\end{equation}
\noindent where $\sigma$ is locally given by
\begin{equation*}
	\sigma = \frac{1}{2} \sum_{i,j = 1}^n\sigma_{ij}(q) dq_i \wedge dq_j, \qquad \sigma_{ji} = -\sigma_{ij}.
\end{equation*}
Assume that $\sigma$ is exact, that is, there exists $\lambda \in \Omega^1(M)$ with $\sigma = d\lambda$. We claim that
	\begin{equation*}
		\varphi_\sigma \colon (T^*M, \lambda_{T^*M} + \pi^*\lambda) \to (T^*M, \lambda_{T^*M}), \quad \varphi_\sigma(q,p) := (q, p + \lambda_q)
	\end{equation*}
	\noindent is an exact symplectomorphism, where $\lambda_{T^*M}$ denotes the canonical Liouville form on $T^*M$. For $(q,p) \in T^*M$ and $v \in TT^*_{(q,p)}M$ we compute
	\begin{align*}
		(\varphi^*_\sigma\lambda_{T^*M})_{(q,p)}(v) &= \varphi_\sigma(q,p)(D\pi \circ D\varphi_\sigma(v))\\
		&= (p + \lambda_q)(D(\pi \circ \varphi_\sigma)(v))\\
		&= p(D\pi(v)) + \lambda_q(D\pi(v))\\
		&= \lambda_{T^*M}\vert_{(q,p)}(v) + (\pi^*\lambda)_{(q,p)}(v).
	\end{align*}
	The mechanical Hamiltonian \eqref{eq:mechanical_Hamiltonian} is transformed to the \bld{magnetic Hamiltonian}
	\begin{equation*}
		H \circ \varphi^{-1}_\sigma(q,p) = \frac{1}{2}\norm[0]{p - \lambda_q}^2_{g^*} + V(q). 
	\end{equation*} 

\begin{definition}[Cotangent Lift]	
	Let $\varphi \in \Diff(M)$ be a diffeomorphism of a smooth manifold $M$. Define a map $D\varphi^\dagger \colon T^*M \to T^*M$, called the \bld{cotangent lift of the diffeomorphism $\varphi$}, by
	\begin{equation*}
		\label{eq:cotangent_lift}
		D\varphi^\dagger(q,p)(v) := p\del[1]{D\varphi^{-1}(v)}, \qquad \forall v \in T_{\varphi(q)}M.
	\end{equation*}
\end{definition}

\begin{proposition}[{Physical Transformation, \cite[p.~10]{frauenfelderkoert:3bp:2018}}]
	\label{prop:physical_transformation}
	Let $\varphi \in \Diff(M)$ be a diffeomorphism and denote by $\lambda_{T^*M}$ the Liouville form on $T^*M$. Then 
	\begin{equation*}
		(D\varphi^\dagger)^*\lambda_{T^*M} = \lambda_{T^*M}.
	\end{equation*}
\end{proposition}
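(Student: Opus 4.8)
The plan is to unwind both sides at an arbitrary point using the intrinsic (tautological) description of the Liouville form together with the single structural fact that the cotangent lift is a bundle map covering $\varphi$. Recall that $\lambda_{T^*M}$ is characterised by $(\lambda_{T^*M})_{(q,p)}(w) = p\del[1]{D\pi(w)}$ for every $(q,p) \in T^*M$ and every $w \in T_{(q,p)}(T^*M)$, where $\pi \colon T^*M \to M$ denotes the canonical projection; this is exactly the normalisation already used in the excerpt for the computation with $\varphi_\sigma$.

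First I would record the commutation relation $\pi \circ D\varphi^\dagger = \varphi \circ \pi$, which is immediate from the definition since $D\varphi^\dagger(q,p)$ is by construction a covector based at $\varphi(q) = \varphi(\pi(q,p))$. Differentiating gives $D\pi \circ D(D\varphi^\dagger) = D\varphi \circ D\pi$. Then, writing $\Phi := D\varphi^\dagger$ and taking $(q,p) \in T^*M$ and $w \in T_{(q,p)}(T^*M)$ arbitrary, I would compute
\begin{align*}
	(\Phi^*\lambda_{T^*M})_{(q,p)}(w) &= (\lambda_{T^*M})_{\Phi(q,p)}\del[1]{D\Phi(w)}\\
	&= \Phi(q,p)\del[1]{D\pi(D\Phi(w))}\\
	&= \Phi(q,p)\del[1]{D\varphi(D\pi(w))}\\
	&= p\del[1]{D\varphi^{-1}(D\varphi(D\pi(w)))}\\
	&= p\del[1]{D\pi(w)}\\
	&= (\lambda_{T^*M})_{(q,p)}(w),
\end{align*}
where the third equality uses the commutation relation, the fourth uses the defining formula for the cotangent lift applied to the test vector $v = D\varphi(D\pi(w)) \in T_{\varphi(q)}M$, and the fifth uses $D\varphi^{-1} \circ D\varphi = \id$. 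Since $w$ is arbitrary, this yields $\Phi^*\lambda_{T^*M} = \lambda_{T^*M}$.

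I do not expect any real obstacle: the only points needing care are the bookkeeping of base points (so that the vector $v$ fed into the definition of $D\varphi^\dagger$ genuinely lies in $T_{\varphi(q)}M$), and, if one wants to be thorough, a preliminary remark that $D\varphi^\dagger$ is a diffeomorphism so that the pullback is legitimate — this follows since $D(\varphi^{-1})^\dagger$ is a two-sided inverse. It is worth noting that this is precisely the coordinate-free counterpart of the verification already carried out above for $\varphi_\sigma$, with $D\varphi$ now playing the role previously played by the fibrewise translation by $\lambda_q$.
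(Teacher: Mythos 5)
Your proof is correct and is essentially the same as the paper's: both evaluate the tautological form pointwise, use the commutation relation $\pi \circ D\varphi^\dagger = \varphi \circ \pi$ (the paper applies it inside $D(\pi \circ D\varphi^\dagger) = D(\varphi \circ \pi)$, you state it up front), and conclude via the cancellation $D\varphi^{-1} \circ D\varphi = \id$. No substantive difference in method or rigour.
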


\begin{proof}
	Let $(q,p) \in T^*M$. Then we compute
	\begin{align*}
		(D\varphi^\dagger)^*\lambda_{T^*M}\vert_{(q,p)}(v) &= \lambda_{T^*M}\vert_{D\varphi^\dagger(q,p)} \del[1]{D(D\varphi^\dagger)(v)}\\
		&= \lambda_{T^*M}\vert_{(\varphi(q), p \circ D\varphi^{-1})} \del[1]{D(D\varphi^\dagger)(v)}\\
		&= (p \circ D\varphi^{-1})\del[1]{D\pi_{(\varphi(q), p \circ D\varphi^{-1})}\del[1]{D(D\varphi^\dagger)(v)}}\\
		&= p \del[1]{D\varphi^{-1} \circ D(\pi \circ D\varphi^\dagger)(v)}\\
		&= p \del[1]{D\varphi^{-1} \circ D(\varphi \circ \pi)(v)}\\
		&= \lambda_{T^*M}\vert_{(q,p)}(v) 
	\end{align*}
	\noindent for all $v \in T_{(q,p)}T^*M$.	
\end{proof}

\begin{example}[Holomorphic Function]
	\label{ex:holomorphic}
	Let $U \subseteq \mathbb{C}$ be an open subset and suppose that $\varphi \in C^\infty(U,\mathbb{C})$ is holomorphic with $\varphi' \neq 0$ on $U$ for the complex derivative $\varphi'$ of $\varphi$. Then the cotangent lift $D\varphi^\dagger$ of $\varphi$ is given by
	\begin{equation*}
		D\varphi^\dagger \colon T^*U \to T^*\mathbb{C}, \qquad D\varphi^\dagger(z,w) = \del[3]{\varphi(z),\frac{w}{\overline{\varphi'(z)}}}.
	\end{equation*}
\end{example}

\subsection{Morse--Bott Homology for the Symplectic Action Functional}
In this section we briefly describe how to construct a Morse--Bott homology in a semi-infinite dimensional setting following \cite{frauenfeldernicholls:morse:2020}.

\begin{definition}[{The Symplectic Action Functional, \cite[p.~446]{mcduffsalamon:st:2017}}]
	Let $(M,\omega)$ be a connected symplectic manifold such that $[\omega]\vert_{\pi_2(M)} = 0$ and denote by $\Lambda M$ the connected component of contractible loops in $C^\infty(\mathbb{T},M)$. For $H \in C^\infty(M \times \mathbb{T})$, define the \bld{symplectic action functional}\index{Action functional!symplectic}
	\begin{equation}
		\label{eq:symplectic_action_functional}
		\mathscr{A}_H \colon \Lambda M \to \mathbb{R}, \qquad \mathscr{A}_H(\gamma) := \int_{\mathbb{D}} \overline{\gamma}^* \omega - \int_0^1 H_t(\gamma(t))dt,
		\end{equation}
	\noindent where $\overline{\gamma} \in C^\infty(\mathbb{D},M)$ is a filling of $\gamma$, that is, $\overline{\gamma}(e^{2\pi i t}) = \gamma(t)$ for all $t \in \mathbb{T}$.
\end{definition}

The gradient $\grad_J \mathscr{A}_H$ of the symplectic action functional is given by
\begin{equation*}
	\grad_J \mathscr{A}_H\vert_\gamma(t) = J(\dot{\gamma}(t) - X_{H_t}(\gamma(t))) \qquad \forall t \in \mathbb{T},
\end{equation*}
\noindent for all $\gamma \in \Lambda M$ and for some $\omega$-compatible almost complex structure $J$ on $(M,\omega)$ with respect to the $L^2$-metric
\begin{equation*}
	\langle X,Y \rangle_J := \int_0^1 \omega(JX(t),Y(t)) dt
\end{equation*}
\noindent for all $X,Y \in \Gamma(\gamma^*TM)$. Thus a negative gradient flow line of the symplectic action functional $\mathscr{A}_H$ is a map $u \in C^\infty(\mathbb{R} \times \mathbb{T},M)$ that is a solution of the \bld{Floer equation}
\begin{equation}
	\label{eq:Floer_equation}
	\partial_s u(s,t) + J(\partial_tu(s,t) - X_{H_t}(u(s,t))) = 0 \qquad \forall (s,t) \in \mathbb{R} \times \mathbb{T}.
\end{equation}
Assume that $(M,\omega)$ is compact and we are given a sequence $(u_k)$ of negative gradient flow lines of the symplectic action functional such that the derivatives $Du_k$ are uniformly bounded. Then by \cite[Theorem~4.1.1]{mcduffsalamon:J-holomorphic_curves:2012} there exists a negative gradient flow line $u$ of the symplectic action functional such that
\begin{equation*}
	u_k \xrightarrow{C^\infty_{\loc}} u, \qquad k \to \infty,
\end{equation*}
\noindent up to a subsequence. Thus if the derivatives of the sequence of negative gradient flow lines are uniformly bounded, $(u_k)$ converges to a broken negative gradient flow line in the Floer--Gromov sense. In contrast to finite-dimensional Morse--Bott homology, a new phenomenon occurs if the derivatives explode. Indeed, if the derivatives explode, by Theorem \ref{thm:bubbling} there exists a nonconstant $J$-holomorphic sphere $v \in C^\infty(\mathbb{S}^2,M)$, see Figure \ref{fig:bubbling}. This cannot happen in our setting as we assumed $[\omega]\vert_{\pi_2(M)} = 0$, but by \cite[Lemma~2.2.1]{mcduffsalamon:J-holomorphic_curves:2012}
\begin{equation*}
	\int_{\mathbb{S}^2} v^*\omega > 0.
\end{equation*}

\begin{figure}[h!tb]
    \centering
    \includegraphics[width=\textwidth]{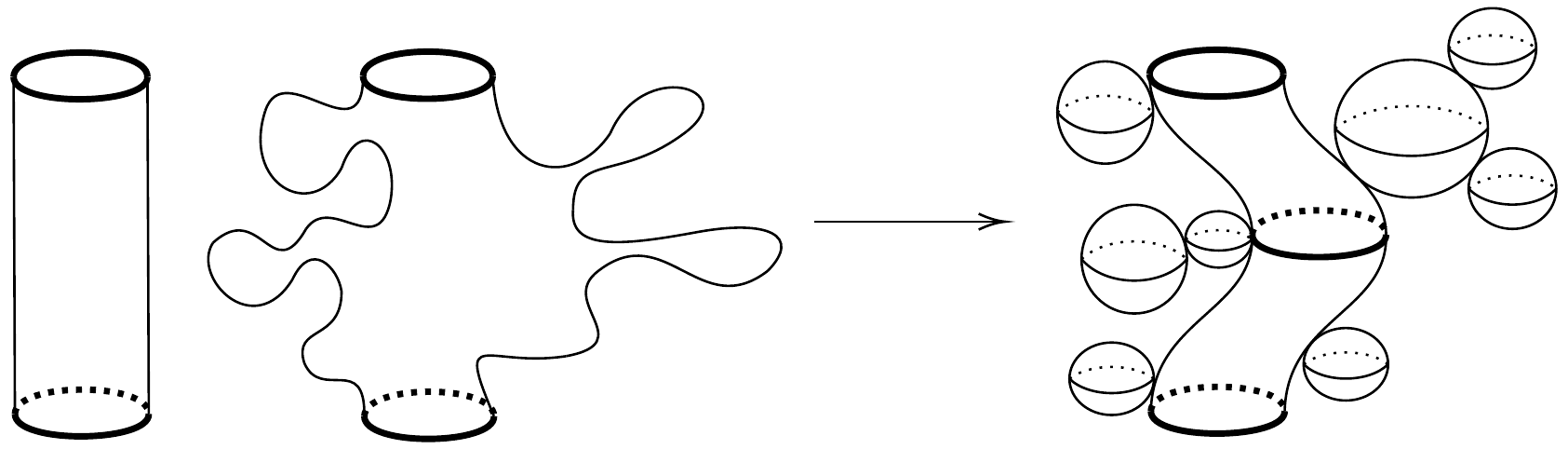}
    \caption{The bubbling phenomenon of a sequence of solutions of the Floer equation.}
    \label{fig:bubbling}
\end{figure}

By adapting Theorem \ref{thm:MB-polyfold} to the semi-infinite dimensional case as sketched in \cite[Corollary~8.13]{cieliebak:ga:2018}, we can define the \bld{Floer Homology of $H$} by
\begin{equation*}
	\HF(H) := \HM(\mathscr{A}_H)
\end{equation*}
\noindent if the symplectic action functional $\mathscr{A}_H$ is a Morse--Bott function. More precisely, choose an additonal Morse function $h \in C^\infty(\Crit \mathscr{A}_H)$ on $\Crit \mathscr{A}_H \subseteq M$ via the obvious identification $\gamma \mapsto \gamma(0)$. Define a $\mathbb{Z}_2$-vector space $\CF(H) := \Crit h \otimes \mathbb{Z}_2$ and a boundary operator 
\begin{equation*}
	\partial \colon \CF(\mathscr{A}_H) \to \CF(\mathscr{A}_H), \qquad \partial \gamma^- := \sum_{\gamma^+ \in \Crit h} n(\gamma^-,\gamma^+)\gamma^+,
\end{equation*}
\noindent where 
\begin{equation*}
	n(\gamma^-,\gamma^+) := \#_2 \mathscr{M}^0(\gamma^-,\gamma^+) \in \mathbb{Z}_2
\end{equation*}
\noindent denotes the $\mathbb{Z}_2$-count of the zero dimensional component of the moduli space of all abstractly perturbed unpartametrised negative gradient flow lines with cascades. Then the ungraded Floer homology of $H \in C^\infty(M \times \mathbb{T})$ is given by
\begin{equation*}
	\HF(H) = \frac{\ker \partial}{\im \partial}.
\end{equation*}
Again, one can show that the definition of Hamiltonian Floer homology does not depend on any auxiliary choices. In fact, Hamiltonian Floer homology is also independent of the choice of time-dependent Hamiltonian function $H$. Consequently, we have a chain of natural isomorphisms
\begin{equation}
	\label{eq:chain}
	\HF(H) = \HM(\mathscr{A}_H)\cong\HM(\mathscr{A}_0) = \bigoplus_{k\geq 0}\HM_k(h) \cong \bigoplus_{k\geq 0}\operatorname{H}_k(M;\mathbb{Z}_2).
\end{equation}
Consequently, we have that
\begin{equation*}
	\#\Crit \mathscr{A}_H = \dim_{\mathbb{Z}_2}\CF(H) \geq \dim_{\mathbb{Z}_2} \HF(H) = \sum_{k = 0}^{\dim M}\dim_{\mathbb{Z}_2}\operatorname{H}_k(M;\mathbb{Z}_2)
\end{equation*}
This resembles the famous Morse inequalities and yields a proof of a special case for the following conjecture.

\begin{conjecture}[Homological Arnold Conjecture]
	Let $(M,\omega)$ be a compact symplectic manifold and $H \in C^\infty(M \times \mathbb{T})$ such that $\mathscr{A}_H$ is a Morse function. Then the number of contractible periodic orbits $\mathscr{P}(H)$ of $H$ satisfies the inequality
	\begin{equation*}
		\# \mathscr{P}(H) \geq \sum_{k = 0}^{\dim M} \dim_{\mathbb{Z}_2} \operatorname{H}_k(M;\mathbb{Z}_2).
	\end{equation*}
\end{conjecture}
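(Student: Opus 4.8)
The plan is to read the estimate off the two structural properties of Floer homology recorded above: that the Floer chain complex is generated over $\mathbb{Z}_2$ by the contractible $1$-periodic orbits of $H$, and that its homology computes $\operatorname{H}_*(M;\mathbb{Z}_2)$. I would carry this out first in the symplectically aspherical situation $[\omega]\vert_{\pi_2(M)} = 0$ assumed throughout this section, where the machinery quoted above is literally available, and then indicate the modification needed for a general compact symplectic manifold.

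First, if $\mathscr{A}_H$ is Morse then it is in particular Morse--Bott with $\Crit \mathscr{A}_H$ a discrete set, so no auxiliary Morse function on $\Crit \mathscr{A}_H$ is needed and $\CF(H) = \Crit \mathscr{A}_H \otimes \mathbb{Z}_2$; under the identification $\gamma \mapsto \gamma(0)$ the set $\Crit \mathscr{A}_H$ is exactly the set $\mathscr{P}(H)$ of contractible $1$-periodic orbits of $X_{H_t}$, so $\dim_{\mathbb{Z}_2}\CF(H) = \#\mathscr{P}(H)$. Second, for any chain complex of $\mathbb{Z}_2$-vector spaces the homology has dimension at most that of the complex, so $\#\mathscr{P}(H) = \dim_{\mathbb{Z}_2}\CF(H) \geq \dim_{\mathbb{Z}_2}\HF(H)$. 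Third, by invariance of Floer homology under change of Hamiltonian and the chain of natural isomorphisms \eqref{eq:chain}, $\HF(H) \cong \bigoplus_{k \geq 0}\operatorname{H}_k(M;\mathbb{Z}_2)$, hence $\dim_{\mathbb{Z}_2}\HF(H) = \sum_{k = 0}^{\dim M}\dim_{\mathbb{Z}_2}\operatorname{H}_k(M;\mathbb{Z}_2)$. Combining these three steps proves the statement whenever $(M,\omega)$ is symplectically aspherical.

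For a general compact $(M,\omega)$ the argument breaks precisely where compactness of the moduli spaces $\mathscr{M}^0(\gamma^-,\gamma^+)$ enters: a sequence of Floer trajectories with exploding derivatives produces, after rescaling, a nonconstant $J$-holomorphic sphere of positive $\omega$-area (see Figure \ref{fig:bubbling}), which may escape the compactification and obstruct $\partial^2 = 0$ over $\mathbb{Z}_2$. The standard remedy is to work over a Novikov field $\Lambda$ over $\mathbb{Z}_2$, keeping track of the energy of each contribution, and to restore transversality for all the relevant moduli spaces — including those carrying multiply covered sphere bubbles — by an abstract perturbation scheme, such as the polyfold perturbations sketched in Appendix \ref{ch:polyfolds} or the virtual cycle constructions of Fukaya--Ono, Liu--Tian, or Pardon. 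One then obtains $\HF(H;\Lambda) \cong \operatorname{H}_*(M;\mathbb{Z}_2) \otimes_{\mathbb{Z}_2} \Lambda$, and since $\Lambda$ is a field the rank computation of the previous paragraph goes through verbatim over $\Lambda$, which yields the inequality in general.

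The genuine obstacle is this last transversality issue for multiply covered $J$-holomorphic spheres; it is why the homological Arnold conjecture for arbitrary compact symplectic manifolds lies much deeper than the aspherical case, and it is exactly what the virtual and polyfold technology quoted above is designed to handle. Everything else — the identification of the generators with periodic orbits, the rank inequality, and the computation of $\HF$ through a chain of continuation isomorphisms — is formal once that analytic input is in place.
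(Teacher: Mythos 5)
Your first paragraph is exactly the paper's argument: identify $\CF(H)$ with the $\mathbb{Z}_2$-vector space generated by $\Crit\mathscr{A}_H = \mathscr{P}(H)$, use the trivial rank inequality $\dim_{\mathbb{Z}_2}\CF(H) \geq \dim_{\mathbb{Z}_2}\HF(H)$, and compute $\HF(H)$ through the chain of isomorphisms \eqref{eq:chain}; this is precisely how the paper derives the inequality, and note that the paper only claims this as ``a proof of a special case'' of the conjecture, since the whole section assumes $[\omega]\vert_{\pi_2(M)} = 0$. Your second and third paragraphs, outlining Novikov coefficients and virtual/polyfold perturbations for general compact $(M,\omega)$, go beyond what the paper does, but they are a sketch relying on external machinery rather than a proof --- which is consistent with the statement being labelled a conjecture here, so no correction is needed; just be aware that only the aspherical case is actually established by the argument at hand.
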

For a discussion of the general homological Arnold conjecture, see \cite[p.~29]{schlenk:floer:2019}.

We finally discuss a $\mathbb{Z}$-grading for Hamiltonian Floer homology. There is an obvious $\mathbb{Z}_2$-grading, but the $\mathbb{Z}$-grading requires an additional assumption. First, we observe that the ordinary Morse index and coindex are both infinite for the symplectic action functional. Indeed, as $\mathscr{A}_H$ is a zero-order perturbation of the symplectic area functional $\mathscr{A}_0$, it is enough to consider that case. Using a Darboux chart, it is actually enough to consider a loop $z \in C^\infty(\mathbb{T},\mathbb{C}^n)$. This loop can be represented by its Fourier series
\begin{equation*}
	z = \sum_{k = -\infty}^{+\infty}z_k e^{2\pi i kt}, \qquad z_k \in \mathbb{C}^n.
\end{equation*}
Then
\begin{equation}
	\label{eq:Fourier}
	\mathscr{A}_0(z) = -\pi \sum_{k = -\infty}^{+\infty} k \norm[0]{z_k}^2 =: \mathscr{A}(z).
\end{equation}
Indeed, we have that $\mathscr{A}_0(0) = 0 = \mathscr{A}(0)$ and as $C^\infty(\mathbb{T},\mathbb{C}^n)$ is connected, it suffices to show that the differential on both sides of \eqref{eq:Fourier} coincide. Represent the tangent vector $v \in T_z C^\infty(\mathbb{T},\mathbb{C}^n) \cong C^\infty(\mathbb{T},\mathbb{C}^n)$ by its Fourier series as
\begin{equation*}
	v = \sum_{k = -\infty}^{+\infty}v_k e^{2\pi i kt}, \qquad v_k \in \mathbb{C}^n.
\end{equation*}
Then we compute
\allowdisplaybreaks
\begin{align*}
	d\mathscr{A}_0\vert_z(v) &= \int_0^1 \omega(v(t),\dot{z}(t))dt\\
	&= \int_0^1 \operatorname{Re}(i \overline{v}(t)\dot{z}(t))dt\\
	&= -2\pi \operatorname{Re}\del[4]{\sum_{k = -\infty}^{+\infty}k\overline{v}_kz_k}\\
	&= -\pi \sum_{k = -\infty}^{+\infty} k(\overline{v}_k z_k + v_k \overline{z}_k)\\
	&= d\mathscr{A}\vert_z(v). 
\end{align*}
Thus the gradient $\grad \mathscr{A}_0$ with respect to the standard real inner product on $\mathbb{C}^n$ is given by 
\begin{equation*}
	\grad \mathscr{A}_0\vert_z^k = -2\pi k z_k \qquad \forall k \in \mathbb{Z}.
\end{equation*}
Consequently, the spectrum of the Hessian of $\mathscr{A}_0$ is $2\pi\mathbb{Z}$ and we see that $\mathscr{A}_0$ has infinite Morse index and coindex. Let $\gamma \in \Crit \mathscr{A}_H$ be a critical point of the symplectic action functional and choose a filling disk $\overline{\gamma} \in C^\infty(\mathbb{D},M^{2n})$ of $\gamma$. Fix a symplectic trivialisation of the pullback tangent bundle $\Phi \colon \mathbb{D} \times \mathbb{R}^{2n} \to \overline{\gamma}^*TM$. This is possible by \cite[Proposition~2.6.7]{mcduffsalamon:st:2017}. Then we can associate to the pair $(\gamma,\overline{\gamma})$ an integer by
\begin{equation*}
	\mu(\gamma,\overline{\gamma}) := \mu_{\CZ}(\Psi) \in \mathbb{Z},
\end{equation*}
\noindent where $\mu_{\CZ}$ denotes the Conley--Zehnder index \cite[Definition~10.4.1]{frauenfelderkoert:3bp:2018} of the path of symplectic matrices $\Psi$ defined by
\begin{equation*}
	\Psi \colon \intcc[0]{0,1} \to \Sp(n), \qquad \Psi(t) := \Phi^{-1}_{e^{2\pi i t}} \circ D\phi^{X_{H_t}}_t \circ \Phi_1.
\end{equation*}
If we choose another filling disk $\overline{\gamma}' \in C^\infty(\mathbb{T},M)$ of $\gamma$, then we have the formula
\begin{equation*}
	\mu(\gamma,\overline{\gamma}) - \mu(\gamma,\overline{\gamma}') = 2c_1([\overline{\gamma}'\# \overline{\gamma}^-]),
\end{equation*}
\noindent where $[\overline{\gamma}'\# \overline{\gamma}^-] \in \pi_2(M)$ and $c_1$ denotes the first Chern class \cite[p.~85]{mcduffsalamon:st:2017}. So there is always a well-defined $\mathbb{Z}_2$-grading of $\CF(H)$. If $c_1(TM)\vert_{\pi_2(M)} = 0$, there is also a well-defined $\mathbb{Z}$-grading. More precisely, if $c_1(TM)\vert_{\pi_2(M)} = 0$, then the Floer chain group $\CF(H)$ is graded by the signature index
\begin{equation*}
	\mu_{\CZ}(\gamma) - \frac{1}{2}\sgn \Hess h (\gamma) \qquad \forall \gamma \in \Crit h.
\end{equation*}
See \cite[p.~297]{cieliebakfrauenfelder:rfh:2009}. Note that the chain of canonical isomorphisms \eqref{eq:chain} gives rise to a natural isomorphism $\HF_*(M) \cong \operatorname{H}_{\ast + n}(M;\mathbb{Z}_2)$.

\subsection{Regular Energy Surfaces}
\label{sec:regular_energy_surfaces}
In contrast to Floer homology, in Rabinowitz--Floer homology, we study an arbitrary period but fixed energy problem. Thus we need to consider hypersurfaces in Hamiltonian systems. 

\begin{definition}[Regular Energy Surface]
	Let $(M,\omega,H)$ be a Hamiltonian system. The level set $\Sigma := H^{-1}(0)$ is a \bld{regular energy surface}\index{Surface!regular energy}, if we have that $\Crit H \cap \Sigma = \emptyset$.
\end{definition}

\begin{definition}[{Hamiltonian Manifold, \cite[Definition~2.4.1]{frauenfelderkoert:3bp:2018}}]
	A \bld{Hamiltonian manifold}\index{Manifold!Hamiltonian} is defined to be a pair $(\Sigma,\omega)$, where $\Sigma$ is an odd-dimensional smooth manifold and $\omega \in \Omega^2(\Sigma)$ is closed such that $\ker \omega$ is a line distribution. The foliation inducing the line distribution $\ker \omega$ is called the \bld{characteristic foliation}.
\end{definition}

\begin{example}[Regular Energy Surface]
	Let $\Sigma$ be a regular energy surface in a Hamiltonian system $(M,\omega,H)$. Then $(\Sigma,\omega\vert_\Sigma)$ is a Hamiltonian manifold. Moreover, the line distribution $\ker \omega \vert_\Sigma$ is spanned by the Hamiltonian vector field $X_H\vert_\Sigma$.
\end{example}

\begin{definition}[{Stable Hamiltonian Manifold, \cite[p.~1773]{cieliebakfrauenfelderpaternain:mane:2010}}]
	A Hamiltonian manifold $(\Sigma,\omega)$ is called \bld{stable}\index{Manifold!Hamiltonian!stable}, if there exists $\lambda \in \Omega^1(\Sigma)$ which is nowhere-vanishing on $\ker \omega$ and such that $\ker \omega \subseteq \ker d\lambda$. We write $(\Sigma,\omega,\lambda)$ for a stable Hamiltonian manifold.
\end{definition}

\begin{remark}
	Equivalently, a Hamiltonian manifold $(\Sigma^{2n - 1},\omega)$ is stable, if and only if there exists $\lambda \in \Omega^1(\Sigma)$ with $\ker \omega \subseteq \ker d\lambda$ and  such that $\lambda \wedge \omega^{n - 1}$ is a volume form on $\Sigma$. 
\end{remark}

\begin{example}[Regular Energy Surface]
	Let $\Sigma$ be a regular energy surface in a Hamiltonian system $(M,\omega,H)$. Suppose that there exists a vector field $X$ in a neighbourhood of $\Sigma$ with $X$ nowhere-tangent to $\Sigma$ and $\ker \omega\vert_\Sigma \subseteq \ker L_X\omega\vert_\Sigma$. Then $(\Sigma,\omega\vert_\Sigma,i_X\omega\vert_\Sigma)$ is a stable Hamiltonian manifold. 
\end{example}

\begin{example}[{\cite[Section~6.1]{cieliebakfrauenfelderpaternain:mane:2010}}]
	\label{ex:twisted_torus}
	Let $\mathbb{T}^n$ be the standard flat torus for $n \geq 2$ and let $J \colon \mathbb{R}^n \to \mathbb{R}^n$ be  an antisymmetric nonzero linear map. Define $\sigma \in \Omega^2(\mathbb{T}^n)$ by setting $\sigma(\cdot,\cdot) := \langle \cdot, J \cdot \rangle$ and denote by $\omega_\sigma = dp \wedge dq + \pi^*\sigma$ the magnetic symplectic form on $T^*\mathbb{T}^n \cong \mathbb{T}^n \times \mathbb{R}^n$. For an energy value $c \in \mathbb{R}$ set $\Sigma_c := H^{-1}(c)$ for the mechanical Hamiltonian function 
	\begin{equation*}
		H(q,p) := \frac{1}{2} \norm[0]{p}^2 \qquad \forall (q,p) \in \mathbb{T}^n \times \mathbb{R}^n.
	\end{equation*}
	Define $A := (J\vert_{\im J})^{-1}$ and $\alpha \in \Omega^1(\im J)$ by
	\begin{equation*}
		\alpha_x(v) := \frac{1}{2}\langle x,Av\rangle.
	\end{equation*}
	Then $\Sigma_c$ is a stable Hamiltonian manifold for every $c > 0$ by \cite[Proposition~6.3]{cieliebakfrauenfelderpaternain:mane:2010}. The stabilising form $\lambda$ on $\Sigma_c$ is given by
	\begin{equation}
		\label{eq:stabilising_form}
		\lambda := f^*(pdq) + (\pr_\parallel \circ \pr)^*\alpha,
	\end{equation}
	\noindent where 
	\begin{equation*}
		\pr_\perp \colon \mathbb{R}^n \to \ker J, \qquad \pr_\parallel \colon \mathbb{R}^n \to \im J, \qquad \pr \colon \mathbb{T}^n \times \mathbb{R}^n \to \mathbb{R}^n
	\end{equation*}
	\noindent denote the projections with respect to the orthogonal splitting
	\begin{equation*}
		\mathbb{R}^n = \ker J \oplus \im J,
	\end{equation*}
	\noindent and
	\begin{equation*}
		f \colon \mathbb{T}^n \times \mathbb{R}^n \to \mathbb{T}^n \times \mathbb{R}^n, \qquad f(q,p) := \del[1]{q,\pr_\perp(p)}.
	\end{equation*}
\end{example}

\begin{definition}[{Reeb Vector Field, \cite[p.~1773]{cieliebakfrauenfelderpaternain:mane:2010}}]
	\label{def:Reeb}
	Let $(\Sigma,\omega,\lambda)$ be a stable Hamiltonian manifold. The unique vector field $R \in \mathfrak{X}(\Sigma)$ implicitly defined by 
	\begin{equation*}
		i_R\omega = 0 \qquad \text{and} \qquad i_R \lambda = 1
	\end{equation*}
	\noindent is called the \bld{Reeb vector field}\index{Vector field!Reeb}.
\end{definition}

\begin{example}
	\label{ex:magnetic_flow}
	The flow $\phi_t$ of the magnetic Hamiltonian system in Example \ref{ex:twisted_torus} is given by 
	\begin{equation*}
		\phi_t(q,p) =\del[3]{\int_0^t e^{sJ}p ds + q, e^{tJ}p},
	\end{equation*}
	\noindent as one can explicitly compute this flow using \eqref{eq:magnetic_Hamiltonian_vf}, and $(q,p) \in \Sigma_c$ gives rise to a contractible closed orbit of period $\tau$ if and only if
	\begin{equation}
		\label{eq:magnetic_flow}
		\pr_\perp(p) = 0, \qquad e^{\tau J} \pr_\parallel(p) = \pr_\parallel(p), \qquad \text{and} \qquad \norm[1]{\pr_\parallel(p)}^2 = 2c.
	\end{equation}
	It is illustrative to consider the special case $n = 2$ and $\sigma = dq_1 \wedge dq_2$. Then
	\begin{equation*}
		\lambda = -\frac{1}{2}(p_1dp_2 - p_2dp_1),
	\end{equation*}
	\noindent and the projection of a contractible periodic orbit to $\mathbb{T}^2$ is depicted in Figure \ref{fig:torus}.
\end{example}

\begin{figure}[h!tb]
	\centering
	\includegraphics[width=.7\textwidth]{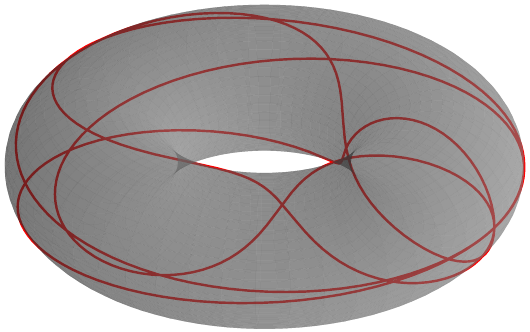}
	\caption{A contractible periodic Reeb orbit on $\mathbb{T}^2$.}
	\label{fig:torus}
\end{figure}

The following provides a large class of examples of stable Hamiltonian manifolds.

\begin{definition}[{Contact Manifold, \cite[Definition~2.5.1]{frauenfelderkoert:3bp:2018}}]
	A \bld{contact manifold}\index{Manifold!contact} is defined to be a stable Hamiltonian manifold $(\Sigma,d\lambda,\lambda)$. We simply write $(\Sigma,\lambda)$ for a contact manifold.
\end{definition}

\begin{example}[{Regular Energy Surface, \cite[Theorem~1.2.2]{abbashofer:cg:2019}}]
	Let $\Sigma$ be a compact regular energy surface in a mechanical Hamiltonian system $(T^*M,\omega_0,H)$. Then there exists $\lambda \in \Omega^1(\Sigma)$ such that $d\lambda = \omega\vert_\Sigma$ and $(\Sigma,\lambda)$ is a contact manifold.
\end{example}

\begin{definition}[{Liouville Domain, \cite[Definition~2.6.2]{frauenfelderkoert:3bp:2018}}]
	A \bld{Liouville domain}\index{Liouville!domain} is a compact connected exact symplectic manifold $(W,\lambda)$ with connected boundary such that the Liouville vector field $X \in \mathfrak{X}(W)$, implicitly defined by $i_Xd\lambda = \lambda$, is outward-pointing along the boundary $\partial W$. 
\end{definition}

\begin{remark}[{\cite[p.~24]{frauenfelderkoert:3bp:2018}}]
	For a Liouville domain $(W^{2n},\lambda)$ we have that $\partial W \neq \emptyset$. Indeed, if $\partial W = \emptyset$, then using Stokes Theorem we compute
	\begin{equation*}
		0 < \int_W d\lambda^n = \int_W d(\lambda \wedge d\lambda^{n - 1}) = \int_{\partial W} \lambda \wedge d\lambda^{n - 1} = 0.
	\end{equation*}
\end{remark}

\begin{remark}[{\cite[Lemma~2.63]{frauenfelderkoert:3bp:2018}}]
	Let $(W,\lambda)$ be a Liouville domain. Then the boundary $(\partial W,\lambda\vert_{\partial W})$ is a contact manifold.
\end{remark}

\begin{remark}
	In our definition of a Liouville domain $(W,\lambda)$ it would actually suffice to assume that the Liouville vector field $X \in \mathfrak{X}(W)$ is nowhere-tangent to the boundary $\Sigma := \partial W$. Indeed, if the Liouville vector field is inward-pointing at the boundary, we get an exact symplectic embedding
	\begin{equation*}
		\psi \colon \del[1]{\intco[0]{0,+\infty} \times \Sigma, e^r\lambda\vert_\Sigma} \hookrightarrow (W,\lambda)
	\end{equation*}
	\noindent defined by
	\begin{equation*}
		\psi(r,x) := \phi^X_t(x),
	\end{equation*}
	\noindent where $\phi^X$ denotes the smooth flow of $X$. But $\psi$ expands volume as $\psi^*_r \lambda = e^r \lambda\vert_\Sigma$.
\end{remark}

\begin{example}[{Star-Shaped Domain, \cite[Example~2.6.6]{frauenfelderkoert:3bp:2018}}]
	\label{ex:star-shaped}
	Denote by $(\mathbb{C}^n,\lambda)$ the standard exact linear symplectic manifold where
	\begin{equation}
		\label{eq:Liouville_form}
		\lambda := \frac{1}{2}\sum_{j = 1}^n \del[1]{y_j dx_j - x_j dy_j} = \frac{i}{4} \sum_{j = 1}^n \del[1]{\overline{z}_j dz_j - z_j d\overline{z}_j}, 
	\end{equation}
	\noindent with coordinates $z_j := x_j + iy_j$. Then the Liouville vector field $X \in \mathfrak{X}(\mathbb{C}^n)$ is given by 
	\begin{equation}
		\label{eq:Liouville_vector_field}
		X = \frac{1}{2}\sum_{j = 1}^n\del[3]{x_j\frac{\partial}{\partial x_j} + y_j \frac{\partial}{\partial y_j}} = \frac{1}{2}\sum_{j = 1}^n\del[3]{z_j\frac{\partial}{\partial z_j} + \overline{z}_j \frac{\partial}{\partial \overline{z}_j}}.
	\end{equation}
	If $U \subseteq \mathbb{C}^n$ is a bounded connected open subset which is star-shaped with respect to the origin and with a smooth connected boundary $\partial U$, then $(\overline{U},\lambda\vert_{\overline{U}})$ is a Liouville domain\index{Domain!star-shaped}. All star-shaped Liouvile domains are diffeomorphic to $(B^n,\lambda\vert_{B^n})$ via the radial projection, where $B^n := \{z \in \mathbb{C}^n : \norm[0]{z} \leq 1\}$ denotes the closed unit disc. However, the Reeb flow of these star-shaped contact type boundaries can be very different as in \cite[Example~5.1]{schlenk:floer:2019}. 
\end{example}

\begin{example}[{Cotangent Bundle, \cite[Example~5.2]{schlenk:floer:2019}}]
	\label{ex:cotangent_bundle}
	Let $(M,g)$ be a compact connected Riemannian manifold and consider the cotangent bundle $(T^*M,pdq)$. Then the Liouville vector field is locally given by $p \frac{\partial}{\partial p}$. Suppose $U \subseteq T^*M$ is a bounded connected open set with smooth boundary containing the zero section and such that the fibrewise intersection $U \cap T^*_qM$ is star-shaped with respect to the origin for all $q \in M$. Then $(\overline{U},pdq\vert_{\overline{U}})$ is a Liouville domain. Any such star-shaped Liouville domain in the cotangent bundle $T^*M$ is diffeomorphic to the \bld{unit cotangent bundle}
	\begin{equation*}
		D^*M := \{(q,p) \in T^*M : \norm[0]{p}_{g^*} \leq 1\},
	\end{equation*}
	\noindent with contact type boundary the spherisation $(S^*M,pdq\vert_{S^*M})$. Once more, the Reeb flow of such star-shaped hypersurfaces can be very different.
\end{example}

\begin{example}[{Magnetic Hamiltonian System, \cite[Example~5.2]{schlenk:floer:2019}}]
	Consider an exact magnetic Hamiltonian system $(T^*M,pdq + \pi^*\lambda,H)$ for $\lambda \in \Omega^1(M)$. 
	For
	\begin{equation*}
		c > \max_{q \in M}\del[3]{\frac{1}{2}\norm[0]{\lambda_q}^2_{m^*} + V(q)}
	\end{equation*}
	\noindent the level set $H^{-1}(c)$ is fibrewise star-shaped. 
\end{example}

\begin{definition}[{Liouville Automorphism, \cite[p.~237]{cieliebak:stein:2012}}]
	Let $(W, \lambda)$ be a Liouville domain with boundary $\Sigma$. A diffeomorphism $\varphi \in \Diff(W)$ is said to be a \bld{Liouville automorphism}\index{Liouville!automorphism}, if $\varphi^*\lambda - \lambda$ is exact and compactly supported in the interior $\Int W$, and $\ord \varphi < \infty$. We denote by $\Aut(W,\lambda)$ the set of all Liouville automorphisms on a given Liouville domain $(W,\lambda)$.
\end{definition}

\begin{remark}
	Let $\varphi \in \Aut(W,\lambda)$ be a Liouville automorphism. Then there exists a unique function $f_\varphi \in C^\infty_c(\Int W)$ such that $\varphi^* \lambda - \lambda = df_\varphi$.
\end{remark}

\begin{remark}
	For a Liouville domain $(W,\lambda)$, the set $\Aut(W,\lambda)$ of Liouville automorphisms is in general not a group. Indeed, for $\varphi,\psi \in \Aut(W,\lambda)$ it is not necessarily true that $\varphi \circ \psi$ is of finite order unless $\varphi$ and $\psi$ commute.
\end{remark}

\begin{remark}
	Any $\varphi \in \Aut(W,\lambda)$ induces a strict contactomorphism $\varphi\vert_{\partial W}$.
\end{remark}

\begin{example}[Rotation]
	\label{ex:rotation}
	For $m \geq 1$ consider the rotation
	\begin{equation*}
		\varphi \colon \mathbb{C}^n \to \mathbb{C}^n, \qquad \varphi(z_1,\dots,z_n) := \del[1]{e^{2\pi i k_1/m}z_1,\dots,e^{2\pi ik_n/m}z_n},
	\end{equation*}
	\noindent where $k_1,\dots,k_n \in \mathbb{Z}$ are coprime to $m$. Let $(W,\lambda)$ be a star-shaped Liouville domain in $\mathbb{C}^n$ as in Example \ref{ex:star-shaped} invariant under the rotation $\varphi$, that is, $\varphi(\partial W) = \partial W$. Then $\varphi$ is a Liouville automorphism as $\varphi^*\lambda = \lambda$ by \eqref{eq:Liouville_form} and $\ord \varphi = m$.
\end{example}

\begin{figure}[h!tb]
	\centering
	\includegraphics[width=.45\textwidth]{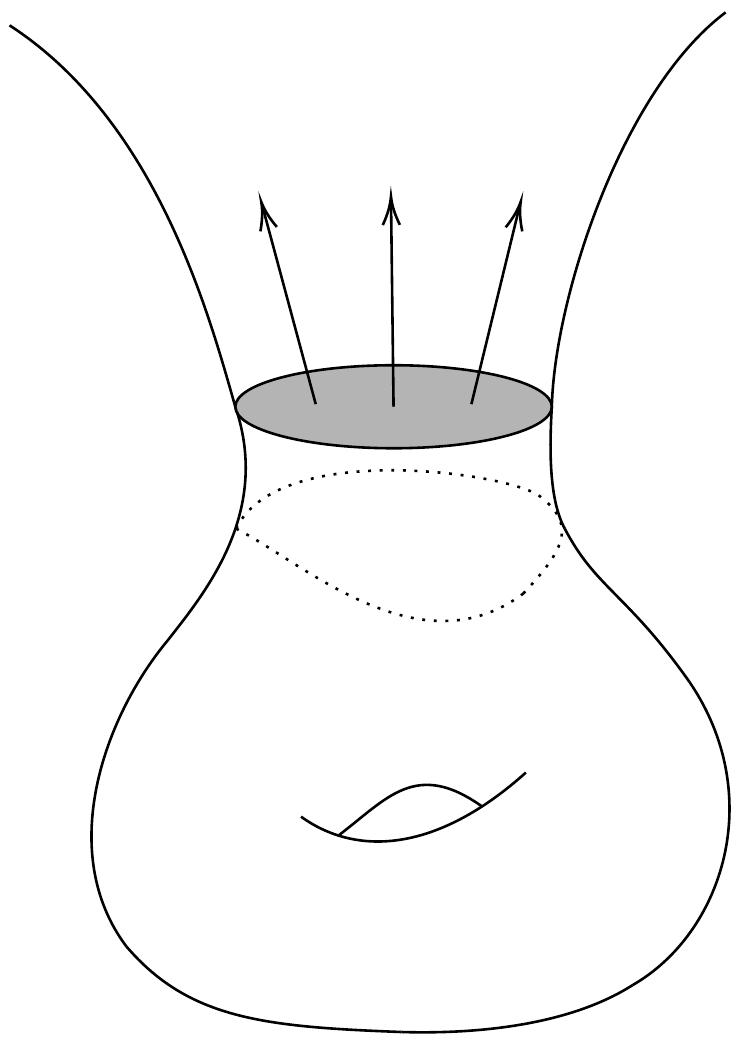}
	\caption{The completion of a Liouville domain.}
	\label{fig:completion}
\end{figure}

One can complete a Liouville domain $(W,\lambda)$ to a noncompact exact symplectic manifold without boundary by attaching the positive cylindrical end $\intco[0]{0,+\infty} \times \partial W$ to its boundary $\partial W$. See Figure \ref{fig:completion}.

\begin{definition}[{Completion of a Liouville Domain, \cite[p.~148]{mcduffsalamon:st:2017}}]
	Let $(W,\lambda)$ be a Liouville domain with boundary $\Sigma$. The \bld{completion of $(W,\lambda)$}\index{Liouville!domain!completion of} is defined to be the exact symplectic manifold $(M,\lambda)$, where 
\begin{equation*}
	M := W \cup_\Sigma \intco[0]{0,+\infty} \times \Sigma \qquad \text{and} \qquad \lambda\vert_{\intco[0]{0,+\infty} \times \Sigma} := e^r\lambda\vert_\Sigma.
	\end{equation*}
\end{definition}

\begin{example}[Star-Shaped Domain]
	Let $(W,\lambda)$ be a star-shaped Liouville domain as in Example \ref{ex:star-shaped}. Then the completion $(M,\lambda)$ of $(W,\lambda)$ is symplectomorphic to the exact linear symplectic manifold $(\mathbb{C}^n,\lambda)$ via the flow of the Liouville vector field. See Figure \ref{fig:star-shaped}. The completion of a star-shaped Liouville domain in a cotangent bundle $T^*M$ as in Example \ref{ex:cotangent_bundle} is $(T^*M,pdq)$.
\end{example}

Finally, we consider more general hypersurfaces in symplectic manifolds.

\begin{definition}[{Stable Hypersurface, \cite[p.~1774]{cieliebakfrauenfelderpaternain:mane:2010}}]
	Let $(M,\omega)$ be a connected symplectic manifold. A \bld{stable hypersurface in $(M,\omega)$} is a compact connected hypersurface $\Sigma \subseteq M$ such that the following conditions are satisfied.
	\begin{enumerate}[label=\textup{(\roman*)}]
		\item $\Sigma$ is separating, that is, $M \setminus \Sigma$ consists of two connected components $M^\pm$, where $M^-$ is bounded and $M^+$ is unbounded.
		\item There exists a vector field $X$ in a neighbourhood of $\Sigma$ such that $X$ is outward-pointing to $\Sigma \cup M^-$ and such that $\ker \omega\vert_\Sigma \subseteq \ker L_X\omega\vert_\Sigma$.
	\end{enumerate}
\end{definition}

\begin{definition}[{Displaceability, \cite[p.~411]{mcduffsalamon:st:2017}}]
	A subset $A \subseteq M$ of a symplectic manifold $(M,\omega)$ is said to be \bld{Hamiltonianly displaceable}, if there exists a compactly supported Hamiltonian symplectomorphism $\varphi_F \in \Ham_c(M,\omega)$, such that
	\begin{equation*}
		\varphi_F(A) \cap A = \emptyset.
	\end{equation*}
\end{definition}

\begin{example}[{\cite[p.~4]{frauenfelderschlenk:hd:2007}}]
	Every compact subset of $(M \times \mathbb{C}, \omega \oplus \omega_0)$ is Hamiltonianly displaceable, where $(M,\omega)$ is any symplectic manifold.
\end{example}

\begin{figure}[h!tb]
	\centering
	\includegraphics[width=.8\textwidth]{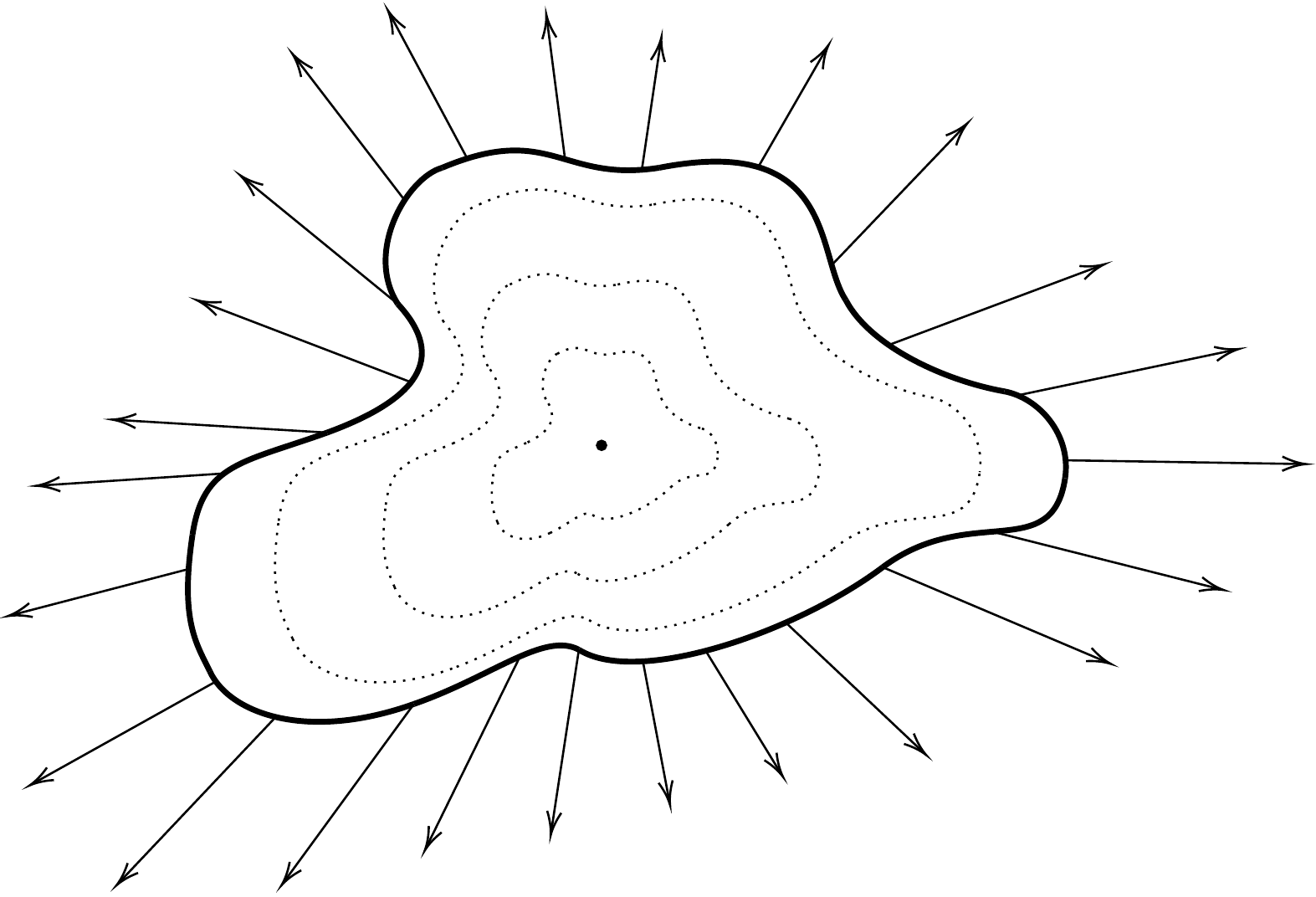}
	\caption{The completion $(\mathbb{C}^n,\lambda)$ of a star-shaped Liouville domain.}
	\label{fig:star-shaped}
\end{figure}

\begin{definition}[{Hofer Norm, \cite[p.~466]{mcduffsalamon:st:2017}}]
	Let $(M,\omega)$ be a symplectic manifold and $F \in C^\infty_c(M \times \intcc[0]{0,1})$. Define the \bld{Hofer norm of $F$} by
	\begin{equation*}
		\norm{F} := \norm{F}_+ + \norm{F}_-,
	\end{equation*}
	\noindent where
	\begin{equation*}
		\norm{F}_+ := \int_0^1 \max_{x \in M} F_t(x)dt \qquad \text{and} \qquad \norm{F}_- := -\int_0^1 \min_{x \in M} F_t(x)dt. 
	\end{equation*}
\end{definition}

\begin{definition}[{Displacement Energy, \cite[p.~469]{mcduffsalamon:st:2017}}]
	Let $(M,\omega)$ be a symplectic manifold and $A \subseteq M$ a compact subset. The \bld{displacement energy of $A$} is
	\begin{equation*}
		e(A) := \inf_{\substack{F \in C^\infty_c(M \times \intcc[0]{0,1})\\\varphi_F(A) \cap A = \emptyset}}\norm{F}.
	\end{equation*}
\end{definition}

\begin{example}
	\label{ex:displacement_energy}
	Consider the displaceable hypersurface $\Sigma_c \subseteq (T^*\mathbb{T}^n, \omega_\sigma,H)$ as in Example \ref{ex:twisted_torus}. Then by \cite[Theorem~12.3.4]{mcduffsalamon:st:2017}, the displacement energy of $\Sigma_c$ is given by
	\begin{equation*}
		e(\Sigma_c) = e\del[1]{\overline{B}^n_{\sqrt{2c}}(0)} = 2\pi c \qquad \forall c > 0,
	\end{equation*}
	\noindent where $\overline{B}^n_{\sqrt{2c}}(0) \subseteq \mathbb{R}^n$ denotes the closed ball around the origin with radius $\sqrt{2c}$.
\end{example}

\newpage
\section{Twisted Rabinowitz--Floer Homology}
\label{sec:twisted_rfh}
In this chapter we construct the generalisation of Rabinowitz--Floer homology and prove Theorem \ref{thm:twisted_rfh}.

To begin, we define the twisted Rabinowitz action functional for an exact symplectic manifold and compute its first and second variation. 

In the second section we prove a compactness result for the moduli space of twisted negative gradient flow lines in a restricted geometrical setting. This follows a standard procedure, but one has to carefully adapt the constructions and proofs to this more general case.

In the third section we define ungraded and graded twisted Rabinowitz--Floer homology and prove part (b) of Theorem \ref{thm:twisted_rfh} in Proposition \ref{prop:fixed_points}.

In the fourth section we briefly illustrate how to prove part (a) of Theorem \ref{thm:twisted_rfh} (see Theorem \ref{thm:invariance}) and define the notion of twisted homotopies of Liouville domains.

In the last section we prove an important vanishing result for twisted Rabinowitz--Floer homology, that is, part (c) of Theorem \ref{thm:twisted_rfh} (see Theorem \ref{thm:displaceable}).

\subsection{The Twisted Rabinowitz Action Functional}
\label{sec:the_twisted_Rabinowitz_action_functional}

\begin{definition}[Free Twisted Loop Space]
	Let $\varphi \in \Diff(M)$ be a diffeomorphism of a smooth manifold $M$. Define the \bld{free twisted loop space of $M$ and $\varphi$} by
	\begin{equation*}
		\mathscr{L}_\varphi M := \cbr[0]{\gamma \in C^\infty(\mathbb{R},M) : \gamma(t + 1) = \varphi(\gamma(t)) \> \forall t \in \mathbb{R}}.
	\end{equation*}
\end{definition}

Let $(M,\omega)$ be a symplectic manifold and let $\varphi \in \Symp(M,\omega)$. Given a twisted loop $\gamma \in \mathscr{L}_\varphi M$ and $\varepsilon_0 > 0$, we say that a curve
\begin{equation*}
	\intoo[0]{-\varepsilon_0,\varepsilon_0} \to \mathscr{L}_\varphi M, \qquad \varepsilon \mapsto \gamma_\varepsilon
\end{equation*}
\noindent starting at $\gamma$ is \bld{smooth}, if the induced variation
\begin{equation*}
	\mathbb{R} \times \intoo[0]{-\varepsilon_0,\varepsilon_0} \to M, \qquad (t,\varepsilon) \mapsto \gamma_\varepsilon(t)
\end{equation*}
\noindent is smooth. Since $\gamma_\varepsilon(t + 1) = \varphi(\gamma_\varepsilon(t))$ holds for all $\varepsilon \in \intoo[0]{-\varepsilon_0,\varepsilon_0}$ and $t \in \mathbb{R}$, we call such a variation a \bld{twisted variation}. Then the infinitesimal variation
\begin{equation*}
	\delta\gamma := \frac{\partial \gamma_\varepsilon}{\partial \varepsilon}\bigg\vert_{\varepsilon = 0} \in \mathfrak{X}(\gamma),
\end{equation*}
\noindent satisfies
\begin{equation*}
	\delta\gamma(t + 1) = D\varphi(\delta\gamma(t)) \qquad \forall t \in \mathbb{R}.
\end{equation*}

\begin{lemma}
	\label{lem:twisted_variation}
	Let $(M,\omega)$ be a symplectic manifold and let $\varphi \in \Symp(M,\omega)$ be of finite order. Let $\gamma \in \mathscr{L}_\varphi M$ and let $X \in \mathfrak{X}(\gamma)$ be such that
	\begin{equation*}
		X(t + 1) = D\varphi(X(t)) \qquad \forall t \in \mathbb{R}.
	\end{equation*}
	Then there exists a twisted variation of $\gamma$ such that $\delta \gamma = X$.	
\end{lemma}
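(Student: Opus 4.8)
The plan is to construct the twisted variation by exponentiating $X$ with a carefully chosen family of metrics that respects the $\varphi$-equivariance. First I would average an arbitrary Riemannian metric on $M$ over the finite cyclic group generated by $\varphi$: since $\ord \varphi = N < \infty$, setting $g := \frac{1}{N}\sum_{j=0}^{N-1}(\varphi^j)^* g_0$ yields a complete (after a conformal rescaling if necessary, or by passing to a bounded metric) Riemannian metric for which $\varphi$ is an isometry. The key consequence is that the exponential map then intertwines $\varphi$ with its differential, i.e. $\exp_{\varphi(p)} \circ D\varphi_p = \varphi \circ \exp_p$ for all $p \in M$, because isometries carry geodesics to geodesics.

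Next I would define $\gamma_\varepsilon(t) := \exp_{\gamma(t)}(\varepsilon X(t))$ for $(t,\varepsilon) \in \mathbb{R} \times \intoo{-\varepsilon_0,\varepsilon_0}$, with $\varepsilon_0$ small enough that this lands inside the domain of the exponential (using a lower bound on the injectivity radius along the compact set $\gamma(\intcc{0,1})$ and the periodicity of the setup). Smoothness of the map $(t,\varepsilon) \mapsto \gamma_\varepsilon(t)$ is immediate from smoothness of $\exp$ and of $t \mapsto (\gamma(t), X(t))$. The infinitesimal variation at $\varepsilon = 0$ is $\frac{\partial}{\partial \varepsilon}\big\vert_0 \exp_{\gamma(t)}(\varepsilon X(t)) = X(t)$ by the standard property $D(\exp_p)_0 = \id_{T_pM}$, so $\delta\gamma = X$ as required.

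The remaining point — and the one that actually uses the hypothesis — is the twisted periodicity $\gamma_\varepsilon(t+1) = \varphi(\gamma_\varepsilon(t))$. Here I would compute, using $\gamma(t+1) = \varphi(\gamma(t))$, then $X(t+1) = D\varphi(X(t))$, and finally the intertwining property of $\exp$ established above:
\begin{equation*}
	\gamma_\varepsilon(t+1) = \exp_{\gamma(t+1)}(\varepsilon X(t+1)) = \exp_{\varphi(\gamma(t))}\del[1]{\varepsilon\, D\varphi(X(t))} = \varphi\del[1]{\exp_{\gamma(t)}(\varepsilon X(t))} = \varphi(\gamma_\varepsilon(t)).
\end{equation*}
So $\gamma_\varepsilon \in \mathscr{L}_\varphi M$ for every $\varepsilon$. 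The main obstacle is really just the first step: one must be slightly careful that the averaged metric is complete (or at least that geodesics exist for the small times needed) and that $\varepsilon_0$ can be chosen uniformly in $t$; both follow from compactness of $\gamma(\intcc{0,1})$ together with the twisted periodicity, which reduces everything to a compact piece. Note that compatibility of $g$ with $\omega$ plays no role — we only need $g$ to be $\varphi$-invariant — so this is purely Riemannian-geometric once the averaging trick is in place.
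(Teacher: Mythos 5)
Your proof is correct and takes essentially the same approach as the paper: there the variation is likewise defined as $\gamma_\varepsilon(t) = \exp_{\gamma(t)}(\varepsilon X(t))$ with respect to a $\varphi$-invariant Riemannian metric, and the twisted periodicity follows from naturality of the exponential map under the isometry $\varphi$, exactly as in your computation. The only cosmetic difference is the source of the invariant metric --- you average an arbitrary metric over the finite group generated by $\varphi$, while the paper uses $m_J = \omega(J\cdot,\cdot)$ for a $\varphi$-invariant $\omega$-compatible almost complex structure $J$ --- and, as you correctly observe, compatibility with $\omega$ plays no role in the argument.
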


\begin{proof}
	As $\varphi$ is assumed to be of finite order, there exists a $\varphi$-invariant $\omega$-compatible almost complex structure $J$ on $M$ by \cite[Lemma~5.5.6]{mcduffsalamon:st:2017}. With respect to the induced Riemannian metric
	\begin{equation*}
		m_J := \omega(J\cdot,\cdot),
	\end{equation*}
	\noindent the symplectomorphism $\varphi$ is an isometry. Define the exponential variation
	\begin{equation*}
		\mathbb{R} \times \intoo[0]{-\varepsilon_0,\varepsilon_0} \to M, \qquad \gamma_\varepsilon(t) := \exp^{\nabla_J}_{\gamma(t)}(\varepsilon X(t)),
	\end{equation*}
	\noindent for $\varepsilon_0 > 0$ sufficiently small and $\nabla_J$ denoting the Levi--Civita connection associated with $m_J$. Such an $\varepsilon_0 > 0$ exists by the naturality of geodesics \cite[Corollary~5.14]{lee:dg:2018}. Then we compute
	\begin{align*}
		\gamma_\varepsilon(t + 1) &= \exp^{\nabla_J}_{\gamma(t + 1)}(\varepsilon X(t + 1))\\
		&= \exp^{\nabla_J}_{\varphi(\gamma(t))}(D\varphi(\varepsilon X(t)))\\
		&= \varphi \del[1]{\exp^{\nabla_J}_{\gamma(t)}(\varepsilon X(t))}\\
		&= \varphi(\gamma_\varepsilon(t))
	\end{align*}
	\noindent by naturality of the exponential map \cite[Proposition~5.20]{lee:dg:2018}.
\end{proof}

\begin{remark}
	The statement of Lemma \ref{lem:twisted_variation} remains true if $\ord \varphi = \infty$.
\end{remark}

This discussion together with Lemma \ref{lem:free_twisted_loop_space} motivates the following definition of the tangent space to the free twisted loop space.

\begin{definition}[Tangent Space to the Free Twisted Loop Space]
	Let $(M,\omega)$ be a symplectic manifold and $\varphi \in \Symp(M,\omega)$. For $\gamma \in \mathscr{L}_\varphi M$ define the \bld{tangent space to the free twisted loop space at $\gamma$} by
	\begin{equation*}
		T_\gamma\mathscr{L}_\varphi M := \cbr[0]{X \in \Gamma(\gamma^*TM) : X(t + 1) = D\varphi(X(t)) \> \forall t \in \mathbb{R}}.
	\end{equation*}
\end{definition}

\begin{definition}[Twisted Hamiltonian Function]
	Let $(M,\omega)$ be a symplectic manifold and $\varphi \in \Symp(M,\omega)$. A function $H \in C^\infty(M \times \mathbb{R})$ is said to be a \bld{twisted Hamiltonian function}, if
	\begin{equation*}
		\varphi^*H_{t + 1} = H_t \qquad \forall t \in \mathbb{R}.
	\end{equation*}
	We denote the space of all twisted Hamiltonian functions by $C^\infty_\varphi(M \times \mathbb{R})$ and the subspace of all autonomous twisted Hamiltonian functions by $C^\infty_\varphi(M)$.
\end{definition}

Recall, that an exact symplectic manifold is a pair $(M,\lambda)$ such that $(M,d\lambda)$ is a symplectic manifold. Moreover, an exact symplectomorphism of an exact symplectic manifold $(M,\lambda)$ is a diffeomorphism $\varphi \in \Diff(M)$ such that $\varphi^*\lambda - \lambda$ is exact.

\begin{definition}[Perturbed Twisted Rabinowitz Action Functional]
	Let $(M,\lambda)$ be an exact symplectic manifold and $\varphi \in \Diff(M)$ an exact symplectomorphism such that $\varphi^*\lambda - \lambda = df$. For $H,F \in C^\infty_\varphi(M \times \mathbb{R})$ define the \bld{perturbed twisted Rabinowitz action functional}
	\begin{equation*}
		\mathscr{A}^{(H,F)}_\varphi \colon \mathscr{L}_\varphi M \times \mathbb{R} \to \mathbb{R}
	\end{equation*}
	\noindent by
	\begin{equation*}
		\mathscr{A}^{(H,F)}_\varphi(\gamma,\tau) := \int_0^1 \gamma^*\lambda - \tau \int_0^1 H_t(\gamma(t))dt - \int_0^1 F_t(\gamma(t))dt - f(\gamma(0)).
	\end{equation*}
	If $F = 0$ and $H \in C^\infty_\varphi(M)$, we write $\mathscr{A}^H_\varphi$ for $\mathscr{A}^{(H,F)}_\varphi$ and call $\mathscr{A}^H_\varphi$ the \bld{twisted Rabinowitz action functional}.
	\label{def:perturbed_twisted_Rabinowitz_functional}
\end{definition}

\begin{remark}
	\label{rem:finite_order}
	Assume that $m := \ord \varphi < \infty$. Then
	\begin{equation*}
		\mathscr{A}^{(H,F)}_\varphi(\gamma,\tau) = \frac{1}{m} \mathscr{A}^{(H,F)}(\overline{\gamma},\tau) - \frac{1}{m} \sum_{k = 0}^{m - 1}f(\gamma(k)),
	\end{equation*}
	\noindent for all $(\gamma,\tau) \in \mathscr{L}_\varphi M$, where $\overline{\gamma} \in \mathscr{L} M$ is defined by $\overline{\gamma}(t) := \gamma(mt)$.
\end{remark}

\begin{definition}[Differential of the Perturbed Twisted Rabinowitz Action Functional]
	Let $\varphi \in \Diff(M)$ be an exact symplectomorphism of an exact symplectic manifold $(M,\lambda)$. For $H,F \in C^\infty_\varphi(M \times \mathbb{R})$, define the \bld{differential of the perturbed twisted Rabinowitz action functional}
	\begin{equation*}
		d\mathscr{A}^{(H,F)}_\varphi\vert_{(\gamma,\tau)} \colon T_\gamma \mathscr{L}_\varphi M \times \mathbb{R} \to \mathbb{R}
	\end{equation*}
	\noindent for all $(\gamma,\tau) \in \mathscr{L}_\varphi M \times \mathbb{R}$ by
	\begin{equation*}
		d\mathscr{A}^{(H,F)}_\varphi\vert_{(\gamma,\tau)}(X,\eta) := \frac{d}{d\varepsilon}\bigg\vert_{\varepsilon = 0}\mathscr{A}^{(H,F)}_\varphi(\gamma_\varepsilon, \tau + \varepsilon \eta),
	\end{equation*}
	\noindent where $\gamma_\varepsilon$ is a twisted variation of $\gamma$ such that $\delta \gamma = X$.
\end{definition}

\begin{proposition}[Differential of the Perturbed Twisted Rabinowitz Action Functional]
	\label{prop:differential_perturbed_twisted_Rabinowitz_functional}
	Let $\varphi \in \Diff(M)$ be an exact symplectomorphism of an exact symplectic manifold $(M,\lambda)$ and $H,F \in C^\infty_\varphi(M \times \mathbb{R})$. Then
	\begin{multline}
		\label{eq:differential_perturbed_twisted_Rabinowitz_functional}
		d\mathscr{\mathscr{A}}^{(H,F)}_\varphi \vert_{(\gamma,\tau)}(X,\eta) = \int_0^1 d\lambda(X(t),\dot{\gamma}(t) - \tau X_{H_t}(\gamma(t)) -  X_{F_t}(\gamma(t)))dt\\
		- \eta\int_0^1 H_t(\gamma(t))dt	
	\end{multline}
	\noindent for all $(\gamma,\tau) \in \mathscr{L}_\varphi M \times \mathbb{R}$ and $(X,\eta) \in T_\gamma\mathscr{L}_\varphi M \times \mathbb{R}$. Moreover, $(\gamma,\tau) \in \Crit \mathscr{A}^{(H,F)}_\varphi$ if and only if
	\begin{equation}
		\dot{\gamma}(t) = \tau X_{H_t}(\gamma(t)) + X_{F_t}(\gamma(t)) \qquad \text{and} \qquad \int_0^1 H_t(\gamma(t))dt = 0
		\label{eq:critical_points}
	\end{equation}
	\noindent for all $t \in \mathbb{R}$.
\end{proposition}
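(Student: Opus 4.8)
The plan is to compute the directional derivative term by term. Fix a twisted variation $\gamma_\varepsilon$ of $\gamma$ with $\delta\gamma = X$, set $\tau_\varepsilon := \tau + \varepsilon\eta$, and write $\iota\colon [0,1]\times\intoo[0]{-\varepsilon_0,\varepsilon_0}\to M$, $\iota(t,\varepsilon):=\gamma_\varepsilon(t)$, for the associated smooth map. The three ``potential'' terms $-\tau_\varepsilon\int_0^1 H_t(\gamma_\varepsilon(t))\,dt$, $-\int_0^1 F_t(\gamma_\varepsilon(t))\,dt$ and $-f(\gamma_\varepsilon(0))$ are differentiated by the ordinary chain rule, together with the Leibniz rule in the first one, so the only genuine work is the first variation of the area term $\int_0^1\gamma_\varepsilon^*\lambda$. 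For that I would use that pullback commutes with $d$: decompose $\iota^*\lambda = a\,dt + b\,d\varepsilon$ with $a(t,\varepsilon)=\lambda_{\gamma_\varepsilon(t)}(\dot\gamma_\varepsilon(t))$ and $b(t,\varepsilon)=\lambda_{\gamma_\varepsilon(t)}(\delta\gamma_\varepsilon(t))$, and compare $d(\iota^*\lambda)$ with $\iota^*d\lambda$ to obtain $\partial_\varepsilon a = \partial_t b - d\lambda(\dot\gamma_\varepsilon,\delta\gamma_\varepsilon)$.

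Differentiating under the integral sign and integrating this identity in $t$ over $[0,1]$ at $\varepsilon = 0$ then yields
\begin{equation*}
	\frac{d}{d\varepsilon}\bigg\vert_{\varepsilon=0}\int_0^1\gamma_\varepsilon^*\lambda = \big(\lambda_{\gamma(1)}(X(1)) - \lambda_{\gamma(0)}(X(0))\big) - \int_0^1 d\lambda(\dot\gamma(t),X(t))\,dt.
\end{equation*}
Here is where the twisted structure enters: since $\gamma(1)=\varphi(\gamma(0))$ and $X(1)=D\varphi(X(0))$ — the defining conditions of $\mathscr{L}_\varphi M$ and $T_\gamma\mathscr{L}_\varphi M$ — the endpoint contribution equals $(\varphi^*\lambda)_{\gamma(0)}(X(0)) - \lambda_{\gamma(0)}(X(0)) = df_{\gamma(0)}(X(0))$, using $\varphi^*\lambda - \lambda = df$. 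This exactly cancels $\frac{d}{d\varepsilon}\big\vert_{\varepsilon=0}\big(-f(\gamma_\varepsilon(0))\big) = -df_{\gamma(0)}(X(0))$, so the $\lambda$- and $f$-terms together contribute $\int_0^1 d\lambda(X(t),\dot\gamma(t))\,dt$.

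For the remaining terms, the Leibniz rule gives $\frac{d}{d\varepsilon}\big\vert_{\varepsilon=0}\big(-\tau_\varepsilon\int_0^1 H_t(\gamma_\varepsilon(t))\,dt\big) = -\eta\int_0^1 H_t(\gamma(t))\,dt - \tau\int_0^1 dH_t(X)\,dt$ and $\frac{d}{d\varepsilon}\big\vert_{\varepsilon=0}\big(-\int_0^1 F_t(\gamma_\varepsilon(t))\,dt\big) = -\int_0^1 dF_t(X)\,dt$, where $dH_t(X)$, $dF_t(X)$ are evaluated along $\gamma$. Using the defining relation $i_{X_{H_t}}d\lambda = -dH_t$ (and likewise for $F_t$) together with antisymmetry of $d\lambda$, one rewrites $dH_t(X) = d\lambda(X,X_{H_t}(\gamma))$ and $dF_t(X) = d\lambda(X,X_{F_t}(\gamma))$; collecting all contributions under a single integral produces exactly \eqref{eq:differential_perturbed_twisted_Rabinowitz_functional}.

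Finally, for the characterisation of critical points: evaluating $d\mathscr{A}^{(H,F)}_\varphi\vert_{(\gamma,\tau)} = 0$ on $(0,\eta)$ with $\eta\neq 0$ forces $\int_0^1 H_t(\gamma(t))\,dt = 0$, and then evaluating it on $(X,0)$ forces $\int_0^1 d\lambda(X(t),\dot\gamma(t)-\tau X_{H_t}(\gamma(t))-X_{F_t}(\gamma(t)))\,dt = 0$ for all $X\in T_\gamma\mathscr{L}_\varphi M$. Since $d\lambda$ is nondegenerate and, by Lemma \ref{lem:twisted_variation}, every smooth section along $\gamma$ obeying the twisted boundary condition is an admissible test field — in particular all bump-function multiples of a local symplectic frame — the fundamental lemma of the calculus of variations yields $\dot\gamma(t) = \tau X_{H_t}(\gamma(t)) + X_{F_t}(\gamma(t))$ pointwise, i.e. the first equation of \eqref{eq:critical_points}. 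The main point requiring care is the bookkeeping of the endpoint term and its cancellation against the variation of $-f(\gamma(0))$ — the one place where the finite-order twisted geometry genuinely intervenes — together with the routine justification of differentiating under the integral sign; alternatively, when $\ord\varphi<\infty$ one could instead derive the formula from the untwisted first-variation computation via Remark \ref{rem:finite_order}.
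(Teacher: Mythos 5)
Your proposal is correct and follows essentially the same route as the paper: compute the first variation of $\int_0^1\gamma_\varepsilon^*\lambda$ (the paper phrases it via $\int_0^1\gamma^*i_Xd\lambda+\int_0^1 di_X\lambda$, you via naturality of $d$, which is the same computation), identify the boundary term $\lambda(X)\vert_0^1$ with $(\varphi^*\lambda-\lambda)(X(0))=df(X(0))$ using the twisted conditions so that it cancels the variation of $-f(\gamma(0))$, and then characterise critical points with bump-function test fields and nondegeneracy of $d\lambda$. The only step you leave implicit, which the paper spells out, is that the fundamental-lemma argument gives $\dot\gamma=\tau X_{H_t}(\gamma)+X_{F_t}(\gamma)$ only on $I$, and one extends it to all $t\in\mathbb{R}$ by applying $D\varphi^k$ and using $\varphi^k_*X_{H_t}=X_{H_{t+k}}$ (and likewise for $F$), a routine consequence of $\varphi$ being symplectic and $H,F$ being twisted.
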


\begin{proof}
	In order to show \eqref{eq:differential_perturbed_twisted_Rabinowitz_functional}, we compute
	\allowdisplaybreaks
	\begin{align*}
		d\mathscr{\mathscr{A}}^{(H,F)}_\varphi \vert_{(\gamma,\tau)}(X,\eta) =& \frac{d}{d\varepsilon}\bigg\vert_{\varepsilon = 0}\mathscr{A}^{(H,F)}_\varphi(\gamma_\varepsilon, \tau + \varepsilon \eta)\\
		=& \int_0^1 \gamma^* i_X d\lambda + \int_0^1 di_X\lambda - \tau \int_0^1 dH_t(X(t))dt\\
		&- \eta\int_0^1 H_t(\gamma(t))dt - \int_0^1 dF_t(X(t))dt - df_\varphi(X(0))\\
		=& \int_0^1 d\lambda(X(t), \dot{\gamma}(t) - \tau X_{H_t}(\gamma(t)) - X_{F_t}(\gamma(t)))dt\\
		& - \eta\int_0^1 H_t(\gamma(t))dt + \lambda(X)\vert_0^1 - df_\varphi(X(0))\\
		=& \int_0^1 d\lambda(X(t), \dot{\gamma}(t) - \tau X_{H_t}(\gamma(t)) - X_{F_t}(\gamma(t)))dt\\
		& - \eta\int_0^1 H_t(\gamma(t))dt + (\varphi^*\lambda - \lambda)(X(0)) - df_\varphi(X(0))\\
		=& \int_0^1 d\lambda(X(t), \dot{\gamma}(t) - \tau X_{H_t}(\gamma(t)) - X_{F_t}(\gamma(t)))dt\\
		& - \eta\int_0^1 H_t(\gamma(t))dt.
	\end{align*}
	Let $(\gamma,\tau) \in \Crit \mathscr{A}^{(H,F)}_\varphi$. It follows immediately from \eqref{eq:differential_perturbed_twisted_Rabinowitz_functional} that
	\begin{equation*}
		\int_0^1 H_t(\gamma(t))dt = 0
	\end{equation*}
	\noindent and
	\begin{equation*}
		\int_0^1 d\lambda(X(t),\dot{\gamma}(t) - \tau X_{H_t}(\gamma(t)) -  X_{F_t}(\gamma(t)))dt = 0
	\end{equation*}
	\noindent for all $X \in T_\gamma \mathscr{L}_\varphi M$. Suppose there exists $t_0 \in \Int I$ such that
	\begin{equation*}
		\dot{\gamma}(t_0) - \tau X_{H_{t_0}}(\gamma(t_0)) -  X_{F_{t_0}}(\gamma(t_0)) \neq 0.
	\end{equation*}
	By nondegeneracy of the symplectic form $d\lambda$ there exists $v \in T_{\gamma(t_0)}M$ with
	\begin{equation*}
		d\lambda(v,\dot{\gamma}(t_0) - \tau X_{H_{t_0}}(\gamma(t_0)) -  X_{F_{t_0}}(\gamma(t_0))) \neq 0.
	\end{equation*}
	Fix a Riemannian metric on $M$ and let $X_v$ denote the unique parallel vector field along $\gamma \vert_I$ such that $X_v(t_0) = v$. As $\Int I$ is open, there exists $\delta > 0$ such that $\overline{B}_\delta(t_0) \subseteq \Int I$. Fix a smooth bump function $\beta \in C^\infty(I)$ for $t_0$ supported in $B_\delta(t_0)$. By shrinking $\delta$ if necessary, we may assume that
	\begin{equation*}
		\int_{t_0 - \delta}^{t_0 + \delta} d\lambda(\beta(t)X_v(t),\dot{\gamma}(t) - \tau X_{H_t}(\gamma(t)) -  X_{F_t}(\gamma(t)))dt \neq 0.
	\end{equation*}
	Extending
	\begin{equation*}
		(\beta X_v)(t + k) := D\varphi^k(\beta(t)X_v(t)) \qquad \forall t \in I, k \in \mathbb{Z},
	\end{equation*}
	\noindent we have that $\beta X_v \in T_\gamma \mathscr{L}_\varphi M$ and thus we compute
	\begin{align*}
		0 &= d\mathscr{\mathscr{A}}^{(H,F)}_\varphi \vert_{(\gamma,\tau)}(\beta X_v,0)\\
		&= \int_{t_0 - \delta}^{t_0 + \delta} d\lambda(\beta(t)X_v(t),\dot{\gamma}(t) - \tau X_{H_t}(\gamma(t)) -  X_{F_t}(\gamma(t)))dt\\
		&\neq 0.
	\end{align*}
	Hence
	\begin{equation*}
		\dot{\gamma}(t) = \tau X_{H_t}(\gamma(t)) + X_{F_t}(\gamma(t)) \qquad \forall t \in I,
	\end{equation*}
	\noindent implying
	\begin{align*}
		\dot{\gamma}(t + k) &= D\varphi^k(\dot{\gamma}(t))\\
		&= \tau (D\varphi^k \circ X_{H_t})(\gamma(t)) + (D\varphi^k \circ X_{F_t})(\gamma(t))\\
		&= \tau (D\varphi^k \circ X_{H_t} \circ \varphi^{-k} \circ \varphi^k)(\gamma(t)) + (D\varphi^k \circ X_{F_t} \circ \varphi^{-k} \circ \varphi^k)(\gamma(t))\\
		&= \tau \varphi^k_* X_{H_t}(\gamma(t + k)) + \varphi^k_* X_{F_t}(\gamma(t + k))\\
		&= \tau X_{\varphi^k_*H_t}(\gamma(t + k)) + X_{\varphi^k_*F_t}(\gamma(t + k))\\
		&= \tau X_{H_{t + k}}(\gamma(t + k)) + X_{F_{t + k}}(\gamma(t + k))
	\end{align*}
	\noindent for all $t \in I$ and $k \in \mathbb{Z}$. The other direction is immediate.
\end{proof}

\begin{corollary}
	The differential of the perturbed twisted Rabinowitz action functional is well-defined, that is, independent of the choice of twisted variation, and linear.
\end{corollary}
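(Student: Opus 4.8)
The plan is to obtain both assertions directly from the closed formula \eqref{eq:differential_perturbed_twisted_Rabinowitz_functional} established in Proposition \ref{prop:differential_perturbed_twisted_Rabinowitz_functional}, together with Lemma \ref{lem:twisted_variation}. First I would observe that the definition of $d\mathscr{A}^{(H,F)}_\varphi\vert_{(\gamma,\tau)}(X,\eta)$ is not vacuous: for every $X \in T_\gamma\mathscr{L}_\varphi M$ Lemma \ref{lem:twisted_variation} produces a twisted variation $\gamma_\varepsilon$ of $\gamma$ with $\delta\gamma = X$, so the expression $\frac{d}{d\varepsilon}\big\vert_{\varepsilon=0}\mathscr{A}^{(H,F)}_\varphi(\gamma_\varepsilon,\tau+\varepsilon\eta)$ makes sense for at least one admissible variation.

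Next, for well-definedness, the key point is that the right-hand side of \eqref{eq:differential_perturbed_twisted_Rabinowitz_functional} depends only on the infinitesimal data $(X,\eta)$ and on the fixed objects $\gamma$, $\tau$, $\lambda$, $H$, $F$, and in no way on the particular choice of twisted variation $\gamma_\varepsilon$ realising $X$. Hence if $\gamma_\varepsilon$ and $\gamma'_\varepsilon$ are two twisted variations of $\gamma$ with $\delta\gamma = \delta\gamma' = X$, then both a priori candidate values for the derivative are computed by Proposition \ref{prop:differential_perturbed_twisted_Rabinowitz_functional} to equal the same expression, so they agree; this is exactly independence of the choice of twisted variation.

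For linearity I would simply inspect the two summands on the right of \eqref{eq:differential_perturbed_twisted_Rabinowitz_functional}. Using that $X + X'$ and $cX$ lie again in $T_\gamma\mathscr{L}_\varphi M$ (immediate from linearity of $D\varphi$, so that the target is a genuine $\mathbb{R}$-vector space), the term $\int_0^1 d\lambda\big(X(t),\,\dot\gamma(t) - \tau X_{H_t}(\gamma(t)) - X_{F_t}(\gamma(t))\big)\,dt$ is $\mathbb{R}$-linear in $X$ by bilinearity of $d\lambda$ and linearity of the integral, and is independent of $\eta$; the term $-\eta\int_0^1 H_t(\gamma(t))\,dt$ is linear in $\eta$ and independent of $X$. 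Adding the two shows that $(X,\eta)\mapsto d\mathscr{A}^{(H,F)}_\varphi\vert_{(\gamma,\tau)}(X,\eta)$ is a linear functional on $T_\gamma\mathscr{L}_\varphi M \times \mathbb{R}$.

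I do not expect any genuine obstacle here: the corollary is a formal consequence of the fact that Proposition \ref{prop:differential_perturbed_twisted_Rabinowitz_functional} already eliminates the auxiliary variation from the formula. The only mild point worth spelling out is the closure of $T_\gamma\mathscr{L}_\varphi M$ under addition and scalar multiplication, which is what makes the linearity statement meaningful and which follows at once from $D\varphi$ being linear on each fibre.
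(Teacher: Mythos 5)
Your argument is correct and is precisely the one the paper intends: the corollary is an immediate consequence of Proposition \ref{prop:differential_perturbed_twisted_Rabinowitz_functional}, whose proof computes the derivative for an arbitrary twisted variation realising $X$ and lands on a formula involving only $(X,\eta)$, $\gamma$, $\tau$, $\lambda$, $H$, $F$, so independence of the variation and linearity (via bilinearity of $d\lambda$ and linearity of the integral) follow at once, with Lemma \ref{lem:twisted_variation} guaranteeing that admissible variations exist. The paper gives no separate proof, and your write-up fills in exactly the intended reasoning.
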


Preservation of energy \ref{cor:preservation_of_energy} implies the following corollary.

\begin{corollary}
	\label{cor:critical_points_perturbed_twisted_Rabinowitz_functional}
	Let $\varphi \in \Diff(M)$ be an exact symplectomorphism of an exact symplectic manifold $(M,\lambda)$ and $H \in C^\infty_\varphi(M)$. Then $\Crit\mathscr{A}^H_\varphi$ consists precisely of all $(\gamma,\tau) \in \mathscr{L}_\varphi M \times \mathbb{R}$ such that $\gamma(\mathbb{R}) \subseteq H^{-1}(0)$ and $\gamma$ is an integral curve of $\tau X_H$.	
\end{corollary}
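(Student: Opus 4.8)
The plan is to deduce the statement directly from Proposition \ref{prop:differential_perturbed_twisted_Rabinowitz_functional}, specialised to $F = 0$ and an autonomous Hamiltonian, together with preservation of energy (Corollary \ref{cor:preservation_of_energy}). First I would apply Proposition \ref{prop:differential_perturbed_twisted_Rabinowitz_functional} with $F = 0$ and $H \in C^\infty_\varphi(M)$, so that $H_t = H$ for all $t$; the critical point equations \eqref{eq:critical_points} then become
\begin{equation*}
	\dot{\gamma}(t) = \tau X_H(\gamma(t)) \qquad \text{and} \qquad \int_0^1 H(\gamma(t))\,dt = 0 .
\end{equation*}
The first of these is precisely the assertion that $\gamma$ is an integral curve of the vector field $\tau X_H$, so this half of the claim is immediate.

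Next I would show that along such a $\gamma$ the function $H$ is constant. Since $\tau$ is a constant we have $\tau X_H = X_{\tau H}$, so Corollary \ref{cor:preservation_of_energy} applied to the Hamiltonian $\tau H$ gives $H(\gamma(t)) = H(\gamma(0))$ for all $t$; alternatively one computes $\tfrac{d}{dt} H(\gamma(t)) = dH(\tau X_H(\gamma(t))) = -\tau\, \omega(X_H, X_H)(\gamma(t)) = 0$ directly from $i_{X_H}\omega = -dH$. Writing $c := H(\gamma(0))$, the second equation above now reads $c = \int_0^1 H(\gamma(t))\,dt = 0$, hence $\gamma(\mathbb{R}) \subseteq H^{-1}(0)$. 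This establishes that every element of $\Crit \mathscr{A}^H_\varphi$ has the claimed form.

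For the converse, suppose $(\gamma,\tau) \in \mathscr{L}_\varphi M \times \mathbb{R}$ satisfies $\dot{\gamma} = \tau X_H(\gamma)$ and $\gamma(\mathbb{R}) \subseteq H^{-1}(0)$. Then the first condition in \eqref{eq:critical_points} holds by hypothesis, and the second holds trivially because the integrand $H(\gamma(t))$ vanishes identically; by Proposition \ref{prop:differential_perturbed_twisted_Rabinowitz_functional} this gives $(\gamma,\tau) \in \Crit \mathscr{A}^H_\varphi$. No serious obstacle arises: the only point needing a moment's care is the identity $dH(X_H) = 0$, i.e. that $H$ is a first integral of its own flow, which is exactly the content of Corollary \ref{cor:preservation_of_energy}; everything else is a direct translation of \eqref{eq:critical_points}.
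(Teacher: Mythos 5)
Your proposal is correct and follows essentially the same route as the paper, which simply observes that the critical point equations of Proposition \ref{prop:differential_perturbed_twisted_Rabinowitz_functional} (with $F = 0$, autonomous $H$) combined with preservation of energy (Corollary \ref{cor:preservation_of_energy}) force $H \circ \gamma \equiv 0$. Your write-up just makes explicit the short argument the paper leaves implicit.
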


There is a natural $\mathbb{R}$-action on the twisted loop space $\mathscr{L}_\varphi M$ given by
\begin{equation*}
	(s \cdot \gamma)(t) := \gamma(t + s) \qquad \forall t \in \mathbb{R}.
\end{equation*}
If $(M,\lambda)$ is an exact symplectic manifold and $H \in C^\infty_\varphi(M)$ for an exact symplectomorphism $\varphi \in \Diff(M)$ of finite order such that $\supp f \cap H^{-1}(0) = \emptyset$, then the twisted Rabinowitz action functional $\mathscr{A}^H_\varphi$ is invariant under the induced $\mathbb{S}^1$-action on $\Crit \mathscr{A}^H_\varphi$. In particular, the unperturbed twisted Rabinowitz action functional is never a Morse function.

\begin{definition}[Hessian of the Twisted Rabinowitz Action Functional]
	Let $\varphi$ be an exact symplectomorphism of an exact symplectic manifold $(M,\lambda)$ and suppose that $H \in C^\infty_\varphi(M)$. For $(\gamma,\tau) \in \Crit \mathscr{A}^H_\varphi$, define the \bld{Hessian of the twisted Rabinowitz action functional} 
	\begin{equation*}
		\Hess \mathscr{A}^H_\varphi\vert_{(\gamma,\tau)} \colon (T_\gamma\mathscr{L}_\varphi M \times \mathbb{R}) \times (T_\gamma \mathscr{L}_\varphi M \times \mathbb{R}) \to \mathbb{R}
	\end{equation*}
	\noindent by
	\begin{equation*}
		\Hess \mathscr{A}^H_\varphi\vert_{(\gamma,\tau)}((X,\eta),(Y,\sigma)) := \frac{\partial^2}{\partial\varepsilon_1\partial\varepsilon_2}\bigg\vert_{\varepsilon_1 = \varepsilon_2 = 0}\mathscr{A}^H_\varphi(\gamma_{\varepsilon_1,\varepsilon_2},\tau + \varepsilon_1\eta + \varepsilon_2\sigma),
	\end{equation*}
	\noindent for a smooth two-parameter family $\gamma_{\varepsilon_1,\varepsilon_2}$ of twisted loops with
	\begin{equation*}
		\frac{\partial}{\partial \varepsilon_1}\bigg\vert_{\varepsilon_1 = 0}\gamma_{\varepsilon_1,0} = X \qquad \text{and} \qquad \frac{\partial}{\partial \varepsilon_2}\bigg\vert_{\varepsilon_2 = 0}\gamma_{0,\varepsilon_2} = Y.
	\end{equation*}
\end{definition}

\begin{remark}
	Traditionally, the differential and the Hessian of the twisted Rabinowitz action functional are called the first and second variation of the twisted Rabinowitz action functional.
\end{remark}

\begin{definition}[Symplectic Connection]
	Let $(M,\omega)$ be a symplectic manifold. A \bld{symplectic connection on $(M,\omega)$} is defined to be a torsion-free connection $\nabla$ in the tangent bundle $TM$ such that $\nabla \omega = 0$.
\end{definition}

\begin{remark}
	Every symplectic manifold admits a symplectic connection by \cite[p.~308]{gutt:symplectic:2006}, but in sharp contrast to the Riemannian case, a symplectic connection on a given symplectic manifold is in general not unique.
\end{remark}

\begin{lemma}
	Let $\varphi \in \Diff(M)$ be an exact symplectomorphism of an exact symplectic manifold $(M,\lambda)$. Fix a symplectic connection $\nabla$ on $(M,d\lambda)$ and a twisted Hamiltonian function $H \in C^\infty_\varphi(M)$. If $(\gamma,\tau) \in \Crit \mathscr{A}^H_\varphi$, then
	\begin{multline}
		\Hess \mathscr{A}^H_\varphi\vert_{(\gamma,\tau)}((X,\eta),(Y,\sigma)) = \int_0^1 d\lambda(Y,\nabla_t X)\\ - \tau\int_0^1 \Hess^\nabla H(X,Y) - \eta\int_0^1 dH(Y) - \sigma \int_0^1 dH(X)
		\label{eq:formula_twisted_Hessian}
	\end{multline}
	\noindent for all $(X,\eta),(Y,\sigma) \in T_\gamma \mathscr{L}_\varphi M \times \mathbb{R}$.
	\label{lem:twisted_Hessian}
\end{lemma}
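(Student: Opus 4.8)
The plan is to obtain \eqref{eq:formula_twisted_Hessian} by differentiating the first‑variation formula \eqref{eq:differential_perturbed_twisted_Rabinowitz_functional} (specialised to $F = 0$ and $H \in C^\infty_\varphi(M)$ autonomous) a second time along a twisted variation, using the symplectic connection $\nabla$ to carry out the covariant differentiations and invoking the critical point equations \eqref{eq:critical_points} to discard the terms that would otherwise obstruct well‑definedness. Since $(\gamma,\tau) \in \Crit\mathscr{A}^H_\varphi$, the second variation is a well‑defined symmetric bilinear form, independent of the chosen two‑parameter family $\gamma_{\varepsilon_1,\varepsilon_2}$; the computation below will exhibit this concretely, as the dependence on auxiliary choices is carried precisely by a term that drops out at a critical point.

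First I would set $X_{\varepsilon_2} := \partial_{\varepsilon_1}\vert_{\varepsilon_1 = 0}\gamma_{\varepsilon_1,\varepsilon_2}$, a vector field along $\gamma_{0,\varepsilon_2}$ with $X_0 = X$, so that by Proposition \ref{prop:differential_perturbed_twisted_Rabinowitz_functional},
\begin{equation*}
	\Hess\mathscr{A}^H_\varphi\vert_{(\gamma,\tau)}((X,\eta),(Y,\sigma)) = \frac{d}{d\varepsilon_2}\bigg\vert_{\varepsilon_2 = 0}\left[\int_0^1 d\lambda\bigl(X_{\varepsilon_2}, \dot\gamma_{0,\varepsilon_2} - (\tau + \varepsilon_2\sigma)X_H(\gamma_{0,\varepsilon_2})\bigr)dt - \eta\int_0^1 H(\gamma_{0,\varepsilon_2})dt\right].
\end{equation*}
Differentiating under the integral with $\nabla$ and using $\nabla(d\lambda) = 0$ gives a product rule; the term $d\lambda(\nabla_{\varepsilon_2}X_{\varepsilon_2}\vert_0, \dot\gamma - \tau X_H)$ vanishes by the first equation in \eqref{eq:critical_points}, so only the derivative of the second slot survives. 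Using torsion‑freeness of $\nabla$ in the form $\nabla_{\varepsilon_2}\partial_t\gamma = \nabla_t\partial_{\varepsilon_2}\gamma$ one gets $\nabla_{\varepsilon_2}\dot\gamma_{0,\varepsilon_2}\vert_0 = \nabla_t Y$, together with $\nabla_{\varepsilon_2}\bigl(X_H(\gamma_{0,\varepsilon_2})\bigr)\vert_0 = \nabla_Y X_H$, so the bracket differentiates to
\begin{equation*}
	\int_0^1 d\lambda(X, \nabla_t Y)\,dt - \sigma\int_0^1 d\lambda(X, X_H)\,dt - \tau\int_0^1 d\lambda(X, \nabla_Y X_H)\,dt - \eta\int_0^1 dH(Y)\,dt.
\end{equation*}

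It then remains to massage the first three integrands into the asserted form. For the first, integrating by parts with $\tfrac{d}{dt}d\lambda(X,Y) = d\lambda(\nabla_t X, Y) + d\lambda(X, \nabla_t Y)$ turns $\int_0^1 d\lambda(X,\nabla_t Y)\,dt$ into $\int_0^1 d\lambda(Y,\nabla_t X)\,dt$ plus the boundary term $\bigl[d\lambda(X,Y)\bigr]_0^1$, which vanishes because $X(1) = D\varphi(X(0))$, $Y(1) = D\varphi(Y(0))$, $\gamma(1) = \varphi(\gamma(0))$ and $\varphi^*d\lambda = d\lambda$ (as $\varphi$ is an exact symplectomorphism). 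For the second, $d\lambda(X, X_H) = dH(X)$ directly from $i_{X_H}d\lambda = -dH$. For the third, differentiating $i_{X_H}d\lambda = -dH$ covariantly along $Y$ and using $\nabla(d\lambda) = 0$ gives $d\lambda(\nabla_Y X_H, \cdot) = -\nabla_Y dH = -\Hess^\nabla H(Y, \cdot)$, whence $d\lambda(X, \nabla_Y X_H) = \Hess^\nabla H(X,Y)$ by symmetry of $\Hess^\nabla H$ (again torsion‑freeness of $\nabla$). Note that the primitive $f$ with $\varphi^*\lambda - \lambda = df$ never reappears, since it has already cancelled in \eqref{eq:differential_perturbed_twisted_Rabinowitz_functional}. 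Assembling the four pieces yields exactly \eqref{eq:formula_twisted_Hessian}.

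The computation is essentially bookkeeping; the only points requiring care are (i) checking that the $\nabla_{\varepsilon_2}X_{\varepsilon_2}$‑term genuinely drops out — this is precisely where the critical point equation enters, and it is what makes $\Hess\mathscr{A}^H_\varphi$ independent of the extension and of the chosen family — and (ii) getting the signs and the placement of arguments right when converting $d\lambda(X, \nabla_Y X_H)$ into $\Hess^\nabla H(X,Y)$ and when integrating by parts, including the vanishing of the twisted boundary term. One could alternatively reduce to the untwisted Rabinowitz action functional via Remark \ref{rem:finite_order}, but the direct second variation above is cleaner and keeps the twisted boundary conditions explicit.
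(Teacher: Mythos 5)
Your proof is correct and follows essentially the same route as the paper: a second variation along a two-parameter twisted family, using $\nabla d\lambda = 0$, torsion-freeness, the identity $d\lambda(\nabla_Y X_H,\cdot) = -\Hess^\nabla H(Y,\cdot)$, and the vanishing of the boundary term $d\lambda(X,Y)\vert_0^1$ via the twisted boundary conditions and $\varphi^* d\lambda = d\lambda$. The only difference is organizational: you differentiate the already-established first-variation formula, so the $f_\varphi$-contribution has cancelled beforehand and the critical-point equation kills the single term involving $\nabla_{\varepsilon_2}X_{\varepsilon_2}$, whereas the paper expands the second variation of the functional directly and tracks the $\lambda$-boundary term and the cancellation of the mixed-derivative terms explicitly.
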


\begin{proof}
	We compute
	\allowdisplaybreaks
	\begin{multline}
		\label{eq:Hessian_second_term}
		\frac{\partial^2}{\partial \varepsilon_1 \partial \varepsilon_2}\bigg\vert_{\varepsilon_1 = \varepsilon_2 = 0} (\tau + \varepsilon_1 \eta + \varepsilon_2 \sigma)\int_0^1 H \circ \gamma_{\varepsilon_1,\varepsilon_2}\\
		=\eta \int_0^1 dH(Y) + \sigma \int_0^1 dH(X) + \tau\int_0^1 \frac{\partial^2}{\partial \varepsilon_1 \partial \varepsilon_2}\bigg\vert_{\varepsilon_1 = \varepsilon_2 = 0} H(\gamma_{\varepsilon_1,\varepsilon_2}),
	\end{multline}
	\noindent and
	\begin{align*}
		\frac{\partial^2}{\partial \varepsilon_1 \partial \varepsilon_2}\bigg\vert_{\varepsilon_1 = \varepsilon_2 = 0} H(\gamma_{\varepsilon_1,\varepsilon_2}) =& \frac{\partial}{\partial\varepsilon_1}\bigg\vert_{\varepsilon_1 = 0}dH\del[1]{\partial_{\varepsilon_2}\vert_{\varepsilon_2 = 0}\gamma_{\varepsilon_1,\varepsilon_2}}\\
		=& -\frac{\partial}{\partial\varepsilon_1}\bigg\vert_{\varepsilon_1 = 0}d\lambda\del[1]{X_H(\gamma_{\varepsilon_1,0}),\partial_{\varepsilon_2}\vert_{\varepsilon_2 = 0}\gamma_{\varepsilon_1,\varepsilon_2}}\\
		=& -d\lambda\del[1]{\nabla_{\varepsilon_1}\vert_{\varepsilon_1 = 0}X_H(\gamma_{\varepsilon_1,0}),Y}\\
		&-d\lambda\del[1]{X_H(\gamma),\nabla_{\varepsilon_1}\vert_{\varepsilon_1 = 0}\partial_{\varepsilon_2}\vert_{\varepsilon_2 = 0}\gamma_{\varepsilon_1,\varepsilon_2}}.
	\end{align*}
	The $d\lambda$-compatibility of $\nabla$ implies
	\allowdisplaybreaks
	\begin{align}
		\label{eq:covariant_Hessian}
		d\lambda\del[1]{\nabla_{\varepsilon_1}\vert_{\varepsilon_1 = 0}X_H(\gamma_{\varepsilon_1,0}),Y} &= d\lambda\del[1]{\nabla_{\partial_{\varepsilon_1}\vert_{\varepsilon_1 = 0}\gamma_{\varepsilon_1,0}}X_H,Y}\nonumber\\
		&= d\lambda(\nabla_{X}X_H,Y)\nonumber\\
		&= \nabla_Xd\lambda(X_H,Y) - d\lambda(X_H,\nabla_XY)\nonumber\\
		&= -\nabla_{X}dH(Y) + dH(\nabla_XY)\nonumber\\
		&= -X(Y(H)) + (\nabla_XY)H\nonumber\\
		&= -\Hess^\nabla H\vert_\gamma(X,Y).
	\end{align}
	Next we compute
	\allowdisplaybreaks
	\begin{align}
		\label{eq:Hessian_first_term}
		\frac{\partial^2}{\partial \varepsilon_1 \partial \varepsilon_2}\bigg\vert_{\varepsilon_1 = \varepsilon_2 = 0} \int_0^1 \gamma_{\varepsilon_1,\varepsilon_2}^*\lambda =& \int_0^1 \frac{\partial}{\partial \varepsilon_1}\bigg\vert_{\varepsilon_1 = 0}\gamma_{\varepsilon_1,0}^*i_{\partial_{\varepsilon_2}\vert_{\varepsilon_2 = 0}\gamma_{\varepsilon_1,\varepsilon_2}}(d\lambda \circ \gamma_{\varepsilon_1,0})\nonumber\\
		&+ \int_0^1 \frac{\partial}{\partial \varepsilon_1}\bigg\vert_{\varepsilon_1 = 0} d\gamma^*_{\varepsilon_1,0}i_{\partial_{\varepsilon_2}\vert_{\varepsilon_2 = 0}\gamma_{\varepsilon_1,\varepsilon_2}}(\lambda \circ \gamma_{\varepsilon_1,0})\nonumber\\
		=& \int_0^1 \frac{\partial}{\partial \varepsilon_1}\bigg\vert_{\varepsilon_1 = 0} d\lambda\del[1]{\partial_{\varepsilon_2}\vert_{\varepsilon_2 = 0}\gamma_{\varepsilon_1,\varepsilon_2},\dot{\gamma}_{\varepsilon_1,0}}\nonumber\\
		&+ \frac{\partial}{\partial \varepsilon_1}\bigg\vert_{\varepsilon_1 = 0}\lambda(\partial_{\varepsilon_2}\vert_{\varepsilon_2 = 0}\gamma_{\varepsilon_1,\varepsilon_2})\vert_0^1\nonumber\\
		=& \int_0^1 d\lambda\del[1]{\nabla_{\varepsilon_1}\vert_{\varepsilon_1 = 0}\partial_{\varepsilon_2}\vert_{\varepsilon_2 = 0}\gamma_{\varepsilon_1,\varepsilon_2},\dot{\gamma}}\nonumber\\
		&+ \int_0^1 d\lambda\del[1]{Y,\nabla_{\varepsilon_1}\vert_{\varepsilon_1 = 0}\partial_t\gamma_{\varepsilon_1,0}}\nonumber\\
		&+ \frac{\partial}{\partial \varepsilon_1}\bigg\vert_{\varepsilon_1 = 0}\lambda(\partial_{\varepsilon_2}\vert_{\varepsilon_2 = 0}\gamma_{\varepsilon_1,\varepsilon_2})\vert_0^1\nonumber\\
		=& \tau\int_0^1 d\lambda\del[1]{\nabla_{\varepsilon_1}\vert_{\varepsilon_1 = 0}\partial_{\varepsilon_2}\vert_{\varepsilon_2 = 0}\gamma_{\varepsilon_1,\varepsilon_2},X_H(\gamma)}\nonumber\\
		&+ \int_0^1 d\lambda\del[1]{Y,\nabla_t\partial_{\varepsilon_1}\vert_{\varepsilon_1 = 0}\gamma_{\varepsilon_1,0}}\nonumber\\
		&+ \frac{\partial}{\partial \varepsilon_1}\bigg\vert_{\varepsilon_1 = 0}\lambda(\partial_{\varepsilon_2}\vert_{\varepsilon_2 = 0}\gamma_{\varepsilon_1,\varepsilon_2})\vert_0^1\nonumber\\
		=& \>\tau\int_0^1 d\lambda\del[1]{\nabla_{\varepsilon_1}\vert_{\varepsilon_1 = 0}\partial_{\varepsilon_2}\vert_{\varepsilon_2 = 0}\gamma_{\varepsilon_1,\varepsilon_2},X_H(\gamma)}\nonumber\\
		&+ \int_0^1 d\lambda\del[1]{Y,\nabla_tX}\nonumber\\
		&+ \frac{\partial}{\partial \varepsilon_1}\bigg\vert_{\varepsilon_1 = 0}\lambda(\partial_{\varepsilon_2}\vert_{\varepsilon_2 = 0}\gamma_{\varepsilon_1,\varepsilon_2})\vert_0^1.
	\end{align}
	Moreover
	\allowdisplaybreaks
	\begin{align}
		\label{eq:Hessian_symmetric}
		\int_0^1 d\lambda\del[1]{Y,\nabla_t X} =& \int_0^1 \frac{d}{dt}d\lambda(Y,X) - \int_0^1 d\lambda(\nabla_t Y, X)\nonumber\\
		=&\> d\lambda(Y,X)\vert_0^1 + \int_0^1 d\lambda(X,\nabla_t Y)\nonumber\\ 
		=&\> d\lambda\del[1]{D\varphi(Y(0)),D\varphi(X(0))} - d\lambda(Y(0),X(0))\nonumber\\ 
		&+ \int_0^1 d\lambda(X,\nabla_t Y)\nonumber\\
		=& \int_0^1 d\lambda(X,\nabla_t Y).
	\end{align}
	\noindent Finally
	\allowdisplaybreaks
	\begin{align}
		\label{eq:Hessian_third_term}
		\lambda(\partial_{\varepsilon_2}\vert_{\varepsilon_2 = 0}\gamma_{\varepsilon_1,\varepsilon_2})\vert_0^1 &= \lambda\del[1]{\partial_{\varepsilon_2}\vert_{\varepsilon_2 = 0}\gamma_{\varepsilon_1,\varepsilon_2}(1)} - \lambda\del[1]{\partial_{\varepsilon_2}\vert_{\varepsilon_2 = 0}\gamma_{\varepsilon_1,\varepsilon_2}(0)}\nonumber\\
		&= \lambda\del[1]{\partial_{\varepsilon_2}\vert_{\varepsilon_2 = 0}\varphi(\gamma_{\varepsilon_1,\varepsilon_2}(0))} - \lambda\del[1]{\partial_{\varepsilon_2}\vert_{\varepsilon_2 = 0}\gamma_{\varepsilon_1,\varepsilon_2}(0)}\nonumber\\
		&= \lambda\del[1]{D\varphi\del[1]{\partial_{\varepsilon_2}\vert_{\varepsilon_2 = 0}\gamma_{\varepsilon_1,\varepsilon_2}(0)}} - \lambda\del[1]{\partial_{\varepsilon_2}\vert_{\varepsilon_2 = 0}\gamma_{\varepsilon_1,\varepsilon_2}(0)}\nonumber\\
		&= (\varphi^*\lambda - \lambda)(\partial_{\varepsilon_2}\vert_{\varepsilon_2 = 0}\gamma_{\varepsilon_1,\varepsilon_2}(0))\nonumber\\
		&= df_\varphi(\partial_{\varepsilon_2}\vert_{\varepsilon_2 = 0}\gamma_{\varepsilon_1,\varepsilon_2}(0)).
	\end{align}
	Combining \eqref{eq:Hessian_first_term}, \eqref{eq:Hessian_second_term}, \eqref{eq:Hessian_symmetric}, and \eqref{eq:Hessian_third_term} yields \eqref{eq:formula_twisted_Hessian}.
\end{proof}

\begin{corollary}
	The Hessian of the twisted Rabinowitz action functional is a well-defined, that is, independent of the choice of twisted two-parameter family, symmetric bilinear form.
\end{corollary}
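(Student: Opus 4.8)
My plan is to read everything off formula \eqref{eq:formula_twisted_Hessian} of Lemma \ref{lem:twisted_Hessian}, which has already done all the real work: its right-hand side involves only the critical point $(\gamma,\tau)$, the tangent vectors $(X,\eta),(Y,\sigma)$, and the fixed auxiliary data $\lambda$, $\nabla$, $H$, but \emph{not} the chosen two-parameter family $\gamma_{\varepsilon_1,\varepsilon_2}$. So for well-definedness I would argue in two steps. First, an admissible family with prescribed first variations exists: applying the exponential construction from the proof of Lemma \ref{lem:twisted_variation} to the vector field $\varepsilon_1 X + \varepsilon_2 Y \in \mathfrak{X}(\gamma)$, with respect to a $\varphi$-invariant $\omega$-compatible almost complex structure $J$, produces a smooth two-parameter family of twisted loops $\gamma_{\varepsilon_1,\varepsilon_2}(t) = \exp^{\nabla_J}_{\gamma(t)}(\varepsilon_1 X(t) + \varepsilon_2 Y(t))$ with $\partial_{\varepsilon_1}\vert_0\gamma_{\varepsilon_1,0} = X$ and $\partial_{\varepsilon_2}\vert_0\gamma_{0,\varepsilon_2} = Y$. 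Second, since the computation in Lemma \ref{lem:twisted_Hessian} is valid for \emph{any} admissible family, any two choices yield the same right-hand side of \eqref{eq:formula_twisted_Hessian}; hence the Hessian is independent of the family.

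Bilinearity then follows by inspecting \eqref{eq:formula_twisted_Hessian} term by term: $\int_0^1 d\lambda(Y,\nabla_t X)$ is $\mathbb{R}$-linear in $X$ because the covariant derivative $\nabla_t$ is linear on sections of $\gamma^*TM$, and linear in $Y$ by linearity of $d\lambda$ in its first slot; the term $\tau\int_0^1\Hess^\nabla H(X,Y)$ is bilinear because $\Hess^\nabla H$ is a bilinear form on each tangent space; and $\eta\int_0^1 dH(Y)$, $\sigma\int_0^1 dH(X)$ are visibly linear in $(X,\eta)$ and $(Y,\sigma)$ respectively.

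For symmetry I would treat the terms of \eqref{eq:formula_twisted_Hessian} separately. The term $\tau\int_0^1\Hess^\nabla H(X,Y)$ is symmetric since $\nabla$ is torsion-free: $\Hess^\nabla H(X,Y) - \Hess^\nabla H(Y,X) = (X(YH) - Y(XH)) - (\nabla_X Y - \nabla_Y X)H = [X,Y]H - [X,Y]H = 0$. The pair of terms $\eta\int_0^1 dH(Y)$ and $\sigma\int_0^1 dH(X)$ is interchanged under $(X,\eta)\leftrightarrow(Y,\sigma)$, so their sum is symmetric. The only term requiring an argument is $\int_0^1 d\lambda(Y,\nabla_t X)$, and this is exactly the content of the computation \eqref{eq:Hessian_symmetric} inside the proof of Lemma \ref{lem:twisted_Hessian}: integrating by parts using $\nabla d\lambda = 0$ gives $\int_0^1 d\lambda(Y,\nabla_t X) = d\lambda(Y,X)\vert_0^1 + \int_0^1 d\lambda(X,\nabla_t Y)$, and the boundary term vanishes because $X(1) = D\varphi(X(0))$, $Y(1) = D\varphi(Y(0))$ and $\varphi^* d\lambda = d\lambda$, so that $d\lambda(Y(1),X(1)) = d\lambda(Y(0),X(0))$. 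Combining the three observations proves $\Hess\mathscr{A}^H_\varphi\vert_{(\gamma,\tau)}((X,\eta),(Y,\sigma)) = \Hess\mathscr{A}^H_\varphi\vert_{(\gamma,\tau)}((Y,\sigma),(X,\eta))$. I do not expect a serious obstacle: all the analysis sits inside Lemma \ref{lem:twisted_Hessian}, and the only genuinely "twisted" point is the vanishing of this boundary term, which rests on $\varphi$ being a symplectomorphism rather than on periodicity of $X$ and $Y$.
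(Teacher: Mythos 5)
Your proposal is correct and matches the paper's (implicit) argument: the corollary is exactly the observation that the right-hand side of \eqref{eq:formula_twisted_Hessian} is independent of the two-parameter family and visibly bilinear, with symmetry supplied by the computation \eqref{eq:Hessian_symmetric}, whose boundary term vanishes by the twisted condition and $\varphi^*d\lambda = d\lambda$. Your additional remark that an admissible family always exists, via the exponential construction of Lemma \ref{lem:twisted_variation} applied to $\varepsilon_1 X + \varepsilon_2 Y$, is a sensible completion in the same spirit as the paper.
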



\begin{lemma}
	\label{lem:twisted_Hessian}
	Let $\varphi \in \Diff(M)$ be an exact symplectomorphism of an exact symplectic manifold $(M,\lambda)$ and $H \in C^\infty_\varphi(M)$. If $(\gamma,\tau) \in \Crit \mathscr{A}^H_\varphi$, then
	\begin{multline}
		\label{eq:formula_twisted_Hessian_Lie}
		\Hess \mathscr{A}^H_\varphi\vert_{(\gamma,\tau)}((X,\eta),(Y,\sigma)) = \int_0^1 d\lambda(Y,L_{\tau X_H}X - \eta X_H(\gamma))\\ - \sigma\int_0^1 dH(X)
	\end{multline}
	\noindent for all $(X,\eta),(Y,\sigma) \in T_\gamma \mathscr{L}_\varphi M \times \mathbb{R}$, where
	\begin{equation*}
		L_{\tau X_H}X(t) = \frac{d}{ds}\bigg\vert_{s = 0} D\phi^{X_H}_{-s\tau}X(s + t) \qquad \forall t \in \mathbb{R}.
	\end{equation*}
\end{lemma}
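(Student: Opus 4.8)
The plan is to deduce \eqref{eq:formula_twisted_Hessian_Lie} directly from the connection-dependent formula \eqref{eq:formula_twisted_Hessian} established for the same Hessian in the preceding lemma, by collapsing the two terms $\int_0^1 d\lambda(Y,\nabla_t X)$ and $-\tau\int_0^1\Hess^\nabla H(X,Y)$ into a single intrinsic expression and rewriting the $\eta$-term via the defining equation of $X_H$. First I would fix, exactly as in the preceding lemma, a $\varphi$-invariant $d\lambda$-compatible almost complex structure and the associated symplectic connection $\nabla$, so that \eqref{eq:formula_twisted_Hessian} applies, and recall from \eqref{eq:covariant_Hessian} that $\Hess^\nabla H\vert_\gamma(X,Y) = -d\lambda(\nabla_X X_H,Y) = d\lambda(Y,\nabla_X X_H)$. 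Substituting this into \eqref{eq:formula_twisted_Hessian} gives $\Hess \mathscr{A}^H_\varphi\vert_{(\gamma,\tau)}((X,\eta),(Y,\sigma)) = \int_0^1 d\lambda\del[1]{Y,\nabla_t X-\tau\nabla_X X_H} - \eta\int_0^1 dH(Y) - \sigma\int_0^1 dH(X)$.

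The key identity is then that, along a critical point, $\nabla_t X-\tau\nabla_X X_H = L_{\tau X_H}X$ pointwise, with $L_{\tau X_H}X$ as in the displayed formula. Since $(\gamma,\tau)\in\Crit\mathscr{A}^H_\varphi$, Corollary~\ref{cor:critical_points_perturbed_twisted_Rabinowitz_functional} gives $\dot\gamma=\tau X_H(\gamma)$, hence $\phi^{X_H}_{s\tau}(\gamma(t))=\gamma(t+s)$ for all $s,t\in\mathbb{R}$. Extending $X$ to a local vector field $\widetilde X$ near a point $\gamma(t_0)$, one has $\nabla_t X(t_0)=\nabla_{\dot\gamma(t_0)}\widetilde X=\tau\nabla_{X_H}\widetilde X$, and torsion-freeness of $\nabla$ yields $\tau\nabla_{X_H}\widetilde X-\tau\nabla_{\widetilde X}X_H=\tau[X_H,\widetilde X]=\mathcal L_{\tau X_H}\widetilde X$; evaluating the Lie derivative through the flow, $\mathcal L_{\tau X_H}\widetilde X(\gamma(t_0))=\frac{d}{ds}\big\vert_{s=0}D\phi^{X_H}_{-s\tau}\widetilde X\del[1]{\phi^{X_H}_{s\tau}(\gamma(t_0))}=\frac{d}{ds}\big\vert_{s=0}D\phi^{X_H}_{-s\tau}X(t_0+s)$, which is exactly $L_{\tau X_H}X(t_0)$. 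Finally I rewrite the $\eta$-term using $i_{X_H}d\lambda=-dH$, i.e. $dH(Y)=d\lambda(Y,X_H(\gamma))$, so that $-\eta\int_0^1 dH(Y)=\int_0^1 d\lambda(Y,-\eta X_H(\gamma))$; combining this with the first integral gives $\int_0^1 d\lambda\del[1]{Y,L_{\tau X_H}X-\eta X_H(\gamma)}$, while the last summand $-\sigma\int_0^1 dH(X)$ is already in the desired form, and \eqref{eq:formula_twisted_Hessian_Lie} follows.

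The main point requiring care — rather than a genuine obstacle — is the well-definedness of the Lie-derivative term when $X$ is only a section of $\gamma^*TM$ and $\gamma$ need not be embedded (a closed Hamiltonian orbit may have self-intersections, and in the degenerate case $\tau=0$ the curve $\gamma$ is a fixed point of $\varphi$ and $\phi^{X_H}_{-s\tau}=\id$). I would address this by noting that the explicit expression $\frac{d}{ds}\big\vert_{s=0}D\phi^{X_H}_{-s\tau}X(s+t)$ depends only on the germ of $X$ along $\gamma$, and that the torsion-free identity localizes correctly because $\nabla_X X_H$ and $L_{\tau X_H}X$ at $\gamma(t)$ depend only on $X(t)$ and on the derivative of $X$ in the $\dot\gamma$-direction, so the choice of extension $\widetilde X$ is immaterial. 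No new analytic input beyond the preceding lemma is needed; in particular the boundary-term cancellations using $X,Y\in T_\gamma\mathscr{L}_\varphi M$ and $\varphi^*d\lambda=d\lambda$ (cf. \eqref{eq:Hessian_symmetric}) have already been absorbed into \eqref{eq:formula_twisted_Hessian}.
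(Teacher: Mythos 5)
Your proposal is correct and follows essentially the same route as the paper: insert $\Hess^\nabla H(X,Y)=d\lambda(Y,\nabla_X X_H)$ into \eqref{eq:formula_twisted_Hessian}, use torsion-freeness together with $\dot{\gamma}=\tau X_H(\gamma)$ at the critical point to identify $\nabla_t X-\tau\nabla_X X_H=[\tau X_H,X]=L_{\tau X_H}X$ via the flow, and absorb the $\eta$-term with $i_{X_H}d\lambda=-dH$. Your extra remarks on the independence of the local extension $\widetilde{X}$ are a welcome (if not strictly necessary) refinement of the step the paper leaves implicit; only note that you need some symplectic (torsion-free, $d\lambda$-parallel) connection as in the preceding lemma, not the Levi--Civita connection of a compatible metric, which need not preserve $d\lambda$.
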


\begin{proof}
	Inserting $\Hess^\nabla(X,Y) = d\lambda(Y,\nabla_X X_H)$ into \eqref{eq:formula_twisted_Hessian} yields
	\begin{multline*}
		\Hess \mathscr{A}^H_\varphi\vert_{(\gamma,\tau)}((X,\eta),(Y,\sigma)) = \int_0^1 d\lambda(Y,\nabla_t X - \tau\nabla_X X_H)\\ - \eta \int_0^1 dH(Y) - \sigma \int_0^1 dH(X).
	\end{multline*}
	But as $\nabla$ has no torsion by assumption, we compute
	\begin{equation*}
		\nabla_t X - \tau \nabla_X X_H = \nabla_{\dot{\gamma}}X - \tau \nabla_X X_H = \nabla_{\tau X_H}X - \tau\nabla_X X_H = [\tau X_H,X],
	\end{equation*}
	\noindent and 
	\allowdisplaybreaks
	\begin{align*}
		[\tau X_H,X](t) &= L_{\tau X_H}X(t)\\
		&= \frac{d}{ds}\bigg\vert_{s = 0} D\phi_{-s\tau}^{X_H}(X(\phi_{s\tau}^{X_H}(\gamma(t)))\\
		&= \frac{d}{ds}\bigg\vert_{s = 0} D\phi_{-s\tau}^{X_H}(X(\phi_{s\tau}^{X_H}(\phi_{t\tau}^{X_H}(\gamma(0)))))\\
		&= \frac{d}{ds}\bigg\vert_{s = 0} D\phi_{-s\tau}^{X_H} X(s + t)
	\end{align*}
	\noindent for all $t \in \mathbb{R}$.
\end{proof}

\begin{corollary}
	\label{cor:kernel_Hessian}
	Let $\varphi \in \Diff(M)$ be an exact symplectomorphism of an exact symplectic manifold $(M,\lambda)$ and let $H \in C^\infty_\varphi(M)$. The kernel of the Hessian of the twisted Rabinowitz action functional $\mathscr{A}^H_\varphi$ at $(\gamma,\tau) \in \Crit \mathscr{A}^H_\varphi$ consists precisely of all $(X,\eta) \in T_\gamma \mathscr{L}_\varphi M \times \mathbb{R}$ satisfying
	\begin{equation*}
		L_{\tau X_H} X = \eta X_H(\gamma) \qquad \text{and} \qquad \int_0^1 dH(X) = 0.
	\end{equation*}
\end{corollary}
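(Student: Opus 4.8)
The plan is to evaluate the symmetric bilinear form of Lemma~\ref{lem:twisted_Hessian} on two families of test vectors and read off the two stated conditions, the second one being extracted by the same localisation argument as in the proof of Proposition~\ref{prop:differential_perturbed_twisted_Rabinowitz_functional}. First I would insert $Y = 0$ into \eqref{eq:formula_twisted_Hessian_Lie}: for every $\sigma \in \mathbb{R}$ the Hessian reduces to $-\sigma\int_0^1 dH(X)$, so its vanishing for all $\sigma$ is equivalent to $\int_0^1 dH(X) = 0$. Next I would insert $\sigma = 0$ and let $Y$ range over $T_\gamma\mathscr{L}_\varphi M$; what remains is the requirement
\[
\int_0^1 d\lambda\bigl(Y(t),\, L_{\tau X_H}X(t) - \eta X_H(\gamma(t))\bigr)\,dt = 0 \qquad \forall Y \in T_\gamma\mathscr{L}_\varphi M,
\]
and I claim this forces the pointwise identity $L_{\tau X_H}X = \eta X_H(\gamma)$. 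The converse implication is then immediate, since if $L_{\tau X_H}X = \eta X_H(\gamma)$ and $\int_0^1 dH(X) = 0$ both terms of \eqref{eq:formula_twisted_Hessian_Lie} vanish identically in $(Y,\sigma)$.

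The key structural observation is that $Z := L_{\tau X_H}X - \eta X_H(\gamma)$ again belongs to $T_\gamma\mathscr{L}_\varphi M$. Indeed, $H \in C^\infty_\varphi(M)$ is autonomous, so $\varphi^*H = H$; hence $\varphi^*X_H = X_H$ by Lemma~\ref{lem:hamiltonian_vector_field_pullback} and $\varphi$ commutes with the flow $\phi^{X_H}$ by Lemma~\ref{lem:conjugated_flow}. Combining this with $X(t+1) = D\varphi(X(t))$ and $X_H(\gamma(t+1)) = D\varphi(X_H(\gamma(t)))$ shows $Z(t+1) = D\varphi(Z(t))$. Therefore it suffices to prove $Z\vert_I \equiv 0$, the twisted equivariance propagating this to all of $\mathbb{R}$.

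To kill $Z$ on $I$ I would argue by contradiction exactly as in Proposition~\ref{prop:differential_perturbed_twisted_Rabinowitz_functional}: if $Z(t_0) \neq 0$ for some $t_0 \in \Int I$, nondegeneracy of $d\lambda$ produces $v \in T_{\gamma(t_0)}M$ with $d\lambda(v, Z(t_0)) \neq 0$; extending $v$ to a parallel vector field $X_v$ along $\gamma\vert_I$, cutting off with a bump function $\beta \in C^\infty(I)$ supported in a small ball $B_\delta(t_0) \subseteq \Int I$ chosen so that $\int_{t_0-\delta}^{t_0+\delta} d\lambda(\beta(t) X_v(t), Z(t))\,dt \neq 0$, and extending $\beta X_v$ to an element of $T_\gamma\mathscr{L}_\varphi M$ by $(\beta X_v)(t+k) := D\varphi^k(\beta(t)X_v(t))$, the Hessian evaluated on $(\beta X_v, 0)$ is nonzero, contradicting the hypothesis. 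The only genuinely new point beyond Proposition~\ref{prop:differential_perturbed_twisted_Rabinowitz_functional} is the verification of the twisted equivariance of $Z$, which is what legitimises working over a single period; everything else is a direct specialisation of the Hessian formula and the fundamental lemma of the calculus of variations.
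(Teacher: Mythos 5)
Your argument is correct and is exactly the route the paper intends: the corollary is stated as an immediate consequence of the formula \eqref{eq:formula_twisted_Hessian_Lie} in Lemma \ref{lem:twisted_Hessian}, and what you have written out is precisely the omitted routine verification --- testing against $(0,\sigma)$ and $(Y,0)$ and then localising with a bump function as in the proof of Proposition \ref{prop:differential_perturbed_twisted_Rabinowitz_functional}. Your extra check that $Z = L_{\tau X_H}X - \eta X_H(\gamma)$ is itself twisted-equivariant (via $\varphi^*X_H = X_H$ and Lemma \ref{lem:conjugated_flow}) is the right point to make explicit, since it is what justifies restricting the localisation to a single period.
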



\begin{lemma}
	\label{lem:kernel_of_the_Hessian}
	Let $\varphi \in \Diff(M)$ be an exact symplectomorphism of an exact symplectic manifold $(M,\lambda)$ and $H \in C^\infty_\varphi(M)$. For every $(\gamma,\tau) \in \Crit \mathscr{A}^H_\varphi$, there is a canonical isomorphism
	\begin{equation}
		\ker \Hess \mathscr{A}^H_\varphi\vert_{(\gamma,\tau)} \cong \mathfrak{K}(\gamma,\tau),
		\label{eq:canonical_isomorphism}
	\end{equation}
	\noindent where
	\begin{equation*}
		\mathfrak{K}(\gamma,\tau) := \cbr[0]{(v_0,\eta) \in T_{\gamma(0)}M \times \mathbb{R} : \textup{solution of \eqref{eq:initial_value_system}}}
	\end{equation*}
	\noindent with
	\begin{equation}
		\label{eq:initial_value_system}
		D(\phi_{-\tau}^{X_H} \circ \varphi)v_0 = v_0 + \eta X_H(\gamma(0)) \qquad \text{and} \qquad dH(v_0) = 0.
	\end{equation}	
\end{lemma}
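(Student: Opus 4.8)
The plan is to start from Corollary \ref{cor:kernel_Hessian}, which (via Lemma \ref{lem:twisted_Hessian}) identifies $\ker \Hess \mathscr{A}^H_\varphi\vert_{(\gamma,\tau)}$ with the set of pairs $(X,\eta) \in T_\gamma\mathscr{L}_\varphi M \times \mathbb{R}$ solving the linear system $L_{\tau X_H}X = \eta X_H(\gamma)$ and $\int_0^1 dH(X) = 0$, and then to show that the evaluation map $(X,\eta) \mapsto (X(0),\eta)$ is the asserted isomorphism onto $\mathfrak{K}(\gamma,\tau)$. The heart of the argument is to integrate the first equation explicitly. Since $(\gamma,\tau) \in \Crit \mathscr{A}^H_\varphi$, Corollary \ref{cor:critical_points_perturbed_twisted_Rabinowitz_functional} gives $\gamma(t) = \phi^{X_H}_{\tau t}(\gamma(0))$ on $[0,1]$, so the transported field $Y(t) := D\phi^{X_H}_{-\tau t}X(t) \in T_{\gamma(0)}M$ is well-defined. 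Using the cocycle property of the flow together with the formula for $L_{\tau X_H}X$ in Lemma \ref{lem:twisted_Hessian}, one computes $\dot Y(t) = D\phi^{X_H}_{-\tau t}\bigl((L_{\tau X_H}X)(t)\bigr)$; when $L_{\tau X_H}X = \eta X_H(\gamma)$ and because $X_H$ is invariant under its own flow, this collapses to $\dot Y = \eta X_H(\gamma(0))$, a constant. Hence $Y(t) = X(0) + t\eta X_H(\gamma(0))$ and
\[
X(t) = D\phi^{X_H}_{\tau t}\bigl(X(0) + t\eta X_H(\gamma(0))\bigr), \qquad t \in [0,1].
\]

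Next I would impose twisted periodicity $X(t+1) = D\varphi(X(t))$. Because $H \in C^\infty_\varphi(M)$ is autonomous, Lemma \ref{lem:conjugated_flow} gives $\varphi \circ \phi^{X_H}_s = \phi^{X_H}_s \circ \varphi$, and consequently $D\varphi\, X_H = X_H \circ \varphi$; combined with the displayed formula these imply that the twisted extension of $X\vert_{[0,1]}$ automatically satisfies $L_{\tau X_H}X = \eta X_H(\gamma)$ on all of $\mathbb{R}$, so the only genuine constraint coming from the twisted boundary condition is the endpoint matching $X(1) = D\varphi(X(0))$. Substituting the formula at $t = 1$ yields $D\phi^{X_H}_{\tau}\bigl(X(0) + \eta X_H(\gamma(0))\bigr) = D\varphi(X(0))$, i.e. $D(\phi^{X_H}_{-\tau}\circ\varphi)X(0) = X(0) + \eta X_H(\gamma(0))$, which is precisely the first equation of \eqref{eq:initial_value_system} with $v_0 = X(0)$.

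It remains to translate the integral condition. Preservation of energy (Corollary \ref{cor:preservation_of_energy}) gives $H \circ \phi^{X_H}_s = H$, whence $dH\vert_{\gamma(t)} \circ D\phi^{X_H}_{\tau t}\vert_{\gamma(0)} = dH\vert_{\gamma(0)}$; together with $dH(X_H) = 0$ this shows $dH\vert_{\gamma(t)}(X(t)) = dH\vert_{\gamma(0)}(X(0))$ is constant in $t$, so $\int_0^1 dH(X) = 0$ is equivalent to $dH(X(0)) = 0$, the second equation of \eqref{eq:initial_value_system}. Thus $(X,\eta) \mapsto (X(0),\eta)$ maps the kernel into $\mathfrak{K}(\gamma,\tau)$; it is manifestly linear, it is injective because $X$ is recovered from $(X(0),\eta)$ by the displayed formula, and it is surjective because for any $(v_0,\eta) \in \mathfrak{K}(\gamma,\tau)$ the same formula defines $X$ on $[0,1]$ with $X(1) = D\varphi(v_0)$, which then extends by twisted periodicity to an element of $T_\gamma\mathscr{L}_\varphi M$ lying in the kernel. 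I expect the only subtle point to be the verification that this twisted extension genuinely solves the differential equation on all of $\mathbb{R}$ — that is, the compatibility between the endpoint matching and the ODE — which is exactly where the hypothesis that $\varphi$ is a symplectomorphism commuting with $\phi^{X_H}$ is used; the rest is bookkeeping.
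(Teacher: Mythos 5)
Your proposal is correct and follows essentially the same route as the paper: transport $X$ back along the Hamiltonian flow to $T_{\gamma(0)}M$, reduce the kernel equations from Corollary \ref{cor:kernel_Hessian} to the constant ODE $\dot{Y} = \eta X_H(\gamma(0))$, integrate, and read off \eqref{eq:initial_value_system} from the twist condition at $t = 1$, with the explicit formula $X(t) = D\phi^{X_H}_{\tau t}(v_0 + t\eta X_H(\gamma(0)))$ providing the inverse. The only difference is that you make explicit the verification (via $\varphi \circ \phi^{X_H}_t = \phi^{X_H}_t \circ \varphi$, $\varphi_* X_H = X_H$ and preservation of energy) that the twisted extension of this formula lies in $T_\gamma \mathscr{L}_\varphi M$ and solves the kernel equations on all of $\mathbb{R}$, a point the paper leaves implicit.
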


\begin{proof}
	We follow \cite[p.~99--100]{frauenfelderkoert:3bp:2018}. Let $(X,\eta) \in \ker \Hess \mathscr{A}^H_\varphi\vert_{(\gamma,\tau)}$ and define
	\begin{equation*}
		v \colon I \to T_{\gamma(0)}M, \qquad v(t) := D\phi_{-\tau t}^{X_H} X(t).
	\end{equation*}
	We claim that
	\begin{equation}
		\ker \Hess \mathscr{A}^H_\varphi\vert_{(\gamma,\tau)} \to \mathfrak{K}(\gamma,\tau), \qquad (X,\eta) \mapsto (v(0),\eta)
		\label{eq:canonical_isomorphism}
	\end{equation}
	\noindent is an isomorphism. First, we show that the above homomorphism is indeed well-defined. The assumption that $(X,\eta)$ lies in the kernel of the Hessian of the twisted Rabinowitz action functional at the critical point $(\gamma,\tau)$ is by Corollary \ref{cor:kernel_Hessian} equivalent to the system
	\begin{equation}
		\label{eq:kernel_of_the_Hessian}
		\dot{v} = \eta X_H(\gamma(0)) \qquad \text{and} \qquad \int_0^1 dH(v) = 0.
	\end{equation}
	Integrating the first equation yields
	\begin{equation*}
		v(t) = v_0 + t\eta X_H(\gamma(0)) \qquad \forall t \in I,
	\end{equation*}
	\noindent with $v_0 :=v(0)$. Thus $(v_0,\eta) \in \mathfrak{K}(\gamma,\tau)$ follows from
	\begin{align}
		\label{eq:twist_condition}
		v(1) &= D\phi_{-\tau}^{X_H} X(1)\nonumber\\
		&= D\phi_{-\tau}^{X_H} D\varphi(X(0))\nonumber\\
		&= D(\phi_{-\tau}^{X_H} \circ \varphi)X(0)\nonumber\\
		&= D(\phi_{-\tau}^{X_H} \circ \varphi)v_0.
	\end{align}
	That \eqref{eq:canonical_isomorphism} is an isomorphism follows by considering the inverse
	\begin{equation*}
		\mathfrak{K}(\gamma,\tau) \to \ker \Hess \mathscr{A}^H_\varphi\vert_{(\gamma,\tau)}, \qquad (v_0,\eta) \mapsto (X,\eta),
	\end{equation*}
	\noindent where $X \in T_\gamma \mathscr{L}_\varphi M$ is defined by
	\begin{equation*}
		X(t) := D\phi^{X_H}_{\tau t}(v_0 + t\eta X_H(\gamma(0))) \qquad \forall t \in \mathbb{R}.
	\end{equation*}
	This establishes the canonical isomorphism \eqref{eq:canonical_isomorphism}.
\end{proof}

Recall, that a strict contactomorphism of a contact manifold $(\Sigma,\alpha)$ is defined to be a diffeomorphism $\varphi \in \Diff(\Sigma)$ such that $\varphi^*\alpha = \alpha$. 

\begin{definition}[Parametrised Twisted Reeb Orbit]
	For a contact manifold $(\Sigma,\alpha)$ and a strict contactomorphism $\varphi \colon (\Sigma,\alpha) \to (\Sigma,\alpha)$ define the set of \bld{parametrised twisted Reeb orbits on $(\Sigma,\alpha)$} by
	\begin{equation*}
		\mathscr{P}_\varphi(\Sigma,\alpha) := \cbr[0]{(\gamma,\tau) \in \mathscr{L}_\varphi \Sigma \times \mathbb{R} : \dot{\gamma}(t) = \tau R(\gamma(t)) \> \forall t \in \mathbb{R}}.
	\end{equation*}
\end{definition}

\begin{definition}[Twisted Spectrum]
	For a contact manifold $(\Sigma,\alpha)$ and a strict contactomorphism $\varphi \colon (\Sigma,\alpha) \to (\Sigma,\alpha)$ define the \bld{twisted spectrum} by
		\begin{equation*}
		\Spec(\Sigma,\alpha) := \cbr[0]{\tau \in \mathbb{R} : \exists \gamma \in \mathscr{L}_\varphi \Sigma \> \text{such that } (\gamma,\tau) \in \mathscr{P}_\varphi(\Sigma,\alpha)}.
	\end{equation*}
\end{definition}

\begin{lemma}
	\label{lem:strict_contactomorphism}
	Let $\varphi \colon(\Sigma,\alpha) \to (\Sigma,\alpha)$ be a strict contactomorphism of a compact contact manifold $(\Sigma,\alpha)$. Then
	\begin{equation*}
		\varphi \circ \phi^R_t = \phi^R_t \circ \varphi \qquad \forall t \in \mathbb{R}.
	\end{equation*}	
\end{lemma}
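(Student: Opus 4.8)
The plan is to show that $\varphi$ intertwines the Reeb vector field with itself, i.e.\ that $R$ is $\varphi$-related to $R$, and then invoke the naturality of integral curves. Since $(\Sigma,\alpha)$ is a contact manifold, it is by definition the stable Hamiltonian manifold $(\Sigma,d\alpha,\alpha)$, so by Definition \ref{def:Reeb} the Reeb vector field $R$ is the unique vector field on $\Sigma$ satisfying $i_R\,d\alpha = 0$ and $i_R\alpha = 1$. The strictness hypothesis $\varphi^*\alpha = \alpha$ gives $\varphi^*d\alpha = d\alpha$, and using the standard identity $i_{\varphi^*X}(\varphi^*\beta) = \varphi^*(i_X\beta)$ one computes
\[
	i_{\varphi^*R}\,d\alpha = i_{\varphi^*R}(\varphi^*d\alpha) = \varphi^*(i_R\,d\alpha) = 0, \qquad i_{\varphi^*R}\alpha = i_{\varphi^*R}(\varphi^*\alpha) = \varphi^*(i_R\alpha) = 1.
\]
By the uniqueness clause of Definition \ref{def:Reeb}, $\varphi^*R = R$, that is, $D\varphi\circ R = R\circ\varphi$.

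Next I would conclude by naturality of integral curves, exactly as in the proof of Lemma \ref{lem:conjugated_flow}. If $c\colon \mathbb{R}\to\Sigma$ is an integral curve of $R$, then for $\widetilde{c} := \varphi\circ c$ we have $\dot{\widetilde{c}}(t) = D\varphi(\dot c(t)) = D\varphi(R(c(t))) = R(\varphi(c(t))) = R(\widetilde{c}(t))$, so $\varphi\circ c$ is again an integral curve of $R$. Since $\Sigma$ is compact, $R$ is complete, so $\phi^R_t$ is a well-defined diffeomorphism of $\Sigma$ for every $t\in\mathbb{R}$; applying the above to $c(t) = \phi^R_t(x)$ and using uniqueness of integral curves with the initial condition $\widetilde{c}(0) = \varphi(x)$ yields $\varphi(\phi^R_t(x)) = \phi^R_t(\varphi(x))$ for all $x\in\Sigma$ and $t\in\mathbb{R}$, which is the claim.

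There is no real obstacle here; the argument is a short uniqueness computation. The only place the compactness hypothesis enters is to guarantee completeness of the Reeb flow, so that the asserted identity $\varphi\circ\phi^R_t = \phi^R_t\circ\varphi$ of globally defined diffeomorphisms makes sense for all $t\in\mathbb{R}$ rather than just locally in $t$.
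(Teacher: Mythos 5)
Your proposal is correct and follows essentially the same route as the paper: first establish $\varphi_*R = R$ (equivalently $\varphi^*R = R$) from $\varphi^*\alpha = \alpha$ via the defining equations $i_R d\alpha = 0$, $\alpha(R) = 1$ and uniqueness of the Reeb vector field, then conclude by uniqueness of integral curves. Your additional remark that compactness of $\Sigma$ guarantees completeness of the Reeb flow is a sensible clarification but does not change the argument.
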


\begin{proof}
	If $\varphi_* R = R$, then we compute
	\begin{equation*}
		\frac{d}{dt} \varphi \circ \phi^R_t = D\varphi \circ R \circ \phi_t^R = \varphi_*R \circ \varphi \circ \phi_t^R= R \circ \varphi \circ \phi_t^R,
	\end{equation*}
	\noindent for all $t \in \mathbb{R}$. To prove $\varphi_*R = R$, just observe that
	\begin{align*}
		i_{\varphi_*R}d\alpha &= \varphi^*d\alpha(R \circ \varphi^{-1}, D\varphi^{-1} \cdot)\\
		&= d\varphi^*\alpha(R \circ \varphi^{-1}, D\varphi^{-1} \cdot)\\
		&= d\alpha(R \circ \varphi^{-1}, D\varphi^{-1} \cdot)\\
		&= \varphi_*(i_R d\alpha)\\
		&= 0,
	\end{align*}
	\noindent and
	\begin{equation*}
		\alpha(\varphi_*R) = \alpha(D\varphi \circ R \circ \varphi^{-1}) = \varphi^*\alpha(R \circ \varphi^{-1}) = \alpha(R \circ \varphi^{-1}) = 1.
	\end{equation*}
	Hence the statement follows from the uniqueness of integral curves.
\end{proof}

\begin{proposition}[Kernel of the Hessian of the Twisted Rabinowitz Action Functional]
	\label{prop:kernel_hessian_contact}
	Let $(\Sigma,\lambda\vert_\Sigma)$ be a regular energy surface of restricted contact type in an exact Hamiltonian system $(M,\lambda,H)$ with $X_H\vert_\Sigma = R$. Suppose $\varphi \in \Diff(M)$ is an exact symplectomorphism such that $H \in C^\infty_\varphi(M)$ and $\varphi^*\lambda\vert_\Sigma = \lambda\vert_\Sigma$. Then
	\begin{equation*}
		\Crit \mathscr{A}^H_\varphi = \mathscr{P}_\varphi(\Sigma,\lambda\vert_\Sigma)
	\end{equation*}
	\noindent and
	\begin{equation*}
		\ker \Hess \mathscr{A}^H_\varphi\vert_{(\gamma,\tau)} \cong \ker \del[1]{D(\phi^R_{-\tau} \circ \varphi)\vert_{\gamma(0)} - \id_{T_{\gamma(0)}\Sigma}}
	\end{equation*}
	\noindent for all $ (\gamma,\tau) \in \mathscr{P}_\varphi(\Sigma,\lambda\vert_\Sigma)$. Moreover, we have $R(\gamma(0)) \in \ker \Hess \mathscr{A}^H_\varphi\vert_{(\gamma,\tau)}$ and if $\mathscr{P}_\varphi(\Sigma,\lambda\vert_\Sigma) \subseteq \Sigma \times \mathbb{R}$ is an embedded submanifold, then $\Spec(\Sigma,\lambda\vert_\Sigma)$ is discrete.	
\end{proposition}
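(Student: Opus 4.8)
The plan is to string together three facts: (i) the description of critical points from Corollary~\ref{cor:critical_points_perturbed_twisted_Rabinowitz_functional}, (ii) the description of the kernel of the Hessian from Lemma~\ref{lem:kernel_of_the_Hessian}, and (iii) the extra rigidity coming from the restricted contact type hypothesis. First I would prove $\Crit \mathscr{A}^H_\varphi = \mathscr{P}_\varphi(\Sigma,\lambda\vert_\Sigma)$: by Corollary~\ref{cor:critical_points_perturbed_twisted_Rabinowitz_functional}, $(\gamma,\tau) \in \Crit \mathscr{A}^H_\varphi$ iff $\gamma(\mathbb{R}) \subseteq H^{-1}(0) = \Sigma$ and $\dot\gamma = \tau X_H(\gamma)$; since $\Sigma$ is of restricted contact type with $X_H\vert_\Sigma = R$, this is exactly $\dot\gamma = \tau R(\gamma)$, i.e. $(\gamma,\tau) \in \mathscr{P}_\varphi(\Sigma,\lambda\vert_\Sigma)$. (One should note here that the hypothesis $\varphi^*\lambda\vert_\Sigma = \lambda\vert_\Sigma$ together with $H \in C^\infty_\varphi(M)$ is what makes $\mathscr{A}^H_\varphi$ well-defined and makes the twisted loop condition compatible with the Reeb dynamics; also $\varphi\vert_\Sigma$ is a strict contactomorphism, so Lemma~\ref{lem:strict_contactomorphism} applies and $\varphi$ commutes with $\phi^R_t$.)

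Next I would identify the kernel of the Hessian. By Lemma~\ref{lem:kernel_of_the_Hessian} there is a canonical isomorphism $\ker \Hess \mathscr{A}^H_\varphi\vert_{(\gamma,\tau)} \cong \mathfrak{K}(\gamma,\tau)$, where $\mathfrak{K}(\gamma,\tau)$ consists of pairs $(v_0,\eta) \in T_{\gamma(0)}M \times \mathbb{R}$ with $D(\phi^{X_H}_{-\tau}\circ\varphi)v_0 = v_0 + \eta X_H(\gamma(0))$ and $dH(v_0) = 0$. The second equation $dH(v_0) = 0$ forces $v_0 \in T_{\gamma(0)}\Sigma$ (as $\Sigma$ is a regular level set), and on $\Sigma$ we have $X_H = R$, so $X_H(\gamma(0)) = R(\gamma(0)) \in T_{\gamma(0)}\Sigma$ and $\phi^{X_H}_{-\tau}\vert_\Sigma = \phi^R_{-\tau}$. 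Hence the defining equation reads $D(\phi^R_{-\tau}\circ\varphi)v_0 - v_0 = \eta R(\gamma(0))$ inside $T_{\gamma(0)}\Sigma$. The key additional input is that $\phi^R_{-\tau}\circ\varphi$ preserves the contact form $\lambda\vert_\Sigma$ (both $\phi^R$ and $\varphi\vert_\Sigma$ are strict contactomorphisms), so $D(\phi^R_{-\tau}\circ\varphi)$ preserves the Reeb line and the coframe component $\lambda\vert_\Sigma$; applying $\lambda_{\gamma(0)}$ to the equation gives $\lambda(D(\phi^R_{-\tau}\circ\varphi)v_0) - \lambda(v_0) = \eta\lambda(R(\gamma(0)))$, i.e. $\lambda(v_0) - \lambda(v_0) = \eta$, forcing $\eta = 0$. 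Thus $\mathfrak{K}(\gamma,\tau) \cong \{v_0 \in T_{\gamma(0)}\Sigma : D(\phi^R_{-\tau}\circ\varphi)v_0 = v_0\} = \ker(D(\phi^R_{-\tau}\circ\varphi)\vert_{\gamma(0)} - \id)$, which is the claimed isomorphism.

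For the remaining assertions: $R(\gamma(0))$ lies in this kernel because $\phi^R_{-\tau}\circ\varphi$ commutes with the Reeb flow (Lemma~\ref{lem:strict_contactomorphism}), hence its differential fixes $R$ — concretely, $D\phi^R_{-\tau}(R) = R$ along the flow and $D\varphi(R) = R$ since $\varphi_*R = R$, so $D(\phi^R_{-\tau}\circ\varphi)R(\gamma(0)) = R(\gamma(0))$ (using $\phi^R_{-\tau}(\varphi(\gamma(0))) = \gamma(0)$, which holds because $(\gamma,\tau)$ is a twisted Reeb orbit: $\gamma(1) = \phi^R_\tau(\gamma(0))$ and $\gamma(1) = \varphi(\gamma(0))$). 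Finally, for discreteness of $\Spec(\Sigma,\lambda\vert_\Sigma)$ when $\mathscr{P}_\varphi(\Sigma,\lambda\vert_\Sigma)$ is an embedded submanifold of $\Sigma \times \mathbb{R}$: the action value (or here simply the $\tau$-coordinate, the period) is locally constant on connected components of the critical manifold because $R(\gamma(0))$ always lies in the kernel of the Hessian, so the $\tau$-derivative of the action is controlled; more directly, $\Spec$ is the image of $\mathscr{P}_\varphi$ under projection to the $\mathbb{R}$-factor, and since every critical point sits in a component along which $\tau$ is constant (the $\mathbb{R}$-reparametrisation action moves within a component without changing $\tau$), the image is a countable set of isolated points — here I would invoke that an embedded submanifold of $\Sigma\times\mathbb{R}$ that is invariant under the reparametrisation $\mathbb{R}$-action has, on each compact piece, only finitely many period values, and compactness of $\Sigma$ bounds things. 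The main obstacle I anticipate is the $\eta = 0$ step and the discreteness argument: the former requires being careful that $D(\phi^R_{-\tau}\circ\varphi)$ genuinely preserves $\lambda\vert_\Sigma$ pointwise at $\gamma(0)$ (it does, as a composition of strict contactomorphism differentials along the relevant points), and the latter requires a clean statement relating the embedded-submanifold hypothesis to local constancy of the period, which is essentially the standard fact that action is locally constant on Morse–Bott critical manifolds of $\mathscr{A}^H_\varphi$.
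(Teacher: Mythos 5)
Your proposal is correct and follows essentially the same route as the paper: the identification $\Crit \mathscr{A}^H_\varphi = \mathscr{P}_\varphi(\Sigma,\lambda\vert_\Sigma)$ via Corollary \ref{cor:critical_points_perturbed_twisted_Rabinowitz_functional}, the reduction of the kernel to the finite-dimensional fixed-point problem via Lemma \ref{lem:kernel_of_the_Hessian}, and the strict-contactomorphism property of $\phi^R_{-\tau} \circ \varphi$ (Lemma \ref{lem:strict_contactomorphism}) to kill $\eta$ and to see $R(\gamma(0))$ in the kernel. Your way of forcing $\eta = 0$ --- applying $\lambda_{\gamma(0)}$ to the equation $D(\phi^R_{-\tau}\circ\varphi)v_0 = v_0 + \eta R(\gamma(0))$ and using $(\phi^R_{-\tau}\circ\varphi)^*\lambda\vert_\Sigma = \lambda\vert_\Sigma$ together with $(\phi^R_{-\tau}\circ\varphi)(\gamma(0)) = \gamma(0)$ --- is an equivalent repackaging of the paper's argument, which instead decomposes $v_0 = v_0^\xi + aR(\gamma(0))$ along $T\Sigma = \xi \oplus \langle R \rangle$ and compares components; both hinge on exactly the same invariances.

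The one place to tighten is the discreteness of the twisted spectrum. You gesture at local constancy of $\tau$ but then invoke compactness of $\Sigma$ and ``finitely many period values on compact pieces''; compactness is not among the hypotheses and is not needed. The paper's argument is a one-line computation you should reproduce: for any path $(\gamma_s,\tau_s)$ inside $\mathscr{P}_\varphi(\Sigma,\lambda\vert_\Sigma) = \Crit \mathscr{A}^H_\varphi$, the period--action equality of Remark \ref{rem:period-action_equality} gives $\tau_s = \mathscr{A}^H_\varphi(\gamma_s,\tau_s)$, hence $\partial_s \tau_s = d\mathscr{A}^H_\varphi\vert_{(\gamma_s,\tau_s)}(\partial_s\gamma_s,\partial_s\tau_s) = 0$ because every point of the path is critical; so $\tau$ is constant on each path component, and an embedded submanifold has only countably many components, which is what the paper uses to conclude. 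Your stronger claim that the period values are isolated does not follow from the embedded-submanifold hypothesis alone, so stick to the constancy-plus-countability argument.
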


\begin{remark}
	\label{rem:period-action_equality}
	If $(\gamma,\tau) \in \mathscr{P}_\varphi(\Sigma,\lambda\vert_\Sigma)$, we have the period-action equality
	\begin{equation*}
		\mathscr{A}^H_\varphi(\gamma,\tau) = \int_0^1 \gamma^* \lambda = \int_0^1 \lambda(\dot{\gamma}) = \tau \int_0^1 \lambda(R(\gamma)) = \tau.
	\end{equation*}	
\end{remark}

\begin{proof}
	The identity $\Crit \mathscr{A}^H_\varphi = \mathscr{P}_\varphi(\Sigma,\lambda\vert_\Sigma)$ immediately follows from Corollary \ref{cor:critical_points_perturbed_twisted_Rabinowitz_functional} together with \cite[Corollary~5.30]{lee:dt:2012}. The proof of the formula for the kernel of the Hessian of $\mathscr{A}^H_\varphi$ is inspired by \cite[p.~102]{frauenfelderkoert:3bp:2018}. By Lemma \ref{lem:kernel_of_the_Hessian} we have that
	\begin{equation*}
		\ker \Hess \mathscr{A}^H_\varphi\vert_{(\gamma,\tau)} \cong \mathfrak{K}(\gamma,\tau),
	\end{equation*}
	\noindent where $(v_0,\eta) \in T_{\gamma(0)} M \times \mathbb{R}$ belongs to $\mathfrak{K}(\gamma,\tau)$ if and only if
	\begin{equation*}
		D(\phi^{X_H}_{-\tau} \circ \varphi)v_0 = v_0 + \eta X_H(\gamma(0)) \qquad \text{and} \qquad dH(v_0) = 0.
	\end{equation*}
	Thus in our setting, the second condition implies $v_0 \in T_{\gamma(0)}\Sigma$. Decompose
	\begin{equation*}
		v_0 = v^\xi_0 + a R(\gamma(0)) \qquad v^\xi_0 \in \xi_{\gamma(0)}, a \in \mathbb{R},
	\end{equation*}
	\noindent where $\xi := \ker \lambda\vert_\Sigma$ denotes the contact distribution. Then we compute
	\begin{align*}
		D(\phi^R_{-\tau} \circ \varphi)R(\gamma(0)) &= D(\phi^R_{-\tau} \circ \varphi)\del[3]{\frac{d}{dt}\bigg\vert_{t = 0} \phi^R_t(\gamma(0))}\\
		&= \frac{d}{dt}\bigg\vert_{t = 0} (\phi^R_{-\tau} \circ \varphi \circ \phi^R_t)(\gamma(0))\\
		&= \frac{d}{dt}\bigg\vert_{t = 0} (\phi^R_t \circ \varphi \circ \phi^R_{-\tau})(\gamma(0))\\
		&= \frac{d}{dt}\bigg\vert_{t = 0} \phi^R_t(\gamma(0))\\
		&= R(\gamma(0)),
	\end{align*}
	\noindent using Lemma \ref{lem:strict_contactomorphism}. Hence
	\begin{equation*}
		v_0 + \eta R(\gamma(0)) = D(\phi^R_{-\tau} \circ \varphi)v_0 = D^\xi(\phi^R_{-\tau} \circ \varphi)v_0^\xi + aR(\gamma(0)),
	\end{equation*}
	\noindent where 
	\begin{equation*}
		D^\xi(\phi^R_{-\tau} \circ \varphi) := D(\phi^R_{-\tau} \circ \varphi)\vert_\xi \colon \xi \to \xi,
	\end{equation*}
	\noindent implies 
	\begin{equation*}
		\eta = 0 \qquad \text{and} \qquad D^\xi(\phi^R_{-\tau} \circ \varphi)v_0^\xi = v_0^\xi
	\end{equation*}
	\noindent by considering the splitting $T\Sigma = \xi \oplus \langle R \rangle$. Consequently
	\begin{equation*}
		\mathfrak{K}(\gamma,\tau) = \ker \del[1]{D(\phi^R_{-\tau} \circ \varphi)\vert_{\gamma(0)} - \id_{T_{\gamma(0)}\Sigma}} \times \cbr[0]{0}.
	\end{equation*}

	Finally, assume that $\mathscr{P}_\varphi(\Sigma,\lambda\vert_\Sigma) \subseteq \Sigma \times \mathbb{R}$ is an embedded submanifold via the obvious identification of $(\gamma,\tau) \in \mathscr{P}_\varphi(\Sigma,\lambda\vert_\Sigma)$ with $(\gamma(0),\tau) \in \Sigma \times \mathbb{R}$. Fix a path $(\gamma_s,\tau_s)$ in $\mathscr{P}_\varphi(\Sigma,\lambda\vert_\Sigma) = \Crit \mathscr{A}^H_\varphi$ from $(\gamma_0,\tau_0)$ to $(\gamma_1,\tau_1)$. Using Remark \ref{rem:period-action_equality} we compute
	\begin{equation*}
		\partial_s \tau_s = \partial_s \mathscr{A}^H_\varphi(\gamma_s,\tau_s) = d\mathscr{A}^H_\varphi\vert_{(\gamma_s,\tau_s)}(\partial_s\gamma_s,\partial_s \tau_s) = 0,
	\end{equation*}
	\noindent implying that $\tau_s$ is constant, and in particular $\tau_0 = \tau_1$. Consequently, $\mathscr{A}^H_\varphi$ is constant on each path-connected component of $\mathscr{P}_\varphi(\Sigma,\lambda\vert_\Sigma)$. As $\mathscr{P}_\varphi(\Sigma,\lambda\vert_\Sigma)$ is a submanifold of $\Sigma \times \mathbb{R}$, there are only countably many connected components by definition, implying that $\Spec(\Sigma,\lambda\vert_\Sigma)$ is discrete.
\end{proof}

\subsection{Compactness of the Moduli Space of Twisted Negative Gradient Flow Lines}
\label{sec:compactness}

\begin{definition}[Twisted Defining Hamiltonian Function]
	Let $(W,\lambda)$ be a Liouville domain with boundary $\Sigma$ and $\varphi \in \Aut(W,\lambda)$. A \bld{twisted defining Hamiltonian function for $\Sigma$} is a Hamiltonian function $H \in C^\infty(M)$ on the completion $(M,\lambda)$ of $(W,\lambda)$, satisfying the following conditions:
	\begin{enumerate}[label=\textup{(\roman*)}]
		\item $H^{-1}(0) = \Sigma$ and $\Sigma \cap \Crit H = \varnothing$.
		\item $H \in C^\infty_\varphi(M)$. 
		\item $dH$ is compactly supported.
		\item $X_H\vert_\Sigma = R$ is the Reeb vector field of the contact form $\lambda\vert_\Sigma$.
	\end{enumerate}
	Denote by $\mathscr{F}_\varphi(\Sigma)$ the set of twisted defining Hamiltonian functions for $\Sigma$.
\end{definition}

\begin{remark}
	A necessary condition for $\mathscr{F}_\varphi(\Sigma) \neq \emptyset$ is that $\varphi^*R = R$. This is not true in general if $\varphi$  does not induce a strict contactomorphism on $\Sigma$.
\end{remark}

\begin{definition}[Adapted Almost Complex Structure]
	Let $(W,\lambda)$ be a Liouville domain with boundary $\Sigma$. An \bld{adapted almost complex structure on $(W,\lambda)$} is a $d\lambda$-compatible almost complex structure $J$ on $(W,\lambda)$ such that $J$ restricts to define a compatible almost complex structure on the contact distribution $\ker \lambda\vert_\Sigma$ and $JR = \partial_r$ holds near the boundary.
\end{definition}

\begin{definition}[Rabinowitz--Floer Data]
	Let $(M,\lambda)$ be the completion of a Liouville domain $(W,\lambda)$ with boundary $\Sigma$ and $\varphi \in \Aut(W,\lambda)$. \bld{Rabinowitz--Floer data for $\varphi$} is defined to be a pair $(H,J)$ consisting of a twisted defining Hamiltonian function $H \in \mathscr{F}_\varphi(\Sigma)$ for $\Sigma$ and an adapted almost complex structure $J$ on $(W,\lambda)$ such that $\varphi^*J = J$.
\end{definition}

\begin{lemma}
	\label{lem:defining_Hamiltonian}
	Let $(W,\lambda)$ be a Liouville domain and $\varphi \in \Aut(W,\lambda)$. Then there exists Rabinowitz--Floer data for $\varphi$.	
\end{lemma}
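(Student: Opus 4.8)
The plan is to build the two ingredients of Rabinowitz--Floer data separately, using throughout that $f_\varphi$ is compactly supported in $\Int W$, so that $\varphi^*\lambda = \lambda$ on a neighbourhood of $\Sigma := \partial W$ in $W$. Consequently the Liouville vector field satisfies $\varphi_*X = X$ near $\Sigma$, so $\varphi$ commutes with the Liouville flow $\phi^X$ there. Choosing the Liouville collar identifies a neighbourhood of $\Sigma$ in the completion $M = W \cup_\Sigma \intco[0]{0,+\infty}\times\Sigma$ with $\del[1]{\intoo[0]{-\varepsilon,+\infty}\times\Sigma, d(e^r\lambda\vert_\Sigma)}$, and $\varphi$ (extended to $M$ by $(r,x)\mapsto(r,\varphi\vert_\Sigma(x))$ on the cylindrical end) has the form $(r,x)\mapsto(r,\overline{\varphi}(x))$ on this neighbourhood, where $\overline{\varphi} := \varphi\vert_\Sigma$ is a strict contactomorphism; in particular $\varphi$ preserves $r$, $\lambda\vert_\Sigma$, the Reeb field $R$ and the splitting $TM = \langle\partial_r\rangle\oplus\langle R\rangle\oplus\xi$ near $\Sigma$, where $\xi := \ker\lambda\vert_\Sigma$.

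For the Hamiltonian, pick a smooth monotone $h\colon\intoo[0]{-\varepsilon,+\infty}\to\mathbb{R}$ that equals a negative constant near $-\varepsilon$, satisfies $h(0)=0$ and $h'(0)=1$, and equals a positive constant for $r$ large. Set $H := h(r)$ on the collar and extend $H$ by the corresponding constants over $W$ and over the cylindrical end. Then $H^{-1}(0)=\Sigma$, $dH = h'(r)\,dr$ is compactly supported and nonvanishing along $\Sigma$ (so $\Sigma\cap\Crit H=\emptyset$), and a direct computation on the symplectisation gives $X_H = e^{-r}h'(r)R$ near $\Sigma$, hence $X_H\vert_\Sigma = R$. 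Since $H$ depends only on $r$ near $\Sigma$ and is locally constant elsewhere, while $\varphi$ preserves $r$ near $\Sigma$ and the relevant level sets elsewhere, we get $\varphi^*H = H$, so $H\in\mathscr{F}_\varphi(\Sigma)$.

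For the almost complex structure, let $m := \ord\varphi<\infty$. Averaging an arbitrary $d\lambda\vert_\xi$-compatible complex structure on $\xi$ over the cyclic group generated by $\overline{\varphi}$ at the level of the associated metrics, and then applying the canonical retraction back to a compatible complex structure, yields a $\overline{\varphi}$-invariant $d\lambda\vert_\xi$-compatible complex structure $J_\xi$. Let $J_0$ be the adapted almost complex structure near $\Sigma$ determined by $J_\xi$ together with $J_0 R = \partial_r$; by the equivariance recorded above, the metric $g_0 := d\lambda(J_0\cdot,\cdot)$ is $\varphi$-invariant on a neighbourhood of $\Sigma$. Extend $J_0$ to an adapted almost complex structure on all of $W$ (possible since adapted structures with prescribed behaviour near $\Sigma$ form a nonempty, fibrewise contractible space), keep the notation $g_0 = d\lambda(J_0\cdot,\cdot)$ globally, and average: $g := \frac{1}{m}\sum_{k=0}^{m-1}(\varphi^k)^*g_0$ is a $\varphi$-invariant Riemannian metric on $W$. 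The canonical (polar-decomposition) construction of a $d\lambda$-compatible almost complex structure from $(d\lambda,g)$ then produces $J$ with $\varphi^*J = J$; and near $\Sigma$ we have $g = g_0$, so this construction returns $J_0$ there, whence $J$ is adapted. Thus $(H,J)$ is Rabinowitz--Floer data for $\varphi$.

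The main obstacle is the almost complex structure: one cannot average the $J$'s directly, since a convex combination of compatible almost complex structures need not square to $-\id$. The standard remedy is to average the associated metrics and apply the canonical retraction, as in the proof of \cite[Lemma~5.5.6]{mcduffsalamon:st:2017}; the extra care needed here is to arrange the adapted normal form near $\Sigma$ to be already $\varphi$-invariant \emph{before} averaging, so that the retraction preserves the conditions $J\vert_\xi$ compatible and $JR=\partial_r$ — which is why one first makes $J_\xi$ equivariant by the unobstructed finite-group average on the contact distribution.
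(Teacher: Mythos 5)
Your proposal is correct and follows essentially the same route as the paper: the Hamiltonian is a cut-off function of the Liouville collar coordinate, using that $f_\varphi$ vanishes near $\partial W$ so that $\varphi$ commutes with the Liouville flow and preserves the collar, and the invariant adapted $J$ comes from the standard average-the-metrics-then-retract trick for a finite-order symplectomorphism (cf.\ \cite[Lemma~5.5.6]{mcduffsalamon:st:2017}). The only cosmetic difference is in the last step: the paper chooses $\varphi$-invariant pieces ($J^\Sigma$ on $\xi$, extended SFT-like over the collar, and an invariant compatible structure on $W\setminus\Sigma$) and glues the associated metrics by a partition of unity, whereas you extend the boundary-adapted structure arbitrarily and then average the metric globally before retracting; both are valid, and your observation that the retraction returns $J_0$ near $\Sigma$ (since $g=g_0$ there) correctly preserves adaptedness.
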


\begin{proof}
	The construction of the twisted defining Hamiltonian $H$ for $\Sigma$ is inspired by the proof of \cite[Proposition~4.1]{cieliebakfrauenfelderpaternain:mane:2010}. Fix $\delta > 0$ such that the exact symplectic embedding
	\begin{equation*}
		\psi \colon \del[1]{\intoc[0]{-\delta,0} \times \Sigma,e^r\lambda\vert_\Sigma} \hookrightarrow (W,\lambda)
	\end{equation*}
	\noindent defined by
	\begin{equation*}
		\psi(r,x) := \phi^X_r(x)
	\end{equation*}
	\noindent satisfies
	\begin{equation}
		\label{eq:assumption_delta}
		U_\delta := \psi(\intoc[0]{-\delta,0} \times \Sigma) \cap \supp f_\varphi = \emptyset.
	\end{equation}
	Indeed, that $\psi$ is an exact symplectic embedding follows from the computation 
	\begin{align*}
		\frac{d}{dr} \psi^*_r \lambda &= \frac{d}{dr}\del[1]{\phi_r^X}^* \lambda\\
		&= (\phi^X_r)^*L_X\lambda\\
		&= \del[1]{\phi_r^X}^*(di_X\lambda + i_Xd\lambda)\\
		&= \del[1]{\phi_r^X}^*(di_Xi_Xd\lambda + \lambda)\\
		&= \del[1]{\phi_r^X}^* \lambda\\
		&= \psi^*_r \lambda
	\end{align*}
	\noindent implying
	\begin{equation*}
		\psi^*_r\lambda = e^r\lambda\vert_\Sigma \qquad \forall r \in \intoc[0]{-\delta,0},
	\end{equation*}
	\noindent by $\psi_0 = \iota_\Sigma$, where $\iota_\Sigma \colon \Sigma \hookrightarrow W$ denotes the inclusion. Note that $\psi^*_rX = \partial_r$. We claim  
	\begin{equation}
		\label{eq:phi-invariance}
		\varphi(\psi(r,x)) = \psi(r,\varphi(x)) \qquad \forall (r,x) \in \intoc[0]{-\delta,0} 	\times \Sigma,
	\end{equation}
	\noindent that is, $\varphi$ and $\psi$ commute. Note that \eqref{eq:phi-invariance} is well-defined since $\varphi(\Sigma) = \Sigma$ by assumption. Indeed, \eqref{eq:phi-invariance} follows from the uniqueness of integral curves and the computation
	\begin{align*}
		\frac{d}{dr}\varphi(\psi(r,x)) &= \frac{d}{dr}\varphi(\phi_r^X(x))\\
		&= D\varphi(X(\phi_r^X(x)))\\
		&= (D\varphi \circ X\vert_{U_\delta} \circ \varphi^{-1} \circ \varphi)(\phi_r^X(x))\\
		&= (\varphi_*X\vert_{\varphi(U_\delta)} \circ \varphi)(\phi_r^X(x))\\
		&= (X\vert_{\varphi(U_\delta)} \circ \varphi)(\phi_r^X(x))\\
		&= X(\varphi(\psi(r,x)))
	\end{align*}
	\noindent where we used the $\varphi$-invariance of the Liouville vector field on $U_\delta$, that is
	\begin{equation*}
		 \varphi_*X\vert_{\varphi(U_\delta)} = X\vert_{\varphi(U_\delta)},
	\end{equation*}
	\noindent which in turn follows from
	\allowdisplaybreaks
	\begin{align*}
		i_{\varphi_*X}d\lambda &= d\lambda(\varphi_*X,\cdot)\\
		&= d\lambda(D\varphi \circ X \circ \varphi^{-1},\cdot)\\
		&= d\lambda\del[1]{D\varphi \circ X \circ \varphi^{-1}, D\varphi \circ D\varphi^{-1}\cdot}\\
		&= \varphi^*d\lambda(X \circ \varphi^{-1}, D\varphi^{-1} \cdot)\\
		&= d\varphi^*\lambda(X \circ \varphi^{-1}, D\varphi^{-1} \cdot)\\
		&= d\lambda(X \circ \varphi^{-1}, D\varphi^{-1} \cdot)\\
		&= \varphi_*(i_Xd\lambda)\\
		&= \varphi_*\lambda\\
		&= \lambda - d(f_\varphi \circ \varphi^{-1})
	\end{align*}
	\noindent and assumption \eqref{eq:assumption_delta}.

	Next we construct the defining Hamiltonian $H \in C^\infty(M)$. Let $h \in C^\infty(\mathbb{R})$ be a sufficiently small mollification of the piecewise linear function
	\begin{equation*}
		h(r) := \begin{cases}
			r & r \in \intcc[1]{-\frac{\delta}{2},\frac{\delta}{2}},\\
			\frac{\delta}{2} & r \in \intco[1]{\frac{\delta}{2},+\infty},\\
			-\frac{\delta}{2} & r \in \intoc[1]{-\infty,-\frac{\delta}{2}},
		\end{cases}
	\end{equation*}
	\noindent as in Figure \ref{fig:h}. 

\begin{figure}[h!tb]
	\centering
	\includegraphics[width=.41\textwidth]{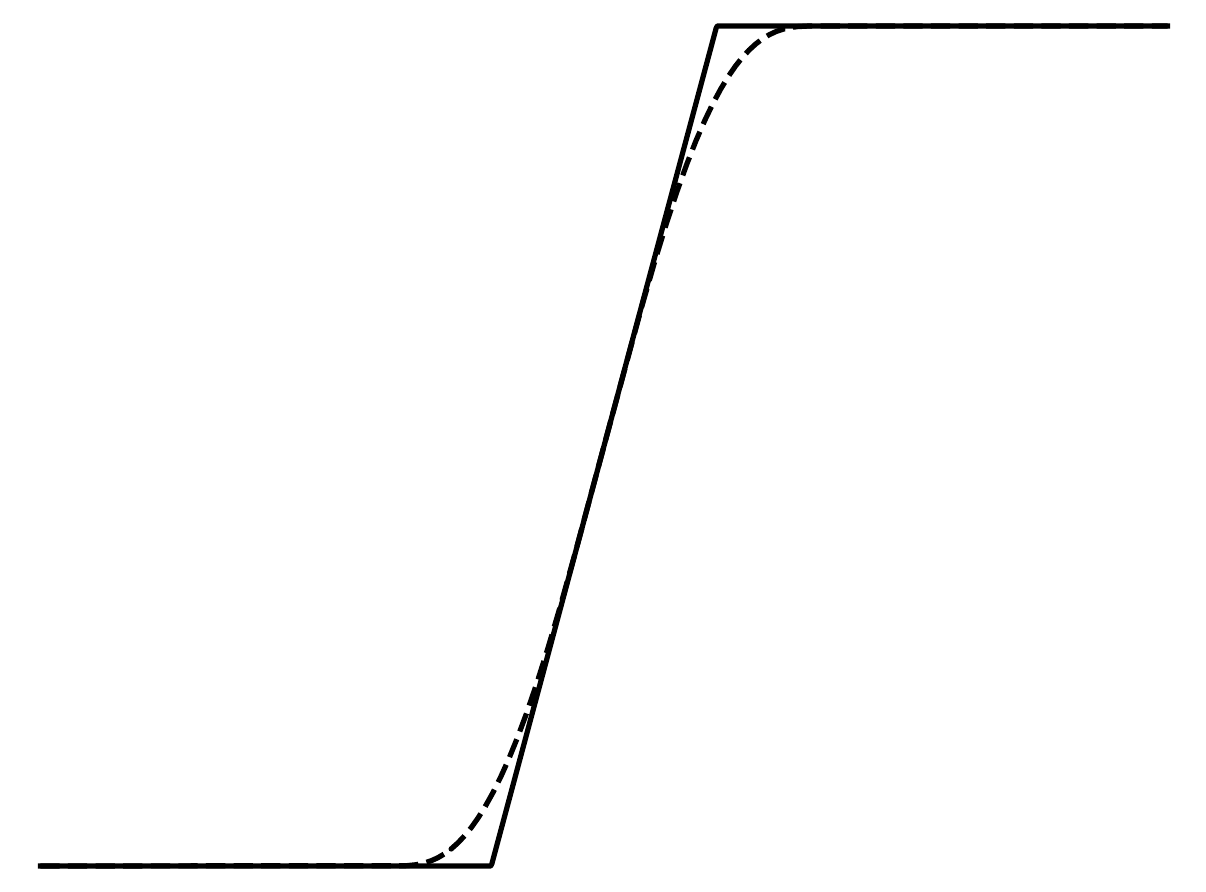}
	\caption{Mollification of the piecewise linear function $h$.}
	\label{fig:h}
\end{figure}

	Define $H \in C^\infty(M)$ by
	\begin{equation}
		\label{eq:modified_Hamiltonian}
		H(p) := \begin{cases}
			h(r) & p = \psi(r,x) \in U_\delta,\\
			h(r) & p = (r,x) \in \intco[0]{0,+\infty} \times \Sigma,\\
			-\frac{\delta}{2} & p \in W \setminus U_\delta.
		\end{cases}
	\end{equation}
	Then $H$ is a defining Hamiltonian for $\Sigma$ and $dH$ is compactly supported by construction. Moreover, $H$ is $\varphi$-invariant by \eqref{eq:phi-invariance}. Finally, $X_H\vert_\Sigma = R$ follows from the observation $X_H = h'(r)e^{-r}R$. Indeed, on $U_\delta$ we compute
	\begin{align*}
		i_{h'(t)e^{-r}R}\psi^*d\lambda &= i_{h'(r)e^{-r}R} d(e^r\lambda\vert_\Sigma)\\
		&= i_{h'(r)e^{-r}R}(e^rdr \wedge \lambda\vert_\Sigma + e^r d\lambda\vert_\Sigma)\\
		&= -h'(r)dr\\
		&= -dH.
	\end{align*}
	
	Next we construct the adapted almost complex structure $J$. Fix a $\varphi$-invariant compatible almost complex structure $J^\Sigma$ on the contact distribution $\ker \lambda\vert_\Sigma$. Extend this family to $(\intoo[0]{-\delta,+\infty} \times \Sigma, d(e^r\lambda\vert_\Sigma))$ by setting
\begin{equation}
	\label{eq:SFT-like}
	J^\Sigma\vert_{(a,x)}(b,v) := \del[1]{\lambda_x(v),  J^\Sigma\vert_x(\pi(v)) - bR(x)},
\end{equation}
\noindent where $\pi \colon \ker \lambda\vert_\Sigma \oplus \langle R \rangle \to \ker \lambda\vert_\Sigma$ denotes the projection. Choose a $\varphi$-invariant $d\lambda$-compatible almost complex structure $J^{W \setminus \Sigma}$ on $W \setminus \Sigma$ and let $\cbr[1]{\beta^\Sigma,\beta^{W \setminus \Sigma}}$ be a partition of unity subordinate to the open cover $\cbr[0]{U_\delta,W \setminus \Sigma}$ of $W$. The compatible almost complex structure $J$ associated with the Riemannian metric
\begin{equation*}
	m_J(\cdot,\cdot) := \beta^\Sigma d\lambda(J^\Sigma\cdot,\cdot) + \beta^{W \setminus \Sigma}d\lambda(J^{W \setminus \Sigma}\cdot,\cdot)
\end{equation*}
\noindent on $W$ is adapted.
\end{proof}

\begin{definition}[\bld{$L^2$}-Metric]
	Let $(H,J)$ be Rabinowitz--Floer data for a Liouville automorphism $\varphi \in \Aut(W,\lambda)$. Define an \bld{$L^2$-metric} on $\mathscr{L}_\varphi M \times \mathbb{R}$
	\begin{equation}
        \label{eq:L^2-metric}
        \langle (X,\eta),(Y,\sigma)\rangle_J := \int_0^1 d\lambda(JX(t),Y(t))dt + \eta\sigma
	\end{equation}
	\noindent for all $(X,\eta),(Y,\sigma) \in T_\gamma\mathscr{L}_\varphi M \times \mathbb{R}$ and $\gamma \in \mathscr{L}_\varphi M$.
\end{definition}

With respect to the $L^2$-metric \eqref{eq:L^2-metric}, the gradient of the twisted Rabinowitz action functional $\grad_J\mathscr{A}^H_\varphi \in \mathfrak{X}(\mathscr{L}_\varphi M \times \mathbb{R})$ is given by
\begin{equation*}
	\grad_J \mathscr{A}^H_\varphi\vert_{(\gamma,\tau)}(t) = \begin{pmatrix}
		J(\dot{\gamma}(t) - \tau X_H(\gamma(t)))\\
		\displaystyle-\int_0^1 H \circ \gamma
	\end{pmatrix} \qquad \forall t \in \mathbb{R}.
\end{equation*}

\begin{lemma}[Fundamental Lemma]
	\label{lem:fundamental_lemma}
	Let $(H,J)$ be Rabinowitz--Floer data for a Liouville automorphism $\varphi \in \Aut(W,\lambda)$ of a Liouville domain $(W,\lambda)$. Then there exists a constant $C = C(\lambda,H,J,f_\varphi) > 0$ such that
	\begin{equation*}
		\norm[0]{\grad_J \mathscr{A}^H_\varphi\vert_{(\gamma,\tau)}}_J < \frac{1}{C} \quad \Rightarrow \quad \abs[0]{\tau} \leq C(\abs[0]{\mathscr{A}^H_\varphi(\gamma,\tau)} + 1)
	\end{equation*}
	\noindent for all $(\gamma,\tau) \in \mathscr{L}_\varphi M \times \mathbb{R}$.	
\end{lemma}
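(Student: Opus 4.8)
The plan is to adapt the standard proof of the Fundamental Lemma in Rabinowitz--Floer homology (compare the untwisted case in \cite{cieliebakfrauenfelder:rfh:2009}) to the present situation, the only genuinely new input being the $\varphi$-twist. First I would unpack the hypothesis. By the explicit formula for $\grad_J \mathscr{A}^H_\varphi$ recorded above and the definition \eqref{eq:L^2-metric} of the $L^2$-metric, writing $\varepsilon := 1/C$, the bound $\norm[0]{\grad_J \mathscr{A}^H_\varphi\vert_{(\gamma,\tau)}}_J < \varepsilon$ means precisely
\begin{equation*}
	\int_0^1 \abs[0]{\dot{\gamma}(t) - \tau X_H(\gamma(t))}_J^2\,dt < \varepsilon^2 \qquad \text{and} \qquad \left\vert \int_0^1 H(\gamma(t))\,dt \right\vert < \varepsilon.
\end{equation*}
The key preliminary observation is that, since $H \in C^\infty_\varphi(M)$ is autonomous, $\varphi^*H = H$, hence $H(\gamma(t+1)) = H(\varphi(\gamma(t))) = H(\gamma(t))$: the function $t \mapsto H(\gamma(t))$ is genuinely $1$-periodic along any twisted loop. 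Moreover $dH(X_H) = 0$, so $\frac{d}{dt}H(\gamma(t)) = dH_{\gamma(t)}(\dot{\gamma}(t) - \tau X_H(\gamma(t)))$.

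The heart of the argument is to show that the gradient bound confines $\gamma$ to an arbitrarily thin collar of $\Sigma$. By Cauchy--Schwarz and the preceding observation, $\int_0^1 \abs[0]{\frac{d}{dt}H(\gamma(t))}\,dt \leq C_1 \varepsilon$ with $C_1 := \sup_M \abs[0]{dH}_J < \infty$ (finite because $dH$ is compactly supported), so the total variation of $H \circ \gamma$ over one period is at most $C_1 \varepsilon$; combined with the integral mean-value estimate $\abs[0]{H(\gamma(t^*))} < \varepsilon$ for some $t^* \in \intcc[0]{0,1}$ this gives $\abs[0]{H(\gamma(t))} < (C_1 + 1)\varepsilon$ for all $t$. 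By the construction of $H$ in \eqref{eq:modified_Hamiltonian}, for $\varepsilon$ small enough the sublevel set $\cbr[0]{\abs[0]{H} < (C_1+1)\varepsilon}$ is a thin collar $\cbr[0]{\abs[0]{r} < \rho}$ of $\Sigma$ with $0 < \rho \le \delta/2$ on which $h' \equiv 1$; there $X_H = e^{-r}R$ and $\lambda(X_H) = h'(r) \equiv 1$, while $\abs[0]{\lambda}$ and $\abs[0]{X_H}_J$ are bounded on it (its closure being diffeomorphic to the compact piece $\intcc[0]{-\rho,\rho} \times \Sigma$). Note also that this collar is disjoint from $\supp f_\varphi$ by \eqref{eq:assumption_delta}, so $f_\varphi(\gamma(0)) = 0$.

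With $\gamma(\mathbb{R})$ confined to this collar, I would finish with the period--action estimate. Splitting $\dot{\gamma} = (\dot{\gamma} - \tau X_H(\gamma)) + \tau X_H(\gamma)$ and integrating,
\begin{equation*}
	\int_0^1 \gamma^*\lambda = \tau \int_0^1 \lambda(X_H(\gamma))\,dt + \int_0^1 \lambda(\dot{\gamma} - \tau X_H(\gamma))\,dt = \tau + R, \qquad \abs[0]{R} \leq c_2 \varepsilon,
\end{equation*}
where $c_2$ bounds $\abs[0]{\lambda}$ on the collar and we used Cauchy--Schwarz; on the other hand, from Definition \ref{def:perturbed_twisted_Rabinowitz_functional} together with $f_\varphi(\gamma(0)) = 0$ we get $\int_0^1 \gamma^*\lambda = \mathscr{A}^H_\varphi(\gamma,\tau) + \tau\int_0^1 H(\gamma)\,dt$. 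Comparing the two expressions and using $\left\vert\int_0^1 H(\gamma)\,dt\right\vert < \varepsilon$ yields $(1 - \varepsilon)\abs[0]{\tau} \leq \abs[0]{\mathscr{A}^H_\varphi(\gamma,\tau)} + c_2\varepsilon$. Since $C_1$, $c_2$ and $\rho$ depend only on $(\lambda, H, J, f_\varphi)$, it then suffices to choose $C$ large enough that $\varepsilon = 1/C$ satisfies all of $(C_1 + 1)\varepsilon < \rho$, $\varepsilon \leq 1/2$, $C \geq 2$ and $C \geq 2c_2$; this gives $\abs[0]{\tau} \leq 2\abs[0]{\mathscr{A}^H_\varphi(\gamma,\tau)} + 1 \leq C(\abs[0]{\mathscr{A}^H_\varphi(\gamma,\tau)} + 1)$, as desired.

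I expect the confinement step to be the main obstacle, as this is where the geometry really enters: a priori $\lambda$ grows like $e^r$ on the cylindrical end and $\lambda(X_H)$ can vanish, so the period--action identity by itself controls nothing. What makes the argument work is the $C^0$-bound on $H \circ \gamma$, and this in turn relies on two features of the twisted defining Hamiltonian that must be used carefully: its $\varphi$-invariance (responsible for the $1$-periodicity of $H \circ \gamma$) and the identity $dH(X_H) = 0$. One should also verify that the mollification defining $h$ can be taken fine enough that $\cbr[0]{\abs[0]{H} < \rho}$ is exactly the collar $\cbr[0]{\abs[0]{r} < \rho}$ and in particular avoids both $W \setminus U_\delta$ and $\supp f_\varphi$.
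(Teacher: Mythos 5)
Your overall strategy is the same as the paper's: use the smallness of the gradient to confine $\gamma$ to a thin neighbourhood of $\Sigma$, and then run the period--action estimate there. Your confinement step is in fact a streamlined variant of the paper's Step 2: instead of the case analysis with the crossing estimate through the shell $\frac{\delta}{2}\leq\abs[0]{H}\leq\delta$, you bound the oscillation of $H\circ\gamma$ by $\sup_M\abs[0]{dH}_J$ times the $L^1$-norm of $\dot{\gamma}-\tau X_H(\gamma)$ and combine this with a point $t^*$ where $\abs[0]{H(\gamma(t^*))}<\varepsilon$; this is correct and uses exactly the ingredients that make the paper's argument work ($dH(X_H)=0$, compact support of $dH$, and the $1$-periodicity of $H\circ\gamma$ coming from $\varphi$-invariance).

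Where your write-up falls short of the statement is the second half. The lemma is asserted for \emph{arbitrary} Rabinowitz--Floer data $(H,J)$, i.e.\ for any $H\in\mathscr{F}_\varphi(\Sigma)$, but you invoke the explicit model \eqref{eq:modified_Hamiltonian} from Lemma \ref{lem:defining_Hamiltonian} to identify $\cbr[0]{\abs[0]{H}<(C_1+1)\varepsilon}$ with a round collar $\cbr[0]{\abs[0]{r}<\rho}$ on which $\lambda(X_H)=h'(r)\equiv 1$, and you appeal to \eqref{eq:assumption_delta} to get $f_\varphi(\gamma(0))=0$. A general twisted defining Hamiltonian has no such normal form, so neither identity is available. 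The repair is what the paper's Step 1 does: by continuity and $X_H\vert_\Sigma=R$, choose $\delta>0$ with $\lambda(X_H)\geq\frac{1}{2}+\delta$ on $U_\delta=H^{-1}(\intoo[0]{-\delta,\delta})$ and $\abs[0]{\lambda}$ bounded there (possible since $dH$ has compact support and $H^{-1}(0)=\Sigma$), which replaces your exact identity $\int_0^1\gamma^*\lambda=\tau+R$ by the inequality $\abs[1]{\int_0^1\gamma^*\lambda}\geq\frac{\abs[0]{\tau}}{2}-\abs[0]{R}$; and either keep $\norm[0]{f_\varphi}_\infty$ in the constant, as the paper does, or argue directly from $H\in\mathscr{F}_\varphi(\Sigma)$ that $\supp f_\varphi\subseteq\Int W$ is compact and disjoint from $\Sigma=H^{-1}(0)$, so that $U_\delta\cap\supp f_\varphi=\emptyset$ for $\delta$ small. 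With this adjustment your constants close up exactly as in your last display, so the gap is one of generality rather than of idea.
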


\begin{proof}
	We proceed in three steps.
	
	\emph{Step 1: There exist constants $\delta > 0$ and $0 < C_\delta < +\infty$ such that if $(\gamma,\tau) \in \mathscr{L}_\varphi M$ with $\gamma(I) \subseteq H^{-1}\del[1]{\intoo[0]{-\delta,\delta}} =: U_\delta$, then}
	\begin{equation*}
		\abs[0]{\tau} \leq 2\abs[0]{\mathscr{A}^H_\varphi(\gamma,\tau)} + C_\delta \norm[0]{\grad_J \mathscr{A}^H_\varphi\vert_{(\gamma,\tau)}}_J + 2\norm[0]{f_\varphi}_\infty.
	\end{equation*}
	Choose $\delta > 0$ such that $U_\delta \subseteq \supp X_H$ and 
	\begin{equation*}
		\lambda_x(X_H(x)) \geq \frac{1}{2} + \delta \qquad \forall x \in U_\delta.
	\end{equation*}
	This is possible as $X_H \vert_\Sigma = R$. Moreover, set	
	\begin{equation*}
		C_\delta := 2\norm[0]{\lambda\vert_{U_\delta}}_\infty.
	\end{equation*}
	Then $C_\delta < +\infty$ as $dH$ is compactly supported. We estimate
	\begin{align*}
		\abs[0]{\mathscr{A}^H_\varphi(\gamma,\tau)} &= \abs[3]{\int_0^1 \gamma^*\lambda - \tau \int_0^1 H(\gamma) - f_\varphi(\gamma(0))}\\
		&= \abs[3]{\tau\int_0^1\lambda(X_H(\gamma)) + \int_0^1 \lambda(\dot{\gamma} - \tau X_H(\gamma)) - \tau \int_0^1 H(\gamma) - f_\varphi(\gamma(0))}\\
		&\geq \abs[0]{\tau}\del[3]{\frac{1}{2} + \delta} - \abs[3]{\int_0^1 \lambda(\dot{\gamma} - \tau X_H(\gamma))} - \abs[0]{\tau}\delta - \norm[0]{f_\varphi}_\infty\\
		&\geq \frac{\abs[0]{\tau}}{2} - \frac{C_\delta}{2}\int_0^1 \norm[0]{\dot{\gamma}(t) - \tau X_H(\gamma(t))}_J dt - \norm[0]{f_\varphi}_\infty\\
		&\geq \frac{\abs[0]{\tau}}{2} - \frac{C_\delta}{2}\sqrt{\int_0^1 \norm[0]{\dot{\gamma}(t) - \tau X_H(\gamma(t))}^2_J dt} - \norm[0]{f_\varphi}_\infty\\
		&\geq \frac{\abs[0]{\tau}}{2} - \frac{C_\delta}{2} \norm[0]{\grad_J\mathscr{A}^H_\varphi\vert_{(\gamma,\tau)}}_J - \norm[0]{f_\varphi}_\infty
	\end{align*}
	\noindent by Jensen's inequality.

	\emph{Step 2: For all $\delta > 0$, there exists $\varepsilon = \varepsilon(\delta) > 0$ with the property that if there exists $t_0 \in I$ with $\abs[0]{H(\gamma(t_0))} \geq \delta$ for $(\gamma,\tau) \in \mathscr{L}_\varphi M$, then $\norm[0]{\grad_J\mathscr{A}^H_\varphi\vert_{(\gamma,\tau)}}_J \geq \varepsilon$.}
	Assume first that $\gamma(t) \in M \setminus U_{\frac{\delta}{2}}$ for all $t \in I$. In this case we estimate
	\begin{equation*}
		\norm[0]{\grad_J\mathscr{A}^H_\varphi\vert_{(\gamma,\tau)}}_J \geq \abs[3]{\int_0^1 H(\gamma)} \geq \frac{\delta}{2}.
	\end{equation*}
	Otherwise, we may assume without loss of generality that there exists $t_1 \in I$ such that $\abs[0]{H(\gamma(t_1))} \leq \frac{\delta}{2}$, $t_0 < t_1$ and $\frac{\delta}{2} \leq \abs[0]{H(\gamma(t))} \leq \delta$ for all $t \in \intcc[0]{t_0,t_1}$. Set
	\begin{equation*}
		\kappa := \max_{x \in \overline{U}_\delta} \norm[0]{\grad_J H}_J > 0,
	\end{equation*}
	\noindent as $dH \neq 0$ in a neighbourhood of $\Sigma$. We estimate
	\allowdisplaybreaks
	\begin{align*}
		\norm[0]{\grad_J\mathscr{A}^H_\varphi\vert_{(\gamma,\tau)}}_J &\geq \sqrt{\int_0^1\norm[0]{\dot{\gamma}(t) - \tau X_H(\gamma(t))}_J^2 dt}\\
		&\geq \int_0^1\norm[0]{\dot{\gamma}(t) - \tau X_H(\gamma(t))}_J dt\\
		&\geq \int_{t_0}^{t_1} \norm[0]{\dot{\gamma}(t) - \tau X_H(\gamma(t))}_J dt\\
		&\geq \frac{1}{\kappa} \int_{t_0}^{t_1} \norm[0]{\grad_J H(\gamma(t))}_J \norm[0]{\dot{\gamma}(t) - \tau X_H(\gamma(t))}_J dt\\
		&\geq \frac{1}{\kappa} \int_{t_0}^{t_1} \abs[0]{m_J( \grad_J H(\gamma(t)), \dot{\gamma}(t) - \tau X_H(\gamma(t)))}dt\\
		&= \frac{1}{\kappa} \int_{t_0}^{t_1} \abs[0]{dH(\dot{\gamma}(t) - \tau X_H(\gamma(t)))}dt\\
		&= \frac{1}{\kappa} \int_{t_0}^{t_1} \abs[0]{dH(\dot{\gamma}(t))}dt\\
		&= \frac{1}{\kappa} \int_{t_0}^{t_1} \abs[3]{\frac{d}{dt}(H \circ \gamma)}\\
		&\geq \frac{1}{\kappa} \abs[3]{\int_{t_0}^{t_1} \frac{d}{dt} (H \circ \gamma)}\\
		&= \frac{1}{\kappa}\abs[0]{H(\gamma(t_1)) - H(\gamma(t_0))}\\
		&\geq \frac{1}{\kappa}\del[1]{\abs[0]{H(\gamma(t_0))} - \abs[0]{H(\gamma(t_1))}}\\
		&\geq \frac{\delta}{2\kappa}
	\end{align*}
	\noindent by Cauchy--Schwarz. Hence $\norm[0]{\grad_J\mathscr{A}^H_\varphi\vert_{(\gamma,\tau)}}_J \geq \varepsilon$ for
	\begin{equation*}
		\varepsilon = \varepsilon(\delta) := \frac{\delta}{2\max\cbr[0]{1,\kappa}}.
	\end{equation*}

	\emph{Step 3: We prove the Fundamental Lemma.} Choose $\delta > 0$ and $0 < C_\delta < +\infty$ as in Step 1 and $\varepsilon = \varepsilon(\delta) > 0$ as in Step 2. Set
	\begin{equation*}
		C_0 := \max\cbr[1]{2,C_\delta\varepsilon + 2\norm[0]{f_\varphi}_\infty}.
	\end{equation*}
	Assume that $\norm[0]{\grad_J\mathscr{A}^H_\varphi\vert_{(\gamma,\tau)}}_J < \varepsilon$ for $(\gamma,\tau) \in \mathscr{L}_\varphi M$. Then $\gamma(I) \subseteq U_\delta$, as otherwise there exists $t_0 \in I$ with $\abs[0]{H(\gamma(t_0))} \geq \delta$ implying $\norm[0]{\grad_J\mathscr{A}^H_\varphi\vert_{(\gamma,\tau)}}_J \geq \varepsilon$ by Step 2. Thus with Step 1 we estimate
	\begin{equation}
		\label{eq:estimate}
		\abs[0]{\tau} \leq C_0(\abs[0]{\mathscr{A}^H_\varphi(\gamma,\tau)} + 1).
	\end{equation}
	Finally, set
	\begin{equation*}
		C := \max\cbr[3]{C_0,\frac{1}{\varepsilon}}.
	\end{equation*}
	This proves the Fundamental Lemma.
\end{proof}

\begin{definition}[Twisted Negative Gradient Flow Line]
	Let $(H,J)$ be Rabinowitz--Floer data for a Liouville automorphism $\varphi \in \Aut(W,\lambda)$. A \bld{twisted negative gradient flow line} is a tuple $(u,\tau) \in C^\infty(\mathbb{R}, \mathscr{L}_\varphi M \times \mathbb{R})$ such that
	\begin{equation*}
		\partial_s(u,\tau) = -\grad_J \mathscr{A}^H_\varphi\vert_{(u(s),\tau(s))} \qquad \forall s \in \mathbb{R}.
	\end{equation*}
\end{definition}

\begin{definition}[Energy]
	Let $(H,J)$ be Rabinowitz--Floer data for a Liouville automorphism $\varphi \in \Aut(W,\lambda)$. The \bld{energy of a twisted negative gradient flow line $(u,\tau)$} is defined by
	\begin{equation*}
		E_J(u,\tau) := \int_{-\infty}^{+\infty} \norm[0]{\partial_s(u,\tau)}^2_J ds = \int_{-\infty}^{+\infty} \norm[1]{\grad_J \mathscr{A}^H_\varphi\vert_{(u(s),\tau(s))}}^2_J ds.
	\end{equation*}
\end{definition}

\begin{theorem}[Compactness]
	\label{thm:compactness}
	Let $(H,J)$ be Rabinowitz--Floer data for a Liouville automorphism $\varphi \in \Aut(W,\lambda)$. Suppose $(u_k,\tau_k)$ is a sequence of negative gradient flow lines of the twisted Rabinowitz action functional $\mathscr{A}^H_\varphi$ such that there exist constants $a,b \in \mathbb{R}$ with
	\begin{equation*}
		a \leq \mathscr{A}^H_\varphi\del[1]{u_k(s),\tau_k(s)} \leq b \qquad \forall k \in \mathbb{N}, s \in \mathbb{R}.
	\end{equation*}
	For every reparametrisation sequence $(s_k) \subseteq \mathbb{R}$ there exists a subsequence $(s_{k_l})$ and a negative gradient flow line $(u_\infty,\tau_\infty)$ of $\mathscr{A}^H_\varphi$ such that 
	\begin{equation*}
		\del[1]{u_{k_l}(\cdot + s_{k_l}),\tau_{k_l}(\cdot + s_{k_l})} \xrightarrow{C^\infty_{\loc}} (u_\infty,\tau_\infty) \qquad \text{as } l \to \infty.
	\end{equation*}
\end{theorem}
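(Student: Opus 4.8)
The plan is to run the standard Rabinowitz--Floer compactness scheme (as in \cite{cieliebakfrauenfelder:rfh:2009,albersfrauenfelder:rfh:2010}), with the Fundamental Lemma \ref{lem:fundamental_lemma} supplying the one estimate that feels the twisting. Since every a priori bound used below is invariant under the $\mathbb{R}$-action $(s\cdot u)(\sigma,\cdot) := u(\sigma+s,\cdot)$ and the action bounds $a \le \mathscr{A}^H_\varphi \le b$ survive translation, I would reduce at once to the case $s_k \equiv 0$ and then apply the conclusion to the translated sequence $(u_k(\cdot+s_k),\tau_k(\cdot+s_k))$. It is convenient to record that, because $\varphi$ has finite order and $\varphi^*J = J$, the map $\varphi$ is an isometry of $m_J = d\lambda(J\cdot,\cdot)$, so all of the local elliptic estimates are $\varphi$-equivariant and descend from $\mathbb{R}\times\mathbb{R}$ to $\mathbb{R}\times\intcc[0]{0,1}$ (equivalently one may pass to the genuine loops $\overline{u}$ of Remark \ref{rem:finite_order}).

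\emph{Step 1: a uniform bound on the multipliers $\tau_k$.} Along a negative gradient flow line the action is monotone, $\frac{d}{ds}\mathscr{A}^H_\varphi(u_k(s),\tau_k(s)) = -\norm[0]{\grad_J\mathscr{A}^H_\varphi\vert_{(u_k(s),\tau_k(s))}}_J^2$, so the energy is controlled by $E_J(u_k,\tau_k) \le b-a$. As $H$ is bounded (by construction, see \eqref{eq:modified_Hamiltonian}), the $\mathbb{R}$-component of the flow equation gives $\abs[0]{\partial_s\tau_k} = \abs[1]{\int_0^1 H\circ u_k(s)} \le \norm[0]{H}_\infty$, so each $\tau_k$ is Lipschitz with a $k$-independent constant. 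On the open set $G_k := \cbr[0]{s : \norm[0]{\grad_J\mathscr{A}^H_\varphi\vert_{(u_k(s),\tau_k(s))}}_J < 1/C}$ the Fundamental Lemma gives $\abs[0]{\tau_k} \le C(\max\cbr[0]{\abs[0]{a},\abs[0]{b}}+1) =: \rho_0$; on its complement $\norm[0]{\grad}_J^2 \ge 1/C^2$, so $\abs[0]{G_k^c} \le C^2(b-a)$ by the energy bound, whence $G_k^c$ is a union of compact intervals of length $\le C^2(b-a)$ whose endpoints lie in $\overline{G_k}$, where $\abs[0]{\tau_k}\le\rho_0$ by continuity. Integrating the Lipschitz bound across such an interval yields $\sup_{s}\abs[0]{\tau_k(s)} \le \rho_0 + \norm[0]{H}_\infty C^2(b-a) =: \rho$ for all $k$. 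This is the step that uses the twisting, through the $\norm[0]{f_\varphi}_\infty$-dependence of the constant $C$ in Lemma \ref{lem:fundamental_lemma}.

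\emph{Step 2: confinement.} Because $\supp f_\varphi \subseteq \Int W$, the automorphism $\varphi$ is the identity on the cylindrical end; there $H$ depends on $r$ only, with $X_H = h'(r)e^{-r}R$ vanishing for $r$ large and $JR = \partial_r$, so on $\cbr[0]{r \text{ large}}$ a twisted negative gradient flow line is an honest $J$-holomorphic cylinder. The usual convexity/maximum-principle argument for the coordinate $r$ (the asymptotics being twisted Reeb orbits on $\Sigma = H^{-1}(0)$, which sits in the compact piece) then shows that every flow line with action in $\intcc[0]{a,b}$ is contained in a fixed compact set $K = K(a,b) \subseteq M$; in particular the images $u_k(\mathbb{R}\times\mathbb{R})$ lie in $K$.

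\emph{Step 3: no bubbling and passage to the limit.} Exactness of $(M,\lambda)$ gives $[d\lambda]\vert_{\pi_2(M)} = 0$; if $\norm[0]{Du_k}$ were unbounded, a Hofer rescaling — in which the perturbation term is killed using the bound on $\tau_k$ from Step 1 — together with Step 2 would produce a nonconstant finite-energy $J$-holomorphic plane, hence by removal of singularities a nonconstant $J$-sphere $v$ with $\int_{\mathbb{S}^2}v^*d\lambda = \int_{\mathbb{S}^2}d(v^*\lambda) = 0$, contradicting positivity of area. So $\norm[0]{Du_k}$ is uniformly bounded; elliptic bootstrapping for the zeroth-order-perturbed Cauchy--Riemann operator gives uniform $C^\infty_{\loc}$-bounds on $u_k$, and then on $\tau_k$ via $\partial_s\tau_k = \int_0^1 H\circ u_k$ together with Step 1. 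Arzel\`a--Ascoli and a diagonal argument extract a $C^\infty_{\loc}$-convergent subsequence with limit $(u_\infty,\tau_\infty)$, which solves the twisted Floer equation since this is a closed condition under $C^\infty_{\loc}$-convergence. I expect Step 1 to be the genuine obstacle: it is where the new term $-f_\varphi(\gamma(0))$ has to be controlled, and it is exactly what Lemma \ref{lem:fundamental_lemma} is designed for; Steps 2 and 3 are the classical Rabinowitz--Floer arguments, made $\varphi$-equivariant by working with $\varphi$-invariant Rabinowitz--Floer data.
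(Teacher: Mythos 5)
Your proposal is correct and follows essentially the same route as the paper: the energy bound $E_J\le b-a$ from the action window, the Fundamental Lemma combined with the Lipschitz bound $\abs[0]{\partial_s\tau_k}\le\norm[0]{H}_\infty$ to bound the multipliers, the maximum principle on the cylindrical end for confinement, and exactness to exclude bubbling before bootstrapping to $C^\infty_{\loc}$-convergence. The only cosmetic differences are the order of the steps and your explicit reduction to $s_k\equiv 0$ by translation invariance, which the paper leaves implicit.
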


\begin{proof}
	In order to show $C^\infty_{\loc}$-convergence, we need to establish
	\begin{itemize}
		\item a uniform $L^\infty$-bound on $u_k$,
		\item a uniform $L^\infty$-bound on $\tau_k$,
		\item a uniform $L^\infty$-bound on the derivatives of $u_k$.
	\end{itemize}
	Indeed, through elliptic bootstrapping \cite[Theorem~B.4.1]{mcduffsalamon:J-holomorphic_curves:2012} the negative gradient flow equation, we will obtain $C^\infty_{\loc}$-convergence by \cite[Theorem~B.4.2]{mcduffsalamon:J-holomorphic_curves:2012}. To obtain a uniform $L^\infty$-bound on the sequence of twisted negative gradient flow lines $u_k$, observe that by definition of Rabinowitz--Floer data for $\varphi$, there exists $r \in \intoo[0]{0,+\infty}$ such that
	\begin{equation*}
		\supp X_H \cap \intco[0]{r,+\infty} \times \Sigma = \emptyset
	\end{equation*}
	\noindent and $J$ is adapted to the boundary of $W \cup_\Sigma \intcc[0]{0,r} \times \Sigma$. Consequently, the Maximum Principle \cite[Corollary~9.2.11]{mcduffsalamon:J-holomorphic_curves:2012} implies that every $u_k$ remains inside the compact set $W \cup_\Sigma \intcc[0]{0,r} \times \Sigma$ as the asymptotics belong to $W \cup_\Sigma \intco[0]{0,r} \times \Sigma$ for all $k \in \mathbb{N}$. Indeed, this follows from
	\allowdisplaybreaks
	\begin{align*}
		E_J(u_k,\tau_k) &= \int_{-\infty}^{+\infty} \norm[0]{\partial_s(u_k,\tau_k)}^2_J ds\\
		&= \int_{-\infty}^{+\infty} \langle \partial_s (u_k,\tau_k),\partial_s (u_k,\tau_k)\rangle_Jds\\
		&= -\int_{-\infty}^{+\infty} \langle\grad_J\mathscr{A}^H_\varphi\vert_{(u_k(s),\tau_k(s))},\partial_s (u_k,\tau_k)\rangle_Jds\\
		&= -\int_{-\infty}^{+\infty} d\mathscr{A}^H_\varphi(\partial_s (u_k,\tau_k))ds\\
		&= -\int_{-\infty}^{+\infty} \partial_s \mathscr{A}^H_\varphi(u_k,\tau_k)ds\\
		&= \lim_{s \to -\infty}\mathscr{A}^H_\varphi(u_k(s),\tau_k(s)) - \lim_{s \to +\infty} \mathscr{A}^H_\varphi(u_k(s),\tau_k(s))\\
		&\leq b - a,
	\end{align*}
	\noindent as this implies
	\begin{equation*}
		\lim_{s \to \pm \infty} \norm[0]{\partial_s(u_k,\tau_k)}_J = \lim_{s \to \pm \infty}\norm[1]{\grad_J \mathscr{A}^H_\varphi\vert_{(u_k(s),\tau_k(s))}}_J = 0
	\end{equation*}
	\noindent by the negative gradient flow equation. 

	The uniform $L^\infty$-bound on the Lagrange multipliers $\tau_k$ follows from the Fundamental Lemma \ref{lem:fundamental_lemma}. Fix a twisted negative gradient flow line $(u,\tau)$ and let $C > 0$ as in the Fundamental Lemma \ref{lem:fundamental_lemma}. For every $\sigma \in \mathbb{R}$ we can define $\zeta(\sigma) \geq 0$ by
	\begin{equation*}
		\zeta(\sigma) := \inf\cbr[3]{\zeta \geq 0 : \norm[1]{\grad_J \mathscr{A}^H_\varphi\vert_{(u(\sigma + \zeta),\tau(\sigma + \zeta))}}_J < \frac{1}{C}}. 
	\end{equation*}
	We estimate
	\begin{equation*}
		b - a \geq E_J(u,\tau) \geq \int_\sigma^{\sigma + \zeta(\sigma)} \norm[1]{\grad_J \mathscr{A}^H_\varphi\vert_{(u_s,\tau(s))}}^2_J ds \geq \frac{\zeta(\sigma)}{C^2}.
	\end{equation*}
	By the Fundamental Lemma \ref{lem:fundamental_lemma} we have that
	\begin{equation*}
		\abs[0]{\tau(\sigma + \zeta(\sigma))} < C(\max\{\abs[0]{a},\abs[0]{b}\} + 1) \qquad \forall \sigma \in \mathbb{R},
	\end{equation*}
	\noindent and thus using the negative gradient flow equation again we estimate
	\begin{align*}
		\abs[0]{\tau(\sigma)} &\leq \abs[0]{\tau(\sigma + \zeta(\sigma))} + \int_\sigma^{\sigma + \zeta(\sigma)}\abs[0]{\partial_s \tau(s)} ds\\
		&= \abs[0]{\tau(\sigma + \zeta(\sigma))} + \int_\sigma^{\sigma + \zeta(\sigma)}\abs[3]{\int_0^1 H(u(s,t))dt} ds\\
		&\leq C(\max\{\abs[0]{a},\abs[0]{b}\} + 1) + \zeta(\sigma)\norm[0]{H}_\infty\\
		&\leq C(\max\{\abs[0]{a},\abs[0]{b}\} + 1) + C^2(b - a)\norm[0]{H}_\infty.
	\end{align*}
	\noindent for all $\sigma \in \mathbb{R}$. Hence
	\begin{equation*}
		\norm[0]{\tau}_\infty \leq C(\max\{\abs[0]{a},\abs[0]{b}\} + 1) + C^2(b - a)\norm[0]{H}_\infty
	\end{equation*}
	\noindent is independent of the twisted negative gradient flow line $(u,\tau)$.

	The uniform $L^\infty$-bound on the derivatives of $u_k$ follows from Corollary \ref{cor:bubbling_rfh} as an exact symplectic manifold is symplectically aspherical.
\end{proof}

\subsection{Definition of Twisted Rabinowitz--Floer Homology}
In this section we make implicit use of the requirement that a Liouville automorphism has finite order. This is crucial because then the arguments proceed as in the case of loops by Remark \ref{rem:finite_order}. 

\begin{definition}[Transverse Conley--Zehnder Index]
	\label{def:CZ-index}
	Let $(W^{2n},\lambda)$ be a Liouville domain with boundary $\Sigma$. Let $(\gamma_0,\tau_0), (\gamma_1,\tau_1) \in \mathscr{P}_\varphi(\Sigma,\lambda\vert_\Sigma)$ for some Liouville automorphism $\varphi \in \Aut(W,\lambda)$ such that there exists a path $\gamma_\sigma$ in $\mathscr{L}_\varphi \Sigma$ from $\gamma_0$ to $\gamma_1$. Define the \bld{transverse Conley--Zehnder index} by
	\begin{equation*}
		\mu((\gamma_0,\tau_0),(\gamma_1,\tau_1)) := \mu_{\CZ}(\Psi^1) - \mu_{\CZ}(\Psi^0) \in \mathbb{Z},
	\end{equation*}
	\noindent with
	\begin{align*}
		\Psi^0 \colon I \to \Sp(n - 1), \qquad &\Psi^0_t := \Phi_{t,0}^{-1} \circ D^\xi \phi^R_{\tau_0t} \circ \Phi_{0,0},\\
		\Psi^1 \colon I \to \Sp(n - 1), \qquad &\Psi^1_t := \Phi_{t,1}^{-1} \circ D^\xi \phi^R_{\tau_1t} \circ \Phi_{0,1},
	\end{align*}
	\noindent where $\Phi_{t,\sigma} \colon \mathbb{R}^{2n - 2} \to \xi_{\gamma_\sigma(t)}$ is a symplectic trivialisation of $F^*\xi$, $\xi := \ker \lambda\vert_\Sigma$ with $F \in C^\infty(\mathbb{R} \times I,M)$ being defined by $F(t,\sigma) := \gamma_\sigma(t)$, satisfying
	\begin{equation}
		\label{eq:twist_condition_trivialisation}
		\Phi_{t + 1,\sigma} = D\varphi \circ \Phi_{t,\sigma} \qquad \forall (t,\sigma) \in \mathbb{R} \times I.
	\end{equation}
\end{definition}

\begin{lemma}
	In the setup of Definition \ref{def:CZ-index}, the transverse Conley--Zehnder index is well-defined, that is, independent of the choice of symplectic trivialisation.	
\end{lemma}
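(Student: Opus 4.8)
The plan is to show that any two symplectic trivialisations $\Phi$ and $\Phi'$ of $F^*\xi$ satisfying the twist condition \eqref{eq:twist_condition_trivialisation} produce the same value of $\mu_{\CZ}(\Psi^1) - \mu_{\CZ}(\Psi^0)$. Write $\Psi^0, \Psi^1$ for the paths of Definition \ref{def:CZ-index} built from $\Phi$ and ${\Psi'}^0, {\Psi'}^1$ for those built from $\Phi'$. First I would form the transition family
\begin{equation*}
	S_{t,\sigma} := (\Phi'_{t,\sigma})^{-1} \circ \Phi_{t,\sigma} \in \Sp(n-1), \qquad (t,\sigma) \in \mathbb{R} \times I,
\end{equation*}
which is smooth in $(t,\sigma)$. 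The key elementary observation is that $S$ is $1$-periodic in $t$: applying \eqref{eq:twist_condition_trivialisation} to both $\Phi$ and $\Phi'$, the factor $(D\varphi)^{-1} \circ D\varphi$ cancels and $S_{t+1,\sigma} = S_{t,\sigma}$. Hence for each $\sigma$ the path $t \mapsto S_{t,\sigma}$, $t \in I$, is a loop based at $S_{0,\sigma}$, and $L_{t,\sigma} := (S_{0,\sigma})^{-1} S_{t,\sigma}$ is a smooth family, parametrised by $\sigma \in I$, of loops in $\Sp(n-1)$ based at $\id$.

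Next, from $\Phi'_{t,\sigma} = \Phi_{t,\sigma} \circ (S_{t,\sigma})^{-1}$ a direct computation gives, for $i \in \{0,1\}$,
\begin{equation*}
	{\Psi'}^i_t = S_{0,i} \, L_{t,i} \, \Psi^i_t \, (S_{0,i})^{-1} \qquad \forall t \in I.
\end{equation*}
Recall that $\Psi^i_0 = \id$ because $\phi^R_0 = \id$, so the Conley--Zehnder index applies; then I would invoke two standard properties of this index: invariance under conjugation by a fixed symplectic matrix, and the loop property $\mu_{\CZ}(L\cdot\Psi) = \mu_{\CZ}(\Psi) + 2\mu(L)$ for a loop $L$ based at $\id$, where $\mu(L) \in \pi_1(\Sp(n-1)) \cong \mathbb{Z}$ is its Maslov index (for degenerate endpoints one works with the Robbin--Salamon extension, which satisfies the same properties). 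Together these yield $\mu_{\CZ}({\Psi'}^i) = \mu_{\CZ}(\Psi^i) + 2\mu(L_{\cdot,i})$, so the difference $\mu_{\CZ}({\Psi'}^i) - \mu_{\CZ}(\Psi^i)$ depends only on the based homotopy class of the loop $L_{\cdot,i}$.

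Finally, since $\sigma \mapsto L_{\cdot,\sigma}$ is a homotopy of loops based at $\id$, the Maslov indices $\mu(L_{\cdot,0})$ and $\mu(L_{\cdot,1})$ agree, so the two correction terms cancel:
\begin{equation*}
	\mu_{\CZ}({\Psi'}^1) - \mu_{\CZ}({\Psi'}^0) = \mu_{\CZ}(\Psi^1) - \mu_{\CZ}(\Psi^0).
\end{equation*}
The step I expect to be the crux is the $1$-periodicity of $S$ in $t$, which is precisely where the twist condition \eqref{eq:twist_condition_trivialisation} enters; it is also the reason the reference path $\gamma_\sigma$ must be part of the data, since it supplies the homotopy linking $L_{\cdot,0}$ to $L_{\cdot,1}$ and hence forces the two correction terms to coincide. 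Once this is in place the remaining ingredients are routine invocations of the homotopy and loop properties of the Conley--Zehnder index.
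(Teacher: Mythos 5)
Your argument is essentially the paper's own proof: your transition family $S_{t,\sigma}$ and normalised loop $L_{t,\sigma}=(S_{0,\sigma})^{-1}\circ S_{t,\sigma}$ coincide with the loop $\eta(t,\sigma)$ used there, and both proofs combine the $1$-periodicity coming from the twist condition \eqref{eq:twist_condition_trivialisation} with naturality (conjugation invariance), the loop property, and homotopy invariance of the Maslov index to cancel the two correction terms. The only additional content in the paper's proof is the preliminary construction of a trivialisation of $F^*\xi$ actually satisfying \eqref{eq:twist_condition_trivialisation}, a point your proposal takes for granted.
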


\begin{proof}
	First we need to show that one can always construct a symplectic trivialisation
	\begin{equation*}
		\Phi_{t,\sigma} \colon \mathbb{R}^{2n - 2} \to \xi_{F(t,\sigma)} \qquad \forall (t,\sigma) \in \mathbb{R} \times I,
	\end{equation*}
	\noindent of $F^*\xi$ satisfying the twist condition \eqref{eq:twist_condition_trivialisation}. By \cite[Theorem~2.1.3]{mcduffsalamon:st:2017}, there exists a linear symplectomorphism $\Phi_{0,0} \colon \mathbb{R}^{2n - 2} \to \xi_{F(0,0)}$. By \cite[Lemma~2.6.6]{mcduffsalamon:st:2017}, we get a symplectic trivialisation $\Phi_{t,0} \colon \mathbb{R}^{2n - 2} \to \xi_{F(t,0)}$ for all $t \in I$ with $\Phi_{1,0} = D\varphi \circ \Phi_{0,0}$. Extend this trivialisation to $\mathbb{R}$ by setting
	\begin{equation*}
		\Phi_{t + k,0} := D\varphi^k \circ \Phi_{t,0} \qquad \forall k \in \mathbb{Z}.
	\end{equation*}
	Next, trivialise along each ray $\sigma \mapsto F(t,\sigma)$ for fixed $t \in \mathbb{R}$. Hence we get a symplectic trivialisation $\Phi_{t,\sigma} \colon \mathbb{R}^{2n - 2} \to \xi_{F(t,\sigma)}$ of $F^*\xi$ satisfying \eqref{eq:twist_condition_trivialisation}.

Now we show that the transverse Conley--Zehnder index $\mu$ is independent of the choice of trivialisation.
Suppose that $\widetilde{\Phi}_{t,\sigma} \colon \mathbb{R}^{2n - 2} \to \xi_{F(t,\sigma)}$ is another symplectic trivialisation of $F^*\xi$ satisfying
\begin{equation*}
	\widetilde{\Phi}_{t + 1,\sigma} = D\varphi \circ \widetilde{\Phi}_{t,\sigma} \qquad \forall (t,\sigma) \in \mathbb{R} \times I.
\end{equation*}
Then we have
\begin{equation*}
	\widetilde{\Psi}_{t,\sigma} = \widetilde{\Phi}_{t,\sigma}^{-1} \circ \Phi_{t,\sigma} \circ \Psi_{t,\sigma} \circ \Phi_{0,\sigma}^{-1} \circ \widetilde{\Phi}_{0,\sigma} \qquad \forall (t,\sigma) \in \mathbb{R} \times \partial I, 
\end{equation*}
\noindent where 
\begin{equation*}
	\Psi_{t,\sigma} = \Phi_{t,\sigma}^{-1} \circ D^\xi \theta^{\tau_\sigma R}_t \circ \Phi_{0,\sigma}.
\end{equation*}
Define
\begin{equation*}
	\eta \colon \mathbb{T} \times I \to \Sp(n - 1), \qquad \eta(t,\sigma) := \Phi_{0,\sigma}^{-1} \circ \widetilde{\Phi}_{0,\sigma} \circ \widetilde{\Phi}^{-1}_{t,\sigma} \circ \Phi_{t,\sigma}.
\end{equation*}
Indeed, we compute
\begin{align*}
	\eta(t + 1,\sigma) &= \Phi_{0,\sigma}^{-1} \circ \widetilde{\Phi}_{0,\sigma} \circ \widetilde{\Phi}^{-1}_{t + 1,\sigma} \circ \Phi_{t + 1,\sigma}\\
	&= \Phi_{0,\sigma}^{-1} \circ \widetilde{\Phi}_{0,\sigma} \circ \widetilde{\Phi}^{-1}_{t,\sigma} \circ D\varphi^{-1} \circ D\varphi \circ \Phi_{t,\sigma}\\
	&= \eta(t,\sigma),
\end{align*}
\noindent for all $(t,\sigma) \in \mathbb{R} \times I$. Using the naturality as well as the loop property \cite[p.~20]{salamon:fh:1999} of the Conley--Zehnder index we compute
\begin{equation*}
	\mu_{\CZ}(\widetilde{\Psi}^s) = \mu_{\CZ}(\eta_s \cdot \Psi^s) = \mu_{\CZ}(\Psi^s) + 2\mu_{\M}(\eta_s) \qquad \forall s \in \partial I, 
\end{equation*}
\noindent where $\mu_{\M}$ denotes the Maslov index. In particular,
\begin{align*}
	\mu_{\CZ}(\widetilde{\Psi}^1) - \mu_{\CZ}(\widetilde{\Psi}^0) &= \mu_{\CZ}(\Psi^1) - \mu_{\CZ}(\Psi^0) + 2\del[1]{\mu_{\M}(\eta_1) - \mu_{\M}(\eta_0)}\\
	&= \mu_{\CZ}(\Psi^1) - \mu_{\CZ}(\Psi^0)
\end{align*}
\noindent by the invariance property of the Maslov index \cite[p.~195]{frauenfelderkoert:3bp:2018}.
\end{proof}

\begin{remark}
	\label{rem:grading}
Denote by 
\begin{equation*}
	\Sigma_\varphi := \frac{\Sigma \times \mathbb{R}}{(\varphi(x),t + 1){\sim}(x,t)}
\end{equation*}
\noindent the mapping torus of $\varphi$ giving rise to the fibration
\begin{equation*}
	\pi_\varphi \colon \Sigma_\varphi \to \mathbb{T}, \qquad \pi_\varphi([x,t]) := [t].
\end{equation*}
The vertical bundle $\ker D^\xi \pi_\varphi \to \Sigma_\varphi$ is a symplectic vector bundle. If $\widetilde{F}$ is another homotopy in $\mathscr{L}_\varphi \Sigma$ from $\gamma_0$ to $\gamma_1$, the concatenation with the reversed path $F^-$ can be identified with the map 
\begin{equation*}
	\widetilde{F} \# F^- \colon \mathbb{T}^2 \to \Sigma_\varphi, \qquad (t,\sigma) \mapsto [(\widetilde{F}\# F^-(t,\sigma),t)].
\end{equation*}
Hence using the concatenation property of the Conley--Zehnder index \cite[p.~195]{frauenfelderkoert:3bp:2018} as well as the functoriality of the Chern number \cite[p.~85]{mcduffsalamon:st:2017}, for $s \in \partial I$ we compute
\begin{align*}
	\mu_{\CZ}(\widetilde{\Psi}^s) - \mu_{\CZ}(\Psi^s) &= \mu_{\CZ}\del[1]{\widetilde{\Psi}^s \#(\Psi^s)^-}\\
	&= 2\mu_{\M}\del[1]{\widetilde{\Psi}^s \#(\Psi^s)^-}\\
	&= 2c_1\del[1]{(\widetilde{F}\#F^-)^*\ker D^\xi \pi_\varphi}\\
	&= 2c_1(\ker D^\xi\pi_\varphi). 
\end{align*}
Thus if the transverse Conley--Zehnder index is viewed in $\mathbb{Z}_2$ or $c_1(\ker D^\xi \pi_\varphi) = 0$, then it additionally does not depend on the choice of path in $\mathscr{L}_\varphi \Sigma$.
\end{remark}
 
Let $(H,J)$ be Rabinowitz--Floer data for $\varphi \in \Aut(W,\lambda)$. Set
\begin{equation*}
	\Sigma := \partial W \qquad \text{and} \qquad M := W \cup_\Sigma \intco[0]{0, +\infty} \times \Sigma.
\end{equation*}
Fix $(\eta,\tau_\eta) \in \mathscr{P}_\varphi(\Sigma,\lambda\vert_\Sigma)$ and denote by $[\eta]$ the corresponding class in $\pi_0 \mathscr{L}_\varphi \Sigma$. Assume that the twisted Rabinowitz action functional $\mathscr{A}^H_\varphi$ is Morse--Bott, that is, $\Crit \mathscr{A}^H_\varphi \subseteq \Sigma \times \mathbb{R}$ is a properly embedded submanifold by Proposition \ref{prop:kernel_hessian_contact}, and fix a Morse function $h \in C^\infty(\Crit \mathscr{A}^H_\varphi)$. Define the twisted Rabinowitz--Floer chain group $\RFC^\varphi(\Sigma,M)$ to be the $\mathbb{Z}_2$-vector space consisting of all formal linear combinations
\begin{equation*}
	\zeta = \sum_{\substack{(\gamma,\tau) \in \Crit(h)\\{[\gamma] = [\eta]}}} \zeta_{(\gamma,\tau)} (\gamma,\tau)
\end{equation*}
\noindent satisfying the Novikov finiteness condition
\begin{equation*}
	\#\cbr[0]{(\gamma,\tau) \in \Crit(h) : \zeta_{(\gamma,\tau)} \neq 0, \mathscr{A}^H_\varphi(\gamma,\tau) \geq \kappa} < \infty \qquad \forall \kappa \in \mathbb{R}.
\end{equation*}
Define a boundary operator
\begin{equation*}
	\partial \colon \RFC^\varphi(\Sigma,M) \to \RFC^\varphi(\Sigma,M) 
\end{equation*}
\noindent by
\begin{equation*}
	\partial (\gamma^-,\tau^-) := \sum_{\substack{(\gamma^+,\tau^+) \in \Crit(h)\\{[\gamma^+] = [\gamma^-]}}} n_\varphi(\gamma^\pm,\tau^\pm)(\gamma^+,\tau^+),
\end{equation*}
\noindent where 
\begin{equation*}
	n_\varphi(\gamma^\pm,\tau^\pm) := \#_2\mathscr{M}^0_\varphi(\gamma^\pm,\tau^\pm) \in \mathbb{Z}_2,
\end{equation*}
\noindent with $\mathscr{M}^0_\varphi(\gamma^\pm,\tau^\pm)$ denoting the zero-dimensional component of the moduli space of all unparametrised twisted negative gradient flow lines with cascades from $(\gamma^-,\tau^-)$ to $(\gamma^+,\tau^+)$. This is well-defined by Theorem \ref{thm:compactness}. Define the \bld{twisted Rabinowitz--Floer homology of $\Sigma$ and $\varphi$} by
\begin{equation*}
	\RFH^\varphi(\Sigma,M) := \frac{\ker \partial}{\im \partial}.
\end{equation*}
 
\begin{proposition}
	\label{prop:fixed_points}
	Let $(W,\lambda)$ be a Liouville domain with simply connected boundary $\Sigma$ and $\varphi \in \Aut(W,\lambda)$. If there do not exist any nonconstant twisted periodic Reeb orbits on $\Sigma$, then
	\begin{equation*}
		\RFH^\varphi_\ast(\Sigma,M) \cong \operatorname{H}_\ast(\Fix(\varphi\vert_\Sigma);\mathbb{Z}_2).
	\end{equation*}	
\end{proposition}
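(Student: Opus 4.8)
The plan is to show that, under the hypotheses, the critical set of the twisted Rabinowitz action functional is exactly the fixed locus $\Fix(\varphi\vert_\Sigma)$ (sitting at Lagrange multiplier $\tau=0$), that $\mathscr{A}^H_\varphi$ is Morse--Bott along it, and that the Rabinowitz--Floer differential then degenerates to the Morse differential of the auxiliary function on $\Fix(\varphi\vert_\Sigma)$. First I would fix Rabinowitz--Floer data $(H,J)$ for $\varphi$ (Lemma~\ref{lem:defining_Hamiltonian}); by construction $\supp f_\varphi\cap\Sigma=\emptyset$, so $f_\varphi\vert_\Sigma\equiv 0$. By Corollary~\ref{cor:critical_points_perturbed_twisted_Rabinowitz_functional} (equivalently Proposition~\ref{prop:kernel_hessian_contact}), $(\gamma,\tau)\in\Crit\mathscr{A}^H_\varphi$ iff $\gamma(\mathbb{R})\subseteq\Sigma$ and $\dot\gamma=\tau R(\gamma)$. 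Since $R$ is nowhere vanishing, $\tau\neq 0$ would make $\gamma$ a nonconstant twisted Reeb orbit, contradicting the hypothesis; hence $\tau=0$, $\dot\gamma=0$, and $\gamma\equiv x$ with $x=\gamma(1)=\varphi(\gamma(0))=\varphi(x)$. Conversely every $x\in\Fix(\varphi\vert_\Sigma)$ gives the constant critical pair $(x,0)$, so $\Crit\mathscr{A}^H_\varphi=\Fix(\varphi\vert_\Sigma)\times\{0\}$, on which $\mathscr{A}^H_\varphi$ vanishes identically (constant loops have zero $\lambda$-period and $f_\varphi\vert_\Sigma\equiv 0$).

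Next I would check the Morse--Bott condition. As $\varphi\vert_\Sigma$ has finite order, averaging a Riemannian metric makes it an isometry, so $\Fix(\varphi\vert_\Sigma)$ is a closed embedded (totally geodesic) submanifold of $\Sigma$ with $T_x\Fix(\varphi\vert_\Sigma)=\ker(D\varphi\vert_x-\id_{T_x\Sigma})$; in particular $\Crit\mathscr{A}^H_\varphi$ is a compact embedded submanifold of $\Sigma\times\mathbb{R}$. Applying Proposition~\ref{prop:kernel_hessian_contact} at $\tau=0$ (so $\phi^R_{-\tau}=\id$) identifies $\ker\Hess\mathscr{A}^H_\varphi\vert_{(x,0)}$ with $\ker(D\varphi\vert_x-\id_{T_x\Sigma})$, which is exactly $T_{(x,0)}(\Fix(\varphi\vert_\Sigma)\times\{0\})$. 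Hence $\mathscr{A}^H_\varphi$ is Morse--Bott and the cascade construction of Section~\ref{sec:twisted_rfh} applies.

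Now for the homology. Since $\Sigma$ is simply connected, all the constant twisted loops at points of $\Fix(\varphi\vert_\Sigma)$ lie in one path component of $\mathscr{L}_\varphi\Sigma$: given $x_0,x_1\in\Fix(\varphi\vert_\Sigma)$ and an arc $c$ from $x_0$ to $x_1$, the arcs $c$ and $\varphi\circ c$ are homotopic rel endpoints, and such a homotopy is precisely a path in $\mathscr{L}_\varphi\Sigma$ from the constant loop at $x_0$ to the constant loop at $x_1$. Taking this component as the reference class, $\RFC^\varphi(\Sigma,M)$ is the $\mathbb{Z}_2$-span of $\Crit h$ for a Morse function $h\in C^\infty(\Fix(\varphi\vert_\Sigma))$, the Novikov condition being vacuous. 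For the differential, recall the energy identity from the proof of Theorem~\ref{thm:compactness}: a twisted negative gradient flow line asymptotic to two critical points has energy equal to their action difference, namely $0$, hence is constant. Therefore no cascade can contain a nonconstant Floer cylinder; only the length-$0$ cascades, i.e.\ honest negative gradient flow lines of $h$ inside $\Fix(\varphi\vert_\Sigma)$, survive, so $\partial$ is the Morse differential of $h$. Consequently $\RFH^\varphi(\Sigma,M)=\HM(h)\cong\operatorname{H}_*(\Fix(\varphi\vert_\Sigma);\mathbb{Z}_2)$.

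The step I expect to be the main obstacle is matching the gradings, i.e.\ showing that the transverse Conley--Zehnder index contributes trivially to the degree of a generator $(x,0)\in\Crit h$, so that its degree equals the Morse index of $h$. For a constant orbit the relevant path $\Psi^s_t=\Phi_{t,s}^{-1}\circ\Phi_{0,s}$ is governed solely by the unavoidable twisting of the symplectic trivialization of $\xi_x$ by $D\varphi\vert_x$; choosing $J^\Sigma$ $\varphi$-invariant one may diagonalize $D\varphi\vert_x$ and pick trivializations so that $\mu_{\CZ}(\Psi^s)$ is constant along each component of $\Fix(\varphi\vert_\Sigma)$, while simple connectivity of $\Sigma$ (together with $c_1(\ker D^\xi\pi_\varphi)=0$ whenever a $\mathbb{Z}$-grading is claimed, cf.\ Remark~\ref{rem:grading}) gives path-independence. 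After the global normalization fixed by the conventions of Section~\ref{sec:twisted_rfh}, this index vanishes on $\Crit\mathscr{A}^H_\varphi$; pinning down this index bookkeeping is the main technical content, the rest being a routine application of the Morse--Bott cascade formalism.
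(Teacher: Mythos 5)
Your proposal is correct and follows essentially the same route as the paper: identify $\Crit\mathscr{A}^H_\varphi$ with $\Fix(\varphi\vert_\Sigma)\times\{0\}$, verify the Morse--Bott condition via the kernel of the Hessian, use simple connectivity of $\Sigma$ to place all constant twisted loops in one component of $\mathscr{L}_\varphi\Sigma$, and rule out nonconstant cascades by the energy/action (equivalently $\tau^\pm=0$) argument, so that the differential reduces to the Morse differential of $h$. The grading point you flag as the main obstacle is treated in the paper simply by declaring the degree of $(c_y,0)$ to be $\mu((c_y,0),(c_x,0))+\ind_h(c_y)$ and noting this relative transverse Conley--Zehnder index vanishes for constant orbits, which is consistent with the normalization you sketch.
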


\begin{proof}
	If there do not exist any nonconstant twisted periodic Reeb orbits,
	\begin{equation*}
		\Crit \mathscr{A}^H_\varphi = \cbr[0]{(c_x,0) : x \in \Fix(\varphi\vert_\Sigma)} \cong \Fix(\varphi\vert_\Sigma)
	\end{equation*}
	\noindent for any $H \in \mathscr{F}_\varphi(\Sigma)$. Since $\Fix(\varphi\vert_\Sigma)$ is a properly embedded submanifold of $\Sigma$ by \cite[Problem~8-32]{lee:dg:2018} or \cite[Lemma~5.5.7]{mcduffsalamon:st:2017}, $\mathscr{A}^H_\varphi$ is a Morse--Bott function. Let $x,y \in \Fix(\varphi\vert_\Sigma)$. As $\Sigma$ is simply connected by assumption, there exists some path $\gamma$ from $x$ to $y$ in $\Sigma$ and a homotopy from $\gamma$ to $\varphi \circ \gamma$ with fixed endpoints. Extend this homotopy to a path in $\mathscr{L}_\varphi \Sigma$ from $c_x$ to $c_y$. Choose a Morse function $h$ on $\Fix(\varphi\vert_\Sigma)$ and any critical point $c_x \in \Fix(\varphi\vert_\Sigma)$. Then we can define a $\mathbb{Z}$-grading of $\RFC^\varphi(\Sigma,M)$ by 
	\begin{equation*}
		\mu((c_y,0),(c_x,0)) + \ind_h(c_y) = \ind_h(c_y) \qquad \forall c_y \in \Crit(h),
	\end{equation*}
	\noindent and consequently,
	\begin{equation*}
		\RFH^\varphi_\ast(\Sigma,M) = \HM_\ast(\Fix(\varphi\vert_\Sigma);\mathbb{Z}_2) \cong \operatorname{H}_\ast(\Fix(\varphi\vert_\Sigma);\mathbb{Z}_2)
	\end{equation*}
	\noindent as there are only twisted negative gradient flow lines with zero cascades, that is, ordinary Morse gradient flow lines of $h$. Indeed, suppose that there is a nonconstant twisted negative gradient flow line $(u,\tau)$ of $\mathscr{A}^H_\varphi$ with asymptotics $(\gamma^\pm,\tau^\pm)$. Using the twisted negative gradient flow equation we estimate
	\allowdisplaybreaks
	\begin{equation*}
		\tau^- - \tau^+  = \int_{-\infty}^{+\infty} \norm[1]{\grad_J \mathscr{A}^H_\varphi\vert_{(u(s),\tau(s))}}^2_J ds > 0.
	\end{equation*}
	Hence $\tau^+ < \tau^-$, contradicting $\tau^\pm = 0$. 
\end{proof}

\subsection[Invariance of Twisted Rabinowitz--Floer Homology Under Twisted Homotopies]{Invariance of Twisted Rabinowitz--Floer Homology Under Twisted Homotopies of Liouville Domains}

\begin{definition}[Twisted Homotopy of Liouville Domains]
	Given the completion $(M,\lambda)$ of a Liouville domain $(W_0,\lambda)$ and $\varphi \in \Aut(W,\lambda)$, a \bld{twisted homotopy of Liouville domains in $M$} is a time-dependent Hamiltonian function $H \in C^\infty(M \times I)$ such that
	\begin{enumerate}[label=\textup{(\roman*)}]
		\item $W_\sigma := H^{-1}_\sigma(\intoc[0]{-\infty,0})$ is a Liouville domain with symplectic form $d\lambda\vert_{W_\sigma}$ and boundary $\Sigma_\sigma := H^{-1}_\sigma(0)$ for all $\sigma \in I$,
		\item $H_\sigma \in \mathscr{F}_\varphi(\Sigma_\sigma)$ for all $\sigma \in I$,
		\item $\Sigma_\sigma \cap \supp f_\varphi = \emptyset$ for all $\sigma \in I$.
	\end{enumerate}
	We write $(H_\sigma)_{\sigma \in I}$ for a twisted homotopy of Liouville domains.
\end{definition}

\begin{theorem}[Invariance of Twisted Rabinowitz--Floer Homology]
	\label{thm:invariance}
	If $(H_\sigma)_{\sigma \in I}$ is a twisted homotopy of Liouville domains such that both $\mathscr{A}^{H_0}_\varphi$ and $\mathscr{A}^{H_1}_\varphi$ are Morse--Bott, then there is a canonical isomorphism
	\begin{equation*}
		\RFH^\varphi(\Sigma_0,M) \cong \RFH^\varphi(\Sigma_1,M).
	\end{equation*}	
\end{theorem}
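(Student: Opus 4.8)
\emph{Strategy.} The plan is to run the standard continuation-map machinery of Floer theory in the twisted Rabinowitz setting; the only genuinely new analytic input beyond Theorem \ref{thm:compactness} is a homotopy version of the Fundamental Lemma \ref{lem:fundamental_lemma}. Fix a smooth cutoff $\beta \in C^\infty(\mathbb{R},I)$ with $\beta \equiv 0$ on $\intoc[0]{-\infty,-1}$ and $\beta \equiv 1$ on $\intco[0]{1,+\infty}$, a smooth family $(J_s)_{s \in \mathbb{R}}$ of $\varphi$-invariant adapted almost complex structures with $J_s$ equal to the chosen datum $J_0$ for $s \le -1$ and to $J_1$ for $s \ge 1$, and Morse functions $h^0,h^1$ on $\Crit \mathscr{A}^{H_0}_\varphi$ and $\Crit \mathscr{A}^{H_1}_\varphi$. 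The hypersurfaces $\Sigma_\sigma = \partial W_\sigma$ form a smooth $\varphi$-equivariant isotopy, so $\pi_0 \mathscr{L}_\varphi \Sigma_0 \cong \pi_0 \mathscr{L}_\varphi \Sigma_1$ canonically; for $(\gamma^-,\tau^-) \in \Crit h^0$ and $(\gamma^+,\tau^+) \in \Crit h^1$ in matching components one counts modulo $2$ the rigid solutions $(u,\tau) \in C^\infty(\mathbb{R},\mathscr{L}_\varphi M \times \mathbb{R})$ of
\begin{equation*}
	\partial_s(u,\tau) = -\grad_{J_s}\mathscr{A}^{H_{\beta(s)}}_\varphi\big\vert_{(u(s),\tau(s))},
\end{equation*}
decorated with the Morse--Bott cascade condition at the ends exactly as in the definition of $\partial$, asymptotic to $(\gamma^\mp,\tau^\mp)$ at $\mp\infty$. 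This defines a homomorphism $\Phi \colon \RFC^\varphi(\Sigma_0,M) \to \RFC^\varphi(\Sigma_1,M)$, and reversing the roles of $0$ and $1$ (and flipping $\beta$) gives $\Psi$ the other way.

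\emph{Compactness — the main obstacle.} As in Theorem \ref{thm:compactness} the content is three uniform bounds. The derivative bound is again bubbling-free by symplectic asphericity (Corollary \ref{cor:bubbling_rfh}). The $C^0$-bound on $u$ comes from the maximum principle: by condition (ii) in the definition of a twisted homotopy every $H_\sigma$ coincides with the radial Hamiltonian outside a compact set, and since $I$ is compact one may choose a single cylindrical coordinate $r$ making all of the $H_{\beta(s)}$ linear on $W \cup_\Sigma \intco[0]{r,+\infty} \times \Sigma$ with $J_s$ adapted there. The delicate point is the $L^\infty$-bound on $\tau$, because the energy identity along a continuation trajectory acquires a defect term:
\begin{multline*}
	E_{J_s}(u,\tau) = \mathscr{A}^{H_0}_\varphi(\gamma^-,\tau^-) - \mathscr{A}^{H_1}_\varphi(\gamma^+,\tau^+) \\
	- \int_{-\infty}^{+\infty} \beta'(s)\,\tau(s) \int_0^1 (\partial_\sigma H_\sigma)\big\vert_{\sigma = \beta(s)}(u(s,t))\,dt\,ds,
\end{multline*}
so one must bootstrap an a priori action bound and a $\tau$-bound out of each other. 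This is carried out as in \cite{cieliebakfrauenfelder:rfh:2009}: prove a $\sigma$-uniform version of Lemma \ref{lem:fundamental_lemma} (legitimate since $\cbr[0]{(\sigma,x) : H_\sigma(x) = 0}$ is compact, $dH_\sigma$ is uniformly compactly supported, and $\norm[0]{f_\varphi}_\infty$ is a fixed constant), control the action of $(u(s),\tau(s))$ by a Gronwall argument from $\partial_s\mathscr{A}^{H_{\beta(s)}}_\varphi(u(s),\tau(s)) = -\norm[0]{\partial_s(u,\tau)}^2_{J_s} - \beta'(s)\tau(s)\int_0^1(\partial_\sigma H_\sigma)(u(s,t))\,dt$ together with the uniformly compact support of $\partial_\sigma H_\sigma$, and feed the result back into the homotopy Fundamental Lemma. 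Note that no $[\omega]$-monotonicity of the homotopy is required, precisely because $\tau$ enters this defect linearly and $\partial_\sigma H_\sigma$ has compact support; the twist contributes only the bounded term $f_\varphi$, which is harmless. With all three bounds in hand, $C^\infty_{\loc}$-convergence and Floer--Gromov compactness of the compactified cascade moduli spaces follow verbatim from the corresponding step in Theorem \ref{thm:compactness}.

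\emph{From compactness to the isomorphism.} Granting compactness and the attendant (polyfold) transversality, the remaining steps are entirely standard. First, $\Phi$ and $\Psi$ are chain maps, by inspecting the boundary of the one-dimensional components of the continuation cascade moduli spaces. Second, the induced maps on homology do not depend on $\beta$, $(J_s)$, or the Morse data, by a homotopy-of-homotopies argument. Third, $\Psi \circ \Phi$ is chain homotopic to the continuation map of the concatenated homotopy running $H_0 \rightsquigarrow H_1 \rightsquigarrow H_0$, which is homotopic relative to its ends to the constant homotopy at $H_0$; for the constant homotopy one may take $\beta \equiv 0$, whose only rigid continuation configurations are the constant ones, so the associated continuation map is the identity. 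Hence $\Psi \circ \Phi \simeq \id$ and symmetrically $\Phi \circ \Psi \simeq \id$, so $\Phi$ induces an isomorphism $\RFH^\varphi(\Sigma_0,M) \xrightarrow{\ \cong\ } \RFH^\varphi(\Sigma_1,M)$; it is canonical because any two admissible continuation data are homotopic. Finally, when $c_1(\ker D^\xi\pi_\varphi) = 0$ this isomorphism is grading-preserving, since a continuation trajectory supplies exactly the path in $\mathscr{L}_\varphi \Sigma$ relating its two asymptotic orbits that enters the transverse Conley--Zehnder index of Definition \ref{def:CZ-index}.
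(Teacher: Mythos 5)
Your overall continuation-map scheme is the standard one, but the step you yourself flag as ``the main obstacle'' is exactly where the argument, as written, has a gap -- and it is the reason the paper does not use a single continuation map for the whole homotopy. The defect term in your energy identity is linear in $\tau$, and the Fundamental Lemma \ref{lem:fundamental_lemma} bounds $|\tau(s)|$ linearly in the action $|\mathscr{A}^{H_{\beta(s)}}_\varphi(u(s),\tau(s))|$; but along a continuation trajectory this action is controlled by the asymptotic actions only up to the \emph{total} defect, which is bounded by $\kappa\,\sup_s|\tau(s)|$ with $\kappa := \int_0^1\|\partial_\sigma H_\sigma\|_\infty\,d\sigma$ the length of the homotopy, and the same happens for the energy bound that enters through $\zeta(\sigma)$ in the proof of Theorem \ref{thm:compactness}. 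What comes out is an estimate of the shape $\sup_s|\tau(s)| \le c_1 + c_2\,\kappa\,\sup_s|\tau(s)|$, with $c_1,c_2$ depending on the (family-uniform) constants of Lemma \ref{lem:fundamental_lemma} and on $\|H\|_\infty$; this closes only when $c_2\kappa<1$. It is not a Gronwall inequality: the defect enters the action bound globally, through an integral over all of $\mathbb{R}$, not causally, so there is no exponential estimate to fall back on, and $\kappa$ is parametrization-independent, so stretching $\beta$ does not help. Hence for a twisted homotopy of arbitrary size your map $\Phi$ need not be defined, and the assertion that ``no monotonicity of the homotopy is required'' is not justified as stated.

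This is precisely why the paper proves Theorem \ref{thm:invariance} by the adiabatic argument of \cite{cieliebakfrauenfelder:rfh:2009}: one subdivides the homotopy into finitely many pieces whose length is small compared with the uniform constants, defines a continuation isomorphism for each small piece essentially as you do, and composes. Your proposal is then still missing the second ingredient the paper singles out as crucial: the intermediate functionals $\mathscr{A}^{H_{\sigma_k}}_\varphi$ at the subdivision points are not assumed Morse--Bott (only the endpoints are), so to run the subdivision one needs the genericity of the Morse--Bott condition within $C^\infty_\varphi(M)$ -- the twisted analogue of \cite[Theorem~B.1]{cieliebakfrauenfelder:rfh:2009}, which works because the contact condition is open -- in order to perturb the intermediate data and make the composition of small-step isomorphisms well defined. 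The remaining parts of your outline (chain-map property, homotopy of homotopies, the constant homotopy inducing the identity, preservation of the grading) are standard and unproblematic once the continuation maps actually exist.
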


\begin{proof}
	The proof follows from the same adiabatic argument as in \cite[p.~275--277]{cieliebakfrauenfelder:rfh:2009}. Crucial is that \cite[Theorem~3.6]{cieliebakfrauenfelder:rfh:2009} remains true in our setting,  as well as the genericness of the Morse--Bott condition. Indeed, if $(M,\lambda)$ is an exact symplectic manifold and $\varphi \in \Diff(M)$ is of finite order such that $\varphi^*\lambda = \lambda$, then we have the following generalisation of \cite[Theorem~B.1]{cieliebakfrauenfelder:rfh:2009}. Adapting the proof accordingly, one can show that there exists a subset
	\begin{equation*}
		\mathscr{U} \subseteq \{H \in C^\infty_\varphi(M) : \supp dH \text{ compact}\},
	\end{equation*}
	\noindent of the second category such that for every $H \in \mathscr{U}$, $\mathscr{A}^H_\varphi$ is Morse--Bott with critical manifold being $\Fix(\varphi\vert_{H^{-1}(0)})$ together with a disjoint union of circles. Again, this works only since the contact condition is an open condition.	
\end{proof}

\begin{remark}
	Invariance of twisted Rabinowitz--Floer homology allows us to define twisted Rabinowitz--Floer homology also in the case where $\mathscr{A}^H_\varphi$ is not necessarily Morse--Bott. Indeed, as the proof of Theorem \ref{thm:invariance} shows, we can perturb the hypersurface $\Sigma$ slightly to make it Morse--Bott. Thus we can define the twisted Rabinowitz--Floer homology of such a hypersurface to be the twisted Rabinowitz--Floer homology of any Morse--Bott perturbation. By Theorem \ref{thm:invariance}, this is indeed well-defined.
\end{remark}

\begin{corollary}[Independence]
	Let $\varphi \in \Aut(W,\lambda)$ and $H_0, H_1 \in \mathscr{F}_\varphi(\Sigma)$ be such that either $\mathscr{A}^{H_0}_\varphi$ or $\mathscr{A}^{H_1}_\varphi$ is Morse--Bott. Then the definition of twisted Rabinowitz--Floer homology $\RFH^\varphi(\Sigma,M)$ is independent of the choice of twisted defining Hamiltonian function for $\Sigma$.
\end{corollary}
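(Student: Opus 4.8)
The plan is to exhibit $H_0$ and $H_1$ as the two ends of a twisted homotopy of Liouville domains in which the hypersurface never moves, and then to invoke the Invariance Theorem \ref{thm:invariance}. Concretely, I would take the straight-line homotopy $H_\sigma := (1-\sigma)H_0 + \sigma H_1$ for $\sigma \in I$ and check that $(H_\sigma)_{\sigma \in I}$ is a twisted homotopy of Liouville domains with $\Sigma_\sigma = \Sigma$ and $W_\sigma = W$ for every $\sigma$. That each $H_\sigma$ lies in $C^\infty_\varphi(M)$ with $dH_\sigma$ compactly supported is immediate from linearity, and $X_{H_\sigma}\vert_\Sigma = R$ follows because $H \mapsto X_H$ is linear and $X_{H_0}\vert_\Sigma = R = X_{H_1}\vert_\Sigma$. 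Also $\Sigma \cap \supp f_\varphi = \emptyset$ holds automatically, since $f_\varphi \in C^\infty_c(\Int W)$ by definition of a Liouville automorphism; this gives condition (iii) of a twisted homotopy.

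The one point requiring a genuine argument is that $H_\sigma^{-1}(0) = \Sigma$ \emph{exactly} (and $\Sigma \cap \Crit H_\sigma = \emptyset$), not merely $\Sigma \subseteq H_\sigma^{-1}(0)$. I would first note that $X_{H_i}\vert_\Sigma = R$ forces $dH_i\vert_\Sigma = -i_R d\lambda\vert_\Sigma$, so $dH_\sigma\vert_\Sigma = dH_0\vert_\Sigma \neq 0$ and hence $\Sigma \cap \Crit H_\sigma = \emptyset$. A collar computation, using $\lambda = e^r\lambda\vert_\Sigma$ near $\Sigma$, shows $\partial_r H_i > 0$ along $\Sigma$ (with $r<0$ the side interior to $W$); since $M \setminus \Sigma$ has exactly the two connected components $\Int W$ and the cylindrical end, each $H_i$ is negative on $\Int W$ and positive on the end, and the same sign behaviour is inherited by the convex combination $H_\sigma$. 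Therefore $H_\sigma^{-1}(0) = \Sigma$ and $W_\sigma = H_\sigma^{-1}(\intoc[0]{-\infty,0}) = W$ for all $\sigma$, with symplectic form $d\lambda\vert_W$; this verifies conditions (i) and (ii).

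It remains to reconcile the hypothesis, which asks only that one of $\mathscr{A}^{H_0}_\varphi$, $\mathscr{A}^{H_1}_\varphi$ be Morse--Bott, with Theorem \ref{thm:invariance}, which wants both. Here I would observe that being Morse--Bott is intrinsic to $(\Sigma, \lambda\vert_\Sigma, \varphi\vert_\Sigma)$ and invisible to $H$: by Corollary \ref{cor:critical_points_perturbed_twisted_Rabinowitz_functional} and Proposition \ref{prop:kernel_hessian_contact}, for any $H \in \mathscr{F}_\varphi(\Sigma)$ one has $\Crit \mathscr{A}^H_\varphi = \mathscr{P}_\varphi(\Sigma,\lambda\vert_\Sigma)$ and $\ker \Hess \mathscr{A}^H_\varphi\vert_{(\gamma,\tau)} \cong \ker\bigl(D(\phi^R_{-\tau}\circ\varphi)\vert_{\gamma(0)} - \id_{T_{\gamma(0)}\Sigma}\bigr)$, and both the critical set and the kernel of the Hessian depend only on $\Sigma$, $\lambda\vert_\Sigma$ and $\varphi\vert_\Sigma$. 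Hence $\mathscr{A}^{H_0}_\varphi$ is Morse--Bott if and only if $\mathscr{A}^{H_1}_\varphi$ is, so under the hypothesis both are, and Theorem \ref{thm:invariance} applied to $(H_\sigma)_{\sigma \in I}$ yields a canonical isomorphism between the two chain complexes built from $H_0$ and from $H_1$; independence of the remaining auxiliary data (the Morse function on $\Crit$, the almost complex structure, the abstract perturbations) is already part of the construction of $\RFH^\varphi(\Sigma,M)$. The main obstacle is thus not conceptual but careful bookkeeping: confirming that the straight-line homotopy really stays inside the class of twisted defining Hamiltonians for the fixed $\Sigma$, i.e. the sign/collar argument of the second paragraph.
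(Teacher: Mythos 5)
Your proposal is correct and takes essentially the same route as the paper: the paper's proof is precisely the convexity of $\mathscr{F}_\varphi(\Sigma)$ via the straight-line homotopy $H_\sigma = (1-\sigma)H_0 + \sigma H_1$, the sign argument $dH(X)\vert_\Sigma = \lambda(R) = 1$ forcing $H_\sigma^{-1}(0) = \Sigma$, and then Theorem \ref{thm:invariance}. Your additional observation that the Morse--Bott property depends only on $(\Sigma,\lambda\vert_\Sigma,\varphi\vert_\Sigma)$ (via Proposition \ref{prop:kernel_hessian_contact}), so that the ``either/or'' hypothesis already implies both ends are Morse--Bott, is a point the paper leaves implicit and is a welcome clarification.
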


\begin{proof}
	Note that $\mathscr{F}_\varphi(\Sigma)$ is a convex space. Indeed, set 
	\begin{equation*}
		H_\sigma := (1 - \sigma)H_0 + \sigma H_1 \qquad \sigma \in I.
	\end{equation*}
	Then $\varphi^*H_\sigma = H_\sigma$, $dH_\sigma$ has compact support and $X_{H_\sigma}\vert_\Sigma = R$ for all $\sigma \in I$. Moreover, for the Liouville vector field $X \in \mathfrak{X}(M)$ we compute
	\begin{equation*}
		\frac{d}{dt}\bigg\vert_{t = 0} H \circ \phi^X_t\vert_\Sigma = dH(X)\vert_\Sigma = d\lambda(X,X_H)\vert_\Sigma = \lambda(X_H)\vert_\Sigma = \lambda(R) = 1, 
	\end{equation*}
	\noindent for any $H \in \mathscr{F}_\varphi(\Sigma)$, and thus $H < 0$ on $\Int W$ and $H > 0$ on $M \setminus W$. Consequently, $H^{-1}_\sigma(0) = \Sigma$ and so $H_\sigma \in \mathscr{F}_\varphi(\Sigma)$ for all $\sigma \in I$. Hence $(H_\sigma)_{\sigma \in I}$ is a twisted homotopy of Liouville domains in $M$ and Theorem \ref{thm:invariance} implies the claim.
\end{proof}

\subsection{Twisted Leaf-Wise Intersection Points}

\begin{definition}[Twisted Leaf-Wise Intersection Point]
	Let $(M,\lambda)$ be the completion of a Liouville domain $(W,\lambda)$ and let $\varphi \in \Aut(W,\lambda)$ be a Liouville automorphism. A point $x \in \Sigma$ is a \bld{twisted leaf-wise intersection point} for a Hamiltonian symplectomorphism $\varphi_F \in \Ham(M,d\lambda)$, if
	\begin{equation*}
		\varphi_F(x) \in L_{\varphi(x)} := \cbr[0]{\phi^R_t(\varphi(x)) : t \in \mathbb{R}}.
	\end{equation*}
\end{definition}

\begin{definition}[Twisted Moser Pair]
	Let $\varphi \in \Aut(W,\lambda)$. A \bld{twisted Moser pair} is defined to be a tuple $\mathfrak{M} := (\chi H,F)$, where 
	\begin{enumerate}[label=\textup{(\roman*)}]
		\item $H \in C^\infty_\varphi(M)$, $F \in C^\infty_\varphi(M \times \mathbb{R})$ and $\chi \in C^\infty(\mathbb{S}^1,I)$ such that $\int_0^1 \chi = 1$. Any time-dependent Hamiltonian function $\chi H$ is said to be \bld{weakly time-dependent}.
		\item $\supp \chi \subseteq \intoo[1]{0,\frac{1}{2}}$ and $F_t = 0$ for all $t \in \intcc[1]{0,\frac{1}{2}}$.
	\end{enumerate}
\end{definition}

\begin{lemma}	
	Let $\varphi \in \Aut(W,\lambda)$. For all $H \in \mathscr{F}_\varphi(\Sigma)$ and $\varphi_F \in \Ham(M,d\lambda)$ there exists a corresponding twisted Moser pair $\mathfrak{M}$ such that the flow of $\chi X_H$ is a time-reparametrisation of the flow of $X_H$.  
	\label{lem:twisted_Moser_pair}
\end{lemma}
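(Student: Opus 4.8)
The construction is completely explicit, so the plan is simply to write down $\mathfrak{M} = (\chi H, F)$ and verify the defining conditions; the two halves $[0,\tfrac12]$ and $[\tfrac12,1]$ of the period will carry the defining data $H$ and the displacing data of $\varphi_F$, respectively.

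First I would fix a cutoff $\chi \in C^\infty(\mathbb{S}^1)$ with $\chi \geq 0$, $\supp\chi \subseteq (0,\tfrac12)$ and $\int_0^1\chi = 1$; such a $\chi$ exists by a standard bump-function construction and is $1$-periodic. Put $B(t) := \int_0^t\chi(s)\,ds$, a smooth non-decreasing function with $B(0) = 0$, $B \equiv 1$ on $[\tfrac12,1]$ and $B(t+1) = B(t)+1$. Since $H$ is autonomous, the Hamiltonian vector field of $\chi H$ at time $t$ is $\chi(t)X_H$, and one checks directly that the flow of this time-dependent vector field starting at time $0$ is $t\mapsto\phi^{X_H}_{B(t)}$; this already establishes the last assertion of the lemma, with $B$ the required time-reparametrisation (and, restricted to $\Sigma$, it is a reparametrisation of the Reeb flow since $X_H\vert_\Sigma = R$). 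Because $\chi(t+1)=\chi(t)$ and $\varphi^*H = H$ — the latter being exactly what $H \in C^\infty_\varphi(M)$ says for an autonomous function — we obtain $\varphi^*(\chi H)_{t+1} = (\chi H)_t$, so $\chi H \in C^\infty_\varphi(M\times\mathbb{R})$; together with $\int_0^1\chi = 1$ this is condition (i) of the definition of a twisted Moser pair.

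Next I would build $F$. Choose a Hamiltonian $G \in C^\infty(M\times[0,1])$ generating $\varphi_F$, i.e.\ with isotopy $\phi^G_t$ satisfying $\phi^G_0 = \id$ and $\phi^G_1 = \varphi_F$ (compactly supported if $\varphi_F$ is), and a smooth non-decreasing $\rho\colon[0,1]\to[0,1]$ that is identically $0$ on $[0,\tfrac12]$ and identically $1$ near $t = 1$. On $[0,1]$ set $F_t := \rho'(t)\,G_{\rho(t)}$; then $X_{F_t} = \rho'(t)X_{G_{\rho(t)}}$, so $\phi^F_t = \phi^G_{\rho(t)}$ and in particular $\phi^F_1 = \varphi_F$, while $F_t = 0$ wherever $\rho' = 0$ — in particular on all of $[0,\tfrac12]$ and near $t=1$. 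Finally extend $F$ from $[0,1]$ to $M\times\mathbb{R}$ by declaring $F_{t+k} := (\varphi^{-k})^*F_t$ for $k\in\mathbb{Z}$; since $F$ vanishes in a neighbourhood of every integer, the extension is smooth, and the prescription is precisely the relation $\varphi^*F_{t+1} = F_t$ (using $\varphi^*\circ(\varphi^{-1})^* = \id$), so $F \in C^\infty_\varphi(M\times\mathbb{R})$. Combined with $\supp\chi \subseteq (0,\tfrac12)$ and $F\vert_{[0,\tfrac12]} = 0$, this is condition (ii), and $\mathfrak{M} = (\chi H,F)$ is the required twisted Moser pair.

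The only step that needs genuine attention — the main, though fairly mild, obstacle — is the twist-periodic extension of $F$: one must arrange the reparametrisation $\rho$ so that $F$ is flat to infinite order at the endpoints of $[0,1]$ (so the periodic pieces glue to a smooth $F$ on $M\times\mathbb{R}$) without destroying $\phi^F_1 = \varphi_F$, and then check that $(\varphi^{-k})^*F_t$ is the unique extension compatible with the twist relation. Everything else is bookkeeping: the two flow computations above and the verification that $\chi H$ and $F$ lie in $C^\infty_\varphi(M\times\mathbb{R})$.
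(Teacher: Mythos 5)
Your construction is correct and is essentially the paper's own proof: the paper likewise takes $\rho$ equal to $0$ on $[0,\tfrac12]$ and $1$ near $t=1$, sets $\widetilde F(x,t)=\dot\rho(t-k)F(\varphi^{-k}(x),\rho(t-k))$ on $[k,k+1]$ (your $(\varphi^{-k})^*(\rho' G_{\rho})$), and verifies $\phi^{\chi X_H}_t=\phi^{X_H}_{\tau(t)}$ with $\tau(t)=\int_0^t\chi$ exactly as you do. The smoothness worry you flag at the integers is resolved just as you note, since $\dot\rho$ vanishes identically near the endpoints, so the pieces glue trivially.
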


\begin{proof}
	For constructing the Hamiltonian perturbation $\widetilde{F}$, fix $\rho \in C^\infty(I,I)$ such that 
	\begin{equation*}
		\rho(t) = \begin{cases}
			0 & t \in \intcc[1]{0,\frac{1}{2}},\\
			1 & t \in \intcc[1]{\frac{2}{3},1}.
		\end{cases}
	\end{equation*}
	See Figure \ref{fig:rho}. Then define $\widetilde{F} \in C^\infty_\varphi(M \times \mathbb{R})$ by 
	\begin{equation*}
		\widetilde{F}(x,t) := \dot{\rho}(t - k)F\del[1]{\varphi^{-k}(x),\rho(t - k)} \qquad \forall t \in \intcc[0]{k,k + 1},
	\end{equation*}
	\noindent for $k \in \mathbb{Z}$. See Figure \ref{fig:rho_dot}. Then $\widetilde{F}_t = 0$ for all $t \in \intcc[1]{0,\frac{1}{2}}$, and 
	\begin{equation*}
		\phi^{X_{\widetilde{F}}}_t = \phi^{X_F}_{\rho(t)} \qquad \forall t \in I.
	\end{equation*}
	Indeed, we compute
	\begin{equation*}
		\frac{d}{dt}\phi^{X_F}_{\rho(t)} = \dot{\rho}(t)\frac{d}{d\rho}\phi^{X_F}_{\rho(t)} = \dot{\rho}(t)\del[1]{X_{F_{\rho(t)}} \circ \phi^{X_F}_{\rho(t)}} = X_{\widetilde{F}_t} \circ \phi^{X_F}_{\rho(t)}.
	\end{equation*}
	In particular
	\begin{equation*}
		\varphi_{\widetilde{F}} = \phi^{X_{\widetilde{F}}}_1 = \phi^{X_F}_{\rho(1)} = \phi^{X_F}_1 = \varphi_F.
	\end{equation*}
	Finally, we have that
	\begin{equation*}
		\phi^{\chi X_H}_t = \phi^{X_H}_{\tau(t)} \qquad \text{with} \qquad \tau(t) := \int_0^t \chi,
	\end{equation*}
	\noindent as we compute
	\begin{equation*}
		\frac{d}{dt}\phi^{X_H}_ {\tau(t)} = \chi(t) \frac{d}{d\tau} \phi^{X_H}_{\tau(t)} = \chi(t)X_H \circ \phi^{X_H}_{\tau(t)},
	\end{equation*}
	\noindent and thus we conclude by the uniqueness of integral curves.
\end{proof}

\begin{figure}[h!tb]
    \centering
    \begin{subfigure}[b]{0.48\textwidth}
		\centering
        \includegraphics[width=\textwidth]{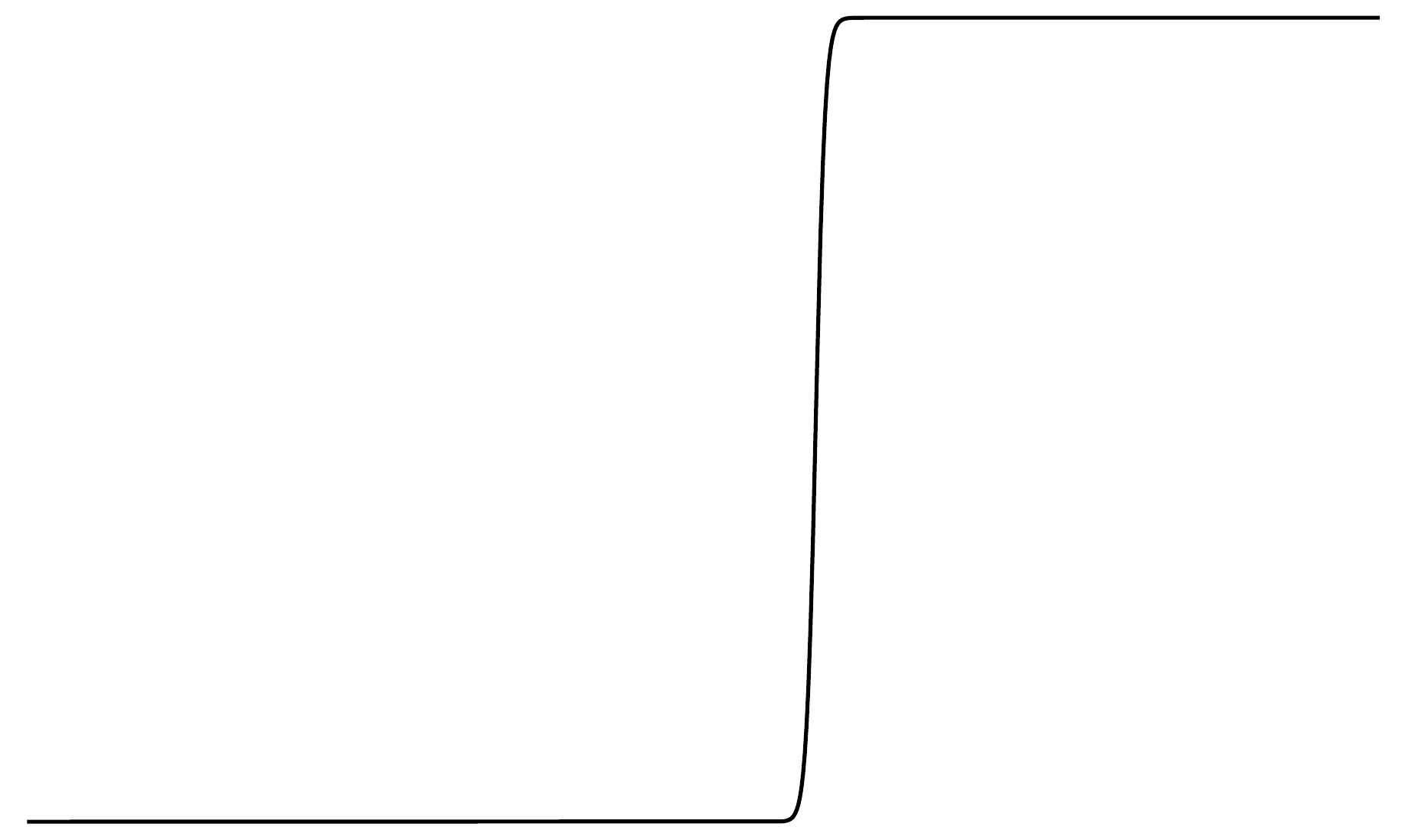}
        \caption{The smooth function $\rho$.}
        \label{fig:rho}
    \end{subfigure}
    \begin{subfigure}[b]{0.48\textwidth}
		\centering
        \includegraphics[width=\textwidth]{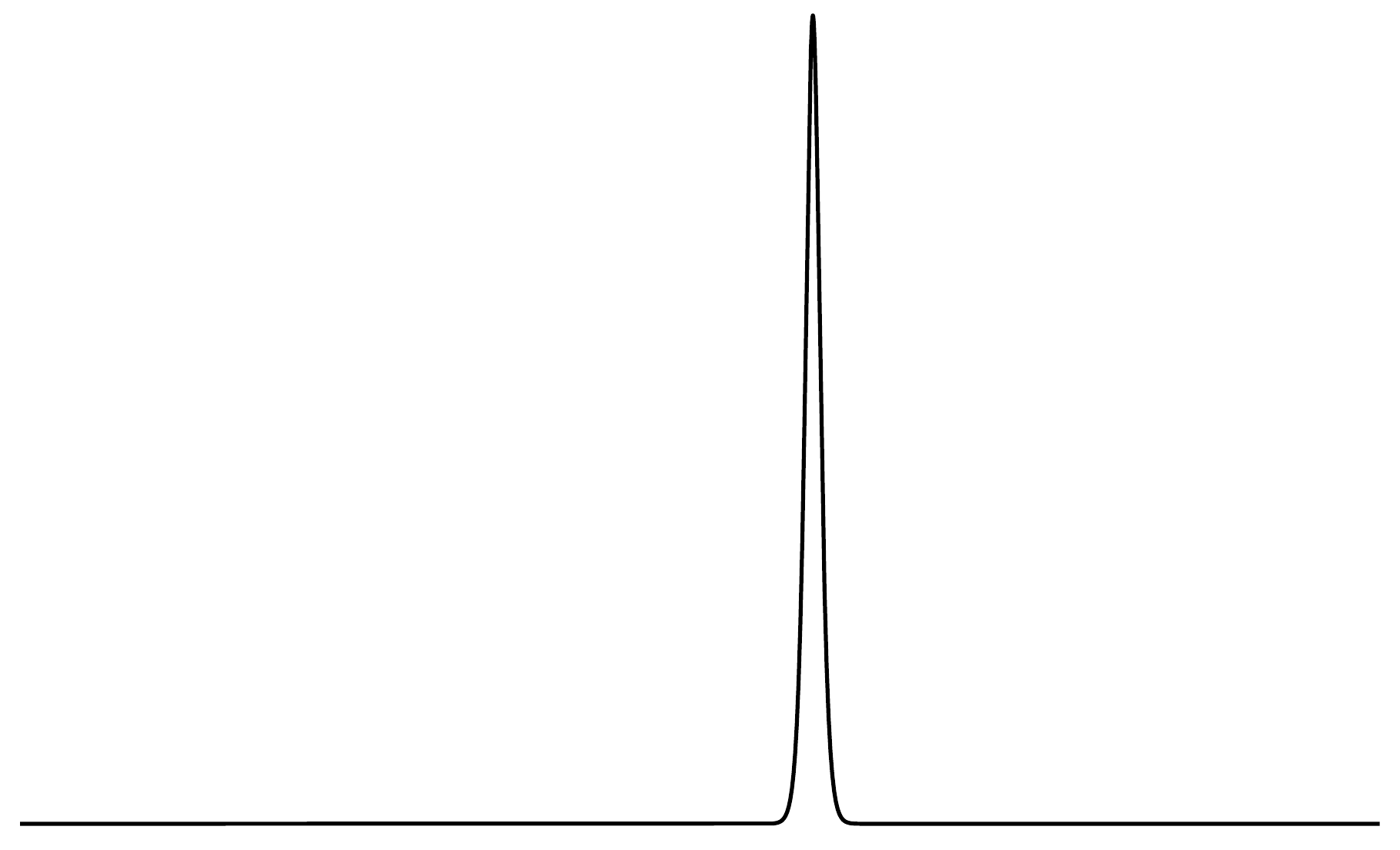}
		\caption{The derivative $\dot{\rho}$ of $\rho$.}
        \label{fig:rho_dot}
    \end{subfigure}
	\caption{}
\end{figure}


\begin{lemma}
	Let $\varphi \in \Aut(W,\lambda)$ and $\varphi_F \in \Ham(M,d\lambda)$ a Hamiltonian symplectomorphism. If $(\gamma,\tau) \in \Crit \mathscr{A}^{\mathfrak{M}}_\varphi$, then $x := \gamma\del[1]{\frac{1}{2}}$ is a twisted leaf-wise intersection point for $\varphi_F$.
	\label{lem:variational_characterisation_twisted_leaf-wise_intersections}
\end{lemma}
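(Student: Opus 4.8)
The plan is to extract the critical point equations of $\mathscr{A}^{\mathfrak{M}}_\varphi$ from Proposition~\ref{prop:differential_perturbed_twisted_Rabinowitz_functional} and then exploit the defining features of a twisted Moser pair $\mathfrak{M}=(\chi H,F)$ with $H\in\mathscr{F}_\varphi(\Sigma)$ --- namely $\supp\chi\subseteq(0,\tfrac{1}{2})$, $\int_0^1\chi=1$, and $F_t=0$ for $t\in[0,\tfrac{1}{2}]$ --- to analyse $\gamma$ separately on the two halves $[0,\tfrac{1}{2}]$ and $[\tfrac{1}{2},1]$.

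First I would note that, with $H_t=\chi(t)H$ and hence $X_{H_t}=\chi(t)X_H$, Proposition~\ref{prop:differential_perturbed_twisted_Rabinowitz_functional} says $(\gamma,\tau)\in\Crit\mathscr{A}^{\mathfrak{M}}_\varphi$ is equivalent to $\dot\gamma(t)=\tau\chi(t)X_H(\gamma(t))+X_{F_t}(\gamma(t))$ for all $t$ together with $\int_0^1\chi(t)H(\gamma(t))\,dt=0$. On $[0,\tfrac{1}{2}]$ the perturbation term drops out, so there $\gamma$ is a time-reparametrisation of an integral curve of $X_H$; Corollary~\ref{cor:preservation_of_energy} then gives that $H\circ\gamma$ is constant on $[0,\tfrac{1}{2}]$, equal to $c:=H(\gamma(0))$. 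Since $\chi$ is supported in $(0,\tfrac{1}{2})$ with $\int_0^1\chi=1$, the constraint $\int_0^1\chi H(\gamma)=0$ becomes $c=0$, so $\gamma([0,\tfrac{1}{2}])\subseteq H^{-1}(0)=\Sigma$. On $\Sigma$ we have $X_H=R$, whence $\dot\gamma(t)=\tau\chi(t)R(\gamma(t))$ on $[0,\tfrac{1}{2}]$; integrating and using $\int_0^{1/2}\chi=1$ yields $x=\gamma(\tfrac{1}{2})=\phi^R_\tau(\gamma(0))$, i.e. $\gamma(0)=\phi^R_{-\tau}(x)$.

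Next I would treat $[\tfrac{1}{2},1]$, where $\chi=0$ forces $\dot\gamma(t)=X_{F_t}(\gamma(t))$. Because $F_t=0$ on $[0,\tfrac{1}{2}]$, the flow of the time-dependent vector field $X_{F_t}$ is the identity up to time $\tfrac{1}{2}$, so uniqueness of integral curves identifies $\gamma$ on $[\tfrac{1}{2},1]$ with that flow applied to $\gamma(\tfrac{1}{2})$, giving $\gamma(1)=\varphi_F(x)$. Combining this with the twisted loop condition $\gamma(1)=\varphi(\gamma(0))$ and the previous step, $\varphi_F(x)=\varphi(\phi^R_{-\tau}(x))$. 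Finally, $\varphi$ restricts to a strict contactomorphism of $(\Sigma,\lambda\vert_\Sigma)$, so Lemma~\ref{lem:strict_contactomorphism} gives $\varphi\circ\phi^R_{-\tau}=\phi^R_{-\tau}\circ\varphi$ on $\Sigma$; since $x\in\Sigma$ this rewrites the last equation as $\varphi_F(x)=\phi^R_{-\tau}(\varphi(x))\in L_{\varphi(x)}$, which is the assertion.

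The argument is essentially bookkeeping, and the step I expect to be the main obstacle --- or rather the only place requiring genuine care --- is the passage establishing $\gamma([0,\tfrac{1}{2}])\subseteq\Sigma$: one must combine preservation of energy along the reparametrised arc with the normalisation $\int_0^1\chi=1$, and without this one could not invoke $X_H\vert_\Sigma=R$ and would only conclude that $\varphi_F(x)$ lies on a characteristic leaf of a nearby level set of $H$ rather than of $\Sigma$ itself. One should also briefly check that $x\in\Sigma$ and $\varphi(x)\in\Sigma$ so that Lemma~\ref{lem:strict_contactomorphism} applies, but this is immediate from $\gamma([0,\tfrac{1}{2}])\subseteq\Sigma$ and $\varphi(\Sigma)=\Sigma$.
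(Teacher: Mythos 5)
Your proposal is correct and follows essentially the same route as the paper: energy conservation plus $\supp\chi\subseteq\bigl(0,\tfrac{1}{2}\bigr)$ and $\int_0^1\chi=1$ force $H\circ\gamma=0$ on $\bigl[0,\tfrac{1}{2}\bigr]$, the second half gives $\gamma(1)=\varphi_F(x)$, and Lemma \ref{lem:strict_contactomorphism} turns $\varphi_F(x)=\varphi(\phi^R_{-\tau}(x))$ into membership in $L_{\varphi(x)}$. The only cosmetic difference is that you track the explicit Reeb time $\tau$ where the paper simply records $\gamma(0)\in L_x$ and $L_{\varphi(x)}=\varphi(L_x)$.
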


\begin{proof}
	Let $\mathfrak{M} = (\chi H,F)$ denote the twisted Moser pair from Lemma \ref{lem:twisted_Moser_pair}. Using Proposition \ref{prop:differential_perturbed_twisted_Rabinowitz_functional} we compute 
	\begin{align*}
		\partial_t H(\gamma(t)) &= dH(\dot{\gamma}(t))\\
		&= dH(\tau X_{\chi(t)H}(\gamma(t)) + X_{F_t}(\gamma(t)))\\
		&= dH(\tau \chi(t) X_H(\gamma(t)))\\
		&= \tau\chi(t)dH(X_H(\gamma(t)))\\
		&= 0
	\end{align*}
	\noindent for all $t \in \intcc[1]{0,\frac{1}{2}}$. Thus $H \circ \gamma = c \in \mathbb{R}$ on $\intcc[1]{0,\frac{1}{2}}$ with 
	\begin{equation*}
		0 = \int_0^1 \chi H(\gamma) = \int_0^{\frac{1}{2}}\chi H(\gamma) = c \int_0^{\frac{1}{2}}\chi = c\int_0^1\chi = c. 
	\end{equation*}
	Consequently, $\gamma(0) \in L_x$ and $x \in \Sigma$. Moreover, also $\gamma(1) = \varphi(\gamma(0)) \in \Sigma$ by the $\varphi$-invariance of $H$. For $t \in \intcc[1]{\frac{1}{2},1}$, $\dot{\gamma} = X_{F_t}(\gamma)$ and so $\gamma(1) = \varphi_F(x) \in \Sigma$. We conclude 
	\begin{equation*}
		L_{\varphi(x)} = \{\phi^R_t(\varphi(x)) : t \in \mathbb{R}\} = \{\varphi(\phi^R_t(x)) : t \in \mathbb{R}\} = \varphi(L_x),
	\end{equation*}
	\noindent and so $\varphi_F(x) = \gamma(1) = \varphi(\gamma(0)) \in L_{\varphi(x)}$.
\end{proof}

\begin{theorem}
	Let $(W,\lambda)$ be a Liouville domain with displaceable boundary in the completion $(M,\lambda)$ and $\varphi \in \Aut(W,\lambda)$. Then $\RFH^\varphi(\Sigma,M) \cong 0$.
	\label{thm:displaceable}
\end{theorem}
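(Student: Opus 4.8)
The plan is to reduce the vanishing of $\RFH^\varphi(\partial W,M)$ to the absence of critical points of a suitably perturbed twisted Rabinowitz action functional, following the strategy of \cite{cieliebakfrauenfelder:rfh:2009}. Write $\Sigma = \partial W$. Since $\Sigma$ is displaceable in $(M,\lambda)$, fix a compactly supported Hamiltonian symplectomorphism $\varphi_F \in \Ham_c(M,d\lambda)$ with $\varphi_F(\Sigma)\cap\Sigma=\emptyset$, and fix any twisted defining Hamiltonian $H\in\mathscr{F}_\varphi(\Sigma)$. By Lemma \ref{lem:twisted_Moser_pair} there is an associated twisted Moser pair $\mathfrak{M}=(\chi H,\widetilde{F})$ with $\varphi_{\widetilde{F}}=\varphi_F$ and with the flow of $\chi X_H$ a time-reparametrisation of the flow of $X_H$.

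The first step is to check that $\Crit\mathscr{A}^{\mathfrak{M}}_\varphi=\emptyset$. Indeed, if $(\gamma,\tau)\in\Crit\mathscr{A}^{\mathfrak{M}}_\varphi$, then by Lemma \ref{lem:variational_characterisation_twisted_leaf-wise_intersections} the point $x:=\gamma(1/2)$ lies in $\Sigma$ and is a twisted leaf-wise intersection point for $\varphi_{\widetilde F}=\varphi_F$, so $\varphi_F(x)\in L_{\varphi(x)}$. But $L_{\varphi(x)}=\cbr[0]{\phi^R_t(\varphi(x)):t\in\mathbb{R}}$ is contained in $\Sigma$, since the Reeb flow of $\lambda\vert_\Sigma$ preserves $\Sigma$ and $\varphi(\Sigma)=\Sigma$. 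Hence $\varphi_F(x)\in\varphi_F(\Sigma)\cap\Sigma=\emptyset$, a contradiction. It follows that the twisted Rabinowitz--Floer chain complex built from $\mathscr{A}^{\mathfrak{M}}_\varphi$ is the zero complex, so $\RFH(\mathscr{A}^{\mathfrak{M}}_\varphi)=0$. Note that, in contrast to the forcing theorem, no quantitative control on $\varphi_F$ is needed here: any displacing Hamiltonian works, because the perturbed functional has no critical point whatsoever.

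The second step is to identify $\RFH^\varphi(\Sigma,M)$ with $\RFH(\mathscr{A}^{\mathfrak{M}}_\varphi)$. Here I would argue as in the invariance Theorem \ref{thm:invariance}, but along a homotopy that switches on the Hamiltonian perturbation: take the linear family of Moser pairs $\mathfrak{M}_s:=(\chi H,s\widetilde{F})$ for $s\in\intcc[0]{0,1}$. At $s=0$ the functional $\mathscr{A}^{\mathfrak{M}_0}_\varphi$ is, up to the reparametrisation built into the Moser pair, the unperturbed functional $\mathscr{A}^H_\varphi$, hence computes $\RFH^\varphi(\Sigma,M)$; at $s=1$ it equals $\mathscr{A}^{\mathfrak{M}}_\varphi$. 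The adiabatic continuation argument of \cite{cieliebakfrauenfelder:rfh:2009} then produces continuation morphisms in both directions whose compositions are chain homotopic to the respective identities, so $\RFH^\varphi(\Sigma,M)\cong\RFH(\mathscr{A}^{\mathfrak{M}}_\varphi)=0$.

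The main obstacle is the compactness input that makes these continuation morphisms well-defined. For small $s$ the intermediate functional $\mathscr{A}^{\mathfrak{M}_s}_\varphi$ has a non-compact critical set — twisted Reeb orbits of arbitrarily large period — so one cannot expect a bound on the Lagrange multiplier that is uniform along the homotopy. This is the only place where displaceability enters quantitatively: a continuation trajectory changes the twisted Rabinowitz action by at most the Hofer norm $\norm{\widetilde{F}}$, which is finite, and feeding this bound into the perturbed analogue of the Fundamental Lemma \ref{lem:fundamental_lemma} yields a uniform $L^\infty$-bound on $\tau$ for trajectories with fixed asymptotics. Combined with the maximum principle and the bubbling analysis (Corollary \ref{cor:bubbling_rfh}), this gives $C^\infty_{\loc}$-compactness of the continuation moduli spaces; and since the action is shifted by at most $\norm{\widetilde{F}}$ while the twisted spectrum is discrete, only finitely many generators of the source contribute to a given generator of the target, so the morphisms are also well-defined on the Novikov completions. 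The remaining verification that the two morphisms compose to the identity on homology is the standard homotopy-of-homotopies argument and carries over verbatim from \cite{cieliebakfrauenfelder:rfh:2009}.
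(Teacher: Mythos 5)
Your proposal follows essentially the same route as the paper: the critical set of $\mathscr{A}^{\mathfrak{M}}_\varphi$ is shown to be empty via the twisted leaf-wise intersection characterisation (Lemma \ref{lem:variational_characterisation_twisted_leaf-wise_intersections}) and displaceability, and the vanishing is then transported to $\RFH^\varphi(\Sigma,M)$ by continuation homomorphisms switching off the perturbation, with compactness supplied by the adapted Fundamental Lemma as in \cite{albersfrauenfelder:rfh:2010} and the period--action equality handling the weakly time-dependent Hamiltonian $\chi H$. This is correct and matches the paper's argument.
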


\begin{proof}
	Suppose that $\Sigma = \partial W$ is displaceable in $M$ via $\varphi_F \in \Ham_c(M,d\lambda)$ and choose Rabinowitz--Floer data $(H,J)$ for $\varphi$. Denote by $\mathfrak{M} = (\chi H,F)$ the associated twisted Moser pair from Lemma \ref{lem:twisted_Moser_pair}. Then $\Crit \mathscr{A}^{\mathfrak{M}}_\varphi = \emptyset$. Indeed, if there exists $(\gamma,\tau) \in \Crit \mathscr{A}^{\mathfrak{M}}_\varphi$, then $\gamma\del[1]{\frac{1}{2}}$ is a twisted leaf-wise intersection point for $\varphi_F$ by Lemma \ref{lem:variational_characterisation_twisted_leaf-wise_intersections}. However, this is impossible as by displaceability we have that $\varphi_F(\Sigma) \cap \Sigma = \emptyset$. Consequently, the perturbed twisted Rabinowitz action functional $\mathscr{A}^{\mathfrak{M}}_\varphi$ is a Morse function. By adapting the Fundamental Lemma to the current setting as in \cite[Theorem~2.9]{albersfrauenfelder:rfh:2010}, the Floer homology $\HF(\mathscr{A}^{\mathfrak{M}}_\varphi)$ is well-defined. By making use of continuation homomorphisms we have that
	\begin{equation*}
		0 = \HF(\mathscr{A}^{\mathfrak{M}}_\varphi) \cong \HF(\mathscr{A}^{(\chi H,0)}_\varphi) \cong \RFH^\varphi(\Sigma,M),
	\end{equation*}
	\noindent where the last equation is the observation that twisted Rabinowitz--Floer homology in the autonomous case extends to the weakly time-dependent case without any issues. Crucial is, that the period--action equality (see Remark \ref{rem:period-action_equality})  is still valid. Indeed, we compute
	\begin{equation*}
		\mathscr{A}^{(\chi H,0)}_\varphi(\gamma,\tau) = \int_0^1 \gamma^*\lambda = \int_0^1 \lambda(\dot{\gamma}) = \tau \int_0^1 \chi \lambda(R(\gamma)) = \tau \int_0^1 \chi = \tau
	\end{equation*}
	\noindent for all $(\gamma,\tau) \in \Crit \mathscr{A}^{(\chi H,0)}_\varphi$.
\end{proof}

\newpage
\section{Applications}
\label{sec:applications}

In this chapter we give two applications of the abstract machinery developed in the previous section and prove Theorem \ref{thm:my_result} as well as Theorem \ref{thm:forcing} (see Theorem \ref{thm:forcing_theorem}).

\subsection{Existence of Noncontractible Periodic Reeb Orbits}
We define an equivariant version of twisted Rabinowitz--Floer homology for the discrete group $\mathbb{Z}_m$ following \cite[p.~487]{albersfrauenfelder:eh:2012}. In general, suppose that a topological manifold $M$ admits a group action by a topological group $G$. Then there exists a principal $G$-bundle $EG \to BG$, where $BG = EG/G$ denotes the classifying space of $G$ and $EG$ is weakly contractible. Then $G$ acts freely on $EG \times M$ via the diagonal action. Thus we can define the \bld{$G$-equivariant homology of $M$} by
\begin{equation*}
	\operatorname{H}_\ast^G(M;R) := \operatorname{H}_\ast(EG \times M/G;R),
\end{equation*}
\noindent for any coefficient ring $R$ and where $EG \times M/G$ is the homotopy quotient of $M$ by $G$. See \cite[p.~30--31]{tu:equivariant:2020}.  For example, if $G = \mathbb{Z}_m$, then $E\mathbb{Z}_m = \mathbb{S}^\infty$ and $B\mathbb{Z}_m = \mathbb{S}^\infty/\mathbb{Z}_m$ is a lens space. Since $\mathbb{Z}_m$ acts freely on $\mathbb{S}^{2n - 1}$, there is a fibre bundle
\begin{equation*}
	EG \to EG \times M/G \to M/G,
\end{equation*}
\noindent inducing an isomorphism
\begin{equation*}
	\operatorname{H}_*^G(M) = \operatorname{H}_*(M/G)
\end{equation*}
\noindent by \cite[Corollary~9.6]{tu:equivariant:2020} and \cite[Theorem~3.3]{tu:equivariant:2020}. This observation will be crucial in widehat follows. Explicitly, consider the free smooth discrete action on the odd-dimensional sphere generated by the rotation $\varphi$ from Example \ref{ex:rotation}. Define a twisted defining Hamiltonian function $H \in \mathscr{F}_\varphi(\mathbb{S}^{2n - 1})$ by
\begin{equation*}
	H(z) := \frac{1}{2}\del[1]{\beta(\abs[0]{z}^2) - 1}
\end{equation*}
\noindent for some sufficiently small mollification of the piecewise linear function 
\begin{equation*}
	\beta(r) = \begin{cases}
		\frac{1}{2} & r \in \intoc[1]{-\infty,\frac{1}{2}},\\
		r & r \in \intcc[1]{\frac{1}{2},\frac{3}{2}},\\
		\frac{3}{2} & r \in \intco[1]{\frac{3}{2},+\infty}. 
	\end{cases}
\end{equation*}
Fix a $\varphi$-invariant $\omega$-compatible almost complex structure on $(\mathbb{C}^n,\lambda)$, where $\lambda$ is given by \eqref{eq:Liouville_form}. Then the rotation $\varphi$ induces a free $\mathbb{Z}_m$-action on $\Crit \mathscr{A}^H_\varphi$ and on the moduli space of twisted negative gradient flow lines with cascades of $\mathscr{A}^H_\varphi$. Therefore, we can define the \bld{$\mathbb{Z}_m$-equivariant twisted Rabinowitz-Floer homology}
\begin{equation*}
	\overline{\RFH}^\varphi_k(\mathbb{S}^{2n - 1}/\mathbb{Z}_m) := \frac{\ker \overline{\partial}_k}{\im \overline{\partial}_{k + 1}} \qquad \forall k \in \mathbb{Z},
\end{equation*}
\noindent as the homology of the $\mathbb{Z}$-graded chain complex (see Remark \ref{rem:grading})
\begin{equation*}
	\overline{\partial}_k \colon \RFC^\varphi_k(\mathbb{S}^{2n - 1},\mathbb{C}^n)/\mathbb{Z}_m \to \RFC^\varphi_{k - 1}(\mathbb{S}^{2n - 1},\mathbb{C}^n)/\mathbb{Z}_m
\end{equation*}
\noindent given by
\begin{equation*}
	\overline{\partial}_k[(\gamma,\tau)] := [\partial_k(\gamma,\tau)] \qquad (\gamma,\tau) \in \Crit h,
\end{equation*}
\noindent for some $\varphi$-invariant Morse function $h$ on $\Crit \mathscr{A}^H_\varphi$. More generally, if $G$ is a finite symmetry of a Hamiltonian system which acts free on the displaceable regular energy hypersurface, one can define the $G$-equivariant twisted Rabinowitz--Floer homology as above if the twisted Rabinowitz--Floer homology is defined. Under some mild index assumption on the Conley--Zehnder index, the resulting $G$-equivariant twisted Rabinowitz--Floer homology is isomorphic to the Tate homology of $G$ with coefficients in $\mathbb{Z}_2$. See \cite[Theorem~5.6]{ruck:equivariant:2022} for a proof and \cite[Definition~6.2.4]{weibel:homological_algebra:1994} as well as \cite[p.~135]{brown:groups:1982} for a definition of Tate homology. 

\begin{theorem}
	Let $n \geq 2$. For $m \geq 1$ consider the rotation
	\begin{equation*}
		\varphi \colon \mathbb{C}^n \to \mathbb{C}^n, \quad \varphi(z_1,\dots,z_n) := \del[1]{e^{2\pi i k_1/m}z_1,\dots,e^{2\pi i k_n/m}z_n}
	\end{equation*}
	\noindent for $k_1,\dots,k_n \in \mathbb{Z}$ coprime to $m$. Then
	\begin{equation*}
		\overline{\RFH}^\varphi_k(\mathbb{S}^{2n - 1}/\mathbb{Z}_m) \cong \begin{cases}
			\mathbb{Z}_2 & m \text{ even},\\
			0 & m \text{ odd},
		\end{cases} \qquad \forall k \in \mathbb{Z},
	\end{equation*}
	If $m$ is even, then $\overline{\RFH}^\varphi_k(\mathbb{S}^{2n - 1}/\mathbb{Z}_m)$ is generated by a noncontractible periodic Reeb orbit in the lens space $\mathbb{S}^{2n - 1}/\mathbb{Z}_m$ for all $k \in \mathbb{Z}$. 
	\label{thm:equivariant_twisted_rfh}
\end{theorem}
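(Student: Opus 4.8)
The plan is to compute $\overline{\RFH}^\varphi_\ast(\mathbb{S}^{2n-1}/\mathbb{Z}_m)$ directly from the chain complex, exploiting the fact that twisted Rabinowitz--Floer homology of the sphere is a finite Morse--Bott computation. First I would identify $\Crit \mathscr{A}^H_\varphi$ for the given $H$. Since $\varphi$ is a rotation, the twisted Reeb orbits $(\gamma,\tau)$ with $\dot\gamma = \tau R(\gamma)$ and $\gamma(t+1) = \varphi(\gamma(t))$ correspond to Reeb orbits on $\mathbb{S}^{2n-1}$ (whose Reeb flow is the Hopf flow $\phi^R_t(z) = e^{2\pi i t}z$, up to normalisation) that close up after applying $\varphi^{-1}$; concretely $e^{2\pi i\tau}z_j = e^{-2\pi i k_j/m}z_j$ for all $j$ in the support of $z$, which because the $k_j$ are all coprime to $m$ forces $\tau \equiv -k_j/m \pmod 1$ simultaneously, hence (using coprimality and $n\geq 2$, or rather the structure of the orbit) the admissible periods are $\tau \in \tfrac{1}{m}\mathbb{Z}$, with the critical manifold at each such level a copy of $\mathbb{S}^{2n-1}$ (the whole Hopf orbit space is swept out). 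Actually more carefully: for a fixed value of $\tau$ the set of $z$ giving a twisted orbit is either empty or all of $\mathbb{S}^{2n-1}$ when all $k_j$ are congruent mod $m$, and in general a subsphere — here I would invoke the coprimality hypothesis to pin down exactly which $\tau$ occur and that the critical submanifold is $\mathbb{S}^{2n-1}$ itself (for $\tau \in \tfrac{1}{m}\mathbb{Z}$ after rescaling). Then $\Crit \mathscr{A}^H_\varphi \cong \bigsqcup_{j\in\mathbb{Z}} \mathbb{S}^{2n-1}$, one component per period in $\tfrac{1}{m}\mathbb{Z}$, plus the fixed-point set $\Fix(\varphi\vert_{\mathbb{S}^{2n-1}})$ which for a rotation with all $k_j$ coprime to $m$ is empty when $m>1$.

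Next I would compute the $\mathbb{Z}_m$-equivariant homology of each critical component. The $\mathbb{Z}_m$-action on a critical component $\mathbb{S}^{2n-1}$ is the free rotation action generated by $\varphi$, so $\operatorname{H}^{\mathbb{Z}_m}_\ast(\mathbb{S}^{2n-1}) = \operatorname{H}_\ast(\mathbb{S}^{2n-1}/\mathbb{Z}_m;\mathbb{Z}_2)$, the homology of a lens space, by the fibre-bundle argument recalled just before the theorem. For $m$ even, $\operatorname{H}_k(\mathbb{S}^{2n-1}/\mathbb{Z}_m;\mathbb{Z}_2) \cong \mathbb{Z}_2$ for $0\leq k\leq 2n-1$ (Poincaré duality plus the standard cell structure, since $\mathbb{Z}_2$-coefficients kill the torsion distinctions); for $m$ odd it is $\mathbb{Z}_2$ in degrees $0$ and $2n-1$ and $0$ otherwise, exactly like an odd sphere, because the attaching maps have odd degree and vanish mod $2$. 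So at the chain level each critical component contributes a copy of the $\mathbb{Z}_2$-homology of the relevant lens space, and $\overline{\RFC}^\varphi_\ast$ is a direct sum over $j\in\mathbb{Z}$ of such pieces, suitably graded by the transverse Conley--Zehnder index (Remark \ref{rem:grading} and Definition \ref{def:CZ-index}), which for the Hopf flow shifts by a fixed amount $\propto (n-1)$ per unit period increment — I would compute this shift explicitly from the linearised Reeb flow being a rotation in each $\mathbb{C}$-factor.

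The real content is then the computation of the boundary operator $\overline{\partial}$, i.e. the count of cascades between adjacent critical components. Here I would argue as in the non-equivariant computation of $\RFH(\mathbb{S}^{2n-1})$: the key input is the long exact sequence / spectral-sequence structure coming from the action filtration, together with the vanishing result. Specifically, the sphere $\mathbb{S}^{2n-1}$ bounds the ball $B^n\subseteq\mathbb{C}^n$, which is Hamiltonianly displaceable in $\mathbb{C}^n\times\mathbb{C}$ — but more to the point, I would use that $\RFH^\varphi(\mathbb{S}^{2n-1},\mathbb{C}^n)$ has a known answer and the equivariant version is obtained by the homotopy-quotient construction, so that $\overline{\RFH}^\varphi_\ast$ fits into a Gysin-type sequence relating it to $\RFH^\varphi_\ast$ and to the $\mathbb{Z}_m$-action. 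The cleanest route: show the differential on $\overline{\RFC}^\varphi$ is an isomorphism from the degree-$k$ part of the component at period $(j+1)/m$ onto the degree-$k$ part of the component at period $j/m$ whenever those degrees match up after the CZ-shift, so that everything cancels in pairs except for a single surviving $\mathbb{Z}_2$ (when $m$ even) coming from the "bottom" of the cancellation pattern, while for $m$ odd the lens-space homology is already so sparse that the periodic cancellation consumes all of it, giving $0$. The main obstacle I anticipate is precisely pinning down this boundary operator: one must either transport the known cascade counts for the untwisted sphere through the $\mathbb{Z}_m$-quotient (checking that no new cascades appear and that multiplicities behave correctly mod $2$), or set up an independent Morse--Bott spectral sequence whose $E_1$-page is the direct sum of lens-space homologies and identify the $d_1$ with the connecting maps. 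I would handle the latter by appealing to invariance (Theorem \ref{thm:invariance}) to choose the most convenient $H$, and to the structure of the Hopf flow to make the cascade moduli spaces explicit; the identification of the surviving class with a \emph{noncontractible} Reeb orbit in the lens space then follows from Lemma \ref{lem:noncontractible}, since any generator at period $j/m$ with $j\not\equiv 0\pmod m$ projects to a noncontractible loop.
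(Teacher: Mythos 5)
There are two genuine gaps. First, your description of $\Crit \mathscr{A}^H_\varphi$ in the general case is wrong: coprimality of each $k_j$ to $m$ does not make the $k_j$ congruent mod $m$, and the twist condition forces $e^{-2i\tau} = e^{2\pi i k_j/m}$ for every $j$ with $z_j \neq 0$, so at a fixed admissible period only the coordinates whose $k_j$ lie in one fixed residue class can be nonzero. The critical component is therefore in general a proper subsphere (for $n = 2$, $m = 5$, $k_1 = 1$, $k_2 = 2$ the components are circles, not $\mathbb{S}^3$), so your identification of each column of the $E_1$-page with $\operatorname{H}_*(\mathbb{S}^{2n-1}/\mathbb{Z}_m;\mathbb{Z}_2)$ breaks down. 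The paper only carries out the direct Morse--Bott computation in the special case where all $k_j$ coincide, and treats the general case indirectly, using the vanishing theorem (Theorem \ref{thm:displaceable}) to identify $\overline{\RFH}^\varphi_*(\mathbb{S}^{2n-1}/\mathbb{Z}_m)$ with the $\mathbb{Z}_m$-equivariant Rabinowitz--Floer homology of Albers--Frauenfelder, which is then computed as before.

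Second, the heart of the proof, namely the determination of the differential, is precisely what you leave open, and the cancellation pattern you predict is not even consistent with the statement: the complex is unbounded in both directions (the periods are $\tau_k = \frac{\pi}{m}(mk - 1)$, $k \in \mathbb{Z}$), so there is no ``bottom'', and for $m$ even the answer is $\mathbb{Z}_2$ in \emph{every} degree, which can only happen if the induced differential on the quotient complex vanishes identically, not if ``everything cancels in pairs except a single surviving $\mathbb{Z}_2$''. The missing key idea is an indirect parity argument: since $\mathbb{S}^{2n-1}$ is displaceable in $\mathbb{C}^n$, Theorem \ref{thm:displaceable} gives $\RFH^\varphi(\mathbb{S}^{2n-1},\mathbb{C}^n) = 0$, and this forces the mod-$2$ count of cascades from the minimum of the sphere at level $k+1$ to the maximum of the sphere at level $k$ to be odd (the ``string of pearls''). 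Choosing the $\mathbb{Z}_m$-invariant auxiliary data ($n$ circles in $\Crit f$ per sphere, with $m$ maxima and $m$ minima each), the differentials before quotienting are the all-ones matrix $\mathbbm{1}$ and $A = I + (\text{cyclic shift})$ over $\mathbb{Z}_2$; on the $\mathbb{Z}_m$-quotient $\mathbbm{1}$ induces multiplication by $m \bmod 2$ and $A$ induces $0$, which is exactly what yields $\mathbb{Z}_2$ in every degree for $m$ even and $0$ for $m$ odd. (Also note the index shift per component is $2n$, not proportional to $n - 1$, since the transverse Conley--Zehnder index coincides here with the full index $(2k-1)n$.) Without an argument of this kind, or an explicit computation of the cascade moduli spaces which you defer, the claimed homology does not follow; the final noncontractibility statement via Lemma \ref{lem:noncontractible} is the one step you do have right.
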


\begin{proof}
	First we consider the special case
	\begin{equation*}
		\varphi \colon \mathbb{C}^n \to \mathbb{C}^n, \qquad \varphi(z) = e^{2\pi i/m}z.
	\end{equation*}
	The hypersurface $\mathbb{S}^{2n - 1} \subseteq (\mathbb{C}^n,\lambda)$ is of restricted contact type with contact form $\lambda\vert_{\mathbb{S}^{2n - 1}}$ and associated Reeb vector field
\begin{equation*}
	R = 2\sum_{j = 1}^n\del[3]{y_j \frac{\partial}{\partial x_j} - x_j\frac{\partial}{\partial y_j}}\bigg\vert_{\mathbb{S}^{2n - 1}}= 2i\sum_{j = 1}^n\del[3]{\overline{z}_j\frac{\partial}{\partial \overline{z}_j} - z_j\frac{\partial}{\partial z_j}}\bigg\vert_{\mathbb{S}^{2n - 1}}.
\end{equation*}
Suppose $(\gamma,\tau) \in \Crit \mathscr{A}^H_\varphi$. If $\tau = 0$, then $\gamma$ is constant. This cannot happen as $\Fix(\varphi\vert_{\mathbb{S}^{2n - 1}}) = \emptyset$. So we assume $\tau \neq 0$. Define a reparametrisation
	\begin{equation*}
		\gamma_\tau \colon \mathbb{R} \to \mathbb{S}^{2n - 1}, \qquad \gamma_\tau(t) := \gamma(t/\tau).
	\end{equation*}
	Then $\gamma_\tau$ is the unique integral curve of $R$ starting at $z := \gamma(0)$ and thus
	\begin{equation*}
		\gamma_\tau(t) = e^{-2it}z \qquad \forall t \in \mathbb{R}.
	\end{equation*}
	From $\gamma(t) = \gamma_\tau(\tau t)$ and the requirement
	\begin{equation*}
		e^{-2i\tau}z = \gamma(1) = \varphi(\gamma(0)) = \varphi(z) = e^{2\pi i/m}z,
	\end{equation*}
	\noindent we conclude $\tau \in \frac{\pi}{m}(m\mathbb{Z} - 1)$. Hence
	\begin{equation*}
		\Crit \mathscr{A}^H_\varphi = \cbr[1]{\del[1]{\phi^{\tau_k R}(z),\tau_k} : k \in \mathbb{Z}, z \in \mathbb{S}^{2n - 1}} \cong \mathbb{S}^{2n - 1} \times \mathbb{Z},
	\end{equation*}
	\noindent for any $H \in \mathscr{F}_\varphi(\mathbb{S}^{2n - 1})$, where we define $\tau_k := \frac{\pi}{m}(mk - 1)$. By Proposition \ref{prop:kernel_hessian_contact}, $(z_0,\eta) \in T_z\mathbb{S}^{2n - 1} \times \mathbb{R}$ belongs to the kernel of the Hessian at $(z,k) \in \Crit \mathscr{A}^H_\varphi$ if and only if $\eta = 0$ and
	\begin{equation*}
		z_0 \in \ker \del[1]{D(\phi^R_{-\tau_k} \circ \varphi)\vert_z - \id_{T_z\mathbb{S}^{2n - 1}}}.
	\end{equation*}
	A direct computation yields $D(\phi^R_{-\tau_k} \circ \varphi)\vert_z = \id_{T_z \mathbb{S}^{2n - 1}}$ and thus the twisted Rabinowitz action functional $\mathscr{A}^H_\varphi$ is Morse--Bott with spheres.

\begin{figure}[h!tb]
	\centering
	\includegraphics[width=\textwidth]{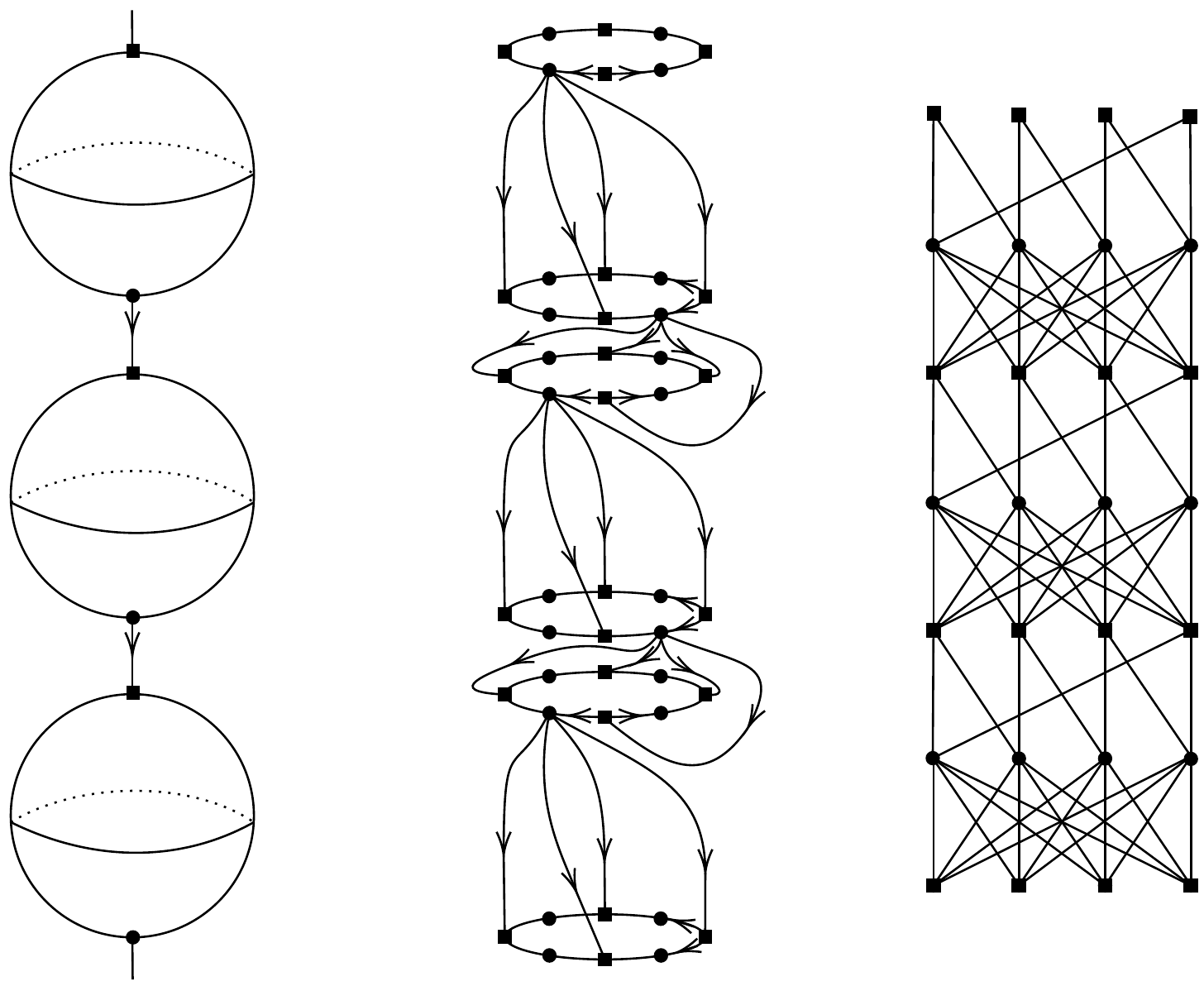}
	\caption{The critical manifold $\mathbb{S}^{2n - 1} \times \mathbb{Z}$ with the standard height function, the Morse--Bott function $f$ and the resulting chain complex.}
	\label{fig:equivariant_rfh_1}
\end{figure}

	The full Conley--Zehnder index \cite[Definition~10.4.1]{frauenfelderkoert:3bp:2018} gives rise to a locally constant function
	\begin{equation*}
		\widehat{\mu}_{\mathrm{CZ}} \colon \Crit \mathscr{A}^H_\varphi \to \mathbb{Z}, \qquad \widehat{\mu}(z,k) = (2k-1)n.
	\end{equation*}
	Note that the definition of the Conley--Zehnder index also applies in this degenerate case, compare \cite[Remark~10.4.2]{frauenfelderkoert:3bp:2018}. By the adapted proof of the Hofer--Wysocki--Zehnder Theorem \cite[Theorem~12.2.1]{frauenfelderkoert:3bp:2018} to the $n$-dimensional setting, the full Conley--Zehnder index coincides with the transverse Conley--Zehnder index $\mu_{\CZ}$. Indeed, for a critical point $(\gamma,\tau) \in \Crit \mathscr{A}^H_\varphi$ define a smooth path
	\begin{equation*}
		\Psi \colon I \to \Sp(n), \qquad \Psi_t := D\phi^H_{\tau t}\vert_{\gamma(0)} \colon \mathbb{C}^n \to \mathbb{C}^n.
	\end{equation*}
	Adapting the proof of \cite[Lemma~12.2.3~(iii)]{frauenfelderkoert:3bp:2018}, we get that
	\begin{equation*}
		\Psi_1(R(\gamma(0))) = R(\gamma(1)) \qquad \text{and} \qquad \Psi_1(\gamma(0)) = \gamma(1).
	\end{equation*}
	Arguing as in \cite[p.~235--236]{frauenfelderkoert:3bp:2018} we conclude
	\begin{equation*}
		\mu_{\CZ}(\gamma,\tau) = \widehat{\mu}_{\CZ}(\gamma,\tau).
	\end{equation*}

	Fix $z_0 \in \mathbb{S}^{2n - 1}$ and define $\eta := \phi^{\tau_0 R}(z_0)$. Note that $\phi^{\tau_k R}(z)$ belongs to the same equivalence class in $\pi_0 \mathscr{L}_\varphi \mathbb{S}^{2n - 1}$ as $\eta$ for all $z \in \mathbb{S}^{2n - 1}$ and $k \in \mathbb{Z}$ because $\mathbb{S}^{2n - 1}$ is simply connected for $n\geq 2$. Let $h \in C^\infty(\mathbb{S}^{2n - 1})$ be the standard height function. By Remark \ref{rem:grading}, $\RFH^\varphi(\mathbb{S}^{2n - 1},\mathbb{C}^n)$ carries the $\mathbb{Z}$-grading
	\begin{equation*}
		\mu((z,k),(z_0,0)) + \ind_h(z) = 2kn + \ind_h(z) \qquad \forall (z,k) \in \mathbb{S}^{2n - 1} \times \mathbb{Z}.
	\end{equation*}
	We claim that the number of twisted negative gradient flow lines between the minimum of $\mathbb{S}^{2n - 1} \times \{k + 1\}$ and the maximum of $\mathbb{S}^{2n - 1} \times \{k\}$ must be odd, so that the critical manifold $\Crit \mathscr{A}^H_\varphi$ looks like a string of pearls, see Figure \ref{fig:equivariant_rfh_1}. Indeed, if there is an even number of such negative gradient flow lines, then $\RFH^\varphi_\ast(\mathbb{S}^{2n - 1},\mathbb{C}^n) \neq 0$, contradicting Theorem \ref{thm:displaceable} as $\mathbb{S}^{2n - 1}$ is displaceable in the completion $\mathbb{C}^n$. To compute the $\mathbb{Z}_m$-equivariant twisted Rabinowitz--Floer homology, choose the additional $\mathbb{Z}_m$-invariant Morse--Bott function $f$ from Example \ref{ex:sphere}. Additionaly, choose a $\mathbb{Z}_m$-invariant Morse function on $\Crit f$. For example, one can take
	\begin{equation*}
		h \colon \mathbb{T} \to \mathbb{R}, \qquad h(t) := \cos(2\pi m t).
	\end{equation*}
	The resulting chain complex is given by
	\begin{equation*}
		\begin{tikzcd}[column sep = scriptsize]
			\dots \arrow[r] & \mathbb{Z}_2^m \arrow[r,"\mathbbm{1}"] & \mathbb{Z}^m_2 \arrow[r,"A"] & \mathbb{Z}^m_2 \arrow[r,"\mathbbm{1}"] & \mathbb{Z}^m_2 \arrow[r,"A"] & \mathbb{Z}^m_2 \arrow[r,"\mathbbm{1}"] & \mathbb{Z}^m_2 \arrow[r] & \dots 
		\end{tikzcd}
	\end{equation*}
	\noindent where $\mathbbm{1} \in M_{m \times m}(\mathbb{Z}_2)$ has every entry equal to $1$ and $A \in M_{m \times m}(\mathbb{Z}_2)$ is defined by
	\begin{equation*}
		A := I_{m \times m} + \sum_{j = 1}^{m - 1} e_{(j + 1)j} + e_{1 m},
	\end{equation*}
	\noindent where $e_{ij} \in M_{m \times m}(\mathbb{Z}_2)$ satisfies $(e_{ij})_{kl} = \delta_{ik}\delta_{jl}$. Thus the resulting chain complex looks like a rope ladder. Compare Figure \ref{fig:equivariant_rfh_1}. Passing to the quotient via the free $\mathbb{Z}_m$-action, we get the acyclic chain complex
	\begin{equation*}
		\begin{tikzcd}
			\dots \arrow[r] & \mathbb{Z}_2 \arrow[r,"0"] & \mathbb{Z}_2 \arrow[r,"0"] & \mathbb{Z}_2 \arrow[r,"0"] & \mathbb{Z}_2 \arrow[r] & \dots 
		\end{tikzcd}
	\end{equation*}
	\noindent if $m$ is even and the alternating chain complex 
	\begin{equation*}
		\begin{tikzcd}[column sep = scriptsize]
			\dots \arrow[r] & \mathbb{Z}_2 \arrow[r,"1"] & \mathbb{Z}_2 \arrow[r,"0"] & \mathbb{Z}_2 \arrow[r,"1"] & \mathbb{Z}_2 \arrow[r,"0"] & \mathbb{Z}_2 \arrow[r,"1"] & \mathbb{Z}_2 \arrow[r] & \dots 
		\end{tikzcd}
	\end{equation*}
	\noindent if $m$ is odd.

	For the general case, we note that $\Crit \mathscr{A}^H_\varphi$ is a disjoint union of spheres and different copies of $\mathbb{Z}$. It is therefore rather hard to compute the equivariant twisted Rabinowitz--Floer homology directly via analysing the critical manifold. By Theorem \ref{thm:displaceable}, there is a canonical isomorphism
	\begin{equation*}
		\RFH^\varphi_*(\mathbb{S}^{2n - 1},\mathbb{C}^n) \cong \RFH_*(\mathbb{S}^{2n - 1},\mathbb{C}^n)
	\end{equation*}
	\noindent inducing a canonical isomorphism
	\begin{equation*}
		\overline{\RFH}^\varphi_*(\mathbb{S}^{2n - 1}/\mathbb{Z}_m) \cong \RFH^{\mathbb{Z}_m}_*(\mathbb{S}^{2n - 1},\mathbb{C}^n),
	\end{equation*}
	\noindent where $\RFH^{\mathbb{Z}_m}_*(\mathbb{S}^{2n - 1},\mathbb{C}^n)$ denotes the $\mathbb{Z}_m$-equivariant Rabinowitz--Floer homology constructed in \cite[p.~487]{albersfrauenfelder:eh:2012}. A computation similar as before shows
	\begin{equation*}
		\RFH^{\mathbb{Z}_m}_k(\mathbb{S}^{2n - 1},\mathbb{C}^n) \cong \begin{cases}
			\mathbb{Z}_2 & m \text{ even},\\
			0 & m \text{ odd},
		\end{cases} \qquad \forall k \in \mathbb{Z}.
	\end{equation*}
	The crucial observation is, that $\Crit \mathscr{A}^H \cong \mathbb{S}^{2n - 1} \times \mathbb{Z}$. In particular, the string of pearls looks like in Figure \ref{fig:equivariant_rfh_2}.

	Lastly, $\overline{\RFH}^\varphi_k(\mathbb{S}^{2n - 1}/\mathbb{Z}_m)$ is generated by a noncontractible periodic Reeb orbit in $\mathbb{S}^{2n - 1}/\mathbb{Z}_m$ for all $k \in \mathbb{Z}$ by Lemma \ref{lem:noncontractible}.
\end{proof}

\begin{figure}[h!tb]
	\centering
	\includegraphics[width=.3\textwidth]{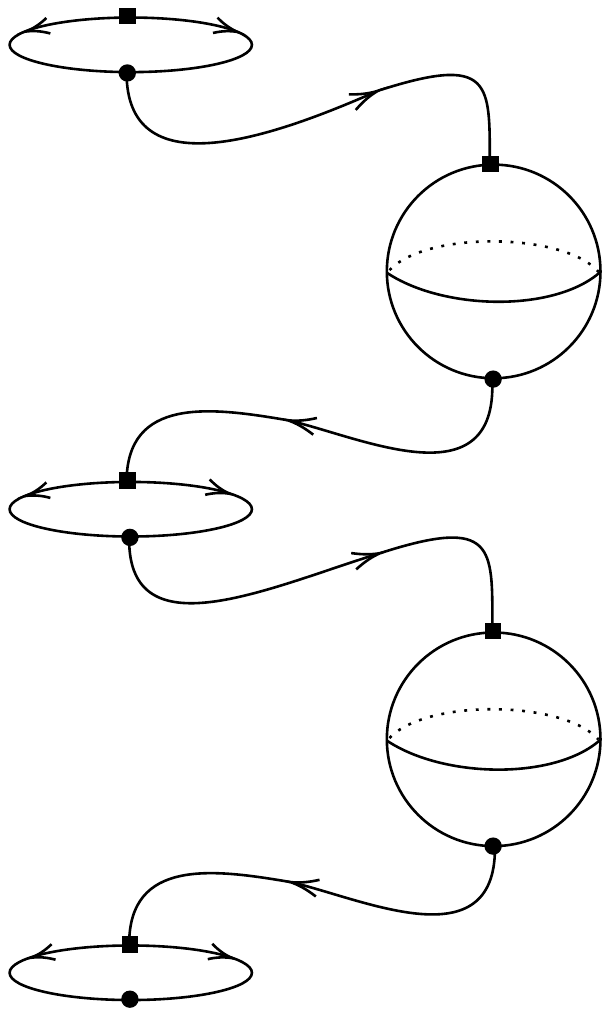}
	\caption{The critical manifold $\Crit \mathscr{A}^H_\varphi$ together with the standard height function.}
	\label{fig:equivariant_rfh_2}
\end{figure}

\begin{remark}[Coefficients]
	As $\overline{\RFH}^\varphi_*(\mathbb{S}^{2n - 1}/\mathbb{Z}_m)$ vanishes for odd $m$, one should rather consider twisted Rabinowitz--Floer homology with coefficients in $\mathbb{Z}$ in this case. Using the polyfold approach, it might be possible to invoke \cite[Chapter~6]{hoferwysockizehnder:polyfolds:2021} to define coherent orientations on the moduli spaces. However, Lagrangian Floer homology admits an abstract polyfold description, but it is not always possible to define coherent orientations.
\end{remark}

Using Theorem \ref{thm:equivariant_twisted_rfh} we can finally prove Theorem \ref{thm:my_result}.

\begin{proof}[of Theorem \ref{thm:my_result}]
	By assumption, $\Sigma$ bounds a star-shaped domain $D$ with respect to the origin. Thus $(D \cup \Sigma,\lambda)$ is a Liouville domain with $\lambda$ given by \eqref{eq:Liouville_form}. By rescaling we may assume that $\mathbb{S}^{2n - 1} \subseteq D$. Define a smooth function
\begin{equation*}
	\delta \colon \Sigma \to \intoo[0]{-\infty,0}
\end{equation*}
\noindent by requiring $\delta(x)$ to be the unique number such that $\phi^X_{\delta(x)}(x) \in \mathbb{S}^{2n - 1}$, $x \in \Sigma$, where $X \in \mathfrak{X}(\mathbb{C}^n)$ denotes the Liouville vector field \eqref{eq:Liouville_vector_field}. We claim that $\delta \circ \varphi = \delta$. Indeed, $\delta(\varphi(x))$ is the unique number such that $\phi^X_{\delta(\varphi(x))}(\varphi(x)) \in \mathbb{S}^{2n - 1}$. As the flow of $X$ and $\varphi$ commute by the proof of Lemma \ref{lem:defining_Hamiltonian}, we conclude that $\phi^X_{\delta(\varphi(x))}(x) \in \mathbb{S}^{2n - 1}$. Define a smooth family of star-shaped hypersurfaces $(\Sigma_\sigma)_{\sigma \in I}$
	\begin{equation*}
		\Sigma_\sigma := \cbr[1]{\phi^X_{\sigma\delta(x)}(x) : x \in \Sigma} \subseteq \mathbb{C}^n.
	\end{equation*}
	Then we compute
	\begin{align*}
		\varphi(\Sigma_\sigma) &= \cbr[1]{\varphi\del[1]{\phi^X_{\sigma\delta(x)}(x)} : x \in \Sigma}\\
		&= \cbr[1]{\phi^X_{\sigma\delta(x)}(\varphi(x)) : x \in \Sigma}\\
		&= \cbr[1]{\phi^X_{\sigma\delta(\varphi(x))}(\varphi(x)) : x \in \Sigma}\\
		&= \cbr[1]{\phi^X_{\sigma\delta(y)}(y) : y \in \varphi(\Sigma)}\\
		&= \cbr[1]{\phi^X_{\sigma\delta(y)}(y) : y \in \Sigma}\\
		&= \Sigma_\sigma
	\end{align*}
	\noindent for all $\sigma \in I$ and therefore we can find a twisted homotopy $(H_\sigma)_{\sigma \in I}$ of Liouville domains in $\mathbb{C}^n$. By Theorem \ref{thm:invariance} we have that
	\begin{equation*}
		\RFH^\varphi_*(\Sigma,\mathbb{C}^n) \cong \RFH_*^\varphi(\mathbb{S}^{2n - 1},\mathbb{C}^n),
	\end{equation*}
	\noindent giving rise to a canonical isomorphism of the associated $\mathbb{Z}_m$-equivariant twisted Rabinowitz--Floer homology
	\begin{equation*}
		\overline{\RFH}^\varphi_*(\Sigma/\mathbb{Z}_m) \cong \overline{\RFH}^\varphi_*(\mathbb{S}^{2n - 1}/\mathbb{Z}_m). 
	\end{equation*}
	However, by Theorem \ref{thm:equivariant_twisted_rfh} the latter does not vanish as $m \geq 2$ is even.
\end{proof}

Using Theorem \ref{thm:equivariant_twisted_rfh} it is also possible to generalise \cite[Theorem~1.2]{albersfrauenfelder:eh:2012}. Define the set of \bld{$\varphi$-invariant Hamiltonian symplectomorphisms} by
	\begin{equation*}
		\Ham^\varphi(\mathbb{C}^n,d\lambda) := \{\varphi_F \in \Ham(\mathbb{C}^n,d\lambda) : F_t(x) = F_t(\varphi(x)) \> \forall (x,t) \in \mathbb{C}^n \times I\}.
	\end{equation*}
	If $\varphi_F \in \Ham^\varphi(\mathbb{C}^n,d\lambda)$, then $\varphi \circ \varphi_F = \varphi_F \circ \varphi$. In particular $0 \in \Fix(\varphi_F)$, and thus no element in $\Ham^\varphi(\mathbb{C}^n,d\lambda)$ can displace a star-shaped hypersurface with respect to the origin in $\mathbb{C}^n$. We have the following result.

\begin{theorem}
	\label{thm:leaf-wise}
	Let $\Sigma \subseteq \mathbb{C}^n$ be a compact connected star-shaped hypersurface invariant under the rotation $\varphi$. Every element in $\Ham^\varphi(\mathbb{C}^n,d\lambda)$ admits infinitely many leaf-wise intersection points on $\Sigma$ or there does exist a leaf-wise intersection point on a closed leaf.
\end{theorem}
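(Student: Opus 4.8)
The plan is to argue by contradiction: assume that a given $\varphi_F \in \Ham^\varphi(\mathbb{C}^n,d\lambda)$ has only finitely many twisted leaf-wise intersection points on $\Sigma$, none of which lies on a closed leaf, and contradict the nonvanishing supplied by Theorem \ref{thm:equivariant_twisted_rfh}. First I would note that the twisted homotopy of Liouville domains built in the proof of Theorem \ref{thm:my_result}, together with the invariance Theorem \ref{thm:invariance}, identifies $\overline{\RFH}^\varphi(\Sigma/\mathbb{Z}_m)$ with $\overline{\RFH}^\varphi(\mathbb{S}^{2n-1}/\mathbb{Z}_m)$; by Theorem \ref{thm:equivariant_twisted_rfh} the latter is $\mathbb{Z}_2$ in every degree $k\in\mathbb{Z}$ of the grading from Remark \ref{rem:grading} when $m$ is even, hence nonzero in infinitely many degrees.

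Next I would set up the perturbed functional. Choose $H \in \mathscr{F}_\varphi(\Sigma)$ with $X_H|_\Sigma = R$, which exists since $\Sigma$ is star-shaped and $\varphi$-invariant, and let $\mathfrak{M} = (\chi H,\widetilde{F})$ be the twisted Moser pair attached to $\varphi_F$ by Lemma \ref{lem:twisted_Moser_pair}. Because $\varphi_F \in \Ham^\varphi$, the perturbation $\widetilde{F}$ is $\varphi$-invariant, so the whole construction is $\mathbb{Z}_m$-equivariant; exactly as in the proof of Theorem \ref{thm:displaceable}, the continuation maps along the deformation $\mathfrak{M}\rightsquigarrow(\chi H,0)$ descend to the quotient and give $\overline{\HF}(\mathscr{A}^{\mathfrak{M}}_\varphi) \cong \overline{\HF}(\mathscr{A}^{(\chi H,0)}_\varphi) \cong \overline{\RFH}^\varphi(\Sigma/\mathbb{Z}_m)$, the period--action equality of Remark \ref{rem:period-action_equality} still furnishing the $C^0$-bounds for the adapted Fundamental Lemma.

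The heart of the matter is a bijection between $\Crit\mathscr{A}^{\mathfrak{M}}_\varphi$ and the set of twisted leaf-wise intersection points. Lemma \ref{lem:variational_characterisation_twisted_leaf-wise_intersections} gives the map $(\gamma,\tau)\mapsto\gamma(1/2)$; conversely, unravelling the defining equations of a critical point shows $\gamma(0) = \varphi^{-1}(\varphi_F(x))$ lies on the Reeb leaf $L_x$ through $x = \gamma(1/2)$, and a preimage of a leaf-wise intersection point $x$ is recovered by flowing $x$ backward along the reparametrised Reeb flow on $[0,1/2]$ and along $X_{\widetilde{F}}$ on $[1/2,1]$. When $L_x$ is not closed the connecting Reeb time $\tau$ is unique, so the fibre over $x$ is a single point; a closed leaf would produce an $\mathbb{S}^1$-family. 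Under the standing assumption every leaf-wise intersection point has a non-closed leaf, so $\Crit\mathscr{A}^{\mathfrak{M}}_\varphi$ is finite and is freely permuted by $\mathbb{Z}_m$ (the correspondence is $\mathbb{Z}_m$-equivariant and the action on $\Sigma$ is free). After a small $\varphi$-invariant perturbation of $\widetilde{F}$ making these critical points nondegenerate, the quotient chain complex computing $\overline{\HF}(\mathscr{A}^{\mathfrak{M}}_\varphi)$ is finite-dimensional and supported in finitely many degrees, contradicting the previous paragraph. Hence $\varphi_F$ admits infinitely many leaf-wise intersection points on $\Sigma$, or one lying on a closed leaf.

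The main obstacle is twofold. First, making the correspondence between critical points and leaf-wise intersection points rigorous, and in particular controlling the degenerate contribution of closed leaves, requires care, since this is where the $\mathbb{S}^1$-symmetry of the unperturbed functional re-enters and the Morse--Bott bookkeeping must be carried out equivariantly. Second, and more seriously, the nonvanishing input from Theorem \ref{thm:equivariant_twisted_rfh} holds over $\mathbb{Z}_2$ only for even $m$; to cover odd $m$ one should replace $\mathbb{Z}_2$- by $\mathbb{Z}$-coefficients, so that $\overline{\RFH}^\varphi(\mathbb{S}^{2n-1}/\mathbb{Z}_m)$ is again nonzero in infinitely many degrees, which forces one to confront coherent orientations on the twisted moduli spaces as indicated in the remark on coefficients.
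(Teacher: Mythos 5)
Your route is genuinely different from the paper's, and it has two real gaps. First, the step ``exactly as in the proof of Theorem \ref{thm:displaceable}, the continuation maps descend to the quotient and give $\overline{\HF}(\mathscr{A}^{\mathfrak{M}}_\varphi)\cong\overline{\RFH}^\varphi(\Sigma/\mathbb{Z}_m)$'' is not available: Theorem \ref{thm:displaceable} is a non-equivariant statement, and in the present situation ($\Sigma$ star-shaped, hence displaceable in $\mathbb{C}^n$) it only yields $0\cong 0$. All the content of your contradiction sits in the equivariant lift of the continuation isomorphism for the non-autonomous perturbed functional -- equivariant transversality for the continuation and chain-homotopy moduli spaces, compatibility with the Novikov condition and the grading -- none of which is set up in the paper, whose equivariant theory is only constructed for the autonomous Morse--Bott functional of Theorem \ref{thm:equivariant_twisted_rfh}. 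Second, and more fatally, your nondegeneracy perturbation of $\widetilde{F}$ destroys the hypothesis you are trying to contradict: critical points of the perturbed functional are twisted leaf-wise intersection points of the \emph{new} Hamiltonian diffeomorphism $\varphi_{F'}$, not of $\varphi_F$, so after perturbing you have neither finiteness of the critical set nor the absence of closed leaves, and the quotient complex need not be finite-dimensional. Treating the possibly degenerate finite critical set of the original $\mathscr{A}^{\mathfrak{M}}_\varphi$ directly would require local Floer homology or a compactness argument you have not supplied.

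The paper avoids both issues by following Albers--Frauenfelder's spectral-invariant scheme: it works on the ordinary loop space with the family $\mathscr{A}_r$ obtained by simultaneously deforming the hypersurface from $\mathbb{S}^{2n-1}$ to $\Sigma$ and switching on $rF$, assigns to each nonzero class $[\xi]\in\RFH^{\mathbb{Z}_m}_\ast(\mathbb{S}^{2n-1},\mathbb{C}^n)$ a minimax spectral value, uses Theorem \ref{thm:equivariant_twisted_rfh} to get $\mathfrak{S}=2\pi\mathbb{Z}$, and then invokes the Lipschitz continuity of spectral values along the homotopy to conclude that $\mathscr{A}_1$ itself has critical values of arbitrarily large size -- hence infinitely many critical points -- with no Morse or Morse--Bott assumption on $\mathscr{A}_1$ whatsoever; the injectivity of $\Crit\mathscr{A}_1\to\{\text{leaf-wise intersection points}\}$ away from closed leaves then finishes exactly as in your last step. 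Your closing remark about odd $m$ and $\mathbb{Z}$-coefficients is fair, but it is a limitation shared by the paper's own input (the nonvanishing, and hence $\mathfrak{S}=2\pi\mathbb{Z}$, comes from the even case), not a defect specific to your approach.
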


\begin{proof}
	We reproduce the proof in \cite{albersfrauenfelder:eh:2012} for completeness with minor modifications. Let $\varphi_F \in \Ham_c^\varphi(\mathbb{C}^n,d\lambda)$ and for $r \in \intcc[0]{0,1}$ consider the smooth family of perturbed Rabinowitz action functionals
	\begin{equation*}
		\mathscr{A}_r \colon \mathscr{L}\mathbb{C}^n \times \mathbb{R} \to \mathbb{R}
	\end{equation*}
	\noindent defined by
	\begin{equation*}
		\mathscr{A}_r(\gamma,\tau) := \int_0^1 \gamma^*\lambda - \tau \int_0^1 H_r(\gamma(t))dt - r\int_0^1 F_t(\gamma(t)) dt,
	\end{equation*}
	\noindent where $(H_r)_{r \in I}$ is a twisted homotopy of Liouville domains from $\mathbb{S}^{2n - 1}$ to $\Sigma$. Clearly, every $\mathscr{A}_r$ is $\varphi$-invariant. As in \cite[Definition~5.1]{albersfrauenfelder:spectral:2010}, we define the spectral value $\sigma([\xi])$ of a homology class $[\xi] \in \RFH^{\mathbb{Z}_m}_\ast(\mathbb{S}^{2n - 1},\mathbb{C}^n)$ by
	\begin{equation*}
		\sigma([\xi]) := \inf_{\eta \in [\xi]}\max_{\eta_{(\gamma,\tau)} \neq 0} \mathscr{A}_0(\gamma,\tau) \in \mathbb{R} \cup \{-\infty\}.
	\end{equation*}
	Moreover, we define the set
	\begin{equation*}
		\mathfrak{S} := \cbr[0]{\sigma([\xi]) : [\xi] \in \RFH^{\mathbb{Z}_m}_\ast(\mathbb{S}^{2n - 1},\mathbb{C}^n)}.
	\end{equation*}
	By Theorem \ref{thm:equivariant_twisted_rfh}, we conclude that $\mathfrak{S} = 2\pi \mathbb{Z}$. Hence $\mathscr{A}_0$ has critical values of arbitrarily large critical value and so does $\mathscr{A}_1$ by \cite[Corollary~5.14]{albersfrauenfelder:spectral:2010}. Thus $\mathscr{A}_1$ has infinitely many critical points which give rise to leaf-wise intersection points by Lemma \ref{lem:variational_characterisation_twisted_leaf-wise_intersections}. The map
	\begin{equation*}
		\Crit \mathscr{A}_1 \to \cbr[0]{\text{leaf-wise intersection points}}
	\end{equation*}
	\noindent is injective unless there exists a leaf-wise intersection point on a closed leaf. For the general case, use cut-off functions.
\end{proof}

\subsection{A Forcing Theorem for Twisted Periodic Reeb Orbits}
\label{sec:forcing}

\begin{definition}[Twisted Stable Hypersurface]
	\label{def:twisted_stable_hypersurface}
	Let $(\Sigma,\omega\vert_\Sigma, \lambda)$ be a stable hypersurface in a connected symplectic manifold $(M,\omega)$ and $\varphi \in \Symp(M,\omega)$. We say that $\Sigma$ is \bld{twisted by $\varphi$}, if $\varphi(\Sigma) = \Sigma$, $\varphi$ is of finite order and $\varphi^*\lambda = \lambda$.
\end{definition}

\begin{example}
	\label{ex:twisted_stable_hypersurface}
	Consider the stable hypersurface $\Sigma_c \subseteq (T^*\mathbb{T}^n, \omega_\sigma,H)$ for $c > 0$ as in Example \ref{ex:twisted_torus}. Let $\varphi \in \Diff(\mathbb{T}^n)$ be an isometry of finite order such that 
	\begin{equation}
		\label{eq:twist_condition}
		D\varphi \circ J = J \circ D\varphi
	\end{equation}
	\noindent holds and consider the cotangent lift \eqref{eq:cotangent_lift}
	\begin{equation*}
		D\varphi^\dagger \colon \mathbb{T}^n \times \mathbb{R}^n \to \mathbb{T}^n \times \mathbb{R}^n, \qquad D\varphi^\dagger(q,p) = \del[1]{\varphi(q),\del[1]{D\varphi^{-1}(q)}^tp}.
	\end{equation*}
	Then clearly $\varphi(\Sigma_c) = \Sigma_c$ as $\varphi$ is an isometry and $D\varphi^\dagger$ is of finite order as $\varphi$ is. Moreover, $D\varphi^\dagger \in \Symp(T^*\mathbb{T}^n,\omega_\sigma)$, as $D\varphi^\dagger \in \Symp(T^*\mathbb{T}^n,\omega_0)$ by Proposition \ref{prop:physical_transformation} and $D\varphi^\dagger$ preserves $\sigma$ by assumption \eqref{eq:twist_condition}. Lastly, we have that $\varphi^*\lambda = \lambda$ as one sees by considering the formula \eqref{eq:stabilising_form} together with assumption \eqref{eq:twist_condition}. 
\end{example}

Let $(\Sigma,\omega\vert_\Sigma,\lambda)$ be a twisted stable hypersurface for $\varphi \in \Symp(M,\omega)$ in a connected symplectically aspherical symplectic manifold $(M,\omega)$, that is, $[\omega]\vert_{\pi_2(M)} = 0$. As $\varphi$ is of finite order by assumption, we can define the \bld{set of twisted contractible loops}, written $\Lambda_\varphi M \subseteq \Lambda M$, as follows. We say that a contractible free loop $v \in \Lambda M$ is in $\Lambda_\varphi M$, if there exists $\gamma \in \mathscr{L}_\varphi M$ such that
\begin{equation*}
	v(t) = \gamma(mt) \qquad \forall t \in \mathbb{T},
\end{equation*}
\noindent where $m := \ord \varphi$. Then we can define a generalisation of the twisted Rabinowitz action functional
\begin{equation}
	\label{eq:Rabinowitz_functional}
	\mathscr{A}^H_\varphi \colon \Lambda_\varphi M \times \mathbb{R} \to \mathbb{R}, \quad \mathscr{A}^H_\varphi(v,\tau) := \frac{1}{m}\int_{\mathbb{D}} \overline{v}^*\omega - \tau \int_0^1 H(v(t))dt,
\end{equation}
\noindent where $\overline{v} \in C^\infty(\mathbb{D},M)$ is a filling of $v$ and $H$ is any twisted defining Hamiltonian function for $\Sigma$. Then $(v,\tau) \in \Crit \mathscr{A}^H_\varphi$ if and only if $(\gamma,\tau) \in \mathscr{L}_\varphi \Sigma$ solves
\begin{equation*}
	\dot{\gamma}(t) = \tau R(\gamma(t)) \qquad \forall t \in \mathbb{R},
\end{equation*}
\noindent where $R \in \mathfrak{X}(\Sigma)$ denotes the stable Reeb vector field \ref{def:Reeb}. We call the projection of the set of critical points of $\mathscr{A}^H_\varphi$ to $\Lambda_\varphi M$ \bld{contractible twisted closed characteristics} and denote it by $\mathscr{C}_\varphi(\Sigma)$. Define a function, called the \bld{$\omega$-energy}, by
\begin{equation*}
	\Omega \colon \mathscr{C}_\varphi(\Sigma) \to \mathbb{R}, \qquad \Omega(v) := \frac{1}{m}\int_{\mathbb{D}} \overline{v}^*\omega.
\end{equation*}
It follows that $\Omega(v) = \mathscr{A}^H_\varphi(v,\tau)$.

\begin{example}
	\label{ex:twisted_characteristic}
	Consider the twisted stable hypersurface $\Sigma_c \subseteq (T^*\mathbb{T}^n,\omega_\sigma,H)$ as in Example \ref{ex:twisted_stable_hypersurface}.	By adapting Example \ref{ex:magnetic_flow}, we have that $(q,p) \in \Sigma_k$ gives rise to a contractible twisted closed characteristic if and only if
	\begin{equation*}
		\int_0^\tau e^{sJ}p ds + q = \varphi(q), \quad e^{\tau J}p = \del[1]{D\varphi^{-1}(q)}^tp, \quad \text{and} \quad \norm[0]{p}^2 = 2c.
	\end{equation*}
	A computation similar to \cite[p.~1843]{cieliebakfrauenfelderpaternain:mane:2010} shows
	\begin{equation*}
		\Omega \colon \mathscr{C}_\varphi(\Sigma_c) \to \mathbb{R}, \qquad \Omega(v) = c\tau.
	\end{equation*}
\end{example}

In order to state the main result of this section, we need two additional preliminary definitions.

\begin{definition}[{Morse--Bott Component, \cite[p.~86]{albersfrauenfelder:rfh:2010}}]
	Let $M$ be a smooth manifold and $f \in C^\infty(M)$. A subset $C \subseteq \Crit f$ is called a \bld{Morse--Bott component}, if
	\begin{enumerate}[label=\textup{(\roman*)}]
		\item $C$ is a connected embedded submanifold of $M$.
		\item $T_xC = \ker \Hess f(x)$ for all $x \in C$.
	\end{enumerate}
\end{definition}

\begin{example}[{\cite[Lemma~2.12]{albersfrauenfelder:rfh:2010}}]
	In the setting of Proposition \ref{prop:kernel_hessian_contact}, any connected component of $\Fix(\varphi\vert_\Sigma) \subseteq \Crit \mathscr{A}^H_\varphi$ is a Morse--Bott component. Indeed, we have that
	\begin{equation*}
		\ker \Hess \mathscr{A}^H_\varphi\vert_{(x,0)} \cong \ker (D\varphi_x - \id_{T_x \Sigma}) = T_x\Fix(\varphi\vert_\Sigma)
	\end{equation*}
	\noindent for all $x \in \Fix(\varphi\vert_\Sigma)$.
\end{example}

\begin{definition}[{\cite[p.~1768]{cieliebakfrauenfelderpaternain:mane:2010}}]
    A symplectic manifold $(M,\omega)$ is called \bld{geometrically bounded}, if there exists an $\omega$-compatible almost complex structure $J$ and a complete Riemannian metric such that the following conditions hold.
    \begin{enumerate}[label=\textup{(\roman*)}]
        \item There are constants $C_0,C_1 > 0$ with
        \begin{equation*}
            \omega(Jv,v) \geq C_0\norm[0]{v}^2 \qquad \text{and} \qquad \abs[0]{\omega(u,v)} \leq C_1\norm[0]{u}\norm[0]{v}
        \end{equation*}
        \noindent for all $u,v \in T_x M$ and $x \in M$.
        \item The sectional curvature of the metric is bounded above, and its injectivity radius is bounded away from zero.
    \end{enumerate}
\end{definition}

\begin{example}[{\cite[p.~1768]{cieliebakfrauenfelderpaternain:mane:2010}}]
	\label{ex:geometrically_bounded}
	Twisted cotangent bundles are geometrically bounded.
\end{example}

\begin{theorem}[Forcing]
	\label{thm:forcing_theorem}
	Let $\Sigma$ be a twisted stable displaceable hypersurface in a symplectically aspherical, geometrically bounded, symplectic manifold $(M,\omega)$ for some $\varphi \in \Symp(M,\omega)$ and suppose that $v^- \in \mathscr{C}_\varphi(\Sigma)$ belongs to a Morse--Bott component $C$ of the twisted Rabinowitz action functional \eqref{eq:Rabinowitz_functional}. Then there exists a contractible twisted closed characteristic $v \in \mathscr{C}_\varphi(\Sigma) \setminus C$ such that
	\begin{equation*}
		\Omega(v) - \Omega(v^-) \leq e(\Sigma).
	\end{equation*}	
\end{theorem}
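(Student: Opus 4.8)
The plan is to run the standard Rabinowitz--Floer "forcing" argument of Cieliebak--Frauenfelder--Paternain, but in the twisted, weakly time-dependent setting developed in Section~\ref{sec:twisted_rfh}. First I would set up the perturbed twisted Rabinowitz action functional $\mathscr{A}^{(\chi H,F)}_\varphi$ on $\Lambda_\varphi M \times \mathbb{R}$, where $H$ is a twisted defining Hamiltonian for $\Sigma$ (which exists by Lemma~\ref{lem:defining_Hamiltonian}), $\chi \in C^\infty(\mathbb{S}^1,I)$ is a cutoff with $\int_0^1\chi = 1$ and $\supp\chi \subseteq (0,\tfrac12)$, and $F$ is built (via Lemma~\ref{lem:twisted_Moser_pair}) from a compactly supported Hamiltonian $\varphi_F \in \Ham_c(M,\omega)$ displacing $\Sigma$, with $\norm{F}$ arbitrarily close to $e(\Sigma)$. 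The key point, exactly as in Lemma~\ref{lem:variational_characterisation_twisted_leaf-wise_intersections}, is that for $F = 0$ the critical points of $\mathscr{A}^{\chi H,0}_\varphi$ are the contractible twisted closed characteristics $\mathscr{C}_\varphi(\Sigma)$, while for the full $F$ the critical set is empty because a critical point would give a twisted leaf-wise intersection point on the displaced hypersurface. Since $(M,\omega)$ is symplectically aspherical and geometrically bounded, the compactness machinery of Theorem~\ref{thm:compactness} (adapted to the weakly time-dependent case, cf. \cite[Theorem~2.9]{albersfrauenfelder:rfh:2010}) applies and the relevant moduli spaces of twisted negative gradient flow lines are compact up to breaking.

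The heart of the argument is a Morse-theoretic "pair-of-pants"/minimax over the homotopy $r \mapsto \mathscr{A}^{(\chi H, rF)}_\varphi$, $r \in [0,1]$. The Morse--Bott component $C$ containing $v^-$ contributes a nontrivial local homology class; using the continuation/deformation along $r$ one shows that this class, which is visible at $r=0$, must "die" by $r=1$ since $\mathscr{A}^{(\chi H,F)}_\varphi$ has no critical points at all. The mechanism is a \emph{spectral estimate}: the continuation flow moves critical values by at most the size of the perturbation, quantified by $\norm{F}$, which by Lemma~\ref{lem:twisted_Moser_pair} and the period--action equality (Remark~\ref{rem:period-action_equality}) translates into $|\mathscr{A}^{\chi H,0}_\varphi(v,\tau)| = |\Omega(v)|$-differences bounded by $\ord(\varphi)\,e(\Sigma)$ — note the factor $\ord(\varphi)$ enters through the rescaling $\overline{\gamma}(t) = \gamma(mt)$ of Remark~\ref{rem:finite_order}, which is why the $\tfrac1m$ normalization appears in \eqref{eq:Rabinowitz_functional}. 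Concretely: because the class of $C$ bounds in the $r=1$ complex, there is a broken gradient trajectory (a sequence of cascades) connecting $C$ to some other critical set; following it produces $v \in \mathscr{C}_\varphi(\Sigma)$ with $v \notin C$, and the energy identity $E_J = \mathscr{A}_{-\infty} - \mathscr{A}_{+\infty} \geq 0$ along the pieces, combined with the action bounds from the Fundamental Lemma~\ref{lem:fundamental_lemma}, yields $\Omega(v) - \Omega(v^-) \leq \ord(\varphi)\,e(\Sigma)$. Taking the infimum over displacing $F$ (hence over $\norm{F} \to e(\Sigma)$) gives the stated bound with $e(\Sigma)$.

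I would organize the steps as: (1) construct the twisted Moser pair and verify $\Crit\mathscr{A}^{(\chi H,F)}_\varphi = \emptyset$ via leaf-wise intersections; (2) establish compactness and well-definedness of the Floer/continuation data in the geometrically bounded, symplectically aspherical setting; (3) set up the local Morse--Bott homology of $C$ and the continuation map, proving it kills $[C]$; (4) extract the connecting trajectory and read off the action/energy inequality; (5) optimize over $F$. The main obstacle is step~(3)–(4): making the "a nontrivial local class that dies must be connected by a trajectory of controlled energy to another critical manifold" precise in the Morse--Bott, semi-infinite-dimensional cascade framework — this is where one must carefully import the mechanism of \cite[Theorem~4.9]{cieliebakfrauenfelderpaternain:mane:2010}, checking that every ingredient (Fundamental Lemma, energy identity, no-escape/maximum principle from Theorem~\ref{thm:compactness}, Morse--Bott perturbation of Theorem~\ref{thm:invariance}) survives the twist by $\varphi$ and the weak time-dependence. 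A secondary subtlety is bookkeeping the factor $\ord(\varphi)$ consistently between the $m$-fold cover description of twisted loops and the displacement energy of $\Sigma$ in $M$ (not of its $m$-fold "unwrapping"), so that the constant in the final inequality is exactly $\ord(\varphi)e(\Sigma)$ and not, say, $e(\Sigma)$ or $m^2 e(\Sigma)$.
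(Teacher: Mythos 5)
Your route is genuinely different from the paper's, and as written it has gaps. The paper never defines or uses any (local or filtered) Floer homology for $\Sigma$: it runs a ``homotopy of homotopies'' argument in the style of \cite[Theorem~4.9]{cieliebakfrauenfelderpaternain:mane:2010}. The perturbation $F$ is switched on and off in the $s$-variable by cutoffs $\beta_r$, so the asymptotics of the flow lines remain critical points of the \emph{unperturbed} functional $\mathscr{A}^H_\varphi$ (no leaf-wise intersection characterisation is needed); one then studies the parametrised moduli space $\mathscr{M}$ of solutions running from $(v^-,\tau^-)$ to the component $C$, over $r\in\intco[0]{0,+\infty}$. If the conclusion fails, the energy bound $E\leq\norm{F}$, a bound on $r$, the $\tau$-bound, and the action window $\intcc[0]{\Omega(v^-)-\norm{F},\Omega(v^-)+\norm{F}}$ force $\mathscr{M}$ to be compact; after an abstract perturbation it would be a compact manifold whose boundary is the single point $(v^-,\tau^-,0)$, which is impossible. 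The Morse--Bott hypothesis on $C$ is used only to guarantee regularity of the Fredholm section at that one trivial solution. Your plan instead perturbs the functional in the $t$-variable via a twisted Moser pair, empties its critical set by leaf-wise intersections, and wants to conclude through continuation maps and a local homology class of $C$ that ``dies''.

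Three concrete problems. (i) Compactness: Theorem \ref{thm:compactness} (and its weakly time-dependent variant from \cite[Theorem~2.9]{albersfrauenfelder:rfh:2010}) is proved for boundaries of Liouville domains inside an exact completion, where the Fundamental Lemma \ref{lem:fundamental_lemma} and the maximum principle are available. Here $\Sigma$ is only stable and $(M,\omega)$ is not exact, so the uniform bound on the Lagrange multiplier is the hard analytic point; it requires the CFP construction of the defining Hamiltonian, the $\varphi$-invariant stable tubular neighbourhood and the extension of the stabilising form (\cite[Proposition~4.1]{cieliebakfrauenfelderpaternain:mane:2010}), none of which your plan supplies. (ii) Your step (3)--(4) needs well-defined filtered Floer data: transversality for \emph{all} moduli spaces in the action window and some control on $\Crit\mathscr{A}^H_\varphi$ away from $C$, which the theorem does not assume to be Morse--Bott; the paper's cobordism argument is designed precisely to avoid this, needing regularity only at the single boundary point. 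You identify this as the main obstacle but give no mechanism to overcome it. (iii) The constants are garbled: with the normalisation $\Omega(v)=\frac{1}{m}\int_{\mathbb{D}}\overline{v}^*\omega$ already built into \eqref{eq:Rabinowitz_functional}, the action-window estimate yields $\Omega(v)-\Omega(v^-)\leq\norm{F}<e(\Sigma)+\varepsilon$ directly, with no factor $\ord(\varphi)$; your intermediate bound $\ord(\varphi)e(\Sigma)$ followed by the claim that taking the infimum over $F$ produces $e(\Sigma)$ is incoherent, since the infimum only replaces $\norm{F}$ by $e(\Sigma)$ and cannot remove a multiplicative factor. The factor $\ord(\varphi)$ belongs only to the un-normalised statement of Theorem \ref{thm:forcing} in the introduction.
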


\begin{corollary}
    \label{cor:schlenk}
    Let $\Sigma$ be a twisted stable displaceable hypersurface in a symplectically aspherical, geometrically bounded, symplectic manifold $(M,\omega)$ for some symplectomorphism $\varphi \in \Symp(M,\omega)$. If $\Fix(\varphi\vert_\Sigma) \neq \emptyset$, then there exists a contractible twisted closed characteristic $v \in \mathscr{C}_\varphi(\Sigma) \setminus \Fix(\varphi\vert_\Sigma)$ such that
    \begin{equation*}
        \Omega(v) \leq e(\Sigma).
    \end{equation*}
\end{corollary}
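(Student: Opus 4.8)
The strategy is to feed a constant characteristic sitting over a fixed point into the Forcing theorem \ref{thm:forcing_theorem}. First I would pick $x\in\Fix(\varphi\vert_\Sigma)$ and let $c_x\in\mathscr{C}_\varphi(\Sigma)$ be the constant loop at $x$; since $(c_x,0)$ solves $\dot\gamma=\tau R(\gamma)$, the loop $c_x$ is a contractible twisted closed characteristic. Because $(M,\omega)$ is symplectically aspherical, $\Omega(c_x)=\frac1m\int_{\mathbb D}\overline{c_x}^*\omega$ does not depend on the chosen filling, and the constant filling gives $\Omega(c_x)=0$. Next I would observe that the connected component $C$ of $\Fix(\varphi\vert_\Sigma)$ through $x$ is a Morse--Bott component of the twisted Rabinowitz action functional \eqref{eq:Rabinowitz_functional}: the argument of the Example following Proposition \ref{prop:kernel_hessian_contact}, namely $\ker\Hess\mathscr{A}^H_\varphi\vert_{(x,0)}\cong\ker(D\varphi_x-\id_{T_x\Sigma})=T_x\Fix(\varphi\vert_\Sigma)$, applies verbatim here, using that $\Fix(\varphi\vert_\Sigma)$ is an embedded submanifold of $\Sigma$ because $\varphi$ is of finite order.

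Then I would invoke Theorem \ref{thm:forcing_theorem} with $v^-:=c_x$, obtaining a contractible twisted closed characteristic $v\in\mathscr{C}_\varphi(\Sigma)\setminus C$ with $\Omega(v)-\Omega(c_x)\le e(\Sigma)$, that is $\Omega(v)\le e(\Sigma)$. It then remains to promote "$v\notin C$" to "$v\notin\Fix(\varphi\vert_\Sigma)$". For this I would note that the elements of $\mathscr{C}_\varphi(\Sigma)$ lying in $\Fix(\varphi\vert_\Sigma)$ are exactly the constant characteristics, equivalently those with $\tau=0$ (the Reeb vector field is nowhere zero), and that any such characteristic has $\Omega=\mathscr{A}^H_\varphi=0$. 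Hence it suffices to know that $v$ has strictly positive $\omega$-energy.

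This last point is the step I expect to be the main obstacle, since it requires looking at the construction inside the proof of Theorem \ref{thm:forcing_theorem} rather than quoting only its statement. Following the adaptation of \cite[Theorem~4.9]{cieliebakfrauenfelderpaternain:mane:2010}, the characteristic $v$ arises as a generator of the twisted Rabinowitz--Floer complex whose differential, or a continuation chain, meets the Morse--Bott component $C$; since the twisted Rabinowitz action strictly decreases along nonconstant twisted negative gradient flow lines, $v$ sits at an action level strictly above that of $C$. As $\mathscr{A}^H_\varphi$ agrees with $\Omega$ on critical points and $C$ lies at the level $\Omega\equiv 0$, this yields $\Omega(v)>0$, so $v$ is nonconstant and therefore $v\in\mathscr{C}_\varphi(\Sigma)\setminus\Fix(\varphi\vert_\Sigma)$; together with $\Omega(v)\le e(\Sigma)$ this proves the corollary.
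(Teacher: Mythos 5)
Your main line is exactly the intended one: take $x \in \Fix(\varphi\vert_\Sigma)$, feed the constant characteristic $c_x$ into Theorem \ref{thm:forcing_theorem} (its component of $\Fix(\varphi\vert_\Sigma)$ is a Morse--Bott component by the example on fixed-point components, and $\Omega(c_x)=0$ because the constant filling is admissible and asphericity makes $\Omega$ filling-independent), and read off $\Omega(v)\le e(\Sigma)$. Up to that point your argument coincides with the paper's.

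The final ``promotion'' step, however, contains a genuine gap. Theorem \ref{thm:forcing_theorem} is proved by contradiction: one assumes \eqref{eq:assumption_moduli_space}, namely that every $v\in\mathscr{C}_\varphi(\Sigma)\setminus C$ has $\Omega(v)>\Omega(v^-)+\|F\|$, and deduces that the moduli space \eqref{eq:moduli_space} is compact. Negating this gives only the upper bound $\Omega(v)\le\Omega(v^-)+\|F\|$; it gives no lower bound, and in particular not $\Omega(v)>0$. Your monotonicity argument does not close this: the flow lines in \eqref{eq:moduli_space} are gradient lines of the $s$-dependent, $F$-perturbed functionals $\mathscr{A}_r$, along which the action is \emph{not} monotone (this is exactly why $\|F\|$ appears; the window in \eqref{eq:action_value} is the two-sided interval $[\Omega(v^-)-\|F\|,\Omega(v^-)+\|F\|]$, which contains $0$), and $v$ is not produced as the upper end of a flow line sitting above $C$ --- it is merely some characteristic violating \eqref{eq:assumption_moduli_space}. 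Consequently, if $\Fix(\varphi\vert_\Sigma)$ has several components, the $v$ you obtain could a priori be a constant characteristic $c_y$ at a fixed point $y$ in a different component, which has $\Omega(c_y)=0$ and satisfies the forcing inequality trivially, so ``$v\notin C$'' does not upgrade to ``$v\notin\Fix(\varphi\vert_\Sigma)$'' by an action estimate. The correct repair is structural rather than quantitative: either one is content with $v\notin C$ (which is all the stated theorem gives, and settles the corollary when $\Fix(\varphi\vert_\Sigma)$ is connected), or one reruns the proof of Theorem \ref{thm:forcing_theorem} with the asymptotic condition at $+\infty$ taken in the whole set of constant characteristics $\Fix(\varphi\vert_\Sigma)$ instead of a single component; this is legitimate because connectedness of $C$ enters the proof only through the fact that $\mathscr{A}_0$ is constant on the limit set, and all constant characteristics lie at the same level $\Omega=0$.
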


\begin{corollary}[{\cite[Theorem~12.3.4]{mcduffsalamon:st:2017}, \cite[p.~171]{hoferzehnder:hd:1994}}]
	\label{cor:displacement_energy_ball}
	We have that
	\begin{equation*}
		e(\overline{B}_r^{2n}(0)) = \pi r^2 \qquad \forall r > 0,
	\end{equation*}
	\noindent where $\overline{B}^{2n}_r(0) \subseteq \mathbb{R}^{2n}$ denotes the closed ball around the origin of radius $r$.
\end{corollary}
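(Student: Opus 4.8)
The assertion is the classical computation of the Hofer displacement energy of a Euclidean ball; the plan is to prove the two inequalities $e(\overline{B}^{2n}_r(0)) \le \pi r^2$ and $e(\overline{B}^{2n}_r(0)) \ge \pi r^2$ separately, the first being elementary and the second the substantial one.

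For the upper bound I would exhibit, for every $\varepsilon > 0$, a compactly supported $F \in C^\infty_c(\mathbb{C}^n \times \intcc[0]{0,1})$ with $\varphi_F(\overline{B}^{2n}_r(0)) \cap \overline{B}^{2n}_r(0) = \varnothing$ and $\norm{F} \le \pi r^2 + \varepsilon$. Writing $\mathbb{C}^n = \mathbb{C} \times \mathbb{C}^{n-1}$ one has $\overline{B}^{2n}_r(0) \subseteq \overline{D}_r \times \overline{B}^{2n-2}_r(0)$, where $\overline{D}_r \subseteq \mathbb{C}$ is the closed disc of area $\pi r^2$, so it suffices to displace $\overline{D}_r$ inside $\mathbb{C}$ with Hofer norm close to $\pi r^2$ and to pull the generating function back by $\pr_1 \colon \mathbb{C}^n \to \mathbb{C}$, cut off in the $\mathbb{C}^{n-1}$-directions outside a large ball (harmless, as only a bounded set need move). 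To displace $\overline{D}_r$ efficiently I would identify a neighbourhood of $\overline{D}_r$ symplectically with a long thin rectangle $(0,L)\times(0,\delta)$ of area $L\delta = \pi r^2 + \tfrac{\varepsilon}{2}$, translate it off itself in the long direction by a cut-off of the Hamiltonian linear in the short coordinate (whose Hofer norm is $\approx L\delta$), and conjugate back via Lemma \ref{lem:conjugated_flow}; a bare translation of $\overline{D}_r$ is to be avoided, as it only yields norm $\approx 4r^2$. Letting $\varepsilon \downarrow 0$ gives $e(\overline{B}^{2n}_r(0)) \le \pi r^2$.

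For the lower bound I would use the forcing machinery of this section. Apply Corollary \ref{cor:schlenk} with $(M,\omega) = (\mathbb{C}^n, d\lambda)$ — which is exact, hence symplectically aspherical, and geometrically bounded (flat metric, standard $J$) — with $\varphi = \id$ and $\Sigma = \mathbb{S}^{2n-1}_r = \partial \overline{B}^{2n}_r(0)$, a compact connected hypersurface of restricted contact type, hence stable, which is displaceable in $\mathbb{C}^n = \mathbb{C}^{n-1} \times \mathbb{C}$. As $\Fix(\id\vert_\Sigma) = \Sigma \neq \varnothing$, the corollary forces a nonconstant contractible closed characteristic $v$ on $\Sigma$, i.e. an honest closed Reeb orbit of $\lambda\vert_\Sigma$ of some period $\tau$, at action level $0 < \Omega(v) \le e(\Sigma)$ (the strict positivity being intrinsic to the forcing argument, which produces a characteristic at action strictly above that of the Morse--Bott component $\Fix(\id\vert_\Sigma)$ sitting at level $0$). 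Since $\mathbb{C}^n$ is exact and $\ord \id = 1$, the period--action equality gives $\Omega(v) = \tau$, and rescaling the explicit Reeb flow $z_j(t) = e^{-2it}z_j(0)$ on the unit sphere shows the minimal Reeb period on $\mathbb{S}^{2n-1}_r$ equals $\pi r^2$, whence $\tau \ge \pi r^2$. Thus $\pi r^2 \le \Omega(v) \le e(\mathbb{S}^{2n-1}_r) \le e(\overline{B}^{2n}_r(0))$ by monotonicity of displacement energy, and together with the upper bound $e(\overline{B}^{2n}_r(0)) = \pi r^2$. Alternatively one argues classically, as in the cited \cite[Theorem~12.3.4]{mcduffsalamon:st:2017} and \cite[p.~171]{hoferzehnder:hd:1994}, via the energy--capacity inequality $c_{\mathrm{HZ}}(A) \le e(A)$ together with $c_{\mathrm{HZ}}(\overline{B}^{2n}_r(0)) = \pi r^2$.

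The main obstacle is the lower bound: in either route it rests on genuinely hard pseudoholomorphic-curve input — Gromov's non-squeezing theorem together with the energy--capacity inequality in the classical route, or Theorem \ref{thm:forcing_theorem} (which itself relies on the vanishing Theorem \ref{thm:displaceable} and on the Gromov compactness of twisted Floer trajectories) in the forcing route. In the write-up I would invoke Corollary \ref{cor:schlenk} (or simply the cited references) and reserve the explicit work for the soft upper bound.
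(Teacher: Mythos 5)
Your argument is essentially the paper's own: the lower bound is obtained exactly as in the text by applying Corollary \ref{cor:schlenk} to the round sphere $\partial\overline{B}^{2n}_r(0)$ with $\varphi = \id$ in $(\mathbb{C}^n,d\lambda)$, using the explicit Reeb flow to see that nonconstant closed characteristics have $\Omega(v) \in \pi r^2\mathbb{Z}$, and then monotonicity of the displacement energy. The only cosmetic difference is that you sketch the thin-rectangle construction for the soft upper bound $e(\overline{B}^{2n}_r(0)) \leq \pi r^2$, which the paper simply cites to \cite[Exercise~12.3.7]{mcduffsalamon:st:2017}; both are fine.
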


\begin{proof}
	By monotonicity and \cite[Exercise~12.3.7]{mcduffsalamon:st:2017} we have that
	\begin{equation*}
		e(\partial \overline{B}^{2n}_r(0)) \leq e(\overline{B}^{2n}_r(0)) \leq \pi r^2 \qquad \forall r > 0.
	\end{equation*}
	The Reeb flow on $\partial \overline{B}^{2n}_r(0)$ is given by
	\begin{equation*}
		\phi^{R_r}_t(z) = e^{-2i t/r^2}z \qquad \forall z \in \partial \overline{B}^{2n}_r(0).
	\end{equation*}
	Hence the parametrised periodic Reeb orbits are $(\phi^{R_r}(z),\tau)$ with $\tau \in \pi r^2 \mathbb{Z}$. But Corollary \ref{cor:schlenk} implies the existence of a nonconstant closed characteristic $v$ on the hypersurface $\partial \overline{B}^{2n}_r(0)$ such that
	\begin{equation*}
		0 < \tau = \Omega(v) \leq e(\partial \overline{B}^{2n}_r(0)) \leq \pi r^2.
	\end{equation*}
	This is only possible for $\tau = \pi r^2$ and the statement follows.
\end{proof}

\begin{proof}[Proof of Theorem \ref{thm:forcing_theorem}]
	This proof uses a method called a ``homotopy of homotopies argument''. Fix $\varepsilon > 0$ and choose a Hamiltonian function $F \in C^\infty_c(M \times I)$ satisfying 
	\begin{equation*}
	    \norm{F} < e(\Sigma) + \varepsilon \qquad \text{and} \qquad \varphi_F(\Sigma) \cap \Sigma = \emptyset.
	\end{equation*}
	For an appropriate twisted defining Hamiltonian function $H$ for $\Sigma$ we denote by $\mathfrak{M}$ the associated twisted Moser pair. The actual construction of $H$ is very cumbersome and is carried out in \cite{cieliebakfrauenfelderpaternain:mane:2010}. The crucial observation here is that \cite[Proposition~2.6]{cieliebakfrauenfelderpaternain:mane:2010} gives a $\varphi$-invariant stable tubular neighbourhood of $\Sigma$ as $\varphi^*\lambda = \lambda$ by invoking the equivariant Darboux--Weinstein Theorem \cite[Theorem~22.1]{guilleminsternberg:sg:1984}. Moreover, we choose a smooth family $(\beta_r)_{r \in \intco[0]{0,+\infty}}$ of cutoff functions $\beta_r \in C^\infty(\mathbb{R},I)$ such that
	\begin{equation*}
		\begin{cases}
			\beta_r(s) = 0 & \abs[0]{s} \geq r,\\
			\beta_r(s) = 1 & \abs[0]{s} \leq r - 1,\\
			s\beta'_r(s) \leq 0 & \forall s \in \mathbb{R},
		\end{cases}
	\end{equation*}
	\noindent for all $r \in \intco[0]{0,+\infty}$. Define a family of twisted Rabinowitz action functionals
	\begin{equation*}
		\mathscr{A}_r \colon \Lambda_\varphi M \times \mathbb{R} \times \mathbb{R} \to \mathbb{R}
	\end{equation*}
	\noindent by
	\begin{equation*}
		\mathscr{A}_r(v,\tau,s) := \mathscr{A}^H_\varphi(v,\tau) - \beta_r(s)\int_0^1 F_t(v(t))dt
	\end{equation*}
	\noindent for all $r \in \intco[0]{0,+\infty}$. Note that $\mathscr{A}_0 = \mathscr{A}^H_\varphi$. For a suitable $\varphi$-invariant $\omega$-compatible almost complex structure we consider the moduli space
	\begin{equation*}
		\mathscr{M} := \cbr[0]{(u,\tau,r) \in C^\infty(\mathbb{R}, \mathscr{L}_\varphi M \times \mathbb{R}) \times \intco[0]{0,+\infty} : (u,\tau,r) \text{ solution of \eqref{eq:moduli_space}}},
	\end{equation*}
	\noindent where
	\begin{equation}
		\begin{cases}
	       \partial_s (u,\tau) = \grad \mathscr{A}_r\vert_{(u(s),\tau(s),s)} & \forall s \in \mathbb{R},\\
	       \displaystyle\lim_{s \to -\infty} (u(s),\tau(s)) = (v^-,\tau^-),\\
	       \displaystyle\lim_{s \to +\infty} (u(s),\tau(s)) \in C.
	   \end{cases}
		\label{eq:moduli_space}
	\end{equation}
	 Note that always $(v^-,\tau^-,0) \in \mathscr{M}$. The proof is now based on the following observation. If 
	 \begin{equation}
	 \label{eq:assumption_moduli_space}
	     \Omega(v) > \norm[0]{F} + \Omega(v^-) \qquad \forall v \in \mathscr{C}_\varphi(\Sigma) \setminus C
	 \end{equation}
	 \noindent holds, then $\mathscr{M}$ is compact. This is absurd. Indeed, the moduli space $\mathscr{M}$ is the zero level set of a Fredholm section of a bundle over a Banach manifold. As $v^-$ belongs to a Morse--Bott component, the Fredholm section is regular at the point $v^-$, that is, the linearisation of the gradient flow equation is surjective there. By compactness, we can therefore perturb the Fredholm section to make it transverse. Hence $\mathscr{M}$ is a compact smooth manifold with boundary consisting precisely of the point $v^-$. There do not exist such manifolds. Thus we conclude that there exists $v \in  \mathscr{C}_\varphi(\Sigma) \setminus C$ such that 
	 \begin{equation*}
	     \Omega(v) - \Omega(v^-) \leq \norm[0]{F} < e(\Sigma) + \varepsilon.
	 \end{equation*}
	 As $\varepsilon > 0$ was arbitrary, the statement follows. We prove the compactness of $\mathscr{M}$ under assumption \eqref{eq:assumption_moduli_space} in four steps.

	\emph{Step 1: If $(u,\tau,r) \in \mathscr{M}$, then $E(u,\tau) \leq \norm{F}$.} We estimate
	\allowdisplaybreaks
	\begin{align*}
		E(u,\tau) &= \int_{-\infty}^{+\infty} \norm[0]{\partial_s (u,\tau)}^2ds\\
		&= \int_{-\infty}^{+\infty} d\mathscr{A}_r(\partial_s (u,\tau),s) ds\\
		&= \int_{-\infty}^{+\infty} \frac{d}{ds}\mathscr{A}_r(u,\tau,s) ds - \int_{-\infty}^{+\infty} (\partial_s \mathscr{A}_r)(u,\tau,s)ds\\
		&= \lim_{s \to +\infty} \mathscr{A}_r(u,\tau,s) - \lim_{s \to -\infty} \mathscr{A}_r(u,\tau,s) - \int_{-\infty}^{+\infty} (\partial_s \mathscr{A}_r)(u,\tau,s)ds\\
		&= \mathscr{A}_0(v^+,\tau^+) - \mathscr{A}_0(v^-,\tau^-) - \int_{-\infty}^{+\infty} (\partial_s \mathscr{A}_r)(u,\tau,s)ds\\
		&= -\int_{-\infty}^{+\infty} (\partial_s \mathscr{A}_r)(u,\tau,s)ds\\
		&= \int_{-\infty}^{+\infty} \dot{\beta}_r(s)\int_0^1 F_t(u(s,t))dtds\\
		&\leq \norm[0]{F}_+ \int_{-\infty}^0 \dot{\beta}_r(s)ds - \norm[0]{F}_-\int_0^{+\infty}\dot{\beta}_r(s)ds\\
		&= \beta_r(0)(\norm[0]{F}_- + \norm[0]{F}_+)\\
		&= \beta_r(0)\norm[0]{F}\\
		&\leq \norm[0]{F},
	\end{align*}
    \noindent as $\mathscr{A}_0(v^+,\tau^+) = \mathscr{A}_0(v^-,\tau^-)$ since $C$ is connected. 
    
	\emph{Step 2: There exists $r_0 \in \mathbb{R}$ such that $r \leq r_0$ for all $(u,\tau,r) \in \mathscr{M}$.} Crucial is the existence of a constant $\delta > 0$ such that
	\begin{equation*}
		\norm[0]{\grad \mathscr{A}_r\vert_{(v,\tau,s)}} \geq \delta \qquad \forall (v,\tau,s) \in \Lambda_\varphi M \times \mathbb{R} \times \mathbb{R}.
	\end{equation*}
	This is proven along the lines of \cite[Lemma~3.9]{cieliebakfrauenfelder:rfh:2009}. With the above inequality and Step 1 we estimate
	\begin{equation*}
		\norm{F} \geq E(u,\tau) \geq \int_{-r}^r \norm[0]{\grad \mathscr{A}_r\vert_{(u(s),\tau(s),s)}}^2 ds \geq 2r\delta^2,
	\end{equation*}
	\noindent and thus we can set
	\begin{equation*}
		r_0 := \frac{\norm{F}}{2\delta^2}.
	\end{equation*}

\emph{Step 3: There exists $C > 0$ such that $\norm{\tau}_\infty \leq C$ for all $(u,\tau,r) \in \mathscr{M}$.} This is a delicate estimate based on the construction of the defining Hamiltonian $H$ for $\Sigma$ as well as an extension of the stablising form and proceeds as in \cite{cieliebakfrauenfelderpaternain:mane:2010}. Particularly crucial is \cite[Proposition~4.1]{cieliebakfrauenfelderpaternain:mane:2010}. 
	
\emph{Step 4: If \eqref{eq:assumption_moduli_space} holds, then $\mathscr{M}$ is compact.} Let $(u_k,\tau_k,r_k)$ be a sequence in the moduli space $\mathscr{M}$. By Step 2 and Step 3, the sequences $(r_k)$ and $(\tau_k)$ are uniformly bounded. Thus $(u_k,\tau_k,r_k)$ admits a $C^\infty_{\loc}$-convergent subsequence by standard arguments. Indeed, the uniform $L^\infty$-bound on the sequence $(u_k)$ follows from the assumption that $(M,\omega)$ is geometrically bounded and the uniform $L^\infty$-bound on the derivatives $(Du_k)$ follows from Corollary \ref{cor:bubbling_rfh} by the assumption that $(M,\omega)$ is symplectically aspherical. Denote the limit of this subsequence by $(u,\tau,r)$. This limit clearly satisfies the first equation in \eqref{eq:moduli_space}, thus one only needs to check the asymptotic conditions in \eqref{eq:moduli_space}. Again by compactness, $(u,\tau)$ converges to critical points $(w^\pm,\tau^\pm)$ of $\mathscr{A}_0$ at its asymptotic ends. We claim that
	\begin{equation}
		\label{eq:action_value}
	    \mathscr{A}_r(u(s),\tau(s),s) \in \intcc[0]{-\norm[0]{F} + \Omega(v^-), \norm[0]{F} + \Omega(v^-)} \qquad \forall s \in \mathbb{R}.
	\end{equation}
	In particular, $\Omega(w^\pm) \in \intcc[0]{-\norm[0]{F} + \Omega(v^-), \norm[0]{F} + \Omega(v^-)}$. So if \eqref{eq:action_value} holds, then by assumption \eqref{eq:assumption_moduli_space} we conclude $(w^\pm,\tau^\pm) \in C$ and $\mathscr{M}$ is indeed compact. It remains to prove \eqref{eq:action_value}. It is enough to prove
	\begin{equation*}
	    \mathscr{A}_r(u_k(s),\tau_k(s),s) \in \intcc[0]{-\norm[0]{F} + \Omega(v^-), \norm[0]{F} + \Omega(v^-)} \qquad \forall s \in \mathbb{R}
	\end{equation*}
	\noindent for every $k \in \mathbb{N}$. As in the proof of \cite[Lemma~2.8]{albersfrauenfelder:rfh:2010} we estimate 
	\allowdisplaybreaks
	\begin{align*}
	    0 &\leq \int^{+\infty}_{s_0} d\mathscr{A}_r(\partial_s (u_k,\tau_k),s) ds\\
		&= \int^{+\infty}_{s_0} \frac{d}{ds}\mathscr{A}_r(u_k,\tau_k,s) ds - \int^{+\infty}_{s_0} (\partial_s \mathscr{A}_r)(u_k,\tau_k,s)ds\\
		&= \lim_{s \to +\infty} \mathscr{A}_r(u_k,\tau_k,s) - \mathscr{A}_r(u_k(s_0),\tau_k(s_0),s_0) - \int^{+\infty}_{s_0} (\partial_s \mathscr{A}_r)(u_k,\tau_k,s)ds\\
		&= \mathscr{A}_0(v^+,\tau^+) - \mathscr{A}_r(u_k(s_0),\tau_k(s_0),s_0) + \int^{+\infty}_{s_0} \dot{\beta}_r(s)\int_0^1 F_t(u_k(s,t))dtds\\ 
		&\leq \mathscr{A}_0(v^+,\tau^+) - \mathscr{A}_r(u_k(s_0),\tau_k(s_0),s_0) + \int^{+\infty}_{-\infty} \norm[0]{\dot{\beta}_r(s)F}_+ds\\
		&\leq \mathscr{A}_0(v^+,\tau^+) - \mathscr{A}_r(u_k(s_0),\tau_k(s_0),s_0) + \norm[0]{F}\\
		&= \Omega(v^-) - \mathscr{A}_r(u_k(s_0),\tau_k(s_0),s_0) + \norm[0]{F}
	\end{align*}
	\noindent for all $s_0 \in \mathbb{R}$. Similarly, we compute
	\allowdisplaybreaks
	\begin{align*}
	    0 &\leq \int_{-\infty}^{s_0} d\mathscr{A}_r(\partial_s (u_k,\tau_k),s) ds\\
		&= \int_{-\infty}^{s_0} \frac{d}{ds}\mathscr{A}_r(u_k,\tau_k,s) ds - \int_{-\infty}^{s_0} (\partial_s \mathscr{A}_r)(u_k,\tau_k,s)ds\\
		&= \mathscr{A}_r(u_k(s_0),\tau_k(s_0),s_0) - \lim_{s \to -\infty} \mathscr{A}_r(u_k,\tau_k,s) - \int_{-\infty}^{s_0} (\partial_s \mathscr{A}_r)(u_k,\tau_k,s)ds\\
		&= \mathscr{A}_r(u_k(s_0),\tau_k(s_0),s_0) - \mathscr{A}_0(v^-,\tau^-) + \int_{-\infty}^{s_0} \dot{\beta}_r(s)\int_0^1 F_t(u_k(s,t))dtds
	\end{align*}
	\noindent and thus we estimate
	\begin{align*}
	    \mathscr{A}_r(u_k(s_0),\tau_k(s_0),s_0) &\geq \mathscr{A}_0(v^-,\tau^-) - \int_{-\infty}^{s_0} \dot{\beta}_r(s)\int_0^1 F_t(u_k(s,t))dtds\\
	    &\geq \mathscr{A}_0(v^-,\tau^-) - \int_{-\infty}^{+\infty} \norm[0]{\dot{\beta}(s)F}_+ds\\
	    &\geq \Omega(v^-) - \norm[0]{F}.
	\end{align*}
	This completes the proof of the Forcing Theorem \ref{thm:forcing_theorem}.
\end{proof}

We conlcude this section by applying the Forcing Theorem \ref{thm:forcing_theorem} to a displaceable twisted stable hypersurface.

\begin{example}
	Consider the isometry
	\begin{equation*}
		\varphi \colon \mathbb{T}^2 \times \mathbb{R}^2 \to \mathbb{T}^2 \times \mathbb{R}^2, \qquad \varphi(q_1,q_2) := (q_2,-q_1)
	\end{equation*}
	\noindent and its cotangent lift 
	\begin{equation*}
		D\varphi^\dagger \colon \mathbb{T}^2 \times \mathbb{R}^2 \to \mathbb{T}^2 \times \mathbb{R}^2, \quad D\varphi^\dagger(q_1,q_2,p_1,p_2) = (q_2,-q_1,p_2,-p_1).
	\end{equation*}
	Then $\Sigma_c \subseteq (T^*\mathbb{T}^2,\omega_\sigma,H)$ is a displaceable twisted stable hypersurface for the area form $\sigma = dq_1 \wedge dq_2$ by Example \ref{ex:twisted_stable_hypersurface}. By Example \ref{ex:twisted_characteristic}, we have that
	\begin{equation*}
		v \colon \mathbb{R} \to \Sigma_c, \qquad v(t) := \sqrt{2c}(\sin t, \cos t, \cos t, -\sin t)
	\end{equation*}
	\noindent is a $\tau$-periodic twisted characteristic for all periods $\tau \in 2\pi \mathbb{Z} + \frac{\pi}{2}$ and $c > 0$. Thus if we choose $v^- \in \mathscr{C}_\varphi(\Sigma_c)$ of period $\tau > 0$, then we compute for $v \in \mathscr{C}_\varphi(\Sigma_c)$ of period $\tau + 2\pi$
	\begin{equation*}
		\Omega(v) - \Omega(v^-) = c(\tau + 2\pi) - c\tau = 2\pi c = e(\Sigma_c) 
	\end{equation*}
	\noindent by Example \ref{ex:twisted_characteristic} and Example \ref{ex:displacement_energy}. Hence, we have verified the statement of the Forcing Theorem \ref{thm:forcing_theorem} for the displaceable twisted stable hypersurface $\Sigma_c$ in the symplectically aspherical and geometrically bounded symplectic manifold $(T^*\mathbb{T}^2,\omega_\sigma)$. Indeed, $(T^*\mathbb{T}^2,\omega_\sigma)$ is geometrically bounded by Example \ref{ex:geometrically_bounded}, and symplectically aspherical as
	\begin{equation*}
		\pi_2(T^*\mathbb{T}^n) \cong \pi_2(\mathbb{T}^n) \times \pi_2(\mathbb{R}^n) \cong 0.
	\end{equation*}
\end{example}

\newpage
\section{Further Steps in Twisted Rabinowitz--Floer Homology}
\label{sec:further_steps}
In this final chapter we discuss some possible further research in twisted Rabinowitz--Floer homology for future work. One can of course try to find a twisted version of every result provided by standard Rabinowitz--Floer homology. Following the survey article \cite{albersfrauenfelder:rfh:2012}, major results relate Rabinowitz--Floer homology to symplectic homology. 

In the first section we briefly outline a further computation of twisted Rabinowitz--Floer homology, where the hypersurface is not displaceable. 

In the second section, we explain an important physical setting where the Forcing Theorem \ref{thm:forcing_theorem} and Theorem \ref{thm:my_result} might be applicable. 

\subsection{Cotangent Bundles}
Let $(M,g)$ be a compact connected Riemannian manifold and let $(S^*M,pdq)$ be the spherisation of $M$ as in Example \ref{ex:cotangent_bundle}. By \cite{abbondandoloschwarz:rfh:2009} or \cite[Theorem~1.10]{cieliebakfrauenfelderoancea:rfh:2010}, we have 
\begin{equation*}
	\RFH_k(S^*M,T^*M) \cong \begin{cases}
		\operatorname{H}^{-k+1}(\mathscr{L}M) & k < 0,\\
		\operatorname{H}_k(\mathscr{L}M) & k > 1.
	\end{cases}
\end{equation*}
In the degrees $k = 0,1$ the answer is known and depends on the Euler class. The proof uses a relation between Rabinowitz--Floer homology and symplectic homology, respectively symplectic cohomology. If $\varphi \in \Aut(D^*M,pdq\vert_{D^*M})$ is a Liouville automorphism, then it is plausible to expect 
\begin{equation*}
	\RFH_k^\varphi(S^*M,T^*M) \cong \begin{cases}
		\operatorname{H}^{-k+1}(\mathscr{L}_\varphi M) & k < 0,\\
		\operatorname{H}_k(\mathscr{L}_\varphi M) & k > 1.
	\end{cases}
\end{equation*}
However, it would be also interesting to study the loop space homology $\operatorname{H}_\ast(\mathscr{L}_\varphi M)$ itself, because usually one computes the free loop space homology via Morse theory. For details see \cite[Chapter~2]{loop_spaces:2015}.

\subsection{Stark--Zeeman Systems}
Following \cite{cieliebakfrauenfelderzhao:J+:2019} we introduce Stark--Zeeman systems. For $\mu_\pm > 0$ define the potential functions
\begin{equation*}
	V_\pm \colon \mathbb{C} \setminus \{\pm 1\} \to \mathbb{R}, \qquad V_\pm(z) := -\frac{\mu_\pm}{\abs[0]{z \pm 1}}.
\end{equation*}
Let $U_0 \subseteq \mathbb{C}$ be open and star-shaped with respect to the origin such that $\pm 1 \in U_0$. Choose $V_0 \in C^\infty(U_0)$. Moreover, set
\begin{equation*}
	V := V_+ + V_- + V_0 \in C^\infty(U)
\end{equation*}
\noindent for $U := U_0 \setminus \{\pm 1\}$. For a function $B \in C^\infty(U_0)$, let $\sigma_B := B dq_1 \wedge dq_2$ and abbreviate by $(T^*U,\omega_B)$ the associated magnetic cotangent bundle. Fix a Riemannian metric $g$ on $U_0$ which is conformal to the standard metric. A \bld{Stark--Zeeman system} is the magnetic Hamiltonian system $(T^*U,\omega_B,H)$ with 
\begin{equation*}
	H(q,p) := \frac{1}{2}\norm[0]{p}^2_{g^*} + V(q) \qquad \forall (q,p) \in T^*U.
\end{equation*}
For $c \in \mathbb{R}$ a regular value of $H$, we consider a connected component $\Sigma_c \subseteq H^{-1}(c)$ such that $\mathfrak{K}_c \cup \{\pm 1\}$ is bounded and simply connected, where the Hill's region $\mathfrak{K}_c$ is defined by 
\begin{equation*}
	\mathfrak{K}_c := \pi(\Sigma_c) \subseteq \{q \in U : V(q) \leq c\}.
\end{equation*}
For example, the planar circular restricted three-body problem is a Stark--Zeeman system. In order to deal with collisions, we regularise $\Sigma_c$.

\begin{definition}[{Regularisation, \cite[p.~48]{frauenfelderkoert:3bp:2018}}]
	Let $(\Sigma,\omega)$ be a noncompact Hamiltonian manifold. A \bld{regularisation of $(\Sigma,\omega)$} is defined to be a compact Hamiltonian manifold $(\overline{\Sigma},\overline{\omega})$ such that there exists an embedding $\iota \colon \Sigma \hookrightarrow \overline{\Sigma}$ with $\iota^*\overline{\omega} = \omega$.
\end{definition}

\noindent Consider the \bld{Birkhoff regularisation map}
\begin{equation*}
	 \varphi \colon \mathbb{C}^* \to \mathbb{C}, \qquad B(z) := \frac{1}{2}\del[3]{z + \frac{1}{z}}.
\end{equation*}
By Example \ref{ex:holomorphic}, the cotangent lift $D\varphi^\dagger$ of $\varphi$ is given by
\begin{equation*}
	D\varphi^\dagger \colon T^*\mathbb{C}^* \to T^*\mathbb{C}, \qquad D\varphi^\dagger(z,w) = \del[3]{\frac{z^2 + 1}{2z},\frac{2\overline{z}^2w}{\overline{z}^2 - 1}} = (q,p).
\end{equation*} 
The regular energy surface $\Sigma_c$ gives rise to a regular energy surface $\Sigma^B_c \subseteq K^{-1}(0)$, where the rescaled Hamiltonian $K := H \circ D\varphi^\dagger$ is given by
\begin{equation*}
	K(z,w) = \frac{\norm[0]{w}^2_{g^*}}{2} - \frac{\mu_+\abs[0]{z + 1}^2}{2\abs[0]{z}^3} - \frac{\mu_-\abs[0]{z - 1}^2}{2\abs[0]{z}^3} + \frac{(V_0(q) - c)\abs[0]{z^2 - 1}^2}{4\abs[0]{z}^4}.
\end{equation*}
The compact regular energy surface $\Sigma^B_c$ is called the \bld{Birkhoff regularisaton of $\Sigma_c$}. This regularisation is invariant under the cotangent lift
\begin{equation*}
	\Phi \colon T^*\mathbb{C}^* \to T^*\mathbb{C}^*, \qquad \Phi(z,w) := \del[3]{\frac{1}{z},-\overline{z}^2 w}
\end{equation*}
As the induced action of $\Phi$ on $\Sigma_c^B$ is free, we obtain the cover 
\begin{equation*}
	\Sigma^B_c \to \Sigma^B_c/\mathbb{Z}_2.
\end{equation*}
Explicitly, there exist diffeomorphisms such that
\begin{equation*}
	\Sigma^B_c \cong \mathbb{S}^1 \times \mathbb{S}^2 \qquad \text{and} \qquad \Sigma^B_c/\mathbb{Z}_2 \cong \mathbb{RP}^3 \# \mathbb{RP}^3.
\end{equation*}
Therefore, it may be possible to apply the ideas developed in the proof of Theorem \ref{thm:my_result} or the Forcing Theorem \ref{thm:forcing_theorem} to these hypersurfaces. The analysis of these hypersurfaces is already quite delicate in the special case of the planar circular restricted three-body problem. Indeed, it requires some work to show that the regularised energy hypersurface is fibrewise star-shaped for energy values below the first critical value. For details, see \cite[Theorem~5.7.2]{frauenfelderkoert:3bp:2018}. Hence we cannot expect stability of the hypersurfaces in a general Stark--Zeeman system.

\section*{Acknowledgements}
First of all I would like to thank my supervisor Urs Frauenfelder for his inspiring guidance. I owe thanks to Kai Cieliebak and Igor Uljarevic for many helpful discussions. Moreover, I cordially thank Felix Schlenk for the pleasant stay at the University of Neuch\^{a}tel in April 2022. I also thank Will J. Merry for his constant encouragement during my Master's thesis as well as the doctoral students of the Mathematics institute at the university of Augsburg for enduring my numerous talks. Lastly, I thank Springer National for providing a freely available template for monographs. Without it, this thesis would not be what it is. This thesis is dedicated to my family, most importantly, my two loves Jil and Neil Kern. 

\appendix

\section{Twisted Loops in Universal Covering Manifolds}
\label{sec:twisted_loops_on_universal_covering_manifolds}

In this Appendix, we will consider the category of topological manifolds rather than the category of smooth manifolds, because smoothness does not add much to the discussion. Free and based loop spaces are fundamental objects in Algebraic Topology, for a vast treatment of the geometry and topology of based as well as free loop spaces see for example \cite{loop_spaces:2015}. But so-called twisted loop spaces are not considered that much. 

\begin{theorem}[Twisted Loops in Universal Covering Manifolds]
	Let $(M,x)$ be a connected pointed topological manifold and $\pi \colon \tilde{M} \to M$ the universal covering. 
	\begin{enumerate}[label=\textup{(\alph*)}]
		\item Fix $[\eta] \in \pi_1(M,x)$ and denote by $U_\eta \subseteq \mathscr{L}(M,x)$ the path component corresponding to $[\eta]$ via the bijection $\pi_0(\mathscr{L}(M,x)) \cong \pi_1(M,x)$. For every $e,e' \in \pi^{-1}(x)$ and $\varphi \in \Aut_\pi(\tilde{M})$ such that $\varphi(e) = \tilde{\eta}_e(1)$, where $\tilde{\eta}_e$ denotes the unique lift of $\eta$ with $\tilde{\eta}_e(0) = e$, we have a commutative diagram of homeomorphisms
		\begin{equation}
			\label{cd:twisted}
			\qquad \begin{tikzcd}
				\mathscr{L}_\varphi(\tilde{M},e) \arrow[rr,"L_\psi"] & & \mathscr{L}_{\psi \circ \varphi \circ \psi^{-1}}(\tilde{M},e')\\
				& U_\eta \arrow[lu,"\Psi_e"] \arrow[ru,"\Psi_{e'}"'],
			\end{tikzcd}
		\end{equation}
		\noindent where $\psi \in \Aut_\pi(\tilde{M})$ is such that $\psi(e) = e'$, 
		\begin{equation*}
			\qquad L_\psi \colon \mathscr{L}_\varphi(\tilde{M},e) \to \mathscr{L}_{\psi \circ \varphi \circ \psi^{-1}}(\tilde{M},e'), \qquad L_\psi(\gamma) := \psi \circ \gamma,
		\end{equation*}
		\noindent and
		\begin{align*}
			\qquad& \Psi_e \colon U_\eta \to \mathscr{L}_\varphi(\tilde{M},e), & \Psi_e(\gamma) := \tilde{\gamma}_e,\\
			\qquad& \Psi_{e'} \colon U_\eta \to \mathscr{L}_{\psi \circ \varphi \circ \psi^{-1}}(\tilde{M},e'), & \Psi_{e'}(\gamma) := \tilde{\gamma}_{e'}.
		\end{align*}
		Moreover, $U_{c_x} \cong \mathscr{L}_\varphi(\tilde{M},e)$ via $\Psi_e$ if and only if $\varphi = \id_{\tilde{M}}$, where $c_x$ denotes the constant loop at $x$.
	\item For every $\varphi \in \Aut_\pi(\tilde{M})$ and $e,e' \in \pi^{-1}(x)$ we have a commutative diagram of isomorphisms
	\begin{equation*}
		\qquad \begin{tikzcd}
		\Aut_\pi(\tilde{M}) \arrow[rr,"C_\psi"] & & \Aut_\pi(\tilde{M})\\
		& \pi_1(M,x) \arrow[lu,"\Phi_e"] \arrow[ru,"\Phi_{e'}"'],
	\end{tikzcd}
	\end{equation*}
	\noindent where for $\psi \in \Aut_\pi(\tilde{M})$ sucht that $\psi(e) = e'$
	\begin{equation*}
		\qquad C_\psi \colon \Aut_\pi(\tilde{M}) \to \Aut_\pi(\tilde{M}), \qquad C_\psi(\varphi) := \psi \circ \varphi \circ \psi^{-1},
	\end{equation*}
	\noindent and
	\begin{align*}
		\qquad & \Phi_e \colon \pi_1(M,x) \to \Aut_\pi(\tilde{M}), & \Phi_e([\gamma]) := \varphi^e_{[\gamma]},\\
		\qquad & \Phi_{e'} \colon \pi_1(M,x) \to \Aut_\pi(\tilde{M}), & \Phi_{e'}([\gamma]) := \varphi^{e'}_{[\gamma]},
	\end{align*}
	\noindent with $\varphi^e_{[\gamma]}(e) = \tilde{\gamma}_e(1)$ and $\varphi^{e'}_{[\gamma]}(e') = \tilde{\gamma}_{e'}(1)$.
\item The projection
	\begin{equation*}
		\qquad \tilde{\pi}_x \colon \coprod_{\substack{\varphi \in \Aut_\pi(\tilde{M})\\e \in \pi^{-1}(x)}} \mathscr{L}_\varphi(\tilde{M},e) \to \mathscr{L}(M,x)
	\end{equation*}
	\noindent defined by $\tilde{\pi}_x(\gamma) := \pi \circ \gamma$ is a covering map with number of sheets coinciding with the cardinality of $\pi_1(M,x)$. Moreover, $\tilde{\pi}_x$ restricts to define a covering map
	\begin{equation*}
		\qquad \tilde{\pi}_x\vert_{\id_{\tilde{M}}} \colon \coprod_{e \in \pi^{-1}(x)} \mathscr{L}(\tilde{M},e) \to U_{c_x},
	\end{equation*}
	\noindent and $\tilde{\pi}_x$ gives rise to a principal $\Aut_\pi(\tilde{M})$-bundle. If $M$ admits a smooth structure, then this bundle is additionally a bundle of smooth Banach manifolds.
\end{enumerate}
	\label{thm:cd_twisted}
\end{theorem}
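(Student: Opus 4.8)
The plan is to prove the three parts of Theorem~\ref{thm:cd_twisted} in order, building each on the standard lifting theory for the universal covering $\pi\colon\tilde M\to M$. Throughout, the central object is the \emph{unique path lifting} property: given $\gamma\in\mathscr{L}(M,x)$ and a point $e\in\pi^{-1}(x)$, there is a unique path $\tilde\gamma_e\colon I\to\tilde M$ with $\tilde\gamma_e(0)=e$ and $\pi\circ\tilde\gamma_e=\gamma$. The endpoint $\tilde\gamma_e(1)$ again lies in $\pi^{-1}(x)$, and by the theory of deck transformations there is a \emph{unique} $\varphi\in\Aut_\pi(\tilde M)$ with $\varphi(e)=\tilde\gamma_e(1)$, since $\Aut_\pi(\tilde M)$ acts simply transitively on each fibre. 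Hence $\tilde\gamma_e\in\mathscr{L}_\varphi(\tilde M,e)$ for exactly one $\varphi$, which is the structural fact underlying all three statements.

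\textbf{Part (a).} First I would verify that $\Psi_e\colon U_\eta\to\mathscr{L}_\varphi(\tilde M,e)$, $\gamma\mapsto\tilde\gamma_e$, is well-defined: if $\gamma\in U_\eta$ then $\gamma$ is homotopic rel endpoints to $\eta$, so by the homotopy lifting property $\tilde\gamma_e(1)=\tilde\eta_e(1)=\varphi(e)$, and a fibre-preserving homotopy shows $\tilde\gamma_e(t+1):=\varphi(\tilde\gamma_e(t))$ extends $\tilde\gamma_e$ consistently to an element of $\mathscr{L}_\varphi(\tilde M,e)$. Continuity of $\Psi_e$ with respect to the compact-open topology follows from continuity of the lifting operation (a standard compactness-and-Lebesgue-number argument); the inverse is $\tilde\gamma\mapsto\pi\circ\tilde\gamma|_{[0,1]}$, manifestly continuous, so $\Psi_e$ is a homeomorphism. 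The diagram \eqref{cd:twisted} commutes because for $\psi\in\Aut_\pi(\tilde M)$ with $\psi(e)=e'$ the path $\psi\circ\tilde\gamma_e$ is the unique lift of $\gamma$ starting at $e'$, i.e.\ $L_\psi\circ\Psi_e=\Psi_{e'}$; and $L_\psi$ maps into $\mathscr{L}_{\psi\varphi\psi^{-1}}(\tilde M,e')$ since $(\psi\circ\tilde\gamma_e)(t+1)=\psi\varphi\psi^{-1}\big((\psi\circ\tilde\gamma_e)(t)\big)$. Finally, $U_{c_x}\cong\mathscr{L}_\varphi(\tilde M,e)$ via $\Psi_e$ forces $\tilde\eta_e(1)=\tilde{c_x}_e(1)=e$, hence $\varphi(e)=e$, hence $\varphi=\id_{\tilde M}$ by simple transitivity; conversely if $\varphi=\id$ then $\eta$ lifts to a genuine loop, so $[\eta]=[c_x]$.

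\textbf{Parts (b) and (c).} Part~(b) is pure deck-transformation bookkeeping: $\Phi_e([\gamma]):=\varphi^e_{[\gamma]}$ (the unique deck transformation carrying $e$ to $\tilde\gamma_e(1)$) is the classical anti-/isomorphism $\pi_1(M,x)\xrightarrow{\sim}\Aut_\pi(\tilde M)$ (I would fix the convention so it is an isomorphism, composing with inversion if the chosen concatenation order demands it), and $C_\psi\circ\Phi_e=\Phi_{e'}$ follows because $\psi\circ\varphi^e_{[\gamma]}\circ\psi^{-1}$ sends $e'$ to $\psi(\tilde\gamma_e(1))=\tilde\gamma_{e'}(1)$, using $\psi\circ\tilde\gamma_e=\tilde\gamma_{e'}$. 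For part~(c), I would show $\tilde\pi_x(\gamma):=\pi\circ\gamma$ is a covering map by exhibiting, for each $\gamma_0\in\mathscr{L}(M,x)$, an evenly covered neighbourhood: take a path-connected neighbourhood $V$ of $\gamma_0$ in $\mathscr{L}(M,x)$ small enough that every $\gamma\in V$ is homotopic rel endpoints to $\gamma_0$ (e.g.\ a basic compact-open neighbourhood refined using a partition of $I$ into intervals mapping into evenly covered sets in $M$); then $\tilde\pi_x^{-1}(V)$ splits as the disjoint union over $(\varphi,e)$ with $\varphi(e)=\tilde{\gamma_0}_e(1)$ of the slices $\Psi_e(V)$, each mapped homeomorphically onto $V$ by the argument of part~(a). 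Counting sheets: the fibre $\tilde\pi_x^{-1}(\gamma_0)$ is in bijection with $\pi^{-1}(x)\cong\pi_1(M,x)$ (for each endpoint there is exactly one admissible $\varphi$), giving the stated cardinality. The restriction $\tilde\pi_x|_{\id_{\tilde M}}$ over $U_{c_x}$ is the sub-covering coming from lifts that are honest loops. For the principal bundle structure, $\Aut_\pi(\tilde M)$ acts on the total space by $\psi\cdot\gamma:=\psi\circ\gamma$ (sending $\mathscr{L}_\varphi(\tilde M,e)$ to $\mathscr{L}_{\psi\varphi\psi^{-1}}(\tilde M,\psi(e))$), this action is free and fibrewise transitive by simple transitivity on the fibres of $\pi$, and the local sections $\Psi_e(V)$ give local trivialisations; when $M$ is smooth the lifting map is smooth in the $H^1$- or $C^\infty$-Banach-manifold structure on loop spaces, so the trivialisations are smooth.

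\textbf{Main obstacle.} The routine parts are genuinely routine; the one place demanding care is the topology on the loop spaces and the verification that the lifting map $\gamma\mapsto\tilde\gamma_e$ and its inverse are continuous (and, in the smooth case, smooth) for the chosen function-space topology — i.e.\ producing the evenly covered neighbourhoods in part~(c) explicitly via a Lebesgue-number/partition-of-$I$ argument and checking these are open in the compact-open (or Banach-manifold) topology. Everything else reduces to the simple transitivity of $\Aut_\pi(\tilde M)$ on fibres and functoriality of lifting.
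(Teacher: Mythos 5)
Your proposal is correct in substance and follows essentially the same strategy as the paper for parts (a), (b) and the covering-space part of (c): everything is reduced to unique path lifting, the Monodromy Theorem, and the simply transitive action of $\Aut_\pi(\tilde{M})$ on fibres, with $L_\psi\circ\Psi_e=\Psi_{e'}$ proved by uniqueness of lifts and the ``Moreover'' clause handled via the fact that a deck transformation with a fixed point is the identity. The only real divergence is technical: where you invoke continuity of the lifting operation in the compact-open topology via a Lebesgue-number argument and build small evenly covered neighbourhoods $V$ of each loop, the paper metrises $\mathscr{L}(M,x)$ by a bounded supremum metric and topologises each $\mathscr{L}_\varphi(\tilde{M},e)$ by the pullback pseudometric $\bar{d}\circ\pi$, so that $\Psi_e$ is literally distance preserving and continuity in both directions is immediate; likewise, instead of shrinking to small $V$, it takes the evenly covered neighbourhoods to be the whole (open) path components $U_\eta$, whose preimage decomposes by part (a) as $\coprod_\psi \mathscr{L}_{\psi\varphi\psi^{-1}}(\tilde{M},\psi(e))$. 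Your route works but does more than necessary; the paper's choice of metric and of $U_\eta$ as the trivialising neighbourhood buys a shorter argument. Your left action $\psi\cdot\gamma=\psi\circ\gamma$ versus the paper's right action $\gamma\cdot\xi=\xi^{-1}\circ\gamma$ is only a convention difference.

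The one place where your argument is genuinely too thin is the final clause of (c), the smooth Banach manifold structure. The loop spaces in the theorem consist of \emph{continuous} loops, so appealing to ``the $H^1$- or $C^\infty$-Banach-manifold structure'' misses the target: the $H^1$ loop space is a different underlying set, and the space of smooth loops is a Fr\'echet, not a Banach, manifold. What is actually needed, and what the paper supplies, is the smooth Banach manifold structure on $C(I,\tilde{M})$ and $C(\mathbb{S}^1,M)$ together with the evaluation fibrations $\ev_0\colon\mathscr{L}M\to M$ and $\ev_1\colon\mathscr{P}(\tilde{M},e)\to\tilde{M}$, from which $\mathscr{L}(M,x)=\ev_0^{-1}(x)$ and $\mathscr{L}_\varphi(\tilde{M},e)=\ev_1^{-1}(\varphi(e))$ acquire Banach manifold structures by the implicit function theorem; the total space is then a countable disjoint union of such manifolds and $\Aut_\pi(\tilde{M})$ is a discrete (zero-dimensional) group. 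Your assertion that ``the lifting map is smooth, so the trivialisations are smooth'' presupposes these manifold structures on the based and twisted loop spaces, which is precisely the point requiring an argument; with the fibration-plus-implicit-function-theorem step added, your proof would be complete.
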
 

\begin{proof}
	For proving part (a), fix a path class $[\gamma] \in \pi_1(M,x)$. As any topological manifold is Hausdorff, paracompact and locally metrisable by definition, the Smirnov Metrisation Theorem \cite[Theorem~42.1]{munkres:topology:2000} implies that $M$ is metrisable. Let $d$ be a metric on $M$ and $\bar{d}$ be the standard bounded metric corresponding to $d$, that is,
	\begin{equation*}
		\bar{d}(x,y) = \min\cbr[0]{d(x,y),1} \qquad \forall x,y \in M.
	\end{equation*}
	The metric $\bar{d}$ induces the same topology on $M$ as $d$ by \cite[Theorem~20.1]{munkres:topology:2000}. Topologise the based loop space $\mathscr{L}(M,x) \subseteq \mathscr{L}M$ as a subspace of the free loop space on $M$, where $\mathscr{L}M$ is equipped with the topology of uniform convergence, that is, with the supremum metric
	\begin{equation*}
		\bar{d}_\infty(\gamma,\gamma') = \sup_{t \in \mathbb{S}^1}\bar{d}\del[1]{\gamma(t),\gamma'(t)} \qquad \forall \gamma,\gamma' \in \mathscr{L}M.
	\end{equation*}
	There is a canonical pseudometric on the universal covering manifold $\tilde{M}$ induced by $\bar{d}$ given by $\bar{d} \circ \pi$. As every pseudometric generates a topology, we topologise the based twisted loop space $\mathscr{L}_\varphi(\tilde{M},e) \subseteq \mathscr{P}\tilde{M}$ as a subspace of the free path space on $\tilde{M}$ for every $e \in \pi^{-1}(x)$ via the supremum metric $\tilde{d}_\infty$ corresponding to $\bar{d} \circ \pi$. In fact, $\tilde{d}_\infty$ is a metric as if $\tilde{d}_\infty(\gamma,\gamma') = 0$, then by definition of $\tilde{d}_\infty$ we have that $\pi(\gamma) = \pi(\gamma')$. But as $\gamma(0) = e = \gamma'(0)$, we conclude $\gamma = \gamma'$ by the unique lifting property of paths \cite[Corollary~11.14]{lee:tm:2011}. Note that the resulting topology of uniform convergence on $\mathscr{L}_\varphi(\tilde{M},e)$ coincides with the compact-open topology by \cite[Theorem~46.8]{munkres:topology:2000} or \cite[Proposition~A.13]{hatcher:at:2001}. In particular, the topology of uniform convergence does not depend on the choice of a metric (see \cite[Corollary~46.9]{munkres:topology:2000}). It follows from \cite[Theorem~11.15~(b)]{lee:tm:2011}, that $\Psi_e$ and $\Psi_{e'}$ are well-defined. Moreover, it is immediate by the fact that the projection $\pi \colon \tilde{M} \to M$ is an isometry with respect to the above metric, that $\Psi_e$ and $\Psi_{e'}$ are continuous with continuous inverse given by the composition with $\pi$. It is also immediate that $L_\psi$ is continuous with continuous inverse $L_{\psi^{-1}}$.

	Next we show that the diagram \eqref{cd:twisted} commutes. Note that
	\begin{equation*}
		\pi \circ L_\psi \circ \Psi_e = \pi \circ \Psi_e = \id_{U_\eta} = \pi \circ \Psi_{e'},
	\end{equation*}
	\noindent thus by
	\begin{equation*}
		(L_\psi \circ \Psi_e(\gamma))(0) = \psi(\tilde{\gamma}_e(0)) = \psi(e) = e' = \tilde{\gamma}_{e'}(0) = \Psi_{e'}(\gamma)(0)
	\end{equation*}
	\noindent and by uniqueness it follows that 
	\begin{equation*}
		L_\psi \circ \Psi_e = \Psi_{e'}.
	\end{equation*}
	In particular
	\begin{equation*}
		\Psi_{e'}(1) = (L_\psi \circ \Psi_e)(1) = \psi(\varphi(e)) = (\psi \circ \varphi \circ \psi^{-1})(e'),
	\end{equation*}
	\noindent and thus $\Psi_{e'}(\gamma) \in \mathscr{L}_{\psi \circ \varphi \circ \psi^{-1}}(\tilde{M},e')$. Consequently, the homeomorphism $\Psi_{e'}$ is well-defined.

	Recall, that by the Monodromy Theorem \cite[Theorem~11.15~(b)]{lee:tm:2011} 
	\begin{equation*}
		\gamma \simeq \gamma' \qquad \Leftrightarrow \qquad \Psi_e(\gamma)(1) = \Psi_e(\gamma')(1)
	\end{equation*}
	\noindent for all paths $\gamma$ and $\gamma'$ in $M$ starting at $x$ and ending at the same point. Note that the statement of the the Monodromy Theorem is an if-and-only-if statement since $\tilde{M}$ is simply connected.

	Suppose $\gamma \in \mathscr{L}(M,x)$ is contractible. Then $\gamma \simeq c_x$, implying $e \in \Fix(\varphi)$. But the only deck transformation of $\pi$ fixing any point of $\tilde{M}$ is $\id_{\tilde{M}}$ by \cite[Proposition~12.1~(a)]{lee:tm:2011}.

	Conversely, assume that $\gamma \in \mathscr{L}(M,x)$ is not contractible. Then we have that $\Psi_e(\gamma)(1) \neq e$. Indeed, if $\Psi_e(\gamma)(1) = e$, then $\gamma \simeq c_x$ and consequently, $\gamma$ would be contractible. As normal covering maps have transitive automorphism groups by \cite[Corollary~12.5]{lee:tm:2011}, there exists $\psi \in \Aut_\pi(\tilde{M}) \setminus \cbr[0]{\id_{\tilde{M}}}$ such that $\Psi_e(\gamma)(1) = \psi(e)$.  

	For proving part (b), observe that $\Phi_e$ and $\Phi_{e'}$ are isomorphisms follows from \cite[Corollary~12.9]{lee:tm:2011}. Moreover, it is also clear that $C_\psi$ is an isomorphism with inverse $C_{\psi^{-1}}$. Let $[\gamma] \in \pi_1(M,x)$. Then using part (a) we compute
\allowdisplaybreaks
	\begin{align*}
	(C_\psi \circ \Phi_e)[\gamma](e') &= (\psi \circ \Phi_e[\gamma] \circ \psi^{-1})(e')\\
	&= \psi\del[1]{\varphi_{[\gamma]}^e(e)}\\
	&= \psi(\tilde{\gamma}_e(1))\\
	&= (L_\psi \circ \Psi_e)(\gamma)(1)\\
	&= \Psi_{e'}(\gamma)(1)\\
	&= \tilde{\gamma}_{e'}(1)\\
	&= \varphi^{e'}_{[\gamma]}(e')\\
	&= \Phi_{e'}[\gamma](e').
\end{align*}
Thus by uniqueness \cite[Proposition~12.1~(a)]{lee:tm:2011}, we conclude
\begin{equation*}
	C_\psi \circ \Phi_e = \Phi_{e'}.
\end{equation*}
Finally for proving (c), define a metric $\tilde{d}_\infty$ on 
	\begin{equation*}
		E := \coprod_{\substack{\varphi \in \Aut_\pi(\tilde{M})\\e \in \pi^{-1}(x)}} \mathscr{L}_\varphi(\tilde{M},e)
	\end{equation*}
	\noindent by
	\begin{equation*}
		\tilde{d}_\infty(\gamma,\gamma') := \begin{cases}
			\bar{d}_\infty\del[1]{\pi(\gamma),\pi(\gamma')} & \gamma,\gamma' \in \mathscr{L}_\varphi(\tilde{M},e),\\
			1 & \text{else}.
		\end{cases}
	\end{equation*}
	Then the induced topology coincides with the disjoint union topology and with respect to this topology, $\tilde{\pi}_x$ is continuous. So left to show is that $\tilde{\pi}_x$ is a covering map. Surjectivity is clear. So let $\gamma \in \mathscr{L}(M,x)$. Then $\gamma \in U_\eta$ for some $[\eta] \in \pi_1(M,x)$. Now note that $U_\eta$ is open in $\mathscr{L}(M,x)$ and by part (a) we conclude
	\begin{equation}
		\label{eq:twisted_fibre}
		\tilde{\pi}_x^{-1}(U_\eta) = \coprod_{\psi \in \Aut_\pi(\tilde{M})} \mathscr{L}_{\psi \circ \varphi \circ \psi^{-1}}(\tilde{M},\psi(e))
	\end{equation}
	\noindent for some fixed $e \in \pi^{-1}(x)$ and $\varphi \in \Aut_\pi(\tilde{M})$ such that $\varphi(e) = \tilde{\eta}_e(1)$. 


	As the cardinality of the fibre $\pi^{-1}(x)$ and of $\Aut_\pi(\tilde{M})$ coincides with the cardinality of the fundamental group $\pi_1(M,x)$ by \cite[Corollary~11.31]{lee:tm:2011} and part (b), we conclude that the number of sheets is equal to the cardinality of the fundamental group $\pi_1(M,x)$.

  Equip $\Aut_\pi(\tilde{M})$ with the discrete topology. As the fundamental group of every topological manifold is countable by \cite[Theorem~7.21]{lee:tm:2011}, we have that $\Aut_\pi(\tilde{M})$ is a discrete topological Lie group. Now label the distinct path classes in $\pi_1(M,x)$ by $\beta \in B$ and for fixed $e \in \pi^{-1}(x)$ define local trivialisations
 \begin{equation*}
	 (\tilde{\pi}_x,\alpha_\beta) \colon \tilde{\pi}_x^{-1}(U_\beta)  \xrightarrow{\cong} U_\beta \times \Aut_\pi(\tilde{M}),
 \end{equation*} 
 \noindent making use of \eqref{eq:twisted_fibre} by
 \begin{equation*}
	 \alpha_\beta(\gamma) := \psi^{-1},
 \end{equation*}
 \noindent whenever $\gamma \in \mathscr{L}_{\psi \circ \varphi \circ \psi^{-1}}(\tilde{M},\psi(e))$. Consequently, $\tilde{\pi}_x$ is a fibre bundle with discrete fibre $\Aut_\pi(\tilde{M})$ and bundle atlas $(U_\beta,\alpha_\beta)_{\beta \in B}$. Define a free right action
 \begin{equation*}
	 E \times \Aut_\pi(\tilde{M}) \to E, \qquad \gamma \cdot \xi := \xi^{-1} \circ \gamma.
 \end{equation*}
  Then $\alpha_\beta$ is $\Aut_\pi(\tilde{M})$-equivariant with respect to this action for all $\beta \in B$. Indeed, using again the commutative diagram \eqref{cd:twisted} we compute
 \begin{equation*}
	 \alpha_\beta(\gamma \cdot \xi) = \alpha_\beta(\xi^{-1} \circ \gamma) = \del[1]{\xi^{-1} \circ \psi}^{-1} = \psi^{-1} \circ \xi = \alpha_\beta(\gamma) \circ \xi
 \end{equation*}
 \noindent for all $\xi \in \Aut_\pi(\tilde{M})$ and $\gamma \in \mathscr{L}_{\psi \circ \varphi \circ \psi^{-1}}(\tilde{M},\psi(e))$. Note, that here we use again the fact that $\Aut_\pi(\tilde{M})$ acts transitively on the fibre $\pi^{-1}(x)$.

 Suppose that $M$ admits a smooth structure. Then for every compact smooth manifold $N$ we have that the mapping space $C(N,M)$ admits the structure of a smooth Banach manifold by \cite{wittmann:loop_space:2019}. By \cite[Theorem~1.1~p.~24]{loop_spaces:2015}, there is a smooth fibre bundle, called the \bld{loop-loop fibre bundle},
 \begin{equation*}
		 \mathscr{L}(M,x) \hookrightarrow \mathscr{L}M \xrightarrow{\ev_0} M 
 \end{equation*}
 \noindent where
 \begin{equation*}
	 \ev_0 \colon \mathscr{L} M \to M, \qquad \ev_0(\gamma) := \gamma(0).
 \end{equation*}
 Thus the based loop space $\mathscr{L}(M,x) = \ev_0^{-1}(x)$ on $M$ is a smooth Banach manifold by the implicit function theorem \cite[Theorem~A.3.3]{mcduffsalamon:J-holomorphic_curves:2012} for all $x \in M$. Likewise, by \cite[Theorem~1.2~p.~25]{loop_spaces:2015}, there is a smooth fibre bundle, called the \bld{path-loop fibre bundle}, 
  \begin{equation*}
	  \mathscr{L}(\tilde{M},e) \hookrightarrow \mathscr{P}(\tilde{M},e) \xrightarrow{\ev_1} \tilde{M}, 
 \end{equation*}
 \noindent where
 \begin{equation*}
	 \mathscr{P}(\tilde{M},e) := \{\gamma \in C(I,\tilde{M}) : \gamma(0) = e\}
 \end{equation*}
 \noindent denotes the based path space and
 \begin{equation*}
	 \ev_1 \colon \mathscr{P}(\tilde{M},e) \to \tilde{M}, \qquad \ev_1(\gamma) := \gamma(1).
 \end{equation*}
 Therefore, the twisted loop space $\mathscr{L}_\varphi(\tilde{M},e) = \ev_1^{-1}(\varphi(e))$ is also a smooth Banach manifold for all $\varphi \in \Aut_\pi(\tilde{M})$ and $e \in \pi^{-1}(x)$ by the implicit function theorem \cite[Theorem~A.3.3]{mcduffsalamon:J-holomorphic_curves:2012}. As the fundamental group $\pi_1(M,x)$ is countable, the topological space $E$ has only countably many connected components being smooth Banach manifolds and thus the total space itself is a smooth Banach manifold. Finally, $\Aut_\pi(\tilde{M})$ is trivially a Banach manifold with $\dim \Aut_\pi(\tilde{M}) = 0$ as a discrete Lie group.
\end{proof}

\begin{corollary}
	\label{cor:cd_twisted_abelian}
	Let $(M,x)$ be a connected pointed topological manifold and denote by $\pi \colon \tilde{M} \to M$ the universal covering of $M$. Assume that $\pi_1(M,x)$ is abelian.
	\begin{enumerate}[label=\textup{(\alph*)}]
		\item Fix a path class $[\eta] \in \pi_1(M,x)$. For every $e,e' \in \pi^{-1}(x)$ and deck transformation $\varphi \in \Aut_\pi(\tilde{M})$ such that $\varphi(e) = \tilde{\eta}_e(1)$, we have a commutative diagram of homeomorphisms
		\begin{equation*}
			\qquad \begin{tikzcd}
				\mathscr{L}_\varphi(\tilde{M},e) \arrow[rr,"L_\psi"] & & \mathscr{L}_\varphi(\tilde{M},e')\\
				& U_\eta \arrow[lu,"\Psi_e"] \arrow[ru,"\Psi_{e'}"'],
			\end{tikzcd}
		\end{equation*}
		\noindent where $\psi \in \Aut_\pi(\tilde{M})$ is such that $\psi(e) = e'$.
	\item For every $\varphi \in \Aut_\pi(\tilde{M})$ we have that $\Phi_e = \Phi_{e'}$ for all $e,e' \in \pi^{-1}(x)$. 
	\end{enumerate}
\end{corollary}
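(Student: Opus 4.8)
The plan is to obtain Corollary \ref{cor:cd_twisted_abelian} as an immediate specialisation of Theorem \ref{thm:cd_twisted}, the only new input being the single structural consequence of commutativity: when $\pi_1(M,x)$ is abelian, every inner automorphism of the deck transformation group is trivial. So the proof will not require any fresh geometric or topological work beyond what is already contained in Theorem \ref{thm:cd_twisted}; it is a matter of substituting an identity into the two commutative diagrams there.

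First I would recall that $\Phi_e \colon \pi_1(M,x) \to \Aut_\pi(\tilde{M})$ is a group isomorphism by \cite[Corollary~12.9]{lee:tm:2011} (this is already invoked in the proof of part (b) of Theorem \ref{thm:cd_twisted}). Hence the hypothesis that $\pi_1(M,x)$ is abelian is equivalent to $\Aut_\pi(\tilde{M})$ being abelian, and therefore $\psi \circ \varphi \circ \psi^{-1} = \varphi$ for all $\psi, \varphi \in \Aut_\pi(\tilde{M})$; equivalently, the conjugation map $C_\psi$ appearing in Theorem \ref{thm:cd_twisted}(b) equals $\id_{\Aut_\pi(\tilde{M})}$ for every $\psi$.

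For part (a) I would then substitute $\psi \circ \varphi \circ \psi^{-1} = \varphi$ directly into the commutative diagram \eqref{cd:twisted}: the target $\mathscr{L}_{\psi \circ \varphi \circ \psi^{-1}}(\tilde{M},e')$ becomes $\mathscr{L}_\varphi(\tilde{M},e')$, while the homeomorphisms $L_\psi$, $\Psi_e$, $\Psi_{e'}$ and the identity $L_\psi \circ \Psi_e = \Psi_{e'}$ transfer verbatim; the only thing to double-check is that $\Psi_{e'}$ genuinely lands in $\mathscr{L}_\varphi(\tilde{M},e')$, which is automatic once $\Psi_{e'}(\gamma)(1) = (\psi \circ \varphi \circ \psi^{-1})(e') = \varphi(e')$ is recorded. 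For part (b), since $C_\psi = \id$, the relation $C_\psi \circ \Phi_e = \Phi_{e'}$ of Theorem \ref{thm:cd_twisted}(b) collapses to $\Phi_e = \Phi_{e'}$, which is precisely the assertion. Because everything reduces to these substitutions, I do not anticipate any real obstacle; the mildly delicate point is purely bookkeeping, namely confirming that the abelian hypothesis is used exactly where conjugation occurs and nowhere else.
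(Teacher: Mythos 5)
Your proposal is correct and is exactly the deduction the paper intends: the corollary is stated without a separate proof precisely because, via the isomorphism $\Phi_e \colon \pi_1(M,x) \to \Aut_\pi(\tilde{M})$, the abelian hypothesis makes every conjugation $C_\psi$ trivial, so both diagrams of Theorem \ref{thm:cd_twisted} specialise verbatim with $\psi \circ \varphi \circ \psi^{-1} = \varphi$. Your bookkeeping check that $\Psi_{e'}$ lands in $\mathscr{L}_\varphi(\tilde{M},e')$ is the right (and only) point to verify, so nothing is missing.
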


Lemma \ref{lem:noncontractible} now follows from part (a) of Theorem \ref{thm:cd_twisted}. Indeed, by assumption $\varphi \in \Aut_\pi(\Sigma) \setminus \{\id_\Sigma\}$ and using the long exact sequence of homotopy groups of a fibration \cite[Theorem~4.41]{hatcher:at:2001}, there is a short exact sequence 
\begin{equation*}
	\begin{tikzcd}[column sep = scriptsize]
		0 \arrow[r] & \pi_1(\Sigma,x) \arrow[r] & \pi_1(\Sigma/\mathbb{Z}_m,\pi(x)) \arrow[r] & \pi_0(\mathbb{Z}_m) \arrow[r] & 0.
	\end{tikzcd}
\end{equation*}
In particular, by \cite[Corollary~12.9]{lee:tm:2011} we conclude
\begin{equation*}
	\Aut_\pi(\Sigma) \cong \pi_1(\Sigma/\mathbb{Z}_m,\pi(x)) \cong \mathbb{Z}_m \cong \{\id_\Sigma,\varphi,\dots,\varphi^{m - 1}\}.
\end{equation*}

Finally, we discuss a smooth structure on the continuous free twisted loop space of a smooth manifold.


\begin{lemma}
	\label{lem:free_twisted_loop_space}
	Let $M$ be a smooth manifold and $\varphi \in \Diff(M)$. Then the continuous free twisted loop space $\mathscr{L}_\varphi M$ is the pullback of
	\begin{equation*}
		(\ev_0,\ev_1) \colon \mathscr{P}M \to M \times M, \qquad \gamma \mapsto (\gamma(0),\gamma(1)),
	\end{equation*}
	\noindent where we abbreviate $\mathscr{P}M := C(I,M)$, along the graph of $\varphi$
	\begin{equation*}
		\Gamma_\varphi \colon M \to M \times M, \qquad \Gamma_\varphi(x) := (x,\varphi(x)),
	\end{equation*}
	\noindent in the category of smooth Banach manifolds. Moreover, we have that
	\begin{equation*}
		T_\gamma \mathscr{L}_\varphi M = \{X \in \Gamma^0(\gamma^*TM) : X(1) = D\varphi(X(0))\}
	\end{equation*}
	\noindent for all $\gamma \in \mathscr{L}_\varphi M$.
\end{lemma}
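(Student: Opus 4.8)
The plan is to realise $\mathscr{L}_\varphi M$ as a transverse preimage of a submersion and then read off its tangent space by differentiation. Throughout I identify a continuous twisted loop $\gamma\colon\mathbb{R}\to M$ satisfying $\gamma(t+1)=\varphi(\gamma(t))$ with its restriction $\gamma\vert_I$, so that, writing $\Graph(\varphi) = \im\Gamma_\varphi\subseteq M\times M$,
\begin{equation*}
	\mathscr{L}_\varphi M = \{\gamma\in\mathscr{P}M : \gamma(1) = \varphi(\gamma(0))\} = (\ev_0,\ev_1)^{-1}(\Graph(\varphi)).
\end{equation*}
First I would assemble the standard input on path spaces: by the cited results on mapping spaces (\cite{wittmann:loop_space:2019}, cf.\ \cite[Theorem~1.2~p.~25]{loop_spaces:2015}), $\mathscr{P}M = C(I,M)$ is a smooth Banach manifold with $T_\gamma\mathscr{P}M = \Gamma^0(\gamma^*TM)$, the Banach space of continuous sections of $\gamma^*TM$; the evaluations $\ev_0,\ev_1\colon\mathscr{P}M\to M$ are smooth with $D\ev_t\vert_\gamma(X) = X(t)$; and $\Gamma_\varphi\colon M\to M\times M$ is a smooth embedding onto the finite-dimensional submanifold $\Graph(\varphi)$, which has codimension $\dim M$.

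The crucial step is that $(\ev_0,\ev_1)\colon\mathscr{P}M\to M\times M$ is a submersion. Granting this, $(\ev_0,\ev_1)$ is tautologically transverse to $\Gamma_\varphi$, so the Banach-manifold preimage theorem (a consequence of the implicit function theorem \cite[Theorem~A.3.3]{mcduffsalamon:J-holomorphic_curves:2012}) shows that $\mathscr{L}_\varphi M = (\ev_0,\ev_1)^{-1}(\Graph(\varphi))$ is a smooth Banach submanifold of $\mathscr{P}M$ of codimension $\dim M$. To see this is the pullback, note that the canonical map $M\times_{M\times M}\mathscr{P}M\to\mathscr{P}M$, $(x,\gamma)\mapsto\gamma$, is a diffeomorphism onto $\mathscr{L}_\varphi M$ with inverse $\gamma\mapsto(\gamma(0),\gamma)$ — since $\Gamma_\varphi$ is injective, $x$ is forced to equal $\gamma(0)$ — and under this identification the two structure maps of the fibre product become $\ev_0\vert_{\mathscr{L}_\varphi M}$ and the inclusion $\iota\colon\mathscr{L}_\varphi M\hookrightarrow\mathscr{P}M$; hence the universal property of the pullback transfers to $\mathscr{L}_\varphi M$ verbatim.

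For the tangent space I would differentiate the defining square at $\gamma\in\mathscr{L}_\varphi M$, with $x := \gamma(0)$: one has $\im D\Gamma_\varphi\vert_x = \{(v,D\varphi(v)) : v\in T_xM\}$ and $D(\ev_0,\ev_1)\vert_\gamma(X) = (X(0),X(1))$, and the standard description of the tangent space of a transverse preimage gives
\begin{equation*}
	T_\gamma\mathscr{L}_\varphi M = \bigl(D(\ev_0,\ev_1)\vert_\gamma\bigr)^{-1}\bigl(\im D\Gamma_\varphi\vert_x\bigr) = \{X\in\Gamma^0(\gamma^*TM) : X(1) = D\varphi(X(0))\},
\end{equation*}
which is the claimed formula.

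The main obstacle is the submersion claim for $(\ev_0,\ev_1)$, i.e.\ showing that for each $\gamma$ the map $X\mapsto(X(0),X(1))$ from $\Gamma^0(\gamma^*TM)$ onto $T_{\gamma(0)}M\times T_{\gamma(1)}M$ admits a bounded linear right inverse depending continuously on $\gamma$ in the charts of $\mathscr{P}M$. Such a right inverse is constructed by hand: trivialise $\gamma^*TM$ over small neighbourhoods of $0$ and of $1$ in $I$ (or use parallel transport along $\gamma$) and interpolate with cutoff functions, turning a pair $(v_0,v_1)$ into a continuous section with those endpoint values; this splits the short exact sequence $0\to\{X : X(0)=X(1)=0\}\to\Gamma^0(\gamma^*TM)\to T_{\gamma(0)}M\times T_{\gamma(1)}M\to 0$. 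The only other delicate point is that the smooth atlas on $C(I,M)$ in play — built from a tubular neighbourhood of the diagonal, or from fibrewise exponential maps — is precisely the one in which $\ev_0$ and $\ev_1$ appear as submersions; rather than reconstructing it I would invoke the cited references and quote this property.
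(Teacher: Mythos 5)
Your proposal is correct and follows essentially the same route as the paper: realise $\mathscr{L}_\varphi M=(\ev_0,\ev_1)^{-1}(\Gamma_\varphi(M))$, establish transversality using cutoff functions and parallel transport along $\gamma$ to produce sections with prescribed endpoint data (which also yields the split/complemented kernel, automatic here since the codimension is finite), invoke the Banach implicit function theorem, identify the result with the fibre product via $\gamma\mapsto(\gamma(0),\gamma)$, and read off $T_\gamma\mathscr{L}_\varphi M=\{X\in\Gamma^0(\gamma^*TM):X(1)=D\varphi(X(0))\}$ by differentiating the defining condition. The only cosmetic difference is that you prove the stronger statement that $(\ev_0,\ev_1)$ is a submersion, whereas the paper checks transversality to the graph directly through the map $X\mapsto X(1)-D\varphi(X(0))$ and exhibits an explicit finite-dimensional complement of its kernel; both reduce to the same cutoff--parallel-transport construction.
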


\begin{proof}
	Write $f := (\ev_0,\ev_1)$. Then
	\begin{equation*}
		\mathscr{L}_\varphi M = f^{-1}(\Gamma_\varphi(M)).
	\end{equation*}
	Thus in order to show that the free twisted loop space $\mathscr{L}_\varphi M$ is a smooth Banach manifold, it is enough to show that $f$ is transverse to the properly embedded smooth submanifold $\Gamma_\varphi(M) \subseteq M \times M$. By \cite[Proposition~2.4]{lang:dg:1999} we need to show that the composition
	\begin{equation*}
		\Phi_\gamma\colon T_\gamma\mathscr{P}M \xrightarrow{Df_\gamma} T_{(x,\varphi(x))}(M \times M) \to T_{(x,\varphi(x))}(M \times M)/T_{(x,\varphi(x))}\Gamma_\varphi(M)
	\end{equation*}
	\noindent is surjective and $\ker \Phi_\gamma$ is complemented for all $\gamma \in f^{-1}(\Gamma_\varphi(M))$, where we abbreviate $x := \gamma(0)$. Note that we have a canonical isomorphism
	\begin{equation*}
		T_{(x,\varphi(x))}(M \times M)/T_{(x,\varphi(x))}\Gamma_\varphi(M) \to T_{\varphi(x)}M, \quad [(v,u)] := u - D\varphi(v).
	\end{equation*}
	Under this canonical isomorphism, the linear map $\Phi_\gamma$ is given by
	\begin{equation*}
		\Phi_\gamma(X) = X(1) - D\varphi(X(0)), \qquad \forall X \in \Gamma^0(\gamma^*TM).
	\end{equation*}
	Fix a Riemannian metric on $M$ and let $X_v \in \Gamma(\gamma^*TM)$ be the unique parallel vector field with $X_v(1) = v \in T_{\varphi(x)}M$. Fix a cutoff function $\beta \in C^\infty(I)$ such that $\supp \beta \subseteq \intcc[1]{\frac{1}{2},1}$ and $\beta = 1$ in a neighbourhood of $1$. Then $\Phi_\gamma(\beta X_v) = v$ and consequently, $\Phi_\gamma$ is surjective. Moreover
	\begin{equation*}
		\ker \Phi_\gamma = \cbr[0]{X \in \Gamma^0(\gamma^*TM) : X(1) = D\varphi(X(0))}
	\end{equation*}
	\noindent is complemented by the finite-dimensional vector space 
	\begin{equation*}
		V := \cbr[0]{\beta X_v \in \Gamma(\gamma^*TM) : v \in T_{\varphi(x)}M}.
	\end{equation*}
	Indeed, any $X \in \Gamma^0(\gamma^*TM)$ can be decomposed uniquely as
	\begin{equation*}
		X = X - \beta X_v + \beta X_v, \qquad v := X(1) - D\varphi(X(0)).
	\end{equation*}
	Abbreviating $Y := X - \beta X_v \in \Gamma^0(\gamma^*TM)$, we have that
	\begin{equation*}
		Y(1) = D\varphi(X(0)) = D\varphi(Y(0)),
	\end{equation*}
	\noindent implying $Y \in \ker \Phi_\gamma$. Thus $\mathscr{L}_\varphi M$ is a smooth Banach manifold.

	Now note that $\mathscr{L}_\varphi M$ can be identified with the pullback
	\begin{equation*}
		f^*\mathscr{P}M = \{(x,\gamma) \in M \times \mathscr{P}M : (\gamma(0),\gamma(1)) = \del[0]{x,\varphi(x)}\},
	\end{equation*}
	\noindent making the diagram 
	\begin{equation*}
		\begin{tikzcd}
			f^* \mathscr{P}M \arrow[r,"\pr_2"]\arrow[d,"\pr_1"'] & \mathscr{P}M \arrow[d,"f"]\\
			M \arrow[r,"\Gamma_\varphi"'] & M \times M
		\end{tikzcd}
	\end{equation*}
	\noindent commute, via the homeomorphism
	\begin{equation*}
		\mathscr{L}_\varphi M \to f^*\mathscr{P}M, \qquad \gamma \mapsto (\gamma(0),\gamma).
	\end{equation*}
	Finally, one computes
	\begin{equation*}
		T_{(x,\gamma)}f^*\mathscr{P}M = \{(v,X) \in T_xM \times T_\gamma \mathscr{P}M : Df_\gamma X = D\Gamma_\varphi\vert_x(v)\}
	\end{equation*}
	\noindent for all $(x,\gamma) \in f^*\mathscr{P}M$.
\end{proof}

\begin{remark}
	Using Lemma \ref{lem:free_twisted_loop_space} one should be able to prove similar results as in Theorem \ref{thm:cd_twisted} in the case of free twisted loop spaces. However, in the non-abelian case the situation gets much more complicated as in general it is not true, that lifts of conjugated elements of the fundamental group lie in the same free twisted loop space by \cite[Theorem~1.6~(i)]{loop_spaces:2015}.
\end{remark}

\newpage
\section{On the Nonexistence of the Gradient of the Twisted Rabinowitz Action Functional}
Let $(M,\lambda)$ be an exact symplectic manifold and $\varphi \in \Symp(M,d\lambda)$ a symplectomorphism of finite order. For $H \in C^\infty(M)$ such that $H \circ \varphi = H$, one can define the twisted Rabinowitz action functional
\begin{equation}
	\label{eq:twisted_Rabinowitz_action_functional}
	\mathscr{A}^H_\varphi \colon \mathscr{L}_\varphi M \times \mathbb{R} \to \mathbb{R}, \qquad \mathscr{A}^H_\varphi(\gamma,\tau) := \int_0^1 \gamma^*\lambda - \tau \int_0^1 H(\gamma(t))dt.
\end{equation}
Let $J$ be a $d\lambda$-compatible almost complex structure such that $\varphi^*J = J$. Then one can consider the gradient of $\mathscr{A}^H_\varphi$ with respect to the $L^2$-metric
\begin{equation}
	\langle (X,\eta), (Y,\sigma) \rangle_J := \int_0^1 d\lambda(JX(t),Y(t))dt + \eta\sigma
	\label{eq:L^2-metric}
\end{equation}
\noindent for all $(X,\eta), (Y,\sigma) \in T_\gamma\mathscr{L}_\varphi M \times \mathbb{R}$ and $(\gamma,\tau) \in \mathscr{L}_\varphi M \times \mathbb{R}$. 

\begin{theorem}[Nonexistence Gradient]
	\label{thm:nonexistence_gradient}
	Let $(M,\lambda)$ be a connected exact symplectic manifold, $\varphi \in \Symp(M,d\lambda)$ of finite order and $H \in C^\infty(M)$ such that $H \circ \varphi = H$. If $\varphi^*\lambda \neq \lambda$, then the gradient of the twisted Rabinowitz action functional \eqref{eq:twisted_Rabinowitz_action_functional} with respect to the $L^2$-metric \eqref{eq:L^2-metric} does not exist.
\end{theorem}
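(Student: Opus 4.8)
The plan is to compute the differential $d\mathscr{A}^H_\varphi|_{(\gamma,\tau)}$ explicitly and show that, when $\varphi^*\lambda \neq \lambda$, it fails to be represented by any element of $T_\gamma\mathscr{L}_\varphi M \times \mathbb{R}$ with respect to the $L^2$-metric. The key point is a boundary term: integrating by parts in the computation of the first variation of $\int_0^1 \gamma^*\lambda$ produces a term $\lambda(X)|_0^1 = (\varphi^*\lambda - \lambda)(X(0))$, which for a genuine twisted Rabinowitz action functional of a \emph{Liouville automorphism} is cancelled by the $-df_\varphi(X(0))$ term in Definition \ref{def:perturbed_twisted_Rabinowitz_functional}. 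Here, however, no such correction term is present in \eqref{eq:twisted_Rabinowitz_action_functional}, so the differential is
\begin{equation*}
	d\mathscr{A}^H_\varphi|_{(\gamma,\tau)}(X,\eta) = \int_0^1 d\lambda\del[1]{X(t), \dot{\gamma}(t) - \tau X_H(\gamma(t))}dt + (\varphi^*\lambda - \lambda)(X(0)) - \eta\int_0^1 H(\gamma(t))dt.
\end{equation*}

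First I would establish this formula by the same computation as in the proof of Proposition \ref{prop:differential_perturbed_twisted_Rabinowitz_functional}, simply dropping the $f_\varphi$ terms. Next, I would argue by contradiction: suppose $\grad_J\mathscr{A}^H_\varphi|_{(\gamma,\tau)} = (Z,\zeta) \in T_\gamma\mathscr{L}_\varphi M \times \mathbb{R}$ exists. Then for all $(X,\eta) \in T_\gamma\mathscr{L}_\varphi M \times \mathbb{R}$ we would have
\begin{equation*}
	\int_0^1 d\lambda(JZ(t),X(t))dt + \zeta\eta = \int_0^1 d\lambda\del[1]{X(t),\dot{\gamma}(t) - \tau X_H(\gamma(t))}dt + (\varphi^*\lambda - \lambda)(X(0)) - \eta\int_0^1 H(\gamma(t))dt.
\end{equation*}
Taking $\eta = 0$ and restricting to $X$ supported away from $t \in \mathbb{Z}$ shows that $JZ$ agrees distributionally with $-(\dot\gamma - \tau X_H(\gamma))$ on the interior, hence $Z$ is continuous there and the identity $\int_0^1 d\lambda(JZ - (\tau X_H(\gamma) - \dot\gamma), X)\,dt = (\varphi^*\lambda-\lambda)(X(0))$ holds for \emph{all} admissible $X$. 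But the left-hand side is an integral of a bounded function against $X$, hence can be made arbitrarily small by choosing $X$ with $X(0) = v$ fixed but $\|X\|_{L^1}$ small (e.g. a sharp bump near $t = 0$), whereas the right-hand side is the fixed nonzero number $(\varphi^*\lambda - \lambda)(v)$ for a suitable $v \in T_{\gamma(0)}M$. This is the contradiction. The remaining task is to guarantee such a $v$ exists: since $\varphi^*\lambda \neq \lambda$ and $M$ is connected, there is a point $p$ and a tangent vector $v_p$ with $(\varphi^*\lambda - \lambda)_p(v_p) \neq 0$; one then needs $\gamma$ to pass through a point where this holds, which is not automatic for a \emph{fixed} $(\gamma,\tau)$, so the precise statement to prove is that the gradient does not exist \emph{as a vector field} on $\mathscr{L}_\varphi M \times \mathbb{R}$, i.e.\ there is at least one $(\gamma,\tau)$ — indeed a dense set — where it fails.

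The main obstacle I anticipate is handling the functional-analytic subtlety cleanly: one must be careful about which completion of $T_\gamma\mathscr{L}_\varphi M$ is intended (the $L^2$-metric is only a weak metric on the space of smooth sections), and the argument that a nonzero evaluation-at-a-point functional $X \mapsto (\varphi^*\lambda-\lambda)(X(0))$ is not $L^2$-bounded is exactly the statement that $\delta$-functionals are not in $L^2$ — this is standard but should be phrased so it applies verbatim to the twisted-loop setting (the twist condition $X(t+1) = D\varphi(X(t))$ does not obstruct building bump sections concentrated near $t=0$, since one can take $X$ supported in $(-\tfrac12,\tfrac12)$ and extend by the twist rule). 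A secondary, more bookkeeping-level obstacle is pinning down the genericity/existence statement for $(\gamma,\tau)$: it suffices to exhibit one loop through a point $p$ with $(\varphi^*\lambda-\lambda)_p \neq 0$, and since $\supp(\varphi^*\lambda - \lambda)$ is a nonempty open set this is easy, but it should be stated explicitly to make the theorem's conclusion precise.
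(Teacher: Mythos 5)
Your proposal is correct and takes essentially the same route as the paper: the paper likewise writes the would-be gradient as $\grad\mathscr{A}^H + V$, uses the identity $d\mathscr{A}^H_\varphi = d\mathscr{A}^H + (\varphi^*\lambda-\lambda)(X(0))$, and derives the contradiction by pairing $V$ against bump sections $\beta_j X_v$ concentrating at $t=0$ and invoking dominated convergence — exactly your ``evaluation functionals are not $L^2$-representable'' argument. Your worry about exhibiting a suitable $(\gamma,\tau)$ is resolved in the paper just as you suggest: connectedness yields a path from a point $x$ with $(\varphi^*\lambda-\lambda)_x(v)\neq 0$ to $\varphi(x)$, which is cut off and extended by the twist rule to a twisted loop through $x$, so the gradient fails to exist as a vector field on $\mathscr{L}_\varphi M \times \mathbb{R}$.
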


\begin{proof}
	Assume that the gradient $\grad \mathscr{A}^H_\varphi \in \mathfrak{X}(\mathscr{L}_\varphi M \times \mathbb{R})$ exists. We write
\begin{equation*}
	\grad \mathscr{A}^H_\varphi = \grad \mathscr{A}^H + V
\end{equation*}
\noindent for some $V \in \mathfrak{X}(\mathscr{L}_\varphi M)$ and
\begin{equation*}
	\grad \mathscr{A}^H\vert_{(\gamma,\tau)} = \begin{pmatrix}
		\displaystyle J(\dot{\gamma} - \tau X_H(\gamma))\\
		\displaystyle -\int_0^1 H(\gamma(t))dt
	\end{pmatrix}
\end{equation*}
\noindent for all $(\gamma,\tau) \in \mathscr{L}_\varphi M \times \mathbb{R}$. Indeed, this follows from 
\begin{equation}
	d\mathscr{A}^H_\varphi\vert_{(\gamma,\tau)}(X,\eta) = d\mathscr{A}^H\vert_{(\gamma,\tau)}(X,\eta) + (\varphi^*\lambda - \lambda)(X(0))
	\label{eq:twisted_differential}
\end{equation}
\noindent for all $(X,\eta) \in T_\gamma\mathscr{L}_\varphi M \times \mathbb{R}$, where
\begin{equation*}
	d\mathscr{A}^H\vert_{(\gamma,\tau)}(X,\eta) = \int_0^1 d\lambda(X,\dot{\gamma}(t) - \tau X_H(\gamma(t))) dt - \eta \int_0^1 H(\gamma(t))dt.
\end{equation*}

\noindent By assumption, there exists $x \in M$ and $v \in T_x M$ with $(\varphi^*\lambda)_x(v) \neq \lambda_x(v)$. As by assumption $M$ is connected, there exists a smooth path $u \in C^\infty(\intcc[0]{0,1},M)$ from $x$ to $\varphi(x)$. Fix a smooth cutoff function $\beta \in C^\infty(\intcc[0]{0,1},\intcc[0]{0,1})$ such that $\beta = 0$ in a neighbourhood of $0$ and $\beta = 1$ in a neighbourhood of $1$. Then we can extend $u$ by
\begin{equation*}
	\gamma(t) := \varphi^k(u(\beta(t - k))) \qquad \forall t \in \intcc[0]{k,k+1}, k \in \mathbb{Z}.
\end{equation*}
Clearly, $\gamma \in \mathscr{L}_\varphi M$ by construction. Extend $v \in T_{\gamma(0)}M$ to $X_v \in T_\gamma \mathscr{L}_\varphi M$ by
\begin{equation*}
	X_v(t) := (1 - \beta(t - k))P_{0,\beta(t - k)}^{\varphi^k \circ u}(D\varphi^k(v)) + \beta(t - k)P_{1,\beta(t - k)}^{\varphi^k \circ u}(D\varphi^{k + 1}(v)),
\end{equation*}
\noindent for all $t \in \intcc[0]{k,k+1}$ and $k \in \mathbb{Z}$, where $P$ denotes the parallel transport system induced by the Levi--Civita connection associated with the metric $m_J$. Choose a sequence $(\beta_j) \subseteq C^\infty(\mathbb{S}^1,\intcc[0]{0,1})$ with $\beta_j = 1$ on $\intcc[1]{0,\frac{1}{2j}} \cup \intcc[1]{1 - \frac{1}{2j},1}$ and such that $\supp \beta_j \subseteq \intcc[1]{0,\frac{1}{j}} \cup \intcc[1]{1 - \frac{1}{j},1}$ for all $j \in \mathbb{N}$. Using \eqref{eq:twisted_differential} we compute
\begin{align*}
	\langle V, \beta_j X_v \rangle_J &= \langle \grad \mathscr{A}^H_\varphi,\beta_j X_v \rangle_J - \langle \grad \mathscr{A}^H,\beta_j X_v\rangle_J\\
	&= d\mathscr{A}^H_\varphi(\beta_j X_v) - d\mathscr{A}^H(\beta_j X_v)\\
	&= (\varphi^*\lambda - \lambda)(\beta_j(0)X_v(0))\\
	&= (\varphi^*\lambda - \lambda)(v)
\end{align*}
\noindent for all $j \in \mathbb{N}$, implying
\begin{align*}
	(\varphi^*\lambda - \lambda)(v) &= \lim_{j \to \infty} \langle V,\beta_j X_v \rangle_J\\
	&= \lim_{j \to \infty} \int_0^1 d\lambda(JV(t),\beta_j(t)X_v(t))dt\\
	&= 0
\end{align*}
\noindent by dominated convergence.
\end{proof}

\newpage
\section{M-Polyfolds}
\label{ch:polyfolds}
The classical approach for establishing generic transversality results in Floer theories is via a suitable version of the Sard--Smale Theorem \cite[Theorem~A.5.1]{mcduffsalamon:J-holomorphic_curves:2012}. The idea is to represent the moduli space of negative gradient flow lines as the zero set of an appropriate Fredholm section. Unfortunately, this does not work for the moduli space of unparametrised negative gradient flow lines as the reparametrisation action is not smooth. Moreover, the transversality results usually require perturbing the given metric to a generic one. There is a more abstract approach for proving transversality results via polyfold theory. This theory was and is still developed by Hofer--Wysocki--Zehnder \cite{hoferwysockizehnder:polyfolds:2021} primarily having symplectic field theory in mind. Another more algebraic approach to abstract perturbations is via Kuranishi structures developed by Fukaya--Oh--Ohta--Ono \cite{FOOO:kuranishi:2020}.

In the first section we introduce the basic terminology of polyfold theory, namely the notion of scale smoothness on scale Banach spaces.

In the second section we formulate a prototypical result for Morse--Bott homology following the brilliant lecture notes \cite{cieliebak:ga:2018}.

\subsection{Scale Calculus}
\label{sec:scale_calculus}

\begin{definition}[{Scale Structure, \cite[Definition~1.1.1]{hoferwysockizehnder:polyfolds:2021}}]
    A \bld{scale structure} on a Banach space $E$ is a decreasing sequence 
    \begin{equation*}
        E =: E_0 \supseteq E_1 \supseteq E_2 \supseteq \dots
    \end{equation*}
    \noindent of Banach spaces such that the inclusion $E_{k + 1} \hookrightarrow E_k$ is compact for every $k \in \mathbb{N}_0$ and such that $E_\infty := \cap_{k = 0}^\infty E_k$ is dense in every $E_k$. A Banach space $E$ together with a scale structure $(E_k)$ is called a \bld{scale Banach space}.
\end{definition}

\begin{example}[{Shifted Scale Banach Space, \cite[Definition~3.3]{frauenfelderweber:shift:2021}}]
	Let $(E,(E_k))$ be a scale Banach space and $m \in \mathbb{N}_0$. Then $\del[1]{E^k,(E^m_k)}$ is a scale Banach space where $E^m_k := E_{m + k}$ for all $k \in \mathbb{N}_0$.
\end{example}

\begin{example}[{Scale Direct Sum, \cite[Definition~3.4]{frauenfelderweber:shift:2021}}]
    Let $(E,(E_k))$ and $(F,(F_k))$ be scale Banach spaces. Then $(E \oplus F, (E_k \oplus F_k))$ is also a scale Banach space.
\end{example}

The following example underlies Morse and Floer theory.

\begin{example}[{Weighted Sobolev Spaces, \cite[Example~3.9]{frauenfelderweber:shift:2021}}]
    \label{ex:weighted_Sobolev_spaces}
    Fix a monotone cutoff function $\beta \in C^\infty(\mathbb{R},\intcc[0]{-1,1})$ with 
    \begin{equation*}
        \beta(s) = \begin{cases}
        1 & s \geq 1,\\
        -1 & s \leq -1,
        \end{cases}
    \end{equation*}
    \noindent and $\delta > 0$. Define
    \begin{equation*}
        \gamma_\delta \colon \mathbb{R} \to \mathbb{R}, \qquad \gamma_\delta(s) := e^{\delta \beta(s)s}.
    \end{equation*}
    For $p \in \intoo[0]{1,+\infty}$ and $k \in \mathbb{N}_0$ define the \bld{Sobolev spaces with weight $\delta$} by
    \begin{equation*}
        W^{k,p}_\delta(\mathbb{R},\mathbb{R}^n) := \{ u \in W^{k,p}(\mathbb{R},\mathbb{R}^n) : \gamma_\delta u \in W^{k,p}(\mathbb{R},\mathbb{R}^n)\}.
    \end{equation*}
    Choose a strictly increasing sequence $(\delta_k)$ with $\delta_0 = 0$. Then $(E,(E_k))$ is a scale Banach space with
    \begin{equation*}
        E_k := W^{k,p}_{\delta_k}(\mathbb{R},\mathbb{R}^n) \qquad \forall k \in \mathbb{N}_0.
    \end{equation*}
\end{example}

\begin{definition}[{Scale Continuity, \cite[Definition~4.1]{frauenfelderweber:shift:2021}}]
    Let $(E,(E_k))$ and $(F,(F_k))$ be two scale Banach spaces. A map $f \colon E \to F$ is called \bld{scale continuous}, iff $f(E_k) \subseteq F_k$ for all $k \in \mathbb{N}_0$ and the restriction $f\vert_{E_k} \colon E_k \to F_k$ is continuous.
\end{definition}

\begin{definition}[{Tangent Bundle, \cite[Definition~1.1.14]{hoferwysockizehnder:polyfolds:2021}}]
    For a scale Banach space $(E,(E_k))$ define its \bld{tangent bundle} by $TE := E^1 \oplus E$.
\end{definition}

\begin{definition}[{Scale Differentiability, \cite[Definition~4.2]{frauenfelderweber:shift:2021}}]
    A scale continuous map $f \colon E \to F$ between scale Banach spaces $(E,(E_k))$ and $(F,(F_k))$ is called \bld{scale differentiable}, iff for every $x \in E_1$ there exists a bounded linear operator
    \begin{equation*}
        Df(x) \colon E_0 \to F_0,
    \end{equation*}
    \noindent called the \bld{scale differential of $f$}, such that the restriction $f\vert_{E_1} \colon E_1 \to F_0$ is Fr\'echet differentiable with derivative $Df\vert_{E_1}$ and the \bld{tangent map}
    \begin{equation*}
        Tf\colon TE \to TF, \qquad Tf(x,h) := (f(x),Df(x)h) 
    \end{equation*}
    \noindent is scale continuous.
\end{definition}

Using the iterated notion of scale differentiability one can define higher scale regularity. We say that a scale differentiable map is \bld{scale smooth}, iff its tangent map is infinitely scale differentiable. The following example motivated the development of scale calculus.

\begin{example}[{Weighted Sobolev Spaces, \cite[Theorem~6.2]{frauenfelderweber:shift:2021}}]
	\label{ex:time_shift}
    Let $E_k$ be the scale of weighted Sobolev spaces introduced in Example \ref{ex:weighted_Sobolev_spaces} on the scale Banach space $E = L^p(\mathbb{R},\mathbb{R}^n)$. Then the shift map
    \begin{equation*}
        \mathbb{R} \oplus E \to E, \qquad (r,x) \mapsto x(\cdot + r)
    \end{equation*}
    \noindent is scale smooth.
\end{example}

One important property of scale differentiability is that the chain rule remains valid in this general setting.

\begin{proposition}[{Chain Rule, \cite[Theorem~1.3.1]{hoferwysockizehnder:polyfolds:2021}}]
    Uppose that $f \colon E \to F$ and $g \colon F \to G$ are scale differentiable maps for scale Banach spaces $E$, $F$ and $G$. Then the composition $g \circ f$ is scale differentiable with tangent map
    \begin{equation*}
        T(g \circ f) = Tg \circ Tf \colon TE \to TG.
    \end{equation*}
\end{proposition}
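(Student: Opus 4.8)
The statement to be proved is the Chain Rule for scale differentiable maps: if $f \colon E \to F$ and $g \colon F \to G$ are scale differentiable, then $g \circ f$ is scale differentiable with $T(g \circ f) = Tg \circ Tf$. The plan is to mimic the classical Fr\'echet chain rule argument, but carried out one scale level at a time and with careful attention to where the loss of regularity built into scale calculus occurs. First I would unwind the definitions: scale differentiability of $f$ means $f(E_k) \subseteq F_k$ with each restriction $f|_{E_k} \colon E_k \to F_k$ scale continuous, that for each $x \in E_1$ there is a bounded linear $Df(x) \colon E_0 \to F_0$ with $f|_{E_1} \colon E_1 \to F_0$ Fr\'echet differentiable and derivative $Df|_{E_1}$, and crucially that the tangent map $Tf \colon TE = E^1 \oplus E \to TF = F^1 \oplus F$, $(x,h) \mapsto (f(x), Df(x)h)$, is itself scale continuous. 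The analogous data are available for $g$.

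The core of the argument proceeds in three steps. Step one: show $g \circ f$ is scale continuous, i.e. $(g \circ f)(E_k) \subseteq G_k$ and $(g\circ f)|_{E_k}$ is continuous — this is immediate from composing the scale continuity statements for $f$ and $g$. Step two: establish Fr\'echet differentiability of $(g\circ f)|_{E_1} \colon E_1 \to G_0$ with derivative $D(g\circ f)(x) = Dg(f(x)) \circ Df(x)$ for $x \in E_1$. Here one must be slightly careful: the classical chain rule needs $f$ mapping into the space on which $g$ is differentiated, and $g$ is differentiable on $F_1$, whereas a priori $f(E_1) \subseteq F_1$ holds by scale continuity — so this is fine, $f|_{E_1}$ lands in $F_1$ where $Dg$ is defined, and the usual estimate $\|g(f(x+h)) - g(f(x)) - Dg(f(x))Df(x)h\|_{G_0} \le \|g(f(x+h)) - g(f(x)) - Dg(f(x))(f(x+h)-f(x))\|_{G_0} + \|Dg(f(x))\|\cdot\|f(x+h)-f(x) - Df(x)h\|_{F_0}$ goes through, using continuity of $f|_{E_1}$ into $F_1$ to control the first term via differentiability of $g$ at $f(x)$. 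Boundedness of $D(g\circ f)(x) \colon E_0 \to G_0$ follows from boundedness of the two factors. Step three: verify that the tangent map $T(g\circ f) \colon TE \to TG$ is scale continuous. The identity $T(g\circ f)(x,h) = (g(f(x)), Dg(f(x))Df(x)h) = Tg(Tf(x,h))$ is a formal computation, so scale continuity of $T(g\circ f)$ reduces to scale continuity of $Tg \circ Tf$, which is the composition of the scale continuous maps $Tf \colon TE \to TF$ and $Tg \colon TF \to TG$ — and composition of scale continuous maps is scale continuous, re-using step one. This simultaneously proves the formula $T(g\circ f) = Tg\circ Tf$.

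The main obstacle, and the reason this is not quite a one-line corollary of the classical chain rule, is the interplay between the two distinct regularity requirements in the definition of scale differentiability: the Fr\'echet-differentiability part lives at level $1 \to 0$, while the ``honest'' smoothness is encoded in scale continuity of the tangent map between the shifted scales $E^1 \oplus E$ and $F^1 \oplus F$. One must check that the shifted scale structures are compatible under composition — e.g. that $Tf$ genuinely maps $TE$ into $TF$ at every scale level, which uses $Df(x) \colon E_{k+1} \to F_{k+1}$ boundedly for $x \in E_{k+1}$ (a consequence of scale continuity of $Tf$ restricted to $(TE)_k = E_{k+1}\oplus E_k$), so that the composition $Tg \circ Tf$ is well-defined level by level. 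Once that bookkeeping is in place the rest is routine, so I would present step two's estimate in modest detail and treat steps one and three more briskly, pointing to the fact that the class of scale continuous maps is closed under composition.
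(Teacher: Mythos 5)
There is a genuine gap in your Step two, and it is exactly the point the paper flags when it remarks that the proof of the chain rule ``heavily relies on the compactness of the embeddings of the scales.'' In your two-term estimate, the first term is controlled by the differentiability of $g$ at $y = f(x) \in F_1$, but that differentiability only gives
\begin{equation*}
	\norm[0]{g(y + k) - g(y) - Dg(y)k}_{G_0} = o\del[1]{\norm[0]{k}_{F_1}}, \qquad k := f(x+h) - f(x),
\end{equation*}
\noindent i.e.\ smallness relative to the \emph{level-one} norm of the increment. To convert this into $o(\norm[0]{h}_{E_1})$ you would need a bound of the form $\norm[0]{f(x+h)-f(x)}_{F_1} \leq C\norm[0]{h}_{E_1}$, but scale differentiability of $f$ gives no such thing: you only have \emph{continuity} of $f\vert_{E_1} \colon E_1 \to F_1$ (no Lipschitz estimate), and the linearisation $Df(x)$ controls the increment only in the $F_0$-norm. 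So the classical two-term argument does not go through, and this is precisely why the scale chain rule is not a corollary of the Fr\'echet chain rule.

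The actual argument (Hofer--Wysocki--Zehnder) instead writes the difference quotient via the integral representation
\begin{equation*}
	\frac{1}{t}\del[1]{g(f(x+th)) - g(f(x))} = \int_0^1 Dg\del[1]{f(x) + s(f(x+th)-f(x))}\del[3]{\frac{f(x+th)-f(x)}{t}} ds,
\end{equation*}
\noindent which is legitimate because $(y,k) \mapsto Dg(y)k$ is jointly continuous as a map $F_1 \oplus F_0 \to G_0$ (scale continuity of $Tg$ at the bottom level). Letting $t \to 0$, the integrand converges because $f(x) + s(f(x+th)-f(x)) \to f(x)$ in $F_1$ and the quotient converges to $Df(x)h$ in $F_0$; this yields the directional derivative $Dg(f(x))Df(x)h$. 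The remaining — and essential — step is to upgrade this to a Fr\'echet estimate uniform over $\norm[0]{h}_{E_1} \leq 1$, and this is done by contradiction: a failing sequence $h_n$ in the unit ball of $E_1$ has, by compactness of the inclusion $E_1 \hookrightarrow E_0$, a subsequence converging in $E_0$, and the joint scale continuity of $Tf$ and $Tg$ then forces the remainder to vanish after all. Your proposal never invokes this compactness, and without it the Fr\'echet differentiability of $(g\circ f)\vert_{E_1}$ is not established. Your Steps one and three (scale continuity of the composition and of $Tg \circ Tf$) are fine.
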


The proof of the chain rule heavily relies on the compactness of the embeddings of the scales. Using the chain rule one is able to define the notion of scale manifolds and scale differential geometry in analogy to the finite-dimensional case. For details see \cite[Chapter~5]{cieliebak:ga:2018}. Thus with the theory developed so far, one can make sense of the smooth moduli space of unparametrised negative gradient flow lines. However, for broken negative gradient flow lines one needs an even more general notion, including scale manifolds.

\begin{definition}[{Retraction, \cite[Definition~2.1.1]{hoferwysockizehnder:polyfolds:2021}}]
    Let $E$ be scale Banach space. A \bld{retraction on $E$} is a scale smooth map $r \colon E \to E$ such that $r^2 = r$.
\end{definition}

\begin{remark}
	If $X$ is a smooth Banach manifold, then $\Fix(r) = r(M)$ is a smooth submanifold for every smooth retraction $r \colon X \to X$ by a result of Cartan \cite[Proposition~2.1.2]{hoferwysockizehnder:polyfolds:2021}.
\end{remark}

It is in general not true that the fixed point set of a scale smooth retraction of a scale manifold is a scale submanifold. This lead Hofer--Wysocki--Zehnder to the generalised notion of an \bld{M-polyfold}, where the ``M'' stands for ``manifold flavoured''. Heuristically, an M-polyfold is locally the fixed point set of a scale smooth retraction of a scale smooth manifold. The main aspect for us is that Fredholm theory still is valid in M-polyfolds in some sense. For an extensive treatment see \cite[Part~I]{hoferwysockizehnder:polyfolds:2021}.

\subsection{M-Polyfold Setup for Morse--Bott Homology}
In this section we explain how an M-polyfold Fredholm setup for Morse--Bott homology can be defined. We follow \cite[Section~8.4]{cieliebak:ga:2018}. The essential arguments are contained in \cite[Appendix~A]{frauenfelder:arnold-givental:2004}. We assume the following setup. Let $(M,g)$ be a compact Riemannian manifold and $f \in C^\infty(M)$ a Morse--Bott function. Choose an additional Morse function $h \in C^\infty(\Crit f)$ and a Riemannian metric $g_0$ on $\Crit f$ such that $(h,g_0)$ is a Morse--Smale pair, that is, the stable and unstable manifolds intersect transversally. Pick two connected critical components $C^\pm \subseteq \Crit f$. For 
\begin{equation*}
	0 < \delta < \min \{\abs[0]{\lambda} : \lambda \in \sigma(\Hess_x f)) \setminus \{0\}, x \in \Crit f\}
\end{equation*}
\noindent and a positive strictly increasing sequence $0 < \delta_0 < \delta_1 < \dots < \delta$, consider the Banach manifold $\widetilde{\mathscr{E}}_k(C^-,C^+)$ consisting of all maps $u \in H^{k + 2}_{\delta_k}(\mathbb{R},M)$ converging exponentially to $C^\pm$, that is, there exist $x^\pm \in C^\pm$ as well as  
\begin{equation*}
	\xi^- \in H^{k + 2}_{\delta_k}(\intoc[0]{-\infty,-T},T_{x^-}M) \quad \text{and} \quad \xi^+ \in H^{k + 2}_{\delta_k}(\intco[0]{T,+\infty},T_{x^+}M)
\end{equation*}
\noindent for some $T \in \mathbb{R}$ with
\begin{equation*}
	u(s) = \exp_{x^\pm}(\xi^\pm(s)) \qquad \forall \pm s \geq T.
\end{equation*}
See Figure \ref{fig:scale_manifold_1}. Local charts on $\widetilde{\mathscr{E}}_k(C^-,C^+)$ are constructed by exponential neighbourhoods around smooth paths \cite[Appendix A]{schwarz:morse:1993}. Similar to \cite{alberswysocki:sc:2013}, one can construct scale smooth charts using those exponential neighbourhoods, equipping the Banach manifold $\widetilde{\mathscr{E}}(C_-,C_+) := \widetilde{\mathscr{E}}_0(C_-,C_+)$ with a scale smooth structure. Moreover, there are two natural scale smooth evaluation maps
\begin{equation*}
	\ev^\pm \colon \widetilde{\mathscr{E}}(C^-,C^+) \to \Crit f, \qquad \ev^\pm(u) = x^\pm.
\end{equation*}

\begin{figure}[h!tb]
	\centering
	\includegraphics[width=.7\textwidth]{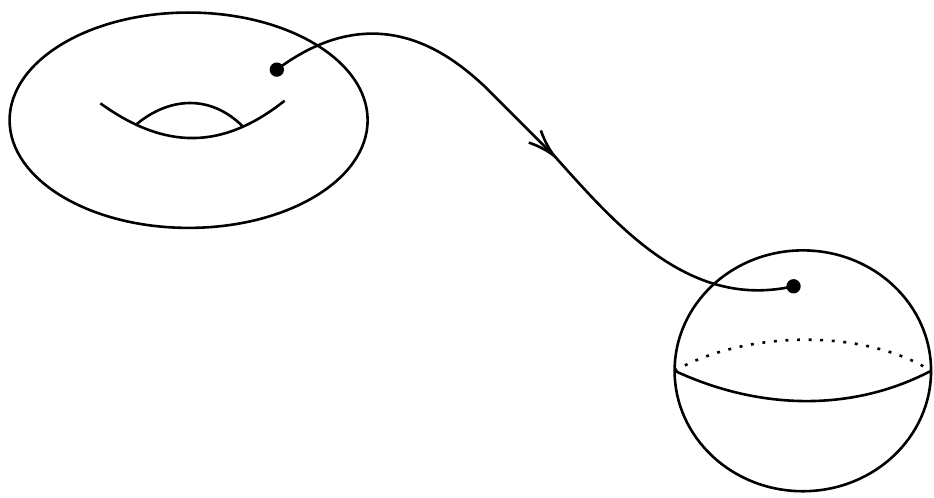}
	\caption{An asymptotically exponential Sobolev path connecting two critical components of a Morse--Bott function.}
	\label{fig:scale_manifold_1}
\end{figure}

\noindent Fix $x^\pm \in C^\pm \cap \Crit h$ and define the stable and unstable manifolds by
\begin{equation*}
	W^\pm(x^\pm) := \cbr[3]{x \in \Crit f : \lim_{s \to \pm\infty}\phi_s(x) = x^\pm},
\end{equation*}
\noindent where $\phi \colon \mathbb{R} \times \Crit f \to \Crit f$ denotes the negative gradient flow of $h$ with respect to the Morse--Smale metric $m_0$. Define
\begin{equation*}
	\widetilde{\mathscr{E}}_k^1(x^-,x^+) := \cbr[1]{u \in \widetilde{\mathscr{E}}_k(C^-,C^+) : \ev^\pm(u) \in W^\pm(x^\pm)}
\end{equation*}
\noindent for all $k \in \mathbb{N}$. See Figure \ref{fig:scale_manifold_2}. Again, $\widetilde{\mathscr{E}}^1(x_-,x_+) := \widetilde{\mathscr{E}}^1_0(x_-,x_+)$ is a scale manifold. For details, see the proof of \cite[Theorem~A.12]{frauenfelder:arnold-givental:2004} and \cite[Theorem~A.14]{frauenfelder:arnold-givental:2004}.

\begin{figure}[h!tb]
	\centering
	\includegraphics[width=.7\textwidth]{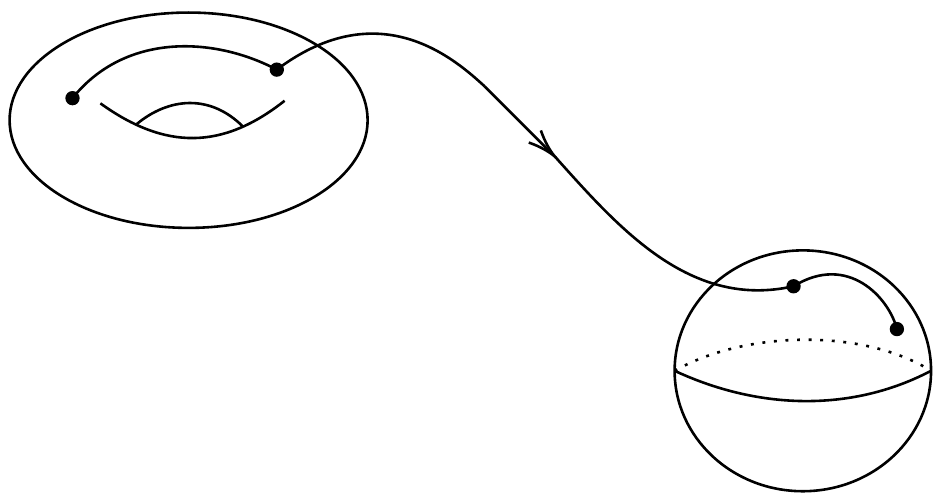}
	\caption{A weighted Sobolev path with one cascade.}
	\label{fig:scale_manifold_2}
\end{figure}

\noindent There exists a canonical scale smooth bundle
\begin{equation*}
	\widetilde{\mathscr{F}}^1(x^-,x^+) \to \widetilde{\mathscr{E}}^1(x^-,x^+),
\end{equation*}
\noindent where the fibre over $u \in \widetilde{\mathscr{E}}_k^1(C^-,C^+)$ is given by
\begin{equation*}
	\widetilde{\mathscr{F}}_k^1(x^-,x^+)\vert_u = H^{k + 1}_{\delta_k}(\mathbb{R},u^*TM)
\end{equation*}
\noindent for all $k \in \mathbb{N}$. Then 
\begin{equation*}
	\widetilde{\partial} \colon \widetilde{\mathscr{E}}^1(x^-,x^+) \to \widetilde{\mathscr{F}}^1(x^-,x^+), \qquad \widetilde{\partial}(u) := \dot{u} + \grad_m f(u)
\end{equation*}
\noindent is a scale Fredholm operator. If $x^- \neq x^+$, the scale smooth reparametrisation action 
\begin{equation*}
	\mathbb{R} \oplus \widetilde{\mathscr{E}}^1(x^-,x^+) \to \widetilde{\mathscr{E}}^1(x^-,x^+), \qquad (r,u) \mapsto u(\cdot + r),
\end{equation*}
\noindent as in Example \ref{ex:time_shift} is free, giving rise to a scale smooth bundle
\begin{equation*}
	\widetilde{\mathscr{F}}^1(x^-,x^+)/\mathbb{R} \to \widetilde{\mathscr{E}}^1(x^-,x^+)/\mathbb{R}
\end{equation*}
\noindent to which $\widetilde{\partial}$ descends to a scale Fredholm section $\partial$. This construction can be generalised to $m$ cascades and thus yields the scale smooth bundle 
\begin{equation*}
	\mathscr{F}^m(x^-,x^+) \to \mathscr{E}^m(x^-,x^+)
\end{equation*}
\noindent of unparametrised paths from $x^-$ to $x^+$ with $m$ cascades. Now we consider broken paths. For $x^0,\dots,x^l \in \Crit h$ with
\begin{equation*}
	f(x^0) \geq f(x^1) \geq \dots \geq f(x^{l - 1}) \geq f(x^l),
\end{equation*}
\noindent we define the space of broken paths from $x^0$ to $x^l$ by
\begin{equation*}
	\mathscr{E}(x^0,\dots,x^l) := \coprod_{(\alpha_0,\dots,\alpha_l) \in \mathbb{N}^{l + 1}_0} \mathscr{E}^{\alpha_0}(x^0,x^1) \times \dots \times \mathscr{E}^{\alpha_l}(x^{l - 1},x^l).
\end{equation*}
Then $\mathscr{E}(x^0,\dots,x^l)$ is a scale manifold over which we have the scale bundle
\begin{equation*}
	\mathscr{F}(x^0,\dots,x^l) := \coprod_{(\alpha_0,\dots,\alpha_l) \in \mathbb{N}^{l + 1}_0} \mathscr{F}^{\alpha_0}(x^0,x^1) \times \dots \times \mathscr{F}^{\alpha_l}(x^{l - 1},x^l)
\end{equation*}
\noindent and the scale Fredholm section
\begin{equation*}
	\partial \times \dots \times \partial \colon \mathscr{E}(x^0,\dots,x^l) \to \mathscr{F}(x^0,\dots,x^l).
\end{equation*}
For $x^\pm \in \Crit h$ with $f(x^-) \geq f(x^+)$ we define the space of broken paths from $x^-$ to $x^+$ by
\begin{equation*}
	X(x^-,x^+) := \coprod_{l \in \mathbb{N}} \coprod_{\substack{x^1,\dots,x^{l - 1} \in \Crit h\\f(x^-) \geq f(x^1) \geq \dots \geq f(x^{l - 1}) \geq f(x^+)}} \mathscr{E}(x^-,x^1,\dots,x^{l - 1},x^+).
\end{equation*}
Similarly, one defines 
\begin{equation*}
	Y(x^-,x^+) := \coprod_{l \in \mathbb{N}} \coprod_{\substack{x^1,\dots,x^{l - 1} \in \Crit h\\f(x^-) \geq f(x^1) \geq \dots \geq f(x^{l - 1}) \geq f(x^+)}} \mathscr{F}(x^-,x^1,\dots,x^{l - 1},x^+).
\end{equation*}
One can show that $X(x^-,x^+)$ carries the natural structure of an M-polyfold and 
\begin{equation*}
	Y(x^-,x^+) \to X(x^-,x^+)
\end{equation*}
\noindent is a bundle. Moreover, the scale Fredholm section above induces a scale Fredholm section 
\begin{equation*}
	\partial \colon X(x^-,x^+) \to Y(x^-,x^+)
\end{equation*}
\noindent of index
\begin{equation*}
	\ind \partial = \ind(x^-) - \ind(x^+) - 1,
\end{equation*}
\noindent where either
\begin{equation*}
	\ind = \ind_f + \ind_h \qquad \text{or} \qquad \ind = -\frac{1}{2}\sgn \Hess f- \frac{1}{2}\sgn \Hess h.
\end{equation*}
For details see \cite[Appendix~A]{cieliebakfrauenfelder:rfh:2009}. One can show that the level set $\partial^{-1}(0)$ of unparametrised broken negative gradient flow lines is compact and that there exists an abstract perturbation of $\partial$, that is, a scale smooth section $\sigma \colon X \to Y$, where
\begin{equation*}
	X := \coprod_{\substack{x^\pm \in \Crit h\\f(x^-) \geq f(x^+)}}X(x^-,x^+) \qquad \text{and} \qquad Y := \coprod_{\substack{x^\pm \in \Crit h\\f(x^-) \geq f(x^+)}}Y(x^-,x^+),
\end{equation*}
\noindent with the property that for all $x^\pm \in \Crit h$ the moduli space
\begin{equation*}
	\mathscr{M}(x^-,x^+) := \{u \in X(x^-,x^+) : (\partial + \sigma)(u) = 0\}
\end{equation*}
\noindent is compact and $\partial + \sigma$ is transverse to the zero section. Hence we arrive at a prototypical result for Morse--Bott homology, compare \cite[Corollary~8.9]{cieliebak:ga:2018}.

\begin{theorem}[M-Polyfold Setup for Morse--Bott Homology]
	\label{thm:MB-polyfold}
	Let $(M,g)$ be a compact Riemannian manifold without boundary and $f \in C^\infty(M)$ a Morse--Bott function. Choose an additional Morse function $h \in C^\infty(\Crit f)$ and an additional Riemannian metric $g_0$ on $\Crit f$ such that $(h,g_0)$ is a Morse--Smale pair. For all critical points $x^\pm \in \Crit h$, the moduli space
	\begin{equation*}
		\mathscr{M}(x^-,x^+) := \{u \in X(x^-,x^+) : (\partial + \sigma)(u) = 0\}
	\end{equation*}
	\noindent is a smooth compact manifold with corners of dimension
	\begin{equation*}
		\dim \mathscr{M}(x^-,x^+) = \ind(x^-) - \ind(x^+) - 1.
	\end{equation*}
	Moreover, there is a canoncial diffeomorphism
	\begin{equation*}
		\partial \mathscr{M}(x^-,x^+) = \mathscr{M}(x^-,x^+) \cap \partial X \cong \coprod_{x \in \Crit h} \mathscr{M}(x^-,x) \times \mathscr{M}(x,x^+).
	\end{equation*}
\end{theorem}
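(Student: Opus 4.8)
The statement is the culmination of the M-polyfold package assembled in the preceding paragraphs, so the plan is to assemble those pieces rather than reprove them from scratch. The proof splits into four parts: (i) verifying that $X(x^-,x^+)$ carries an M-polyfold structure and $Y(x^-,x^+)\to X(x^-,x^+)$ is a strong M-polyfold bundle; (ii) checking that $\partial$ is a proper, oriented, scale Fredholm section of the claimed index; (iii) invoking the abstract perturbation and implicit function theorems of Hofer--Wysocki--Zehnder to obtain a transverse perturbed section $\partial+\sigma$ with compact zero set, hence a smooth compact manifold with corners $\mathscr{M}(x^-,x^+)$; and (iv) analysing the boundary stratification.

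\textbf{Step-by-step.} First I would recall the construction of $\widetilde{\mathscr{E}}(C^-,C^+)$ and its one-cascade refinement $\widetilde{\mathscr{E}}^1(x^-,x^+)$ as scale Banach manifolds, using exponential charts around smooth asymptotically constant paths together with the Morse--Bott lemma (Remark \ref{rem:MB-lemma}) to guarantee exponential decay; the choice of weights $0<\delta_0<\delta_1<\cdots<\delta$ below the spectral gap of $\Hess f$ is what makes the linearised operator Fredholm on each scale. The scale smoothness of the reparametrisation action is Example \ref{ex:time_shift}, and the chain rule (from the excerpt) lets one pass these structures through the quotient by $\mathbb R$. Then, building cascades and gluing them at intermediate critical points of $h$ along $W^-(x^i)\cap W^+(x^{i+1})$ — which is where the Morse--Smale condition on $(h,g_0)$ enters, ensuring the fibre products are cut out transversally — one obtains $X(x^-,x^+)$ with its natural boundary-and-corner M-polyfold structure, the corner strata being indexed by the number of cascades and the number of breakings. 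Next I would compute the index: each unbroken cascade segment contributes $\ind_f + \ind_h$ differences minus the reparametrisation $\mathbb R$, and the intermediate evaluation constraints subtract the appropriate dimensions; summing telescopically over a broken configuration yields $\ind\partial = \ind(x^-)-\ind(x^+)-1$, independent of the combinatorial type, exactly as stated (and matching the signature-index normalisation in the alternative formula). Orientability comes from coherent orientations of the determinant line bundle of $\partial$, built from orientations of the unstable manifolds.

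\textbf{Compactness, transversality, boundary.} With $\partial$ a proper scale Fredholm section, I would apply the HWZ perturbation theorem to produce a scale smooth section $\sigma$, supported away from the already-transverse locus, such that $\partial+\sigma$ is transverse to the zero section and $\mathscr{M}(x^-,x^+)=(\partial+\sigma)^{-1}(0)$ is compact; properness of $\partial$ together with the boundedness-of-energy estimate (here the Morse--Bott exponential-decay compactness, i.e. the finite-dimensional analogue of Floer--Gromov convergence) is what guarantees no escape of mass, so the perturbed zero set stays compact. The implicit function theorem in M-polyfolds then makes $\mathscr{M}(x^-,x^+)$ a smooth compact manifold with corners of dimension $\ind\partial = \ind(x^-)-\ind(x^+)-1$. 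Finally, the codimension-one boundary $\partial\mathscr{M}(x^-,x^+)=\mathscr{M}(x^-,x^+)\cap\partial X$ corresponds precisely to configurations that break once at an intermediate critical point $x\in\Crit h$, and the product structure of the M-polyfold near that stratum gives the canonical diffeomorphism $\partial\mathscr{M}(x^-,x^+)\cong\coprod_{x\in\Crit h}\mathscr{M}(x^-,x)\times\mathscr{M}(x,x^+)$; one must choose $\sigma$ compatibly on the strata (inductively on the index difference) so that the induced perturbations on $\mathscr{M}(x^-,x)$ and $\mathscr{M}(x,x^+)$ agree with the restriction of $\sigma$, which is a standard coherence argument.

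\textbf{Main obstacle.} The genuine difficulty is not any single calculation but establishing that $X(x^-,x^+)$ really is an M-polyfold with the asserted corner structure — i.e. producing the scale smooth gluing profiles and retractions at the broken configurations and checking that the reparametrisation actions and evaluation maps are scale smooth on the nose — and, hand in hand with it, arranging the abstract perturbations $\sigma$ coherently across all strata so that the boundary formula holds with the same $\sigma$ on both sides. This is exactly the content of the cited Appendix~A of \cite{frauenfelder:arnold-givental:2004} and Chapter~8 of \cite{cieliebak:ga:2018}, and in the write-up I would reduce to those references for the heavy M-polyfold analysis while spelling out the index count and the boundary identification in detail.
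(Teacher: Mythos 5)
Your proposal assembles exactly the ingredients the paper uses: the weighted-Sobolev scale manifolds of cascade configurations, the scale Fredholm section $\partial$ with index $\ind(x^-)-\ind(x^+)-1$, compactness of the broken-trajectory spaces, the Hofer--Wysocki--Zehnder abstract perturbation/implicit function theorems, and the identification of the codimension-one strata with once-broken configurations, deferring the heavy M-polyfold analysis to \cite{frauenfelder:arnold-givental:2004}, \cite{cieliebak:ga:2018} and \cite{cieliebakfrauenfelder:rfh:2009} just as the paper does (which states the theorem as the culmination of these cited constructions rather than proving it in detail). This is essentially the same route; your remarks on coherent orientations are superfluous here since the theory is set up with $\mathbb{Z}_2$ coefficients, but that does not affect correctness.
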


\newpage
\section{Bubbling Analysis}
\label{ch:bubbling_analysis}

In this section we prove the main result about the compactness of the moduli space of negative gradient flow lines of the symplectic action functional.

\begin{definition}[Symplectic Asphericity]
	A connected symplectic manifold $(M,\omega)$ is said to be \bld{symplectically aspherical}, if
	\begin{equation*}
		\int_{\mathbb{S}^2} f^*\omega = 0 \qquad \forall f \in C^\infty(\mathbb{S}^2,M).
	\end{equation*}
\end{definition}

\begin{remark}
	A symplectic manifold $(M,\omega)$ is symplectically aspherical, if and only if $[\omega]\vert_{\pi_2(M)} = 0$, where $[\omega] \in \operatorname{H}_{\dR}^2(M;\mathbb{R})$ denotes the cohomology class of the closed form $\omega$.
\end{remark}

\begin{theorem}[Bubbling]
	\label{thm:bubbling}
	Let $(M,\omega)$ be a compact symplectically aspherical symplectic manifold and let $(u_k)$ be a sequence of negative gradient flow lines of the symplectic action functional $\mathcal{A}_H$ for some $H \in C^\infty(M \times \mathbb{T})$ with uniformly bounded energy
\begin{equation*}
	E_J(u_k) := \int_{-\infty}^{+\infty}\norm[0]{\partial_s u_k}^2_J
\end{equation*}
\noindent for some, and hence every, $\omega$-compatible almost complex structure $J$. Then the derivatives of $(u_k)$ are uniformly bounded.
\end{theorem}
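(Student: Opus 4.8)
The plan is to argue by contradiction, following the standard bubbling-off argument of Gromov (see \cite[Chapter~4]{mcduffsalamon:J-holomorphic_curves:2012}). Suppose the derivatives are not uniformly bounded. Then, after passing to a subsequence, there exist points $(s_k,t_k) \in \mathbb{R} \times \mathbb{T}$ with $R_k := \norm[0]{du_k(s_k,t_k)} \to \infty$. The first step is to localise and rescale: using Hofer's lemma on the quantitative behaviour of functions on metric spaces, I would choose modified centres and radii so that the rescaled maps
\begin{equation*}
	v_k(z) := u_k\del[3]{s_k + \frac{\operatorname{Re}z}{R_k}, t_k + \frac{\operatorname{Im}z}{R_k}}
\end{equation*}
are defined on larger and larger disks, have gradient bounded by a fixed constant (say $2$) there, and gradient exactly $1$ at the origin. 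Since the Hamiltonian term $X_{H_t}$ in the Floer equation is a zeroth-order term, rescaling kills it in the limit, so the $v_k$ converge in $C^\infty_{\loc}$ to a nonconstant $J$-holomorphic plane $v \colon \mathbb{C} \to M$ with bounded gradient.

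The second step is to verify that this plane has finite energy: $\int_{\mathbb{C}} v^*\omega \leq \liminf_k E_J(u_k) < \infty$, which holds because the energy identity for Floer cylinders reads $E_J(u_k) = \int u_k^*\omega + (\text{boundary terms that are controlled by the uniform energy bound})$, and rescaling only redistributes this energy. With finite energy and bounded gradient, the removal of singularities theorem \cite[Theorem~4.1.2]{mcduffsalamon:J-holomorphic_curves:2012} lets me extend $v$ to a $J$-holomorphic sphere $\overline{v} \colon \mathbb{S}^2 \to M$. This sphere is nonconstant because $\norm[0]{dv(0)} = 1$, hence by \cite[Lemma~2.2.1]{mcduffsalamon:J-holomorphic_curves:2012} it satisfies $\int_{\mathbb{S}^2} \overline{v}^*\omega > 0$. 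But $(M,\omega)$ is symplectically aspherical, so $\int_{\mathbb{S}^2} \overline{v}^*\omega = 0$, a contradiction. This establishes the uniform bound on the derivatives.

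The main obstacle is carrying out the rescaling argument cleanly enough that the limit plane genuinely has \emph{uniformly bounded} gradient on all of $\mathbb{C}$ (so that removal of singularities applies) rather than merely on compact sets; this is exactly what Hofer's lemma is designed to provide, and the bookkeeping of the modified sequences $(s_k,t_k)$ and radii $R_k$ is the technical heart of the proof. A secondary point requiring care is the energy accounting: one must check that no energy escapes to the ends of the cylinders in a way that would invalidate the bound $\int_{\mathbb{C}} v^*\omega < \infty$, which follows from the a priori energy bound $E_J(u_k) \le b - a$ type estimate together with the fact that $\omega$-energy and $J$-energy of the gradient flow lines differ only by the (bounded) action. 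Since all the relevant analytic inputs --- $C^\infty_{\loc}$-compactness with bounded gradient, removal of singularities, and positivity of energy of nonconstant spheres --- are quoted from \cite{mcduffsalamon:J-holomorphic_curves:2012}, the argument is essentially a matter of assembling them in the right order.
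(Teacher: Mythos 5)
Your proposal is correct and follows essentially the same route as the paper: argue by contradiction, rescale around points of exploding gradient via Hofer's lemma so that the limits are defined on larger and larger balls with gradient bounded by $2$ and equal to $1$ at a point, pass to a nonconstant $J$-holomorphic plane (the Hamiltonian term dying under rescaling), check finite energy, extend over the puncture to a $J$-holomorphic sphere, and contradict symplectic asphericity via positivity of energy of nonconstant spheres. The only difference is that you invoke the removal-of-singularities theorem of McDuff--Salamon, whereas the paper carries out that step by hand, passing to cylindrical coordinates, interpreting the plane as a negative gradient flow line of the symplectic area functional, and using its Morse--Bott property together with an action--energy inequality to obtain exponential decay and a unique asymptotic limit; both versions are legitimate.
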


The main idea of the proof is to assume that the derivatives $(Du_k)$ explode and then to construct a nonconstant $J$-holomorphic sphere. Indeed, assume that there exists a sequence $(s_k,t_k)$ in  $\mathbb{R} \times \mathbb{T}$ such that
\begin{equation*}
    \lim_{k \to \infty}\norm[0]{\partial_su_k(s_k,t_k)} \to +\infty.
\end{equation*}
Then we rescale the sequence $(u_k)$, see Figure \ref{fig:magnifying_glass}. Set  
\begin{equation*}
    m_k := \norm[0]{\partial_su_k(s_k,t_k)} \qquad \text{and} \qquad v_k(\sigma,\tau) := u_k\del[3]{\frac{\sigma}{m_k} + s_k,\frac{\tau}{m_k} + t_k}
\end{equation*}
\noindent for all $(\sigma,\tau) \in \mathbb{C}$.

\noindent Then we compute
\begin{align*}
    m_k \partial_\sigma v_k(\sigma,\tau) &= \partial_s u_k\del[3]{\frac{\sigma}{m_k} + s_k,\frac{\tau}{m_k} + t_k},\\
    m_k \partial_\tau v_k(\sigma,\tau) &= \partial_t u_k\del[3]{\frac{\sigma}{m_k} + s_k,\frac{\tau}{m_k} + t_k}.
\end{align*}
In particular $\norm[0]{\partial_\sigma v_k(0,0)} = 1$ for all $k \in \mathbb{N}$. Moreover, every $v_k$ solves 
\begin{equation*}
    \partial_\sigma v_k(\sigma,\tau) + J\partial_\tau v_k(\sigma,\tau) = \frac{1}{m_k}JX_{H_{\frac{\tau}{m_k}+t_k}}(v_k(\sigma,\tau)) \qquad \forall (\sigma,\tau) \in \mathbb{C}
\end{equation*}
\noindent as $u_k$ satisfies the Floer equation \eqref{eq:Floer_equation}. If there exists  $v_\infty \in C^\infty(\mathbb{C},M)$ such that
\begin{equation*}
	v_k \xrightarrow{C^\infty_{\loc}} v_\infty, \qquad k \to \infty,
\end{equation*}
\noindent modulo subsequences, then $v_\infty$ satisfies 
\begin{equation*}
	\partial_\sigma v_\infty(\sigma,\tau) + J\partial_\tau v_\infty(\sigma,\tau) = 0 \qquad \forall (\sigma,\tau) \in \mathbb{C}.
\end{equation*}
Consequently, $v_\infty$ is a $J$-holomorphic plane. Using the assumption that the energy of the sequence $(u_k)$ is uniformly bounded, one can extend $v_\infty$ to a $J$-holomorphic sphere $v \in C^\infty(\mathbb{S}^2,M)$ such that $v\vert_\mathbb{C} = v_\infty$ via the identification $\mathbb{S}^2 \cong \mathbb{C} \cup \{\infty\}$. 

\begin{figure}[h!tb]
    \centering
    \includegraphics[width=\textwidth]{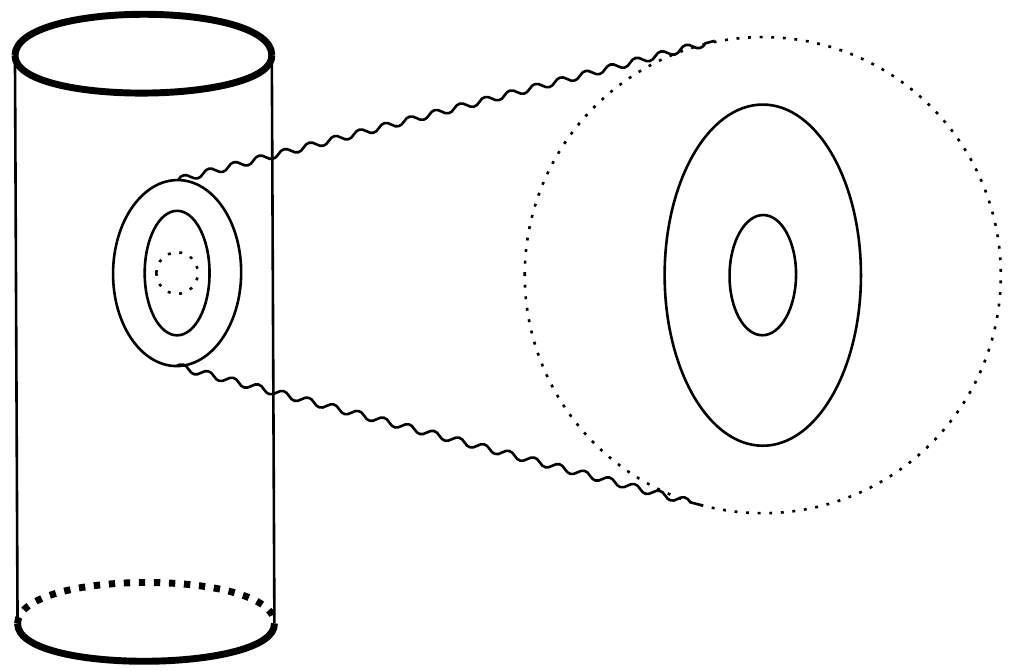}
    \caption{Looking at the sequence $(u_k)$ of negative gradient flow lines of the symplectic action functional with a magnifying glass via rescaling.}
    \label{fig:magnifying_glass}
\end{figure}

\subsection{Rescaling}

\begin{lemma}[{Hofer, \cite[Lemma~4.4.4]{abbashofer:cg:2019}}]
	\label{lem:Hofer}
	Let $(X,d)$ be a complete metric space and
	\begin{equation*}
		f \colon X \to \intco[0]{0,+\infty}
	\end{equation*}
	\noindent continuous. Given $\varepsilon > 0$ and $x \in X$, there exist $0 < \delta \leq \varepsilon$ and $y \in X$ such that
	\begin{equation*}
		 d(x,y) \leq 2\delta, \qquad \delta f(y) \geq \varepsilon f(x), \qquad \text{and} \qquad  \sup_{z \in \overline{B}_\delta(y)} f(z) \leq 2f(y).
	\end{equation*}
\end{lemma}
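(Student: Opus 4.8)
The plan is to run the classical Hofer iteration argument: either we find a point where the supremum condition is already satisfied, or we keep jumping to points of larger and larger $f$-value while staying inside a shrinking sequence of balls, and this latter alternative is forbidden by completeness. I would set up an inductive sequence of points $(x_n)$ with $x_0 := x$ as follows. Suppose we have chosen $x_n \in X$ with $f(x_n) \geq f(x)$ (this holds trivially for $n = 0$). Look for $z \in \overline{B}_{\varepsilon/2^{n+1}}(x_n)$ with $f(z) > 2 f(x_n)$. If no such $z$ exists, then $x_n$ together with $\delta := \varepsilon/2^{n+1}$ already satisfies $\sup_{z \in \overline{B}_\delta(x_n)} f(z) \leq 2 f(x_n)$, and we stop. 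If such a $z$ does exist, set $x_{n+1} := z$, so that $d(x_n, x_{n+1}) \leq \varepsilon/2^{n+1}$ and $f(x_{n+1}) > 2 f(x_n) \geq f(x)$, and continue.

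The key step is to show the process terminates. If it did not, we would obtain an infinite sequence $(x_n)$ with $d(x_n, x_{n+1}) \leq \varepsilon/2^{n+1}$, hence $(x_n)$ is Cauchy, so by completeness $x_n \to x_\infty$ for some $x_\infty \in X$. But $f(x_{n+1}) > 2 f(x_n)$ forces $f(x_n) \geq 2^n f(x_0)$, and one checks $f(x_0) > 0$ (if $f(x) = 0$ the conclusion is trivial with $y = x$ and any $\delta \le \varepsilon$, since then $\varepsilon f(x) = 0 \le \delta f(y)$ automatically, and we can instead just apply the non-terminating analysis to a point with positive value, or note the claim is vacuous-friendly; more cleanly, assume $f(x) > 0$ from the outset). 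Then $f(x_n) \to +\infty$, contradicting the continuity of $f$ at $x_\infty$. Hence the process stops at some finite stage $N$, producing $y := x_N$ and $\delta := \varepsilon/2^{N+1} \leq \varepsilon$.

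It remains to verify the three stated inequalities for this $y$ and $\delta$. The supremum bound $\sup_{z \in \overline{B}_\delta(y)} f(z) \leq 2 f(y)$ is exactly the stopping condition. For the distance bound, by the triangle inequality
\begin{equation*}
	d(x, y) \leq \sum_{n = 0}^{N - 1} d(x_n, x_{n+1}) \leq \sum_{n = 0}^{N - 1} \frac{\varepsilon}{2^{n+1}} < \varepsilon \le 2\varepsilon,
\end{equation*}
which is a little weaker than $d(x,y) \le 2\delta$; to get the sharp form one runs the iteration with balls of radius $\delta$ doubling down from a first guess, or more simply one rescales: the standard trick is to do the iteration so that at stage $n$ the relevant radius is $\delta_n$ with $\delta_0 = \varepsilon$ and $\delta_{n+1} = \delta_n/2$, stop at the first $n$ where $\sup_{\overline{B}_{\delta_n/2}(x_n)} f \le 2 f(x_n)$ fails to force a jump, and then $d(x,y) \le \sum_{n<N}\delta_n/2^{\,?}$ telescopes to at most $2\delta_N$. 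For the growth bound $\delta f(y) \geq \varepsilon f(x)$: since each jump multiplies $f$ by more than $2$ while $\delta$ is divided by exactly $2$ at each stage starting from $\varepsilon$, after $N$ steps $\delta = \varepsilon/2^N$ and $f(y) = f(x_N) > 2^N f(x)$, so $\delta f(y) > (\varepsilon/2^N)\cdot 2^N f(x) = \varepsilon f(x)$. I expect the only mildly delicate point to be bookkeeping the radii so that all three inequalities come out with the constants exactly as stated; the completeness-versus-unboundedness dichotomy is the conceptual heart and is routine.
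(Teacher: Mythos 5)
Your overall strategy is the standard Hofer iteration (jump to a point where $f$ more than doubles inside a ball of half the previous radius, and use completeness plus continuity to rule out infinitely many jumps), which is exactly how the cited source (Abbas--Hofer, Lemma~4.4.4) proves this; the paper itself does not reprove the lemma. The conceptual core of your argument — Cauchyness from the geometric radii, $f(x_n)\geq 2^{n}f(x_0)$ (note $f(x_1)>0$ even if $f(x_0)=0$, so no special case is needed), contradiction with continuity at the limit — is correct. But two points need attention. First, your bookkeeping is internally inconsistent: with your radii $\delta_n=\varepsilon/2^{n+1}$ and stopping value $\delta:=\varepsilon/2^{N+1}$ you only get $\delta f(y)\geq \tfrac{1}{2}\varepsilon f(x)$, while your final display silently uses $\delta=\varepsilon/2^{N}$. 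The fix is simply to start the iteration with a ball of radius $\varepsilon$, i.e.\ take $\delta_n=\varepsilon/2^{n}$, jump within distance $\delta_n$ at stage $n$, and stop at the first $N$ with $\sup_{\overline{B}_{\delta_N}(x_N)}f\leq 2f(x_N)$; then $\delta_N f(x_N)\geq (\varepsilon/2^{N})\,2^{N}f(x)=\varepsilon f(x)$ and $d(x,x_N)\leq\sum_{n<N}\varepsilon/2^{n}\leq 2\varepsilon$.

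Second, and more seriously, the "rescaling/telescoping trick" you invoke to upgrade the distance estimate to $d(x,y)\leq 2\delta$ cannot work, because the lemma with $2\delta$ in place of $2\varepsilon$ is false. Take $X=\mathbb{R}$, $f(t)=e^{Mt}$ with $M$ large, $x=0$, $\varepsilon=1$: the condition $\sup_{\overline{B}_\delta(y)}f\leq 2f(y)$ forces $e^{M\delta}\leq 2$, i.e.\ $\delta\leq \ln 2/M$, and then for any $y$ with $|y|\leq 2\delta$ one has $\delta f(y)\leq \delta e^{2M\delta}\leq 4\ln 2/M<1=\varepsilon f(x)$ once $M>4\ln 2$, so no admissible pair $(\delta,y)$ exists. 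The correct conclusion — and the one stated in Abbas--Hofer — is $d(x,y)\leq 2\varepsilon$, which is exactly what your iteration produces; the $2\delta$ in the statement above is evidently a misprint, and only the bounds $\delta f(y)\geq\varepsilon f(x)$ and the sup bound are used in the subsequent Rescaling Lemma. So: keep your argument, repair the radii as above, prove $d(x,y)\leq 2\varepsilon$, and drop both the $2\delta$ claim and the $f(x)=0$ aside (the latter is also incorrect as written, since taking $y=x$ does not give $\sup_{\overline{B}_\delta(x)}f\leq 2f(x)=0$).
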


\begin{lemma}[Rescaling]
    \label{lem:rescaling}
	Let $(u_k)$ be a sequence of negative gradient flow lines of the symplectic action functional $\mathcal{A}_H$ for some $H \in C^\infty(M \times \mathbb{T})$ and $J$ some $\omega$-compatible almost complex structure. Then there exists a sequence $(R_k)$ in $\intoo[0]{0,+\infty}$ with $R_k \to +\infty$ and a sequence $v_k \in C^\infty(B_{R_k}(0),M)$ such that
    \begin{equation*}
        \lim_{k \to \infty}\norm[0]{\partial_\sigma v_k + J\partial_\tau v_k}_\infty = 0 \qquad \text{and} \qquad 1 \leq \norm[0]{\partial_\sigma v_k}_\infty \leq 2.
    \end{equation*}
\end{lemma}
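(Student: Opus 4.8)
The plan is to carry out the standard ``rescale at a point of near-maximal derivative'' argument, with Hofer's Lemma \ref{lem:Hofer} supplying the choice of blow-up points; that lemma is precisely the device that copes with the noncompactness of the domain $\mathbb{R} \times \mathbb{T}$, on which the supremum of $\norm{\partial_s u_k}$ need not be attained. The lemma is only of interest in the blow-up regime, so I would first pass to a subsequence along which $\sup_{\mathbb{R} \times \mathbb{T}}\norm{\partial_s u_k} \to +\infty$ — if no such subsequence exists, then by the Floer equation $\partial_s u_k + J\partial_t u_k = JX_{H_t}(u_k)$ the norms $\norm{\partial_s u_k}$ and $\norm{\partial_t u_k}$ differ pointwise by at most $\norm{X_H}_{C^0(M \times \mathbb{T})}$, so $\norm{Du_k}_\infty$ is bounded and no rescaling is needed. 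Having passed to this subsequence I pick, for each $k$, a point $x_k' \in \mathbb{R} \times \mathbb{T}$ with $a_k := \norm{\partial_s u_k(x_k')} \geq k$.

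Next I would apply Lemma \ref{lem:Hofer} to the complete metric space $\mathbb{R} \times \mathbb{T}$, the continuous function $f_k := \norm{\partial_s u_k}$, the point $x_k'$, and the scale $\varepsilon_k := a_k^{-1/2} \to 0$. This produces $0 < \delta_k \leq \varepsilon_k$ and a point $y_k = (s_k,t_k)$ such that $d(x_k',y_k) \leq 2\delta_k$, that $m_k := f_k(y_k)$ satisfies $\delta_k m_k \geq \varepsilon_k a_k = \sqrt{a_k}$, and that $\sup_{\overline{B}_{\delta_k}(y_k)} f_k \leq 2 m_k$. Set $R_k := \delta_k m_k$; then $R_k \geq \sqrt{a_k} \to +\infty$, and dividing the middle estimate by $\delta_k \leq a_k^{-1/2}$ also gives $m_k \geq a_k \to +\infty$. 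Since $\delta_k \to 0$, for all large $k$ the closed ball $\overline{B}_{\delta_k}(y_k) \subseteq \mathbb{R} \times \mathbb{T}$ wraps around neither factor and therefore lifts isometrically to a genuine closed disk in $\mathbb{C}$; in this lifted chart I define
\begin{equation*}
	v_k \colon B_{R_k}(0) \to M, \qquad v_k(w) := u_k\del[3]{y_k + \frac{w}{m_k}}.
\end{equation*}

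It then remains to check the two displayed assertions, which is a direct computation with the chain rule: $\partial_\sigma v_k(w) = m_k^{-1}\partial_s u_k(y_k + w/m_k)$ and $\partial_\tau v_k(w) = m_k^{-1}\partial_t u_k(y_k + w/m_k)$, so $\norm{\partial_\sigma v_k(0)} = m_k^{-1} m_k = 1$, while the last Hofer estimate yields $\norm{\partial_\sigma v_k}_\infty = m_k^{-1}\sup_{\overline{B}_{\delta_k}(y_k)} f_k \leq 2$, which is the bound $1 \leq \norm{\partial_\sigma v_k}_\infty \leq 2$. For the Cauchy--Riemann defect, the Floer equation rearranges to $\partial_s u_k + J\partial_t u_k = JX_{H_t}(u_k)$, hence
\begin{equation*}
	\partial_\sigma v_k + J\partial_\tau v_k = \frac{1}{m_k}\, J X_{H_{t_k + \tau/m_k}}(v_k),
\end{equation*}
and so $\norm{\partial_\sigma v_k + J\partial_\tau v_k}_\infty \leq m_k^{-1}\norm{X_H}_{C^0(M \times \mathbb{T})} \to 0$ because $m_k \to +\infty$.

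All of these computations are routine; the one genuinely delicate point — and the reason Hofer's lemma is needed — is the calibration of the blow-up scale $\varepsilon_k$. It must tend to $0$ (so that the lifted balls do not wrap around the $\mathbb{T}$-factor and the rescaled domains are honest disks) while $\varepsilon_k a_k$ tends to $\infty$ (so that both $R_k = \delta_k m_k \to \infty$ and $m_k \to \infty$, the latter being what makes the Hamiltonian term disappear in the limit). The choice $\varepsilon_k = a_k^{-1/2}$ does both at once, and with it the noncompactness of the domain and the possible non-attainment of the supremum of the derivative cause no trouble.
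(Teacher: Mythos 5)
Your proposal is correct and follows essentially the same route as the paper: apply Hofer's Lemma to $f_k=\norm[0]{\partial_s u_k}$ at a blow-up point with scale $\varepsilon_k$ equal to the inverse square root of the derivative there, set $R_k=\delta_k m_k$, rescale by $m_k$, and read off the bounds $1\leq\norm[0]{\partial_\sigma v_k}_\infty\leq 2$ and the $O(1/m_k)$ Cauchy--Riemann defect from the rescaled Floer equation. Your explicit remarks that one works in the blow-up regime and that the small balls lift to honest disks in $\mathbb{C}$ only make precise what the paper leaves implicit.
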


\begin{proof}
    Abbreviate $z_k := (s_k,t_k) \in \mathbb{R} \times \mathbb{T}$. Apply Hofer's Lemma \ref{lem:Hofer} with
	\begin{equation*}
		f \colon \mathbb{R} \times \mathbb{T} \to \intco[0]{0,+\infty}, \qquad f(s,t) := \norm[0]{\partial_s u_k(s,t)},
	\end{equation*}
	\noindent as well as
    \begin{equation*}
		\varepsilon_k := \frac{1}{\sqrt{m_k}}, \qquad \text{and} \qquad x_k := z_k.
    \end{equation*}
    Thus there exists a nearby sequence $z'_k = (s'_k,t'_k) \in \mathbb{R} \times \mathbb{T}$ and $0 < \delta_k \leq \varepsilon_k$ with
    \begin{equation*}
        \delta_k\norm[0]{\partial_s u_k(z'_k)} \geq \varepsilon_k \norm[0]{\partial_s u_k(z_k)} = \sqrt{m}_k
    \end{equation*}
	\noindent and
	    \begin{equation*}
        \sup_{z \in B_{\delta_k}(z_k')} \norm[0]{\partial_s u_k(z)}\leq 2\norm[0]{\partial_s u_k(z'_k)}.
    \end{equation*}
    Rescale $m_k' := \norm[0]{\partial_su_k(z_k')}$, set $R_k := \delta_k m_k'$ and
    \begin{equation*}
        v_k(\sigma,\tau) := u_k\del[3]{\frac{\sigma}{m_k'} + s_k',\frac{\tau}{m_k'} + t_k'} \qquad \forall (\sigma,\tau) \in B_{R_k}(0).
    \end{equation*}
	As $u_k$ is a solution to the Floer equation \eqref{eq:Floer_equation}, $v_k$ satisfies the equation
    \begin{equation}
		\label{eq:rescaled_Floer_equation}
		\partial_\sigma v_k(\sigma,\tau) + J\partial_\tau v_k(\sigma,\tau) = \frac{1}{m_k'}JX_{H_{\frac{\tau}{m_k'}+t_k'}}(v_k(\sigma,\tau))
	\end{equation}   
    \noindent for all $(\sigma,\tau) \in B_{R_k}(0)$. As $m'_k \geq m_k \to +\infty$, we conclude
    \begin{equation*}
        \lim_{k \to \infty}\norm[0]{\partial_\sigma v_k + J\partial_\tau v_k}_\infty = 0.
    \end{equation*}  
    Finally, we compute
    \begin{align*}
        \norm[0]{\partial_\sigma v_k}_\infty &= \sup_{z \in B_{R_k}(0)}\norm[0]{\partial_\sigma v_k(z)}\\
        &= \frac{1}{m_k'}\sup_{z \in B_{\delta_k}(z_k')} \norm[0]{\partial_s u_k(z)}\\
        &\leq \frac{2}{m_k'}\norm[0]{\partial_s u_k(z'_k)}\\
        &= 2,
    \end{align*}
    \noindent and 
    \begin{equation*}
		m_k'\norm[0]{\partial_\sigma v_k(0,0)} = \norm[0]{\partial_s u_k(z_k')} = m_k' .
    \end{equation*}
    This concludes the proof of the lemma.
\end{proof}

\begin{lemma}
    \label{lem:continuous_holomorphic_plane}
    Let $(v_k)$ be the sequence constructed in the Rescaling Lemma \ref{lem:rescaling}. Then there exists $v_\infty \in C^0(\mathbb{C},M)$ such that
    \begin{equation*}
        v_k \xrightarrow{C^0_{\mathrm{loc}}} v_\infty, \qquad k \to \infty,
    \end{equation*}
    \noindent up to a subsequence.
\end{lemma}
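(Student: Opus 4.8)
The plan is to deduce the statement from the Arzel\`a--Ascoli theorem together with a diagonal argument, the key point being that the sequence $(v_k)$ is uniformly Lipschitz on every fixed compact subset of $\mathbb{C}$.

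First I would record a uniform bound on the full first derivative of $v_k$. By the Rescaling Lemma \ref{lem:rescaling} we have $\norm{\partial_\sigma v_k}_\infty \leq 2$ and $\norm{\partial_\sigma v_k + J\partial_\tau v_k}_\infty \to 0$; in particular there is $k_0$ with $\norm{\partial_\sigma v_k + J\partial_\tau v_k}_\infty \leq 1$ for all $k \geq k_0$. Applying $J$ and using $J^2 = -\id$ gives the identity $\partial_\tau v_k = J\partial_\sigma v_k - J(\partial_\sigma v_k + J\partial_\tau v_k)$, and since $J$ is an isometry for the Riemannian metric $m_J = \omega(J\cdot,\cdot)$ this yields $\norm{\partial_\tau v_k}_\infty \leq 3$ for $k \geq k_0$. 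Hence there is a constant $C > 0$, independent of $k$, with $\norm{Dv_k(z)}_{m_J} \leq C$ for all $k \geq k_0$ and all $z$ in the domain $B_{R_k}(0)$ of $v_k$. Here I use that $M$ is compact (the standing assumption of Theorem \ref{thm:bubbling}), so that $(M, d_{m_J})$ is a compact metric space.

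Next, fix $R > 0$. Since $R_k \to +\infty$, for all sufficiently large $k$ we have $R_k > R$, so $v_k|_{\overline{B}_R(0)}$ is defined; integrating $Dv_k$ along line segments in the convex set $\overline{B}_R(0)$ shows that $v_k|_{\overline{B}_R(0)}$ is $C$-Lipschitz with respect to $m_J$. Thus $\{v_k|_{\overline{B}_R(0)} : R_k > R\}$ is a uniformly bounded and uniformly equicontinuous family of maps into the compact metric space $(M, d_{m_J})$, and Arzel\`a--Ascoli produces a subsequence converging uniformly on $\overline{B}_R(0)$. Taking $R = j$ for $j \in \mathbb{N}$ and passing to a diagonal subsequence, I obtain a single subsequence, still denoted $(v_k)$, converging uniformly on every $\overline{B}_j(0)$, hence $C^0_{\loc}(\mathbb{C})$, to a limit $v_\infty \colon \mathbb{C} \to M$. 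As a locally uniform limit of continuous maps, $v_\infty$ is continuous, so $v_\infty \in C^0(\mathbb{C},M)$.

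Since the argument is essentially a packaging of Arzel\`a--Ascoli, I do not expect a genuine obstacle; the only points requiring care are the derivation of the $\partial_\tau$-bound from the approximate Cauchy--Riemann equation \eqref{eq:rescaled_Floer_equation} (equivalently, directly from the Rescaling Lemma), and the bookkeeping in the diagonal argument caused by the fact that the domains $B_{R_k}(0)$ only exhaust $\mathbb{C}$ in the limit rather than being fixed. The upgrade from this $C^0_{\loc}$-convergence to $C^\infty_{\loc}$-convergence, and the identification of $v_\infty$ as a nonconstant $J$-holomorphic plane extending to a $J$-holomorphic sphere, is a separate matter handled afterwards by elliptic regularity as sketched in the surrounding discussion.
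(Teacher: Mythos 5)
Your proposal is correct and follows essentially the same route as the paper: uniform bounds on both partial derivatives of $v_k$ (the $\partial_\tau$-bound extracted from the approximate Cauchy--Riemann equation and compactness of $M$), equicontinuity, Arzel\`a--Ascoli on the balls $\overline{B}_R(0)$ for $k$ with $R_k > R$, and a diagonal subsequence over $R = j$. The only cosmetic difference is that you obtain equicontinuity by integrating the derivative bound along line segments to get a uniform Lipschitz estimate, whereas the paper embeds $M$ into Euclidean space via Whitney and invokes Morrey's inequality to get a uniform $C^{0,1}$-bound; both give the same uniform Lipschitz control.
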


\begin{proof}
    By the Rescaling Lemma \ref{lem:rescaling} and compactness of $M$ there exists $C > 0$ such that
	\begin{equation*}
		\norm[0]{\partial_\tau v_k}_\infty \leq C \qquad \forall k \in \mathbb{N}.
    \end{equation*}
    Fix $R > 0$ and choose $K_R \in \mathbb{N}$ such that $R_k \geq R$ for all $k \geq K_R$ and consider the restrictions $v_k\vert_{B_R(0)}$ for all $k \geq K_R$. As the derivatives of $v_k$ are uniformly bounded, we conclude that $v_k\vert_{B_R(0)}$ is of Sobolev class 
    \begin{equation*}
        W^{1,\infty}(B_R(0)) := \{f \in W^{1,\infty}(B_R(0),\mathbb{R}^{4n + 1}) : f(B_R(0)) \subseteq M\}
    \end{equation*}
    \noindent for all $k \geq K_k$ where we consider $M^{2n} \hookrightarrow \mathbb{R}^{4n + 1}$ via the Whitney embedding Theorem. Thus by Morrey's inequality \cite[Corollary~9.14]{brezis:fa:2011}, every $v_k\vert_{B_R(0)}$ is of H\"older class $C^{0,1}(B_R(0))$ , and hence $v_k\vert_{B_R(0)}$ is equicontinuous. As $M$ is compact, Ascolis Theorem implies the existence of $v^R \in C^0(B_R(0),M)$ with
    \begin{equation*}
        v_k\vert_{B_R(0)} \xrightarrow{C^0} v^R, \qquad k \to \infty,
    \end{equation*}
    \noindent up to a subsequence. Choose a subsequence $(k_j^1)$ with $k_j^1 \geq K_1$ for all $j \in \mathbb{N}$ and such that there exists $v^1 \in C^0(B_1(0),M)$ with
    \begin{equation*}
        v_{k_j^1}\vert_{B_1(0)} \xrightarrow{C^0} v^1, \qquad j \to \infty.
    \end{equation*}
    \noindent Inductively, choose a subsequence $(k_j^{\mu + 1})$ of $(k_j^\mu)$ for all $\mu \in \mathbb{N}$ with $k_j^{\mu + 1} \geq K_{\mu + 1}$ for all $j \in \mathbb{N}$ and such that there exists $v^{\mu + 1} \in C^0(B_{\mu + 1}(0),M)$ with
    \begin{equation*}
        v_{k^{\mu + 1}_j}\vert_{B_{\mu + 1}(0)} \xrightarrow{C^0} v^{\mu + 1}, \qquad j \to \infty.
    \end{equation*}
    Finally, taking the diagonal subsequence yields
    \begin{equation*}
		v_{k^j_j} \xrightarrow{C^0_{\loc}} v_\infty \in C^0(\mathbb{C},M), \qquad j \to \infty.
    \end{equation*}
	{}
\end{proof}

\subsection{Elliptic Bootstrapping}
\label{sec:elliptic_bootstrapping}

\begin{lemma}[Elliptic Bootstrapping]
    \label{lem:holomorphic_plane}
    Denote by $v_\infty \in C^0(\mathbb{C},M)$ the map constructed in Lemma \ref{lem:continuous_holomorphic_plane}. Then $v_\infty \in C^\infty(\mathbb{C},M)$ is a nonconstant $J$-holomorphic plane.
\end{lemma}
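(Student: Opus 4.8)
The plan is to run the standard elliptic bootstrapping argument, starting from the $C^0_{\mathrm{loc}}$-limit $v_\infty$ and upgrading its regularity degree by degree, then verifying nonconstancy and the spherical extension.

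First I would fix an arbitrary $R > 0$ and work on $B_R(0)$. On this disc the maps $v_k$ (for $k$ large enough that $R_k \geq 2R$, say) satisfy the inhomogeneous Cauchy--Riemann equation \eqref{eq:rescaled_Floer_equation}, whose right-hand side is $\frac{1}{m_k'}JX_{H}(v_k)$; since $M$ is compact and $m_k' \to +\infty$, this right-hand side tends to zero uniformly together with all its derivatives in the $M$-directions. Using the Whitney embedding $M \hookrightarrow \mathbb{R}^{4n+1}$ as in the proof of Lemma \ref{lem:continuous_holomorphic_plane}, I would view $v_k$ as $\mathbb{R}^{4n+1}$-valued and write the equation as $\bar\partial v_k = g_k(v_k) + \text{(second fundamental form terms)}$, i.e. in the form $\partial_\sigma v_k + J_0 \partial_\tau v_k = F_k$ with $F_k$ controlled in $L^\infty$ by the $C^0$-bound plus the uniform gradient bound from the Rescaling Lemma \ref{lem:rescaling}. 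Elliptic estimates for the $\bar\partial$-operator on $\mathbb{R}^2$ (Calderón--Zygmund, \cite[Appendix~B]{mcduffsalamon:J-holomorphic_curves:2012}) then give $W^{1,p}_{\mathrm{loc}}$-bounds on $v_k$ for all $p < \infty$, uniform in $k$; Rellich--Kondrachov upgrades $C^0_{\mathrm{loc}}$-convergence to weak $W^{1,p}_{\mathrm{loc}}$-convergence and strong $C^{0,\alpha}_{\mathrm{loc}}$-convergence. The key step is then the inductive bootstrap: knowing $v_k \to v_\infty$ in $C^{\ell,\alpha}_{\mathrm{loc}}$, the right-hand side $F_k$ of the (now genuinely perturbed) Cauchy--Riemann equation is bounded in $C^{\ell,\alpha}_{\mathrm{loc}}$ uniformly, so Schauder estimates give a uniform $C^{\ell+1,\alpha}_{\mathrm{loc}}$-bound on $v_k$, hence $C^{\ell+1,\alpha}_{\mathrm{loc}}$-convergence of a further subsequence; passing to the limit in \eqref{eq:rescaled_Floer_equation} and using $1/m_k' \to 0$ shows
\begin{equation*}
	\partial_\sigma v_\infty + J\partial_\tau v_\infty = 0 \qquad \text{on } B_R(0).
\end{equation*}
Since $R$ was arbitrary, $v_\infty \in C^\infty(\mathbb{C},M)$ is a $J$-holomorphic plane. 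This elliptic regularity step — making sure the Hamiltonian perturbation term really is negligible at every order and that the Schauder estimates are applied in coordinates where the target curvature terms are harmless — is the main technical obstacle, though it is by now entirely routine.

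Next I would show $v_\infty$ is nonconstant. By the Rescaling Lemma \ref{lem:rescaling} we have $\|\partial_\sigma v_k\|_{L^\infty(B_{R_k}(0))} \geq 1$, but this alone does not localise; instead I would recall that Hofer's Lemma \ref{lem:Hofer} was applied so that $v_k(0,0)$ realises (up to a factor) the maximal norm, and in fact the construction gives $\|\partial_\sigma v_k(0,0)\| = 1$ (this is exactly the normalisation $m_k'\|\partial_\sigma v_k(0,0)\| = \|\partial_s u_k(z_k')\| = m_k'$ at the end of the proof of Lemma \ref{lem:rescaling}). By $C^1_{\mathrm{loc}}$-convergence, $\|\partial_\sigma v_\infty(0,0)\| = 1 \neq 0$, so $v_\infty$ is nonconstant. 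Finally, the removal-of-singularity/energy argument: the energy $E_J(u_k)$ is uniformly bounded by hypothesis, and since rescaling is conformal the energy of $v_k$ on $B_{R_k}(0)$ is bounded by $E_J(u_k)$ plus a term going to zero (coming from the Hamiltonian perturbation, controlled as in the standard Floer-to-$J$-holomorphic comparison); hence $\int_{\mathbb{C}} v_\infty^*\omega \leq \liminf E_J(u_k) < \infty$. A $J$-holomorphic plane of finite energy extends across $\infty$ by the Removable Singularity Theorem \cite[Theorem~4.1.2]{mcduffsalamon:J-holomorphic_curves:2012}, yielding a $J$-holomorphic sphere $v \in C^\infty(\mathbb{S}^2,M)$ with $v|_{\mathbb{C}} = v_\infty$, which is nonconstant since $v_\infty$ is; and then $\int_{\mathbb{S}^2}v^*\omega > 0$ by \cite[Lemma~2.2.1]{mcduffsalamon:J-holomorphic_curves:2012}, contradicting symplectic asphericity. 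This last contradiction is what actually proves the Bubbling Theorem \ref{thm:bubbling}; the present lemma is complete once nonconstancy and smoothness are established.
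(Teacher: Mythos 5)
Your proposal is correct and follows the same overall strategy as the paper (uniform first-derivative bounds from the Rescaling Lemma, elliptic estimates in local charts, Ascoli, diagonal subsequence, then passing to the limit in \eqref{eq:rescaled_Floer_equation} using $1/m_k' \to 0$), but it organises the elliptic step differently. The paper differentiates the rescaled Floer equation once to obtain $\Delta v_k = P(\partial_\sigma v_k,\partial_\tau v_k)$ with $P$ quadratic, multiplies by cutoffs, and applies Calder\'on--Zygmund for the Laplacian followed by Morrey and Ascoli; you instead keep the first-order equation and run a $\bar\partial$/Schauder bootstrap in an ambient embedding. Both are standard; the one caution with your version is that if you freeze the complex structure to $J_0$ and push $(J(v_k)-J_0)\partial_\tau v_k$ to the right-hand side, the right-hand side is only $C^{\ell-1,\alpha}$ when $v_k \in C^{\ell,\alpha}$ and the bootstrap gains nothing, so you must either treat $J(v_k)$ as a variable H\"older coefficient on the left (elliptic estimates for first-order systems with $C^{\ell,\alpha}$ coefficients, as in \cite[Appendix~B]{mcduffsalamon:J-holomorphic_curves:2012}) or use smallness of $J(v_k)-J_0$ on small balls — precisely the issue the paper's second-order Laplacian trick sidesteps. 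Your nonconstancy argument, via the pointwise normalisation $\norm[0]{\partial_\sigma v_k(0,0)} = 1$ from the end of the proof of Lemma \ref{lem:rescaling} together with $C^1_{\loc}$-convergence, is in fact cleaner than the paper's appeal to $\norm[0]{\partial_\sigma v_\infty}_\infty = \lim_k \norm[0]{\partial_\sigma v_k}_\infty \geq 1$, since sup-norms over the whole plane need not pass to a $C^\infty_{\loc}$-limit. Finally, note that the finite-energy, removal-of-singularities and asphericity contradiction you append belong to the subsequent subsection and the proof of Theorem \ref{thm:bubbling}, not to this lemma — you flag this correctly, so it is harmless.
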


\begin{proof}
    Fix sequences $(z_j) \subseteq \mathbb{C}$ and $r_j \subseteq \intoo[0]{0,+\infty}$ such that
	\begin{itemize}
		\item $v_\infty\vert_{B_{4r_j}(z_j)}$ is contained in a chart $U_j$ of $M$ for all $j \in \mathbb{N}$.
		\item $\bigcup_{j \in \mathbb{N}}B_{r_j}(z_j) = \mathbb{C}$.
		\item for all $j \in \mathbb{N}$ there exists $K_j$ such that $v_k\vert_{B_{2r_j}(z_j)} \subseteq U_j$ for all $k \geq K_j$.
	\end{itemize}
	Fix smooth bump functions $\beta_j \in C^\infty(\mathbb{C},\intcc[0]{0,1})$ for $\overline{B}_{r_j}(z_j)$ supported in $B_{2r_j}(z_j)$ and define
    \begin{equation*}
        \overline{v}_k^j := \beta_j v_k \in C^\infty_c(\mathbb{C},\mathbb{R}^{2n})
    \end{equation*}
    \noindent for all $k,j \in \mathbb{N}$. We compute
    \begin{equation}
    \label{eq:derivative}
        \Delta \overline{v}^j_k = (\Delta \beta_j)v_k + 2 \partial_\sigma\beta_j\partial_\sigma v_k + 2 \partial_\tau \beta_j \partial_\tau v_k + \beta_j\Delta v_k. 
    \end{equation}
    To compute $\Delta v_k$, we differentiate \eqref{eq:rescaled_Floer_equation} in charts. Applying $\partial_\sigma$ we get that 
    \begin{equation*}
        \partial_\sigma^2 v_k(\sigma,\tau) + DJ\partial_\sigma v_k(\sigma,\tau)\partial_\tau v_k(\sigma,\tau) + J\partial_\sigma\partial_\tau v_k(\sigma,\tau)
    \end{equation*} 
    \noindent is equal to
    \begin{equation*}
        \frac{1}{m_k'} D\grad_J H_{\frac{\tau}{m_k'}+t_k'}(v_k(\sigma,\tau)) \partial_\sigma v_k(\sigma,\tau),
    \end{equation*}
	\noindent where $\grad_J H_t$ denotes the gradient of $H_t$ with respect to the Riemannian metric induced by $J$, that is, the Riemannian metric $\omega(J\cdot,\cdot)$. Applying $\partial_\tau$ to \eqref{eq:rescaled_Floer_equation} we get that 
    \begin{equation*}
        \partial_\tau\partial_\sigma v_k(\sigma,\tau) + J\partial_\tau^2 v_k(\sigma,\tau) + DJ(\partial_\tau v_k(\sigma,\tau))^2
    \end{equation*} 
    \noindent is equal to
    \begin{equation*}
        \frac{1}{m_k'} D\grad_J H_{\frac{\tau}{m_k'}+t_k'}(v_k(\sigma,\tau)) \partial_\tau v_k(\sigma,\tau) + \frac{1}{m_k'} \partial_\tau\grad_J H_{\frac{\tau}{m_k'}+t_k'}(v_k(\sigma,\tau)).
    \end{equation*}
    Hence 
    \begin{equation*}
        \Delta v_k = P(\partial_\sigma v_k,\partial_\tau v_k),
    \end{equation*}
    \noindent where $P$ is a polynomial of degree $2$ with $C^\infty$-coefficients and so
    \begin{equation*}
        \Delta \overline{v}_k^j = P(\partial_\sigma \overline{v}_k^j,\partial \overline{v}_k^j)
    \end{equation*}
    \noindent by \eqref{eq:derivative}. As the derivatives $\partial_\sigma v_k$ and $\partial_\tau v_k$ are uniformly bounded, we conclude that $\Delta \overline{v}_k^j$ is also uniformly bounded. Thus $\norm[0]{\Delta \overline{v}_k^j}_{L^p}$ is uniformly bounded for all exponents $p \in \intcc[0]{1,+\infty}$ as $\Delta \overline{v}_k^j$ is compactly supported by construction. Therefore, the Calderon--Zygmund inequality \cite[Corollary~B.2.7]{mcduffsalamon:J-holomorphic_curves:2012} implies that $\norm[0]{\overline{v}_k^j}_{W^{2,p}}$ is uniformly bounded for all $1 < p < +\infty$. In particular,
    \begin{equation*}
        \norm[0]{\partial_\sigma\overline{v}_k^j}_{W^{1,p}} \qquad \text{and} \qquad \norm[0]{\partial_\tau\overline{v}_k^j}_{W^{1,p}}
    \end{equation*}
    \noindent are uniformly bounded. Thus again by Morrey's inequality, $\partial_\sigma\overline{v}_k^j$ and $\partial_\tau\overline{v}_k^j$ belong to the H\"older class $C^{0,\alpha}$ for all $0 < \alpha < 1$. By Ascolis Theorem, $\partial_\sigma\overline{v}_k^j$ and $\partial_\tau\overline{v}_k^j$ admit convergent subsequences and 
    \begin{equation*}
        \overline{v}_\infty^j := \beta_j v_\infty \in C^1_c(\mathbb{C},\mathbb{R}^{2n})
    \end{equation*}
    \noindent satisfies
    \begin{equation*}
        v_k\vert_{B_{r_j}(z_j)} \xrightarrow{C^1} \overline{v}_\infty^j
    \end{equation*}
	\noindent for all $j \in \mathbb{N}$.
    Hence we have showed that $v_\infty \in C^1(\mathbb{C},M)$ and
    \begin{equation*}
        v_k \xrightarrow{C^1_{\loc}} v_\infty 
    \end{equation*}
    \noindent up to subsequences. This procedure can be generalised to higher derivatives and is referred to as \bld{elliptic bootstrapping}. Note that taking higher order derivatives makes only sense locally in a chart if we do not refer to a particular connection. Thus we get $v_\infty \in C^\infty(\mathbb{C},M)$ and
    \begin{equation*}
        v_k \xrightarrow{C^\infty_{\loc}} v_\infty, \qquad k \to \infty,
    \end{equation*}
    \noindent up to subsequences. Finally, $v_\infty$ is nonconstant as
	    \begin{equation*}
        \norm[0]{\partial_\sigma v_\infty}_\infty = \lim_{k \to \infty}\norm[0]{\partial_\sigma v_k}_\infty \geq 1 
    \end{equation*}
	\noindent and satisfies
    \begin{equation*}
        \partial_\sigma v_\infty + J\partial_\tau v_\infty = 0
    \end{equation*}
	\noindent by the Rescaling Lemma \ref{lem:rescaling}.
\end{proof}

\subsection{Removal of Singularities}
Consider the conformal diffeomorphism
\begin{equation*}
    \varphi \colon \mathbb{C} \setminus \{0\} \to \mathbb{R} \times \mathbb{T}, \qquad \varphi(re^{2\pi i\theta}) := \Log(re^{2\pi i\theta}),
\end{equation*}
\noindent and define
\begin{equation*}
    w \colon \mathbb{R} \times \mathbb{T} \to M, \qquad w := v_\infty \circ \varphi^{-1},
\end{equation*}
\noindent where $v_\infty \in C^\infty(\mathbb{C},M)$ denotes the nonconstant $J$-holomorphic plane constructed in Lemma \ref{lem:continuous_holomorphic_plane}. Then 
\begin{equation*}
    \lim_{s \to -\infty} w(s,\cdot) = v_\infty(0)
\end{equation*}
\noindent and $w$ is of finite energy. Indeed, we compute
\begin{equation*}
	E_J(w) = \int_{\mathbb{R} \times \mathbb{T}} w^* \omega = \int_{-\infty}^{+\infty} \int_0^1\omega(J\partial_s w(s,t),\partial_s w(s,t))dtds \leq \sup_{k \in \mathbb{N}} E_J(u_k)
\end{equation*}
\noindent by \cite[Lemma~2.2.1]{mcduffsalamon:J-holomorphic_curves:2012}. Moreover, $w$ is a negative gradient flow line of the symplectic area functional
\begin{equation*}
	\mathscr{A} \colon \Lambda M \to \mathbb{R}, \qquad \mathscr{A}(\gamma) := \int_{\mathbb{D}} \overline{\gamma}^* \omega.
\end{equation*}

\begin{lemma}
	\label{lem:constant_asymptotic}
	For every sequence $(r_k) \subseteq \intoo[0]{0,+\infty}$ with $r_k \to +\infty$ as $k \to \infty$ define a sequence $(w_k)$ by
	\begin{equation*}
		w_k(s,t) := w(s + r_k,t) \qquad \forall (s,t) \in \mathbb{R} \times \mathbb{T}.
	\end{equation*}
	Then there exists a point $w_\infty \in M$ such that
	\begin{equation*}
		w_k \xrightarrow{C^\infty_{\loc}} w_\infty, \qquad k \to \infty,
	\end{equation*}
	\noindent up to a subsequence.
\end{lemma}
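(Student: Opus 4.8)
The plan is to exploit the uniform bound on the energy of $w$ together with the fact that $w$ is a $J$-holomorphic cylinder of finite energy. First I would observe that, since $w = v_\infty \circ \varphi^{-1}$ and $v_\infty$ extends continuously across $0$ by Lemma~\ref{lem:continuous_holomorphic_plane}, the restriction $w|_{[0,+\infty) \times \mathbb{T}}$ has a well-defined limit $v_\infty(\infty) \in M$ as $s \to +\infty$ provided $w$ is asymptotically constant; this is exactly the content of the removal-of-singularities theorem applied to the puncture at $\infty$, and it will be established in the surrounding text. Granting that, the energy of $w$ on the half-cylinder $[r_k,+\infty) \times \mathbb{T}$ tends to zero as $k \to \infty$, because the total energy $E_J(w) \leq \sup_k E_J(u_k) < \infty$ is finite and the tails of a convergent improper integral vanish.

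Next I would translate this into a $C^\infty_{\loc}$-bound. For the shifted maps $w_k(s,t) := w(s + r_k, t)$ on $[-R,R] \times \mathbb{T}$, the local energy $\int_{[-R,R] \times \mathbb{T}} w_k^* \omega = \int_{[r_k - R, r_k + R] \times \mathbb{T}} w^* \omega \to 0$. Since each $w_k$ is again a negative gradient flow line of the symplectic area functional $\mathscr{A}$, hence a solution of the unperturbed Floer equation $\partial_s w_k + J \partial_t w_k = 0$, I can invoke the same rescaling-and-bubbling machinery already set up: if the derivatives $Dw_k$ did not go uniformly to zero on compact sets, I would rescale as in Lemma~\ref{lem:rescaling} and extract, via Lemma~\ref{lem:continuous_holomorphic_plane} and the Elliptic Bootstrapping Lemma~\ref{lem:holomorphic_plane}, a nonconstant $J$-holomorphic sphere; but such a sphere has strictly positive $\omega$-area by \cite[Lemma~2.2.1]{mcduffsalamon:J-holomorphic_curves:2012}, contradicting the vanishing of the local energy. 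Therefore $\|Dw_k\|_{C^0([-R,R] \times \mathbb{T})} \to 0$ for every $R$, and a standard diagonal subsequence argument (exactly as in Lemma~\ref{lem:continuous_holomorphic_plane}, using Arzel\`a--Ascoli and the compactness of $M$) produces a $C^\infty_{\loc}$-limit $w_\infty$. Since the derivatives of the limit vanish identically, $w_\infty$ is a constant map, i.e.\ a point of $M$.

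I expect the main obstacle to be the finite-energy asymptotic analysis near the puncture — namely, justifying that the energy of $w$ on the tail $[r,+\infty) \times \mathbb{T}$ genuinely tends to zero, which requires knowing that no energy escapes to infinity. The cleanest route is to cite the removal-of-singularities / finite-energy-implies-removable-puncture theorem for $J$-holomorphic curves (e.g.\ \cite[Theorem~4.1.2]{mcduffsalamon:J-holomorphic_curves:2012}) applied to $v_\infty$ at the added point $\infty \in \mathbb{S}^2 \cong \mathbb{C} \cup \{\infty\}$, which simultaneously yields the smooth extension of $v_\infty$ across $\infty$ and the exponential decay of $w$, making the convergence $w_k \to w_\infty$ immediate. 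Alternatively, one can run the bubbling-off-of-energy argument directly on the cylindrical ends, but that essentially reproves removal of singularities. Either way, once the tail energy is controlled, the remaining steps are a routine repetition of the rescaling, bootstrapping, and Arzel\`a--Ascoli arguments already carried out earlier in this section.
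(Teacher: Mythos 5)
Your overall strategy (tail energy of $w$ tends to zero on the shifted windows, hence the derivatives of $w_k$ decay and the limit is forced to be a point) is a legitimate and genuinely different route from the paper, which instead only establishes a \emph{uniform bound} on the energy density via the differential inequality $\Delta e \geq -ae^2$ and the mean value inequality \cite[Lemma~4.3.3]{mcduffsalamon:J-holomorphic_curves:2012}, bootstraps to a $C^\infty_{\loc}$-limit that is a priori a negative gradient flow line of $\mathscr{A}$, and then proves constancy of that limit by a separate telescoping action argument. However, the step on which your whole argument rests has a gap: you justify ``local energy $\to 0$ implies $\|Dw_k\|_{C^0([-R,R]\times\mathbb{T})}\to 0$'' solely by a bubbling contradiction, i.e.\ by rescaling as in Lemma \ref{lem:rescaling} and extracting a nonconstant $J$-holomorphic sphere. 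That mechanism only operates when the derivatives \emph{blow up}: Hofer's Lemma with $\varepsilon_k = 1/\sqrt{m_k}$ produces domains $B_{R_k}(0)$ with $R_k \geq \sqrt{m_k}\to\infty$ precisely because $m_k\to\infty$, and without blow-up no plane, and hence no sphere, is obtained. The complementary case --- $|Dw_k(z_k)|\geq\delta>0$ at points $z_k$ in a fixed compact window while the derivatives stay bounded --- is not addressed at all, so your dichotomy is incomplete as stated. It can be repaired either by the paper's own tool (once the local energy on unit balls drops below the threshold, the mean value inequality gives $e(z)\leq \tfrac{8}{\pi}\int_{B_1(z)}e$, which converts vanishing local energy directly into vanishing derivatives), or by treating the bounded case explicitly: bootstrap a subsequence to a local $C^1$-limit, which is $J$-holomorphic with zero energy and hence constant, contradicting the persisting derivative bound $\delta$ at a limit point.

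Two smaller remarks. First, your ``cleanest route'' of quoting removal of singularities for $v_\infty$ at $\infty$ is mathematically sound as an external citation, but inside this paper it is circular: the asymptotic analysis at the puncture (Steps 1--3 in the proof of Theorem \ref{thm:bubbling}) is carried out \emph{after}, and by means of, Lemma \ref{lem:constant_asymptotic}, so you cannot defer it to ``the surrounding text''. Second, the worry you raise at the end about energy escaping to infinity is unfounded for the same reason you already gave: $E_J(w)\leq\sup_k E_J(u_k)<\infty$ is a convergent integral of a nonnegative density over the cylinder, so its tails vanish automatically; no further argument is needed there.
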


\begin{proof}
	By \cite[p.~89--90]{mcduffsalamon:J-holomorphic_curves:2012} there exists a constant $a \geq 0$ such that 
	\begin{equation*}
		\Delta e \geq - ae^2,
	\end{equation*}
	\noindent where 
	\begin{equation*}
		e \colon \mathbb{R} \times \mathbb{T} \to \intco[0]{0,+\infty}, \qquad e(s,t) := \norm[0]{\partial_s w(s,t)}^2
	\end{equation*}
	\noindent denotes the energy density. As $E_J(w) < +\infty$, there exists $R > 0$ such that
\begin{equation*}
    \int_R^{+\infty}\int_0^1 e(s,t)dt ds \leq \min \cbr[3]{\frac{\pi}{8a},1}
\end{equation*}
\noindent and
\begin{equation*}
    \int_{-\infty}^{-R}\int_0^1 e(s,t)dt ds \leq \min \cbr[3]{\frac{\pi}{8a},1}.
\end{equation*}
Let $z = (s,t) \in \mathbb{R} \times \mathbb{T}$ such that $\abs[0]{s}\geq R + 1$. Then 
\begin{equation*}
    \int_{B_1(z)}e < \frac{\pi}{8a}.
\end{equation*}
By \cite[Lemma~4.3.3]{mcduffsalamon:J-holomorphic_curves:2012} we conclude
\begin{equation*}
    e(z) \leq \frac{8}{\pi}\int_{B_1(z)} e \leq \frac{8}{\pi}.
\end{equation*}
Consequently, $e$ is uniformly bounded, as $e\vert_{\intcc[0]{-R-1,R+1}\times \mathbb{T}}$ is uniformly bounded by continuity of $e$. Hence $\norm[0]{\partial_s w}$ is uniformly bounded by definition of $e$ and $\norm[0]{\partial_t w}$ is uniformly bounded as $w$ is a $J$-holomorphic curve. By an eliptic bootrstrapping argument as in Lemma \ref{lem:holomorphic_plane}, we conclude
\begin{equation*}
        w_k \xrightarrow{C^\infty_{\loc}} w_\infty, \qquad k \to \infty,
\end{equation*}
\noindent up to a subsequence, where $w_\infty$ is a negative gradient flow line of the symplectic  area functional $\mathscr{A}$. Then $w_\infty$ is constant. Indeed, assume that $w_\infty$ is not constant. Then there exists $s < s'$ such that
\begin{equation*}
    \varepsilon := \mathscr{A}(w_\infty(s)) - \mathscr{A}(w_\infty(s')) > 0.
\end{equation*}
Moreover, there exists $K \in \mathbb{N}$ such that 
\begin{equation*}
    \mathscr{A}(w_k(s)) - \mathscr{A}(w_k(s')) = \mathscr{A}(w(s + r_k)) - \mathscr{A}(w(s' + r_k)) \geq \frac{\varepsilon}{2}
\end{equation*}
\noindent for all $k \geq K$. Define a subsequence $(r_{k_j})$ of $(r_k)$ recursively by
\begin{equation*}
    k_0 := K \qquad \text{and} \qquad k_j := \min\{l \in \mathbb{N} : s' + r_{k_{j - 1}} \leq s + r_l\}.
\end{equation*}
This works as $r_k \to +\infty$ as $k \to \infty$. Fix $l \in \mathbb{N}$. Then we compute
\allowdisplaybreaks
\begin{align*}
    E(w) &= \sup_{s \in \mathbb{R}} \mathscr{A}(w(s)) - \inf_{s \in \mathbb{R}}\mathscr{A}(w(s))\\
    &\geq \mathscr{A}(w(s + r_{k_0})) - \mathscr{A}(w(s + r_{k_l}))\\
    &= \sum_{\nu = 1}^l \del[1]{\mathscr{A}(w(s + r_{k_{\nu - 1}})) - \mathscr{A}(w(s + r_{k_{\nu}}))}\\
    &\geq \sum_{\nu = 1}^l \del[1]{\mathscr{A}(w(s + r_{k_{\nu - 1}})) - \mathscr{A}(w(s' + r_{k_{\nu - 1}}))}\\
    &\geq \frac{\varepsilon l}{2}.
\end{align*}
As $l \in \mathbb{N}$ was arbitrary, we conlcude that $E(w) = +\infty$.
\end{proof}

\begin{proof}[Proof of Theorem \ref{thm:bubbling}]
	As $M$ is compact, there exists a finite open cover $U_1,\dots,U_m$ of $M$ such that $\overline{U}_j$ is contained in a Darboux chart. Define
	\begin{equation*}
		\mathscr{U} := \{\gamma \in \Lambda M : \gamma(\mathbb{T}) \subseteq U_j \text{ for some $j = 1,\dots,m$}\}.
	\end{equation*}

	\emph{Step 1: There exists $s_0 \in \mathbb{R}$ such that $w(s) \in \mathscr{U}$ for all $s \geq s_0$.} Assume that there exists a sequence $(r_k) \subseteq \intoo[0]{0,+\infty}$ with $r_k \to +\infty$ as $k \to \infty$ and $w(r_k) \notin \mathscr{U}$ for all $k \in \mathbb{N}$. By Lemma \ref{lem:constant_asymptotic}, there exists $w_\infty \in M$ such that
	\begin{equation*}
		w_k \xrightarrow{C^\infty_{\loc}} w_\infty, \qquad k \to \infty,
	\end{equation*}
	\noindent up to a subsequence. But $w_\infty \in U_j$ for some $j$ and thus there exists $K \in \mathbb{N}$ such that $w(r_k,t) \in U_j$ for all $t \in \mathbb{T}$ and $k \geq K$. Consequently, $w(r_k) \in \mathscr{U}$ for all $k \geq K$.

	\emph{Step 2: There exist constants $C > 0$ and $\kappa > 0$ such that}
	\begin{equation*}
		d_{L^2}(w(s),w_\infty) \leq Ce^{-\kappa s} \qquad \forall s \geq s_0,
	\end{equation*}
	\noindent \emph{where $w_\infty$ is the limit of $w(k) = w_k(0)$ as $k \to \infty$ up to a subsequence $(k_j)$.} By \cite[Proposition~6.4]{frauenfeldernicholls:morse:2020} and \cite[Lemma~6.3]{frauenfeldernicholls:morse:2020}, we have the action-energy inequality
	\begin{equation*}
		\abs[0]{\mathscr{A}(\gamma)} \leq C_0 \norm[0]{\grad_J\mathscr{A}(\gamma)}_J^2 \qquad \forall \gamma \in \mathscr{U}
	\end{equation*}
	\noindent for some constant $C_0 > 0$ as the symplectic area functional $\mathscr{A}$ is Morse--Bott. See Remark \ref{rem:MB-lemma}. Let $s \geq  s_0$. Choose $j_0$ such that $k_{j_0} > s$. We estimate
	\begin{equation*}
		\mathscr{A}(w(s)) > \mathscr{A}(w(k_{j_0})) \geq \lim_{j \to \infty} \mathscr{A}(w(k_j)) = \lim_{j \to \infty} \mathscr{A}(w_{k_j}(0)) = \mathscr{A}(w_\infty) = 0.
	\end{equation*}
	Let $s_2 > s_1 > s_0$. Using \cite[Lemma~6.5]{frauenfeldernicholls:morse:2020} and \cite[Lemma~6.6]{frauenfeldernicholls:morse:2020} we compute
	\allowdisplaybreaks
	\begin{align*}
		d(w(s_1),w(s_2)) &\leq \frac{2}{\sqrt{C_0}}\del[1]{\sqrt{\mathscr{A}(w(s_1))} - \sqrt{\mathscr{A}(w(s_2))}}\\
		&\leq \frac{2}{\sqrt{C_0}}\sqrt{\mathscr{A}(w(s_1))}\\
		&\leq \frac{2}{\sqrt{C_0}}\sqrt{\mathscr{A}(w(s_0))}e^{\frac{1}{2C_0}(s_0 - s_1)}
	\end{align*}
	Choose $j_1$ such that $k_{j_1} \geq s_1$. Then for all $j \geq j_1$, we have that
	\begin{equation*}
		d_{L^2}(w(s_1),w(k_j)) \leq Ce^{-\kappa s_1},
	\end{equation*}
	\noindent where
	\begin{equation*}
		C := \frac{2}{\sqrt{C_0}}\sqrt{\mathscr{A}(w(s_0))}e^{\frac{1}{2C_0}s_0} \qquad \text{and} \qquad \kappa := \frac{1}{2C_0}. 
	\end{equation*}
	Thus
	\begin{equation*}
		d_{L^2}(w(s_1),w_\infty) = \lim_{j \to \infty}d_{L^2}(w(s_1),w(k_j)) \leq Ce^{-\kappa s_1}.
	\end{equation*}

	\emph{Step 3: There exists a unique point $w_\infty \in M$ such that}
	\begin{equation*}
		w(s) \xrightarrow{C^\infty} w_\infty, \qquad s \to +\infty.
	\end{equation*}
	Uniqueness follows immediately from the exponential decay established in Step 2. Indeed, a priori, $w_\infty$ does depend on the choice of subsequence $(k_j)$. Let $w_\infty' \in M$ be the limit of a different subsequence. Then we compute
	\begin{equation*}
		d_{L^2}(w_\infty,w_\infty') \leq d_{L^2}(w(s),w_\infty) + d_{L^2}(w(s),w_\infty') \leq Ce^{-\kappa s} + C'e^{-\kappa's} \to 0
	\end{equation*}
	\noindent as $s \to +\infty$. That the limit can also be taken with respect to the $C^\infty$-topology follows from \cite[Proposition~6.5.3]{audindamian:fh:2014}.
\end{proof}

\begin{corollary}
	\label{cor:bubbling_rfh}
	Let $(M,\omega)$ be a symplectically aspherical symplectic manifold and let $(u_k,\tau_k)$ be a sequence of solutions $(u_k,\tau_k) \in C^\infty(\mathbb{R} \times \mathbb{T}, M) \times C^\infty(\mathbb{R},\mathbb{R})$ of
	\begin{equation*}
		\begin{cases}\partial_s u_k(s,t) + J(\partial_t u_k(s,t) - \tau_k(s)X_H(u_k(s,t))) = 0,\\
		\displaystyle \partial_s\tau_k(s) = \int_0^1 H(u_k(s,t))dt,
	\end{cases}
	\end{equation*}
	\noindent for all $s \in \mathbb{R}$ and $k \in \mathbb{N}$ for some $H \in C^\infty(M)$, with uniformly bounded energy. If there exists a compact subset $K \subseteq M \times \mathbb{R}$ such that
	\begin{equation*}
		\im(u_k,\tau_k) \subseteq K \qquad \forall k \in \mathbb{N},
	\end{equation*}
	\noindent then the derivatives of $(u_k)$ are uniformly bounded.
\end{corollary}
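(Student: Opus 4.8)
The plan is to reduce Corollary \ref{cor:bubbling_rfh} to the Bubbling Theorem \ref{thm:bubbling} by the standard rescaling argument, taking care of the extra Lagrange-multiplier component. First I would argue by contradiction: suppose the derivatives $Du_k$ are not uniformly bounded. Then, exactly as in the discussion preceding the Rescaling Lemma \ref{lem:rescaling}, there is a sequence $(s_k,t_k)$ with $m_k := \norm[0]{\partial_s u_k(s_k,t_k)} \to +\infty$ (note that $\partial_t u_k$ is controlled by $\partial_s u_k$ via the first equation, since $J$ and $H$ are bounded on the compact set $K$ and $\tau_k$ is bounded because $\im(u_k,\tau_k)\subseteq K$). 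Here the uniform bound on $\tau_k$ is free from the hypothesis $\im(u_k,\tau_k)\subseteq K$, which is the whole point of stating the corollary this way — it lets us bypass the Fundamental Lemma.

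Next I would carry out the Hofer rescaling. Apply Hofer's Lemma \ref{lem:Hofer} to $f(s,t) := \norm[0]{\partial_s u_k(s,t)}$ with $\varepsilon_k := 1/\sqrt{m_k}$ and $x_k := (s_k,t_k)$, obtaining nearby points $z_k' = (s_k',t_k')$, numbers $0 < \delta_k \le \varepsilon_k$, and rescaled speeds $m_k' := \norm[0]{\partial_s u_k(z_k')} \ge \sqrt{m_k} \to +\infty$, with $\sup_{B_{\delta_k}(z_k')}\norm[0]{\partial_s u_k} \le 2 m_k'$. Set $R_k := \delta_k m_k' \to +\infty$ and
\begin{equation*}
	v_k(\sigma,\tau) := u_k\del[3]{\frac{\sigma}{m_k'} + s_k', \frac{\tau}{m_k'} + t_k'}, \qquad \rho_k(\sigma) := \tau_k\del[3]{\frac{\sigma}{m_k'} + s_k'}
\end{equation*}
on $B_{R_k}(0)$. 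The first equation becomes
\begin{equation*}
	\partial_\sigma v_k + J\partial_\tau v_k = \frac{\rho_k}{m_k'} JX_H(v_k),
\end{equation*}
and since $\rho_k$ is uniformly bounded while $m_k' \to +\infty$, the right-hand side tends to zero in $C^0_{\loc}$; meanwhile $1 \le \norm[0]{\partial_\sigma v_k}_\infty \le 2$. The $\rho_k$-equation rescales to $\partial_\sigma \rho_k = \frac{1}{m_k'}\int_0^1 H(v_k)\,dt \to 0$, so $\rho_k$ converges to a constant and plays no role in the bubble.

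Then I would run the same three-step analysis as in the proof of Theorem \ref{thm:bubbling}: Lemma \ref{lem:continuous_holomorphic_plane} (Arzelà–Ascoli, using the $W^{1,\infty}$-bound and Morrey, plus a diagonal subsequence) yields $v_\infty \in C^0(\mathbb{C},M)$ with $v_k \xrightarrow{C^0_{\loc}} v_\infty$; elliptic bootstrapping as in Lemma \ref{lem:holomorphic_plane} — differentiating the rescaled equation in Darboux charts, applying the Calderón–Zygmund inequality, and iterating — promotes this to $C^\infty_{\loc}$-convergence and shows $v_\infty$ is a nonconstant $J$-holomorphic plane ($\norm[0]{\partial_\sigma v_\infty}_\infty \ge 1$); and removal of singularities together with Lemma \ref{lem:constant_asymptotic} extends $v_\infty$ to a nonconstant $J$-holomorphic sphere $v \in C^\infty(\mathbb{S}^2,M)$ of finite energy, using that $\sup_k E_J(u_k,\tau_k) < \infty$ controls $\int v^*\omega$. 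By Lemma \cite[Lemma~2.2.1]{mcduffsalamon:J-holomorphic_curves:2012}, $\int_{\mathbb{S}^2} v^*\omega > 0$, contradicting the assumption that $(M,\omega)$ is symplectically aspherical. Hence $Du_k$ is uniformly bounded after all.

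The main obstacle I anticipate is not conceptual but bookkeeping: one must verify that all the a priori bounds used in the bubbling analysis (notably the uniform $L^\infty$-bound on $\partial_t u_k$ and the uniform bound on the inhomogeneous term $\tau_k X_H(u_k)$ and its derivatives appearing in the bootstrap) genuinely follow from the two hypotheses $\im(u_k,\tau_k) \subseteq K$ and $\sup_k E_J(u_k,\tau_k) < \infty$, rather than from the uniform derivative bound one is trying to prove. The key point is that $K$ compact gives uniform bounds on $H$, $X_H$, $J$ and all their derivatives along the curves, and on $\tau_k$ itself, so the Floer-type equation for $(u_k,\tau_k)$ differs from the pure Floer equation only by a uniformly $C^\infty$-bounded zeroth-order perturbation — which is exactly the situation the proof of Theorem \ref{thm:bubbling} already handles. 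Once this is checked, the argument is a routine adaptation.
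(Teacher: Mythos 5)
Your proposal is correct and follows exactly the route the paper takes: its one-line proof simply observes that, under the compact-image hypothesis, the limit of the rescaled equation is still $J$-holomorphic, which is precisely your point that the term $\frac{\rho_k}{m_k'}JX_H(v_k)$ vanishes because $\tau_k$ is bounded while $m_k'\to+\infty$, after which the bubbling analysis of Theorem \ref{thm:bubbling} applies verbatim. You have merely spelled out the details (Hofer rescaling, bootstrapping, removal of singularities) that the paper leaves implicit, and your bookkeeping of which bounds come from $K$ and the energy hypothesis is accurate.
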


\begin{proof}
	Crucial is the observation that under the above assumptions, the limit of \eqref{eq:rescaled_Floer_equation} is still a $J$-holomorphic curve.  
\end{proof}

\printbibliography

\end{document}